\documentclass[11pt,a4paper]{article}
\usepackage[T1]{fontenc}
\usepackage{amsmath,amsfonts,amssymb,amsthm}
\usepackage[numbers]{natbib}
\usepackage{mathtools}
\usepackage{bm}
\usepackage{microtype}
\usepackage{graphicx}
\usepackage{tikz}
\usetikzlibrary{arrows.meta,positioning}
\usepackage{xcolor}
\usepackage[a4paper,margin=1in]{geometry}
\usepackage[hidelinks,pdftitle={Local asymptotic normality for spectral-tail likelihoods under fixed-domain sampling},pdfauthor={Alexandre B. Simas and Jonas Wallin},pdfsubject={},pdfkeywords={fixed-domain asymptotics; Gaussian likelihood; generalized locally Toeplitz sequence; local asymptotic normality; spectral-tail parameters; Gaussian random fields; maximum likelihood estimation; covariance misspecification; Matérn covariance; generalized Wendland covariance}]{hyperref}
\numberwithin{equation}{section}
\theoremstyle{plain}
\newtheorem{theorem}{Theorem}[section]
\newtheorem{lemma}[theorem]{Lemma}
\newtheorem{proposition}[theorem]{Proposition}
\newtheorem{corollary}[theorem]{Corollary}
\theoremstyle{definition}
\newtheorem{definition}[theorem]{Definition}
\newtheorem{assumption}[theorem]{Assumption}
\newtheorem{example}[theorem]{Example}
\newtheorem{remark}[theorem]{Remark}
\newcommand{\calD}{\mathcal{D}}

\DeclareMathOperator*{\argmax}{arg\,max}

\newcommand{\tr}{\operatorname{tr}}
\newcommand{\GLT}{\mathrm{GLT}}
\newcommand{\C}{\mathbb{C}}
\newcommand{\R}{\mathbb{R}}
\newcommand{\N}{\mathbb{N}}
\newcommand{\Z}{\mathbb{Z}}
\newcommand{\bn}{\bm{n}}
\newcommand{\bh}{\bm{h}}
\newcommand{\bth}{\bm{\vartheta}}
\newcommand{\bk}{\bm{k}}
\newcommand{\br}{\boldsymbol{r}}
\newcommand{\bs}{\boldsymbol{s}}
\newcommand{\bi}{\boldsymbol{i}}
\newcommand{\bomega}{\bm{\omega}}
\newcommand{\bxi}{\bm{\xi}}
\newcommand{\bmm}{\bm{m}}
\newcommand{\bx}{\boldsymbol{x}}

\DeclareMathOperator{\rank}{rank}
\title{Local asymptotic normality for spectral-tail likelihoods under fixed-domain sampling}
\author{Alexandre B. Simas\thanks{The authors are listed in alphabetical order.}\textsuperscript{,}\thanks{Statistics program, King Abdullah University of Science and Technology, SA 23955 Thuwal, Saudi Arabia} \and Jonas Wallin\thanks{Department of Statistics, Lund University, Tycho Brahes väg 1, SE 223 63, Sweden}}
\date{}
\begin{document}
\maketitle
\begin{abstract}

We establish local asymptotic normality (LAN) for joint inference on scale and roughness in stationary Gaussian random fields observed on increasingly dense regular grids in a fixed domain.
 Scale and roughness are represented by the coefficient and decay exponent of the leading spectral tail. Under spectral regularity conditions,
  we identify the joint normalization governing the LAN expansion and prove consistency and asymptotic normality of the maximum likelihood estimators. 
  For \(N\) observations, the estimation errors are of order \(N^{-1/2}\) for roughness and \((\log N)/\sqrt N\) for scale. 
  After componentwise normalization, the errors converge to a perfectly correlated Gaussian pair.
  Knowing either parameter improves the convergence rate for estimating the other by a logarithmic factor.
   The analysis uses generalized locally Toeplitz (GLT) theory, for which we develop a truncation principle accommodating nonintegrable limiting symbols and quantitative bounds on the contribution from low frequencies. 
   We verify the assumptions for Matérn and generalized Wendland covariance families, establishing uniform spectral bounds for the latter.
    We also show that consistency and the limiting distributions persist under covariance misspecification when the true and fitted spectra share the same leading tail and their difference decays sufficiently rapidly. 
    This includes misspecification between Matérn and generalized Wendland models under explicit parameter restrictions.
\end{abstract}
\noindent\textbf{Keywords:} fixed-domain asymptotics; Gaussian likelihood;
generalized locally Toeplitz sequence; local asymptotic normality;
spectral-tail parameters; Gaussian random fields; maximum likelihood estimation;
covariance misspecification; Matérn covariance; generalized Wendland covariance.

\noindent\textbf{MSC 2020:} Primary 62M30; Secondary 62F12, 60G60.

\section{Introduction}
\label{sec:introduction}

Asymptotic statistics for stationary Gaussian random fields is often studied in two regimes \cite{Stein1999}. 
In increasing-domain asymptotics, we collect more data over an expanding domain. 
This is similar to observing a longer time series: more data usually provides more information and allows more precise estimation.
In fixed-domain asymptotics, the domain stays fixed, and we collect more data within it. Here,
 more observations do not necessarily provide more information about every parameter. 
 Some parameters can be estimated increasingly precisely, with estimates showing the usual asymptotically normal behavior, 
 while information about others remains limited. This second regime is particularly relevant in spatial statistics, where the region of interest often remains fixed as more observations are collected.

In the fixed-domain regime, denser sampling reveals increasingly fine spatial detail. 
This brings the field’s behavior at small spatial scales into focus. Such behavior is closely related to the spectral density at high frequencies.
 For the models considered here, two parameters describe this behavior:
  one controls how quickly the spectral density decays and is related to the roughness of the field, 
  while the other controls its magnitude and acts as a scale parameter. 
  We denote these the tail parameters. 
  In their review of the Mat\'ern model,
\cite[Section~3.1.2]{PorcuEtAl2024} identify efficient estimation of the
microergodic parameter when smoothness is unknown as an open problem
under fixed-domain asymptotics. We study this question through joint
maximum likelihood estimation of the these tail parameters. 
We show that the maximum likelihood estimator (MLE) is consistent and that the likelihood is locally asymptotically normal (LAN), in the sense of \cite{LeCamYang2000}, under weak spectral assumptions. 
Our framework covers the Matérn \cite{Matern1986} and generalized Wendland \cite{Wendland1995,ChernihHubbert2014} covariance families.
 We also establish consistency under certain forms of model misspecification, 
 linking parameter estimation across these two families.

We observe a stationary Gaussian random field at \(N\) equally spaced locations on a regular grid within a fixed domain in \(\mathbb R^d\). As \(N\to\infty\), the observations become increasingly dense within this domain. We assume that the spectral density satisfies
\[
S(\omega)\sim c_\infty\|\omega\|^{-2\alpha},
\qquad \|\omega\|\to\infty,
\]
where \(\omega\) denotes spatial frequency, \(c_\infty>0\), and \(\alpha>d/2\). The coefficient \(c_\infty\) sets the scale of high-frequency variation, while the exponent \(\alpha\) controls the roughness of the field.
 We estimate these two parameters jointly, holding the remaining covariance parameters fixed.

The Gaussian likelihood depends on the determinant and inverse of the covariance matrix of the observations.
 Understanding its asymptotic behavior therefore requires controlling these matrices as their dimension increases. 
 Regular sampling gives the covariance matrices a Toeplitz structure \cite{GrenanderSzego1958}.
  We use generalized locally Toeplitz (GLT) theory \cite{GaroniSerraCapizzanoI,GaroniSerraCapizzanoII} to study their limiting eigenvalue distribution after rescaling.

A standard way to establish GLT convergence is to show that the functions defining the Toeplitz matrices converge in \(L^1\): the integral of their absolute difference from the limiting function must tend to zero. For our rescaled covariance matrices, however, the limiting function grows too rapidly near zero for this argument to apply. We prove a truncation principle that establishes GLT convergence in this setting, even though the limiting function has an infinite integral.

For the sampled covariance matrices, we show that the contribution from frequencies near zero consists of a part whose rank is small relative to the number of observations and a remainder that is small in operator norm. These bounds retain quantitative information needed for the likelihood analysis. Together with bounds on inverses and covariance derivatives, they yield the trace limits governing the likelihood score and information matrix.

Joint estimation presents a further difficulty: changing the exponent also changes the magnitude of the spectrum at high frequencies, so its effect can be partly offset by changing the tail coefficient. 
Under the stated regularity conditions, we establish consistency and local asymptotic normality. 
The joint limiting distribution of the normalized estimation errors is singular: both components converge to the same Gaussian variable.
Writing \((c_{\infty,0},\alpha_0)\) for the true tail parameters, 
the joint maximum likelihood estimator satisfies
\[
\begin{pmatrix}
\sqrt N\,(\widehat\alpha-\alpha_0)\\[4pt]
\displaystyle
\frac{d\sqrt N}{2\log N}\,
\frac{\widehat c_\infty-c_{\infty,0}}{c_{\infty,0}}
\end{pmatrix}
\;\Rightarrow\;
\begin{pmatrix}
Z\\[2pt]
Z
\end{pmatrix},
\qquad Z\sim\mathcal N(0,2/v_{\alpha_0}),
\]
where \(v_{\alpha_0}>0\) is defined in Section~\ref{sec:maximum-likelihood}.
Estimating roughness and scale jointly incurs a logarithmic loss in convergence rate for each parameter.
When \(c_\infty\) is known,  \(\alpha\) can be estimated with an error of order \((\sqrt N\log N)^{-1}\), instead of \(N^{-1/2}\) under joint estimation.
When  \(\alpha\) is known, \(c_\infty\) can be estimated with an error of order \(N^{-1/2}\), instead of \((\log N)/\sqrt N\) under joint estimation.

We also consider covariance misspecification under spectral conditions ensuring equivalence of the true and fitted Gaussian measures on the fixed observation domain. 
Under these conditions, consistency and the limiting distributions remain valid even though the covariance functions differ. In particular, the results connect Matérn and generalized Wendland models that share the same tail parameters and induce equivalent Gaussian measures on the observation domain.
For generalized Wendland models, we derive uniform spectral bounds that are, to our knowledge, new.

\subsection*{Outline of the paper}
Section~\ref{sec:related-literature} reviews related work on covariance estimation under infill asymptotics, focusing on joint estimation of scale and roughness, likelihood-based inference, and covariance misspecification. Section~\ref{sec:sampled-glt} introduces the stationary random field, spectral assumptions, and regular sampling scheme. 
It develops the truncation principle and matrix bounds used to establish GLT convergence of the rescaled covariance matrices.
 Section~\ref{sec:maximum-likelihood} introduces the Gaussian likelihood for the tail parameters and derives the score and information matrix.
  It presents the limiting distributions and convergence rates for joint estimation and for estimation of either parameter when the other is known.
Section~\ref{sec:tail-estimator-consistency} develops the global and local likelihood arguments underlying consistency and asymptotic normality of the MLE. These arguments control the likelihood away from the true parameters, its quadratic approximation near the truth, and the location of its maximizer.
Section~\ref{sec:common-tail-misspecification} gives spectral conditions under which consistency and the limiting distributions remain valid when the covariance model is misspecified.
Section~\ref{sec:standard-covariance-families} verifies the assumptions for the Mat\'ern and generalized Wendland families, with the remaining covariance parameters held fixed. It also identifies pairs of models to which the misspecification results apply.
Section~\ref{sec:simulation-study} examines the finite-sample behavior of the estimators through simulations. It compares joint estimation with estimation of either parameter when the other is known, using both a Mat\'ern likelihood and a generalized Wendland likelihood fitted to Mat\'ern data.
Section~\ref{sec:discussion} summarizes the statistical and analytical contributions and their role in the likelihood analysis. The appendices contain detailed proofs and additional supporting results.

% Previous review retained in likelihood_asymptotics_literature.tex.
\section{Related literature}
\label{sec:related-literature}
We review likelihood asymptotics and related estimation methods under
fixed-domain sampling, beginning with one-dimensional models.
\subsection*{One-dimensional models}

Earlier one-dimensional fixed-domain results concern estimation of the tail
scale with roughness fixed. \cite{Ying1991} establishes consistency and
asymptotic normality for the exponential model, while
\cite{DuZhangMandrekar2009} derives limiting distributions for exact and
tapered maximum likelihood estimators in Mat\'ern models and gives
conditions under which tapering preserves asymptotic efficiency.

\cite{BrousteFukasawa2018} proves LAN for joint scale and roughness
estimation from fractional Gaussian noise under high-frequency sampling,
using a normalization that combines the two estimation errors.

\cite{Petit2025} proves strong consistency of
the smoothness estimator and joint asymptotic normality of the amplitude and
smoothness estimators for  the exact likelihood of a periodized Matérn process
on an increasingly dense regular grid on the circle.

\cite{ChongMies2025} establish LAN and the existence
of an asymptotically efficient sequence of likelihood score roots for
triangular arrays of one-dimensional stationary Gaussian time series,
allowing a multidimensional unknown parameter. Their work is closely related,
but the spectral assumptions differ. Their conditions concern the
sample-size-dependent spectral densities of the sampled time series and
their reciprocals. They allow different power-law bounds across frequency
ranges, while requiring parameter derivatives through order three and
additional frequency-derivative bounds. Under fixed-domain infill, their
conditions include at least some Matérn models with very low spatial
smoothness. Their paper does not provide a general application to Matérn
or generalized Wendland covariance families in this regime.

\subsection*{Spatial covariance estimation}

\cite{Ying1993} studies fixed-domain likelihood estimation for a
multiplicative Ornstein--Uhlenbeck field on a spatial lattice.

\cite[Section~6.7]{Stein1999} considers joint scale and roughness estimation
for periodic Mat\'ern fields on regular grids. Exact Fourier diagonalization
gives the Fisher information and motivates a conjecture for the joint MLE
limit in dimensions at most three.

\cite[Theorem~2.7]{ChenOwhadiStuart2021} proves consistency of a
likelihood-based regularity estimator for a Mat\'ern-like model sampled on a
regular grid on the \(d\)-dimensional torus, with the remaining parameters
fixed.

For Mat\'ern models with roughness fixed, \cite{Zhang2004} identifies the
estimable variance--range combination, corresponding to the tail scale,
through equivalence of Gaussian measures in dimensions at most three.
\cite{WangLoh2011} studies covariance tapering under fixed-domain
asymptotics, while \cite{KaufmanShaby2013} allows joint maximization over
variance and range. \cite{LiuFengXiao2026} constructs closed-form covariance parameter
estimators for anisotropic tensor-product Ornstein--Uhlenbeck fields and
establishes consistency and asymptotic normality.

\subsection*{Spectral and increment-based estimation}

\cite{LimStein2008} establishes relative asymptotic unbiasedness,
consistency, and asymptotic normality for filtered, smoothed cross-periodograms
under high-frequency assumptions. \cite{Loh2015} estimates roughness from irregularly spaced observations
using higher-order quadratic variations. \cite{WuLim2016} estimates
roughness from regular-grid observations using a bias-reduced periodogram.
\cite{LohSunWen2021} extends estimation by quadratic variations to
Mat\'ern covariance parameters and derives rate bounds. In dimensions
one to three, their procedures consistently estimate both smoothness and
the variance--range microergodic parameter under stratified, randomized
and deformed lattice sampling. \cite{KorteStapffKarvonenMoulines2025}  proves consistency of Gaussian
likelihood smoothness estimators  for Whittle--Mat\'ern processes on closed
Riemannian manifolds under quasi-uniform sampling, including certain
non-Gaussian and misspecified settings. 

The work of \cite{KotharkarStein2025} is closely related the present paper,
establishing joint Gaussian limits for moment estimators of scale and
roughness in two-dimensional Gaussian random fields with power-law
generalized covariances under increasing-domain and fixed-domain
asymptotics. Under fixed-domain sampling, their estimators have the
same convergence-rate orders as our MLEs and, after componentwise
normalization, likewise converge to a singular Gaussian pair with
perfect positive correlation. Their fixed-domain limits follow from
exact self-similarity. They also transfer the moment method to
two-dimensional Mat\'ern random fields with roughness strictly between
zero and one, provided the estimators are constructed using sufficiently
high-order finite differences.

\subsection*{Covariance misspecification}

\cite{bevilacqua2019gw} gives sufficient conditions for equivalence of
Gaussian measures with Mat\'ern and generalized Wendland covariances.
\cite{BevilacquaCaamanoPorcu2022} gives a reparameterization under which
generalized Wendland covariances converge to Mat\'ern covariances and
discusses likelihood results with roughness fixed.

\cite{Karvonen2023} considers noiseless observations of
a fixed function and fits a Mat\'ern Gaussian process with variance and range
held fixed. The question is whether maximum likelihood learns the function's
smoothness. Under
quasi-uniform sampling, the estimated
smoothness cannot asymptotically fall below the function's Sobolev smoothness.

\cite{KarvonenBachoc2025} studies likelihood and cross-validation scale
estimation with oversmoothed Sobolev kernels. It also constructs a consistent
roughness estimator from the growth of scale estimates with sample size.
\cite{Pillai2026} shows that, in dimension four, Mat\'ern range differences
can yield orthogonal Gaussian measures even when the leading spectral tails
agree.

\section{Stationary random fields and spectral assumptions}
\label{sec:sampled-glt}
This section introduces the stationary random field, its covariance function,
and the spectral assumption used throughout the paper.

We consider a real-valued Gaussian random field \(\{Y(\bx):\bx\in\R^d\}\). The field is centered and second-order stationary. Its covariance function \(\rho:\R^d\to\R\) is \(\rho(\bh):=\operatorname{Cov}(Y(\bx+\bh),Y(\bx))\), which is independent of \(\bx\) by stationarity.
We work with covariance functions having a spectral density \(S:\R^d\to[0,\infty)\), using the Fourier convention
\begin{equation}
\label{eq:rho-spectral-representation}
        \rho(\bh)
        =
        \frac{1}{(2\pi)^d}
        \int_{\R^d} e^{\mathrm i\bomega\cdot\bh}S(\bomega)\,d\bomega,
        \qquad \bh\in\R^d .
\end{equation}
The spectral density is even, and hence \(\rho(-\bh)=\rho(\bh)\).
For background on stationary covariance functions and their spectral
representations, see \cite[Chapter~2]{Stein1999}.

The following condition specifies the high-frequency behavior used throughout the asymptotic analysis.
\begin{assumption}
\label{ass:block-tail-detailed}
The spectral density \(S\) is continuous on \(\R^d\), and there exist constants \(\alpha>d/2\),
\(c_\infty>0\), and \(c_2>0\) such that, for every \(\bomega\in\R^d\),
\begin{equation}
\label{eq:block-envelope-detailed}
        0 < S(\bomega)
        \le
        c_2(1+\|\bomega\|^2)^{-\alpha},
\end{equation}
and
\begin{equation}
\label{eq:block-tail-limit-detailed}
        \|\bomega\|^{2\alpha}S(\bomega)
        \longrightarrow
        c_\infty,
        \qquad \|\bomega\|\to\infty .
\end{equation}
\end{assumption}

Since \(\alpha>d/2\), Equation~\eqref{eq:block-envelope-detailed} implies \(S\in L^1(\R^d)\). Consequently, the Fourier representation in Equation~\eqref{eq:rho-spectral-representation} shows that \(\rho\) is bounded and continuous. More generally, for any finite collection of locations \(\bx_1,\ldots,\bx_N\), the matrix \((\rho(\bx_i-\bx_j))_{i,j=1}^N\) is a well-defined covariance matrix.

The Mat\'ern and generalized Wendland covariance families studied later both satisfy Assumption~\ref{ass:block-tail-detailed} under the parameter restrictions stated in Section~\ref{sec:standard-covariance-families}.

\subsection{Regular-grid sampling of a stationary random field}
\label{sec:Rgrid}

Throughout the article, we observe the field on regular grids in the fixed cube \([0,L]^d\), where \(L>0\).  For \(\bn=(n_1,\ldots,n_d)\in\N^d\), with \(n_k\ge2\), let \(\mathcal I_{\bn}:=\{\bi=(i_1,\ldots,i_d)\in\Z^d:0\le i_k\le n_k-1\}\) and \(N_{\bn}:=\prod_{k=1}^d n_k\).  The grid points are
\begin{equation}
\label{eq:grid-detailed}
        \bx_{\bi}^{(\bn)}
        :=
        \left(
        \frac{L}{n_1-1}i_1,\ldots,
        \frac{L}{n_d-1}i_d
        \right),
        \qquad \bi\in\mathcal I_{\bn}.
\end{equation}
Fix a lexicographic ordering of \(\mathcal I_{\bn}\), and let \(Y_{\bn}:=(Y(\bx_{\bi}^{(\bn)}))_{\bi\in\mathcal I_{\bn}}\in\R^{N_{\bn}}\) denote the vector of observations. Because the field is centered and Gaussian,
\begin{equation}
\label{eq:sampled-covariance-matrix}
        Y_{\bn}\sim\mathcal N(0,\Sigma_{\bn}),
        \qquad
        \Sigma_{\bn}
        :=
        \bigl(
        \rho(\bx_{\br}^{(\bn)}-\bx_{\bs}^{(\bn)})
        \bigr)_{\br,\bs\in\mathcal I_{\bn}}
        \in\R^{N_{\bn}\times N_{\bn}} .
\end{equation}
We let \(\bn\to\infty\) mean \(\min_k n_k\to\infty\), so the sample size \(N_{\bn}\) tends to infinity. For  all asymptotic results below, 
we impose isotropic refinement: \(h_{1,\bn}=\cdots=h_{d,\bn}=:h(\bn)\), where \(h_{k,\bn}:=L/(n_k-1)\). 
Because the side length is the same in every coordinate, this condition forces \(n_1=\cdots=n_d=:n\); hence the asymptotic theory concerns cubic grids \(\bn=(n,\ldots,n)\), with \(N_{\bn}=n^d\).
 Consequently  \(\bx_{\br}^{(\bn)}-\bx_{\bs}^{(\bn)}=h(\bn)(\br-\bs)\). We write \(h=h(\bn)\); then \(h\to0\) as the sample size increases.

Following the classical Toeplitz construction in \cite{GrenanderSzego1958} and its multilevel extension in \cite{TyrtyshnikovZamarashkin1998}, we fix the following convention. Let \(Q:=[-\pi,\pi]^d\) and \(\nu_Q(d\bth):=(2\pi)^{-d}\,d\bth\). For \(\phi\in L^1(Q)\), define its Fourier coefficients by \(\widehat\phi_{\boldsymbol{\ell}}:=\int_Q e^{-\mathrm i\boldsymbol{\ell}\cdot\bth}\phi(\bth)\,d\nu_Q(\bth)\), \(\boldsymbol{\ell}\in\Z^d\). The multilevel Toeplitz matrix generated by \(\phi\) is
\[
        T_{\bn}(\phi)
        :=\bigl(\widehat\phi_{\br-\bs}\bigr)_{\br,\bs\in\mathcal I_{\bn}}
        \in\C^{N_{\bn}\times N_{\bn}}.
\]
Under isotropic refinement, the spectral density produces the aliased generating function
\begin{equation}
\label{eq:aliased-symbol-def2}
        \phi_h(\bth)
        :=
        \frac{1}{h^d}\sum_{\bk\in\Z^d}
        S\!\left(\frac{\bth+2\pi\bk}{h}\right),
        \qquad \bth\in Q.
\end{equation}
Under Assumption~\ref{ass:block-tail-detailed}, \(S\in L^1(\R^d)\), and hence \(\phi_h\in L^1(Q)\). Appendix~\ref{app:aliased-symbols} shows that
\[
        \widehat{\phi_h}_{\boldsymbol{\ell}}
        =\rho(-h\boldsymbol{\ell})
        =\rho(h\boldsymbol{\ell}),
        \qquad \boldsymbol{\ell}\in\Z^d.
\]
Together with Equation~\eqref{eq:sampled-covariance-matrix}, this identity gives the Toeplitz representation \(\Sigma_{\bn}=T_{\bn}(\phi_h)\).
The entrywise definition of \(T_{\bn}(\phi)\) also yields the identity used below for quadratic forms. For \(x\in\C^{N_{\bn}}\), write \(p_x(\bth):=\sum_{\br\in\mathcal I_{\bn}}x_{\br}e^{\mathrm i\br\cdot\bth}\). Then, for \(x,y\in\C^{N_{\bn}}\),
\begin{equation}
        x^*T_{\bn}(\phi)y
        =\int_Q\phi(\bth)\overline{p_x(\bth)}p_y(\bth)\,d\nu_Q(\bth).
        \label{eq:toeplitz-quadratic-form}
\end{equation}
The corresponding renormalized generating function is
\begin{equation}
\label{eq:renormalized-aliased-symbol-def}
        \widetilde\phi_h(\bth)
        :=h^{d-2\alpha}\phi_h(\bth)
        =h^{-2\alpha}\sum_{\bk\in\Z^d}
        S\!\left(\frac{\bth+2\pi\bk}{h}\right).
\end{equation}
For every \(\bth\in Q\setminus\{0\}\), Assumption~\ref{ass:block-tail-detailed} yields
\begin{equation}
\label{eq:renormalized-symbol-pointwise-limit}
        \widetilde\phi_h(\bth)
        \longrightarrow
        \widetilde\phi(\bth)
        :=c_\infty\sum_{\bk\in\Z^d}
        \|\bth+2\pi\bk\|^{-2\alpha}.
\end{equation}
{For \(\bth\ne\boldsymbol0\), every \(\bth+2\pi\bk\) is nonzero, and each term \(h^{-2\alpha}S((\bth+2\pi\bk)/h)\) converges to \(c_\infty\|\bth+2\pi\bk\|^{-2\alpha}\).  The spectral envelope bounds it by \(c_2\|\bth+2\pi\bk\|^{-2\alpha}\).  Because \(2\alpha>d\), the series of these bounds over \(\bk\in\mathbb Z^d\) is summable, so dominated convergence with respect to counting measure proves \eqref{eq:renormalized-symbol-pointwise-limit}.}
The candidate limit behaves as \(\|\bth\|^{-2\alpha}\) at the origin.  Because \(2\alpha>d\), it is not in \(L^1(Q)\).
Thus, we pass from classical \(L^1\)-based Toeplitz theory to the broader GLT
framework, where the singular limiting symbol can be identified through
approximating classes of sequences.
\subsection{Generalized Locally Toeplitz sequences}
Let \(\{A_{\bn}\}_{\bn}\) be a matrix sequence, let \(N_{\bn}\) denote
the dimension of \(A_{\bn}\), and let \(a:Q\to\C\) be measurable and finite
almost everywhere. We write
\[
        \{A_{\bn}\}_{\bn}\sim_{\GLT}a
\]
if the sequence belongs to the multilevel GLT class with full symbol
\(\kappa(x,\bth)=a(\bth)\) on \([0,1]^d\times Q\). Thus, throughout this
paper, GLT symbols depend only on frequency. We use the multilevel convention
of \cite{GaroniSerraCapizzanoII}; see also
\cite{GaroniSerraCapizzanoI} for the unilevel theory.

In particular, for every \(F\in C_c(\R)\),
\[
        \frac{1}{N_{\bn}}\sum_{j=1}^{N_{\bn}}
        F\!\left(\sigma_j(A_{\bn})\right)
        \longrightarrow
        \int_Q F\!\left(|a(\bth)|\right)\,d\nu_Q(\bth).
\]
If the matrices are Hermitian, then \(a\) is real-valued
\(\nu_Q\)-almost everywhere and
\[
        \frac{1}{N_{\bn}}\sum_{j=1}^{N_{\bn}}
        F\!\left(\lambda_j(A_{\bn})\right)
        \longrightarrow
        \int_Q F\!\left(a(\bth)\right)\,d\nu_Q(\bth).
\]
GLT symbols are identified up to \(\nu_Q\)-almost-everywhere equality; in
particular, the value assigned at \(\bth=0\) is immaterial. GLT membership is
stronger than either distributional identity: we also use its algebra and its
closure under approximating classes of sequences.

\begin{definition}
\label{def:scaled-glt-symbol}
Let \(s_{\bn}>0\). We say that \(\{A_{\bn}\}_{\bn}\) has scaled GLT
symbol \(a\) under the normalization \(s_{\bn}\), and write
\[
        \{A_{\bn}\}_{\bn}\sim_{\GLT,s_{\bn}}a,
\]
if
$
        \{s_{\bn}A_{\bn}\}_{\bn}\sim_{\GLT}a.
$
\end{definition}

The definition makes the desired conclusion concrete: the covariance matrices
are studied under the normalization \(h^{d-2\alpha}\), with candidate limiting
symbol \(\widetilde\phi\). Since \(\widetilde\phi\notin L^1(Q)\), the GLT
conclusion cannot be obtained from global \(L^1(Q)\) convergence of the
generating functions. The next subsection supplies the required truncation
argument.
\subsection{A truncation principle for singular Toeplitz symbols}
\label{sec:triangular-array-truncation}

The renormalized covariance symbols form a triangular array: at each sample size there is an \(L^1(Q)\) Toeplitz generating function, but the candidate limit need not belong to \(L^1(Q)\).  Thus the fixed-\(L^1(Q)\) Toeplitz GLT identification, applied through global \(L^1(Q)\) convergence of generating functions, does not identify the limiting symbol.

For each annulus \(Q\setminus B_r(0)\), the singularity is absent and the fixed-\(L^1(Q)\) Toeplitz GLT identification applies \cite[Chapter~6, GLT~3, p.~118]{GaroniSerraCapizzanoII}.  The contribution supported in \(B_r(0)\) must instead be controlled directly at the matrix level.  We use the following approximating-class-of-sequences notion; see \cite{SerraCapizzano2003,GaroniSerraCapizzanoI,GaroniSerraCapizzanoII}.

\begin{definition}
\label{def:acs-detailed}
{Throughout this definition and the following truncation argument,
\(\|\cdot\|_2\) denotes the Euclidean norm for vectors and the induced
operator (spectral) norm for matrices; \(\|\cdot\|_1\) and
\(\|\cdot\|_{\mathrm{HS}}\) denote the trace and Hilbert--Schmidt norms,
respectively.}
Let \(B_{\bn}^{(r)}\in\C^{N_{\bn}\times N_{\bn}}\), \(0<r<\pi\).  We write
\[
        \{B_{\bn}^{(r)}\}_{\bn}
        \xrightarrow[\;r\downarrow0\;]{\mathrm{a.c.s.}}0
\]
if, for every \(\varepsilon>0\), there exists \(r_\varepsilon>0\) such that for every \(0<r<r_\varepsilon\) there are matrices \(R_{\bn}^{(r)}\) and \(E_{\bn}^{(r)}\), defined for all sufficiently large \(\bn\), with
\[
        B_{\bn}^{(r)}=R_{\bn}^{(r)}+E_{\bn}^{(r)},
        \qquad
        \frac{\rank R_{\bn}^{(r)}}{N_{\bn}}\le \varepsilon,
        \qquad
        \|E_{\bn}^{(r)}\|_2\le \varepsilon .
\]
If \(A_{\bn}^{(r)},A_{\bn}\in\C^{N_{\bn}\times N_{\bn}}\) and \(\{A_{\bn}-A_{\bn}^{(r)}\}_{\bn}\xrightarrow[r\downarrow0]{\mathrm{a.c.s.}}0\), then \(\{\{A_{\bn}^{(r)}\}_{\bn}\}_{r\downarrow0}\) is called an approximating class of sequences, or a.c.s., for \(\{A_{\bn}\}_{\bn}\).
\end{definition}

We use the standard GLT approximation principle in the following form: if \(\{A_{\bn}^{(r)}\}_{\bn}\sim_{\GLT} a_r\) for every fixed \(r>0\), if \(\{\{A_{\bn}^{(r)}\}_{\bn}\}_{r\downarrow0}\) is an a.c.s. for \(\{A_{\bn}\}_{\bn}\), and if \(a_r\to a\) in measure, then \(\{A_{\bn}\}_{\bn}\sim_{\GLT}a\).  This gives the interface between the annular symbol calculation and the low-frequency matrix estimate.

\begin{theorem}
\label{thm:triangular-array-toeplitz-truncation-detailed}
Let \(f_{\bn}\in L^1(Q)\), and let \(f:Q\to\C\) be measurable and finite
almost everywhere.  For \(r\in(0,\pi)\), set
\[
        f_{\bn}^{(r)}:=f_{\bn}\mathbb I_{Q\setminus B_r(0)},
        \qquad
        f^{(r)}:=f\mathbb I_{Q\setminus B_r(0)} .
\]
Assume that the following two conditions hold.
\begin{enumerate}
\item[\textup{(i)}]
For every fixed \(r\in(0,\pi)\), one has \(f^{(r)}\in L^1(Q)\) and
\[
        f_{\bn}^{(r)}\to f^{(r)}
        \qquad\text{in }L^1(Q).
\]

\item[\textup{(ii)}]
The low-frequency part is negligible in the a.c.s. sense:
\[
        \{T_{\bn}(f_{\bn}-f_{\bn}^{(r)})\}_{\bn}
        \xrightarrow[\;r\downarrow0\;]{\mathrm{a.c.s.}}0 .
\]
\end{enumerate}
Then
\[
        \{T_{\bn}(f_{\bn})\}_{\bn}\sim_{\GLT} f .
\]
\end{theorem}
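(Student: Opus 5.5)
The argument combines the approximation principle stated just before the theorem with the fixed-symbol Toeplitz GLT property, applied on each truncated region. For fixed \(r\in(0,\pi)\), the approximating sequence will be \(A_{\bn}^{(r)}:=T_{\bn}(f_{\bn}^{(r)})\), and the target symbol on that region is \(a_r:=f^{(r)}\). Three facts are needed: \(\{T_{\bn}(f_{\bn}^{(r)})\}_{\bn}\sim_{\GLT} f^{(r)}\) for every fixed \(r\); the family \(\{\{T_{\bn}(f_{\bn}^{(r)})\}_{\bn}\}_{r\downarrow0}\) is an a.c.s. for \(\{T_{\bn}(f_{\bn})\}_{\bn}\); and \(f^{(r)}\to f\) in measure as \(r\downarrow0\).

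\emph{Step 1 (fixed \(r\)).} Split
\[
T_{\bn}(f_{\bn}^{(r)})=T_{\bn}(f^{(r)})+T_{\bn}(f_{\bn}^{(r)}-f^{(r)}).
\]
Since \(f^{(r)}\in L^1(Q)\) by (i), the Toeplitz axiom GLT~3 of \cite{GaroniSerraCapizzanoII} gives \(\{T_{\bn}(f^{(r)})\}_{\bn}\sim_{\GLT}f^{(r)}\).

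For the remainder, use the trace-norm bound \(\|T_{\bn}(g)\|_1\le N_{\bn}\|g\|_{L^1(Q,\nu_Q)}\). This bound follows from writing \(g\) as a combination of nonnegative parts and using \eqref{eq:toeplitz-quadratic-form}, or directly from the standard multilevel estimate. With \(\delta_{\bn}:=\|f_{\bn}^{(r)}-f^{(r)}\|_{L^1}\to0\), the remainder \(Z_{\bn}\) satisfies \(\|Z_{\bn}\|_1/N_{\bn}\to0\). Such a sequence is zero-distributed in the strong sense: split its SVD at threshold \(\sqrt{\delta_{\bn}}\). The singular values above the threshold number at most \(N_{\bn}\sqrt{\delta_{\bn}}\), and give a low-rank piece; the rest has operator norm at most \(\sqrt{\delta_{\bn}}\). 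Hence \(\{Z_{\bn}\}_{\bn}\sim_{\GLT}0\). By the algebra property (sum of GLT sequences), \(\{T_{\bn}(f_{\bn}^{(r)})\}_{\bn}\sim_{\GLT}f^{(r)}\).

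\emph{Step 2 (a.c.s.).} By linearity of \(T_{\bn}\),
\[
T_{\bn}(f_{\bn})-T_{\bn}(f_{\bn}^{(r)})=T_{\bn}(f_{\bn}-f_{\bn}^{(r)}).
\]
Hypothesis (ii) states exactly that this difference tends to \(0\) in the a.c.s. sense of Definition~\ref{def:acs-detailed}.

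\emph{Step 3 (convergence in measure).} The functions \(f^{(r)}\) and \(f\) differ only on \(B_r(0)\), whose \(\nu_Q\)-measure tends to zero. Hence \(f^{(r)}\to f\) in measure; this is where finiteness of \(f\) almost everywhere is used. The approximation principle then yields \(\{T_{\bn}(f_{\bn})\}_{\bn}\sim_{\GLT}f\).

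The main obstacle is Step 1. The GLT axioms only cover Toeplitz sequences with a fixed generating function, whereas here the generating function \(f_{\bn}^{(r)}\) depends on \(\bn\). This is handled by the \(L^1\)-to-trace-norm estimate together with the zero-distributed argument above. Two technical points must be checked. First, the trace-norm bound must hold for complex-valued \(g\); it does, by splitting \(g\) into real and imaginary parts and each of those into positive and negative parts, with a constant of at most \(4\). Second, the quantifier structure of Definition~\ref{def:acs-detailed}, where \(r_\varepsilon\) is chosen before \(\bn\) is large, must match the form of the approximation principle being invoked; a diagonal choice of \(r\) and \(\bn\) may be needed if that principle is stated with sequence-indexed a.c.s.
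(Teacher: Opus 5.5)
Your proposal is correct and follows the paper's proof essentially step for step. The paper also fixes \(r\) and uses the \(p=1\) multilevel trace-norm bound to make \(T_{\bn}(f_{\bn}^{(r)})-T_{\bn}(f^{(r)})\) zero-distributed, hence GLT-zero; it then reads the a.c.s. property directly off hypothesis~(ii) and concludes via \(f^{(r)}\to f\) in measure and the GLT approximation principle. Your quantifier concern is harmless: take any sequence \(r_m\downarrow0\) with each \(r_m\) below the threshold \(r_\varepsilon\) of Definition~\ref{def:acs-detailed} for \(\varepsilon=1/m\); this gives the standard sequence-indexed a.c.s. directly, so no diagonal argument is needed.
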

\begin{proof}
Fix \(r\in(0,\pi)\).  The \(p=1\) trace-norm estimate for multilevel Toeplitz
matrices \cite[Theorem~3.2, p.~47]{GaroniSerraCapizzanoII} gives
\[
 \frac{1}{N_{\bn}}\|T_{\bn}(f_{\bn}^{(r)})-T_{\bn}(f^{(r)})\|_1
 \le \frac{1}{(2\pi)^d}\|f_{\bn}^{(r)}-f^{(r)}\|_{L^1(Q)}\longrightarrow0.
\]
Here \(\|\cdot\|_1\) denotes the trace norm.
By \cite[Theorem~2.3 and Chapter~6, GLT~3, pp.~28 and~118]{GaroniSerraCapizzanoII}, the trace-norm-small difference is zero-distributed and hence GLT-zero, while the fixed \(L^1\) Toeplitz sequence
\(\{T_{\bn}(f^{(r)})\}_{\bn}\) has GLT symbol \(f^{(r)}\).  Therefore
\(\{T_{\bn}(f_{\bn}^{(r)})\}_{\bn}\sim_{\GLT}f^{(r)}\).

Set \(A_{\bn}:=T_{\bn}(f_{\bn})\) and
\(A_{\bn}^{(r)}:=T_{\bn}(f_{\bn}^{(r)})\).  Then
\(A_{\bn}-A_{\bn}^{(r)}=T_{\bn}(f_{\bn}-f_{\bn}^{(r)})\), so
assumption~\textup{(ii)} has exactly the a.c.s. quantifier order required
for \(\{\{A_{\bn}^{(r)}\}_{\bn}\}_{r\downarrow0}\) to approximate
\(\{A_{\bn}\}_{\bn}\).  Finally, \(f^{(r)}\to f\) almost everywhere,
and hence in measure on \(Q\), because \(f\) is finite almost everywhere.
The GLT approximation principle now yields
\(\{A_{\bn}\}_{\bn}\sim_{\GLT}f\).
\end{proof}
The theorem closes the abstract GLT approximation step.  For a spectral
density \(S\) satisfying Assumption~\ref{ass:block-tail-detailed}, the
localized construction below supplies the concrete a.c.s. decomposition
needed to show
\[
        \{\Sigma_{\bn}\}_{\bn}
        \sim_{\GLT,h^{d-2\alpha}}
        \widetilde\phi .
\]
However this conclusion alone does not control the
normalized traces needed for convergence of the likelihood. For Hermitian sequences, ordinary GLT
convergence describes the limiting eigenvalue distribution without controlling
exceptional eigenvalues. For example, let
\(A_N=I_N+Ne_1e_1^*\). Then
\(\{A_N\}_N\sim_{\GLT}1\), whereas
\[
N^{-1}\operatorname{tr}(A_N)=2,
\qquad
N^{-1}\operatorname{tr}(A_N^2)=N+3.
\]
Thus, \(o(N)\) eigenvalues may have magnitudes that are invisible to the GLT
symbol. The trace arguments therefore require quantitative rank and
operator-norm bounds, which the localized compression in the next subsection
provides.

\subsection{Localized Fourier-kernel compression}
\label{sec:localized-fourier-compression}
The function $h^{-2\alpha}S(\,\cdot\,/h)\mathbb I_{B_r(0)}$ is not uniformly
bounded as $h\to0$, and its Toeplitz matrix need not be small in operator
norm.  We instead write its Toeplitz matrix as a low-rank matrix plus
a residual that is small in operator norm; this is the information ordinary
GLT convergence misses when exceptional eigenvalues affect traces.
For \(\bomega\in\R^d\), define the sampled Fourier mode
\begin{equation}
        [v_{\bn}(\bomega)]_{\bi}
        :=
        e^{-\mathrm i\bx_{\bi}^{(\bn)}\cdot\bomega},
        \qquad \bi\in\mathcal I_{\bn}.
        \label{eq:localized-fourier-sampled-mode}
\end{equation}
These modes evaluate the spectral representation
\eqref{eq:rho-spectral-representation} on the grid.

For a real-valued Fourier weight \(w\in L^1(\R^d)\) and \(R>0\), define
the localized Fourier matrix
\begin{equation}
        \mathcal P_{\bn}^{(R)}(w)
        :=
        \frac1{(2\pi)^d}
        \int_{\|\bomega\|<R}
        w(\bomega)v_{\bn}(\bomega)v_{\bn}(\bomega)^*
        \,d\bomega.
        \label{eq:localized-fourier-matrix}
\end{equation}
For the principal alias, the change of variables \(\bth=h\bomega\) gives
the exact identity
\[
        T_{\bn}\!\left(
        h^{-2\alpha}S(\,\cdot\,/h)\mathbb I_{B_r(0)}
        \right)
        =
        h^{d-2\alpha}\mathcal P_{\bn}^{(r/h)}(S).
\]
The ball \(B_r(0)\) therefore corresponds to \(\|\bomega\|<r/h\).
The following interface is enough here; its explicit Taylor construction is
given in Appendix~\ref{app:preliminary-main-paper-proofs}.

\begin{lemma}
\label{lem:localized-fourier-kernel-compression}
There is a constant \(C=C(d,L)\) such that, for every \(R>0\),
\(0<\varepsilon<1\), and sampling index \(\bn\), there is a subspace
\(\mathcal U_{\bn}(R,\varepsilon)\), independent of \(w\), with
\[
 \dim\mathcal U_{\bn}(R,\varepsilon)
 \le C\{1+R+\log(\varepsilon^{-1})\}^d.
\]
For every real \(w\in L^1(\R^d)\), there are Hermitian matrices
\(H_{\bn}^{(R)}(w)\) and \(F_{\bn}^{(R)}(w)\) such that
\begin{align*}
 \mathcal P_{\bn}^{(R)}(w)&=H_{\bn}^{(R)}(w)+F_{\bn}^{(R)}(w),\\
 \operatorname{ran}H_{\bn}^{(R)}(w)&\subseteq\mathcal U_{\bn}(R,\varepsilon),
 &\|F_{\bn}^{(R)}(w)\|_2&\le CN_{\bn}\varepsilon\|w\|_{L^1(\R^d)}.
\end{align*}
\end{lemma}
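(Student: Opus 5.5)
The plan is to approximate the rank-one kernel \(v_{\bn}(\bomega)v_{\bn}(\bomega)^*\) uniformly over the ball \(\|\bomega\|<R\) by a sum of rank-one terms built from a fixed, \(w\)-independent finite family of vectors. Each grid vector has entries \(e^{-\mathrm i\bx_{\bi}\cdot\bomega}\), and every grid point satisfies \(\bx_{\bi}\in[0,L]^d\). After centering, write \(\bx_{\bi}=\bm c+\bm y_{\bi}\) with \(\bm c=(L/2,\ldots,L/2)\) and \(\|\bm y_{\bi}\|_\infty\le L/2\). Then
\[
[v_{\bn}(\bomega)]_{\bi}=e^{-\mathrm i\bm c\cdot\bomega}\,e^{-\mathrm i\bm y_{\bi}\cdot\bomega}.
\]
The unimodular phase \(e^{-\mathrm i\bm c\cdot\bomega}\) cancels in \(v v^*\). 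We therefore only need to approximate the map \(\bomega\mapsto(e^{-\mathrm i\bm y_{\bi}\cdot\bomega})_{\bi}\) by a low-dimensional expansion in \(\bomega\).

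\textbf{Step 1 (Taylor/tensor expansion).} Factorize \(e^{-\mathrm i\bm y\cdot\bomega}=\prod_{k=1}^d e^{-\mathrm i y_k\omega_k}\). Truncate each one-dimensional exponential at degree \(M\):
\[
e^{-\mathrm i y_k\omega_k}=\sum_{m<M}\frac{(-\mathrm i y_k\omega_k)^m}{m!}+\rho_M(y_k\omega_k),
\]
where \(|y_k\omega_k|\le LR/2\). The remainder obeys \(|\rho_M|\le (LR/2)^M/M!\). By Stirling, choosing
\[
M= \lceil C_0(1+LR+\log\varepsilon^{-1})\rceil
\]
makes this at most \(\varepsilon/(2d)\), with a safety margin such as \(e^{-M}\). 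Define
\[
\mathcal U_{\bn}(R,\varepsilon):=\operatorname{span}\{(\bm y_{\bi}^{\bm m})_{\bi}:\bm m\in\{0,\ldots,M-1\}^d\}.
\]
This subspace is independent of \(w\), and its dimension is at most \(M^d\le C\{1+R+\log\varepsilon^{-1}\}^d\). The polynomial part gives \(v_{\bn}(\bomega)=e^{-\mathrm i\bm c\cdot\bomega}(u(\bomega)+e(\bomega))\). Here \(u(\bomega)\in\mathcal U_{\bn}\). The entrywise error \(e\) is bounded by a telescoping product estimate. Each one-dimensional Taylor polynomial is bounded by \(e^{LR/2}\), which could blow up. To avoid this, I would pick \(M\) so that the remainder is at most \(\varepsilon e^{-dLR}\); this costs only an additive \(O(R)\) in \(M\). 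The result is \(|e_{\bi}(\bomega)|\le\varepsilon\), and hence \(\|e(\bomega)\|_2\le\sqrt{N_{\bn}}\,\varepsilon\).

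\textbf{Step 2 (splitting the matrix).} Insert this into the kernel:
\[
vv^*=uu^*+\bigl(ue^*+eu^*+ee^*\bigr).
\]
Define
\[
H_{\bn}^{(R)}(w):=\frac{1}{(2\pi)^d}\int_{\|\bomega\|<R} w\,uu^*\,d\bomega,
\]
and let \(F_{\bn}^{(R)}(w)\) be the remaining integral. Both are Hermitian because \(w\) is real. The range of \(H_{\bn}^{(R)}(w)\) lies in \(\mathcal U_{\bn}\), since \(\mathcal U_{\bn}\) is a closed finite-dimensional subspace and the Bochner integral stays in it. For the residual, note that \(\|u\|_2\le\|v\|_2+\|e\|_2\le2\sqrt{N_{\bn}}\). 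Hence
\[
\|ue^*+eu^*+ee^*\|_2\le 5N_{\bn}\varepsilon.
\]
Integrating against \(|w|\) then gives
\[
\|F_{\bn}^{(R)}(w)\|_2\le C N_{\bn}\varepsilon\|w\|_{L^1(\R^d)}.
\]

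\textbf{Main obstacle.} The delicate point is the uniform entrywise control of the Taylor remainder of a \(d\)-fold product while keeping the dimension bound \(\{1+R+\log\varepsilon^{-1}\}^d\), with \(R\) entering only linearly rather than through \(e^{R}\). The first approach is the factor \(e^{-dLR}\) in the tolerance described in Step 1. If that turns out to be too lossy in the constants, I would instead bound the multivariate remainder directly: set \(\bm y\cdot\bomega=t\) with \(|t|\le\sqrt d\,LR/2\), and note that the tail of the total-degree Taylor series of \(e^{-\mathrm i t}\) is at most \((|t|)^{M}/M!\). Truncating by total degree \(|\bm m|<M\) keeps the span inside the same monomial space, with dimension \(\binom{M+d}{d}\le CM^d\). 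This second approach avoids the product blow-up entirely and is probably the cleanest choice.
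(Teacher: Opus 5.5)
Your proof is correct and is essentially the paper's argument: the paper follows your fallback route. It truncates the scalar Taylor series of $e^{-\mathrm i y}$ at $y=\bx_{\bi}\cdot\bomega$ (so $|y|\le L\sqrt d\,R$ and $M\asymp R+\log\varepsilon^{-1}$), takes $H$ to be $(2\pi)^{-d}\int_{\|\bomega\|<R} w\,\widetilde v\widetilde v^*\,d\bomega$ with range in the total-degree monomial space, and bounds the residual by $3(2\pi)^{-d}N_{\bn}\varepsilon\|w\|_{L^1(\R^d)}$. Your tensor-product variant also works, and the $e^{-dLR}$ safeguard is unnecessary, since $|T_M(t)|\le 1+|\rho_M(t)|$ bounds each factor in the telescoping estimate by $1+\varepsilon$.
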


The compression lemma applies directly to the localized principal alias.  The following theorem records the resulting decomposition and bounds for every spectral density covered by the assumption.

\begin{theorem}
\label{thm:quantitative-sampled-covariance-glt}
Let \(S\) be a spectral density satisfying
Assumption~\ref{ass:block-tail-detailed}, and let
\(\Sigma_{\bn}\) be the sampled covariance matrix in
\eqref{eq:sampled-covariance-matrix} on the isotropic grid described in
Section~\ref{sec:Rgrid}.  Let \(\widetilde\phi\) be the a.e. finite
symbol defined in \eqref{eq:renormalized-symbol-pointwise-limit}, with
an arbitrary finite value assigned at \(\bth=0\).  Then
\[
        \{\Sigma_{\bn}\}_{\bn}
        \sim_{\GLT,h^{d-2\alpha}}
        \widetilde\phi .
\]
{Moreover, there exist constants \(m_{\mathrm{reg}}>0\), \(C>0\), and \(h_0>0\), depending only on \(S\), \(d\), and \(L\), such that, for every fixed \(r\in(0,\pi)\) and every sampling index with \(h\le h_0\), there are matrices}
\(\Sigma_{\bn}^{\mathrm{reg},r}\), \(H_{\Sigma,\bn}^{(r)}\), and
\(F_{\Sigma,\bn}^{(r)}\) such that
\[
h^{d-2\alpha}\Sigma_{\bn}
=\Sigma_{\bn}^{\mathrm{reg},r}+H_{\Sigma,\bn}^{(r)}+F_{\Sigma,\bn}^{(r)}.
\]
The regular core has GLT symbol \(\widetilde\phi^{(r)}\), where
\(\widetilde\phi^{(r)}\to\widetilde\phi\) in measure, and satisfies
\begin{align*}
 m_{\mathrm{reg}}I\preceq\Sigma_{\bn}^{\mathrm{reg},r},\qquad
 \|\Sigma_{\bn}^{\mathrm{reg},r}\|_2&\le C(1+r^{-2\alpha}),\\
 \frac{\rank H_{\Sigma,\bn}^{(r)}}{N_{\bn}}
 &\le C\{r^d+h^d\log^d(h^{-1})\},
 &\|F_{\Sigma,\bn}^{(r)}\|_2&\le Ch.
\end{align*}
\end{theorem}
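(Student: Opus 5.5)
Write \(\widetilde\phi_h=\widetilde\phi_h^{\mathrm{in}}+\widetilde\phi_h^{\mathrm{out}}\), where the inner part \(\widetilde\phi_h^{\mathrm{in}}:=h^{-2\alpha}S(\,\cdot\,/h)\mathbb I_{B_r(0)}\) is the principal alias restricted to the ball, and the outer part collects everything else: the principal alias on \(Q\setminus B_r(0)\) plus all aliases with \(\bk\neq0\) on all of \(Q\). Then \(h^{d-2\alpha}\Sigma_{\bn}=T_{\bn}(\widetilde\phi_h^{\mathrm{in}})+T_{\bn}(\widetilde\phi_h^{\mathrm{out}})\). For the regular core I would not use \(T_{\bn}(\widetilde\phi_h^{\mathrm{out}})\) directly, since it depends on \(h\) and its lower bound near the origin is awkward. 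Instead I would take
\[
\Sigma_{\bn}^{\mathrm{reg},r}:=T_{\bn}(g^{(r)}),\qquad g^{(r)}:=\widetilde\phi\,\mathbb I_{Q\setminus B_r(0)}+m\,\mathbb I_{B_r(0)},
\]
with a small constant \(m>0\). The constant \(m\) covers the ball; for instance \(m=\inf_Q c_\infty\sum_{\bk\ne0}\|\bth+2\pi\bk\|^{-2\alpha}>0\) works. Since \(\widetilde\phi\ge m_{\mathrm{reg}}>0\) on \(Q\) and is bounded by \(C(1+r^{-2\alpha})\) off \(B_r(0)\), the standard Toeplitz bounds \(\operatorname{ess\,inf} g\, I\preceq T_{\bn}(g)\preceq \operatorname{ess\,sup} g\, I\) give both operator inequalities. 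The GLT symbol of \(\Sigma_{\bn}^{\mathrm{reg},r}\) is \(g^{(r)}\), by the fixed-\(L^1\) Toeplitz identification, and \(g^{(r)}\to\widetilde\phi\) a.e., hence in measure.

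The remainder \(h^{d-2\alpha}\Sigma_{\bn}-\Sigma_{\bn}^{\mathrm{reg},r}\) splits into three pieces.
\begin{itemize}
\item[(a)] \(T_{\bn}(\widetilde\phi_h^{\mathrm{in}})\). By the identity preceding Lemma~\ref{lem:localized-fourier-kernel-compression}, this equals \(h^{d-2\alpha}\mathcal P_{\bn}^{(r/h)}(S)\). I would apply the lemma with \(R=r/h\) and \(\varepsilon=h^{\beta}\), where \(\beta\) is chosen so that \(h^{d-2\alpha}N_{\bn}\varepsilon\|S\|_{L^1}\lesssim h\). Since \(N_{\bn}\asymp h^{-d}\), this needs \(\beta=2\alpha+1\). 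The rank is then at most \(C(1+r/h+\log h^{-1})^d\). Dividing by \(N_{\bn}\asymp h^{-d}\) gives \(\le C\{r^d+h^d\log^d(h^{-1})\}\) for \(h\le h_0\), which is exactly the stated rank bound.
\item[(b)] \(-m\,T_{\bn}(\mathbb I_{B_r(0)})\). This is likewise compressed: its symbol is a principal-alias-type Fourier multiplier, namely \(T_{\bn}(\mathbb I_{B_r})=h^{d}\mathcal P^{(r/h)}_{\bn}(1)\) with weight mass \(\asymp (r/h)^d\). Its low-rank part goes into \(H\), and its residual is \(O(N_{\bn}\varepsilon h^{d-d}r^d)\le Ch\) for the same \(\varepsilon\).
\item[(c)] \(T_{\bn}(\widetilde\phi_h^{\mathrm{out}}-\widetilde\phi\,\mathbb I_{Q\setminus B_r(0)})\). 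This is the main obstacle, because it must be \(O(h)\) in operator norm and not merely \(o(1)\) in trace norm, and Assumption~\ref{ass:block-tail-detailed} gives no rate in \eqref{eq:block-tail-limit-detailed}.
\end{itemize}

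For (c), I would use the freedom in choosing the core rather than fight the missing rate. Redefine the core with symbol \(\widetilde\phi_h^{\mathrm{out}}+m\mathbb I_{B_r}\), and adjust \(m\) so that the lower bound still holds. The envelope gives \(\widetilde\phi_h^{\mathrm{out}}\le C(1+r^{-2\alpha})\) uniformly in \(h\). For a lower bound, positivity and continuity of \(S\) together with the tail limit give \(S(\bomega)\ge c(1+\|\bomega\|^2)^{-\alpha}\). Hence the aliases with \(\bk\ne0\) are \(\ge c'>0\) uniformly on \(Q\) for small \(h\), and so is the principal alias on \(Q\setminus B_r\), which gives \(m_{\mathrm{reg}}\). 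The GLT symbol of this \(h\)-dependent core is then obtained exactly as in the proof of Theorem~\ref{thm:triangular-array-toeplitz-truncation-detailed}. Its symbols converge in \(L^1(Q)\) to \(g^{(r)}\), by dominated convergence using the envelope and the pointwise limit \eqref{eq:renormalized-symbol-pointwise-limit}. So the core sequence \(\sim_{\GLT}g^{(r)}\), and then \(F\) consists only of the compression residuals from (a) and (b), which are \(\le Ch\).

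Finally, the GLT claim for \(\{\Sigma_{\bn}\}\) follows from Theorem~\ref{thm:triangular-array-toeplitz-truncation-detailed} applied with \(f_{\bn}=\widetilde\phi_h\) and \(f=\widetilde\phi\). Condition (i) is the dominated convergence just described. Condition (ii) follows from (a): given \(\varepsilon\), choose \(r^d\) small so that the rank fraction is \(\le\varepsilon\) for large \(\bn\), while the residual is \(Ch\le\varepsilon\) eventually.
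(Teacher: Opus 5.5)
Once you switch to the \(h\)-dependent core, your argument is essentially the paper's. The paper takes \(\Sigma_{\bn}^{\mathrm{reg},r}=T_{\bn}\bigl(\widetilde\phi_h-h^{-2\alpha}S(\,\cdot\,/h)\mathbb I_{B_r(0)}\bigr)\), which is your \(T_{\bn}(\widetilde\phi_h^{\mathrm{out}})\). It compresses piece (a) with Lemma~\ref{lem:localized-fourier-kernel-compression} at \(R=r/h\), \(\varepsilon=h^{2\alpha+1}\), exactly as you do.

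The added \(m\mathbb I_{B_r(0)}\) and piece (b) are unnecessary. The shifted aliases belong to \(\widetilde\phi_h^{\mathrm{out}}\) on \(B_r(0)\) as well, and you already show they are bounded below on all of \(Q\); the paper uses the single alias \(\bk=\boldsymbol1\).

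Relatedly, your core symbols do not converge to \(g^{(r)}\). On \(B_r(0)\) the shifted aliases tend to \(c_\infty\sum_{\bk\ne\boldsymbol0}\|\bth+2\pi\bk\|^{-2\alpha}\). So the limit is \(\widetilde\phi-c_\infty\|\bth\|^{-2\alpha}\mathbb I_{B_r(0)}\), plus \(m\mathbb I_{B_r(0)}\) in your version. This still tends to \(\widetilde\phi\) almost everywhere as \(r\downarrow0\), so the stated conclusion survives.

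The last step does not go through as written, though the gap is easily repaired. With \(f_{\bn}=\widetilde\phi_h\), condition (ii) of Theorem~\ref{thm:triangular-array-toeplitz-truncation-detailed} concerns \(T_{\bn}(\widetilde\phi_h\mathbb I_{B_r(0)})\). This is piece (a) \emph{plus} the Toeplitz matrix of the shifted aliases restricted to \(B_r(0)\). That second symbol is bounded between two positive constants on \(B_r(0)\), so its Toeplitz matrix is not small in operator norm. For instance, its quadratic form at the normalized constant vector stays bounded away from zero, because the Fej\'er kernel concentrates at the origin. Piece (a) says nothing about this matrix.

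You need one extra observation. For \(0\le g\le C\), the matrix \(T_{\bn}(g\mathbb I_{B_r(0)})\) is positive semidefinite with trace at most \(CN_{\bn}r^d\). Hence at most \(CN_{\bn}r^d/\varepsilon\) of its eigenvalues exceed \(\varepsilon\), which gives the required small-rank plus small-norm split.

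Simpler still, bypass the truncation theorem and apply the GLT approximation principle directly to your decomposition into core plus \(H\) plus \(F\). The core has a GLT symbol converging in measure to \(\widetilde\phi\). The rank bound on \(H\) and the norm bound on \(F\) give the a.c.s.\ property by letting \(\bn\to\infty\) and then \(r\downarrow0\). This is how the paper concludes.
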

{The proof is given in Appendix~\ref{app:preliminary-main-paper-proofs}.}
The decomposition should be read as an exact separation followed by two
limiting approximations. For fixed \(r\), the regular core removes the
principal contribution from frequencies in \(B_r(0)\), leaving a matrix that
is uniformly positive and bounded in operator norm. The matrix
\(H_{\Sigma,\bn}^{(r)}\) recovers this low-frequency contribution up to a
compression: it may contain large eigenvalues, but it acts on an
asymptotically negligible fraction of directions as \(N_{\bn}\to\infty\) and
then \(r\downarrow0\). The remainder \(F_{\Sigma,\bn}^{(r)}\) may act in every
direction, but its operator norm vanishes as the sample size increases. At the
symbol level, \(\widetilde\phi^{(r)}\to\widetilde\phi\) in measure as
\(r\downarrow0\). Thus the singular low-frequency behavior is confined to few
directions, while the error outside those directions is uniformly small; these
are precisely the quantitative controls that ordinary GLT convergence does not
retain.

We next bound
\(\|(h^{d-2\alpha}\Sigma_{\bn})^{-1}\|_2\), which is needed for the likelihood analysis.

\begin{lemma}
\label{lem:renorm-glt-inversion}
Under Assumption~\ref{ass:block-tail-detailed} the renormalized sampled
covariance is uniformly invertible after rescaling: with
\(m:=\tfrac12 c_\infty\allowbreak(\pi\sqrt d)^{-2\alpha}>0\),
\[
        \lambda_{\min}\bigl(h^{d-2\alpha}\Sigma_{\bn}\bigr)\ge m
        \qquad\text{for all sufficiently small \(h\).}
\]
\end{lemma}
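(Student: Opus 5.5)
Because \(\Sigma_{\bn}=T_{\bn}(\phi_h)\) and \(h^{d-2\alpha}\phi_h=\widetilde\phi_h\), the quadratic-form identity \eqref{eq:toeplitz-quadratic-form} gives, for every \(x\in\C^{N_{\bn}}\),
\[
 x^*\bigl(h^{d-2\alpha}\Sigma_{\bn}\bigr)x
 =\int_Q\widetilde\phi_h(\bth)\,|p_x(\bth)|^2\,d\nu_Q(\bth),
 \qquad
 \int_Q|p_x(\bth)|^2\,d\nu_Q(\bth)=\|x\|_2^2,
\]
where the second equality is Parseval's identity for the trigonometric polynomial \(p_x\). So the lemma reduces to a pointwise lower bound on the renormalized symbol: if \(\widetilde\phi_h(\bth)\ge m\) for all \(\bth\in Q\) and all small \(h\), then \(\lambda_{\min}(h^{d-2\alpha}\Sigma_{\bn})\ge m\). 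No GLT machinery is needed. This is the classical Grenander--Szeg\H{o} fact that the minimum eigenvalue dominates the essential infimum of the generating function.

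To bound the symbol from below, use that every term of the aliasing sum \eqref{eq:renormalized-aliased-symbol-def} is positive, since \(S>0\). Keeping only one alias, choose for each \(\bth\in Q\) any \(\bk\in\Z^d\) and use \(\widetilde\phi_h(\bth)\ge h^{-2\alpha}S((\bth+2\pi\bk)/h)\). The natural choice is \(\bk=0\), for which \(\|\bth\|\le\pi\sqrt d\) on \(Q\). By the tail limit \eqref{eq:block-tail-limit-detailed}, there is \(R_0\) such that \(S(\bomega)\ge \tfrac12 c_\infty\|\bomega\|^{-2\alpha}\) whenever \(\|\bomega\|\ge R_0\). If \(\|\bth\|/h\ge R_0\), this yields
\[
 \widetilde\phi_h(\bth)\ge \tfrac12 c_\infty\|\bth\|^{-2\alpha}\ge \tfrac12 c_\infty(\pi\sqrt d)^{-2\alpha}=m .
\]

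The remaining case, \(\|\bth\|<R_0h\), is the only real obstacle, since the tail asymptotics do not apply there. It is handled by continuity and positivity of \(S\): on the compact ball \(\{\|\bomega\|\le R_0\}\), \(S\) attains a minimum \(s_0>0\). Hence in this case
\[
 \widetilde\phi_h(\bth)\ge h^{-2\alpha}S(\bth/h)\ge h^{-2\alpha}s_0,
\]
and this is at least \(m\) once \(h\le (s_0/m)^{1/(2\alpha)}\). (Alternatively one could bound this case through a nonzero alias \(\bk\ne0\), but the continuity argument is simpler and gives a larger bound.)

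Combining the two cases gives \(\inf_{\bth\in Q}\widetilde\phi_h(\bth)\ge m\) for all \(h\) below an explicit threshold depending only on \(S\), \(d\) and \(\alpha\). The quadratic-form identity then gives \(x^*(h^{d-2\alpha}\Sigma_{\bn})x\ge m\|x\|_2^2\) for all \(x\), which is the claimed eigenvalue bound.
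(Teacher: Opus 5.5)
Your proposal is correct and follows the paper's proof essentially step for step. Both reduce the claim via the Toeplitz quadratic-form identity to a lower bound on the renormalized symbol, discard all but the principal alias, and split $Q$ into the region $\|\bth\|\ge R_0h$, handled by the tail limit, and the region $\|\bth\|<R_0h$, handled by the positive minimum $s_0$ of the continuous, strictly positive $S$ on a compact ball; your explicit Parseval step and threshold $h\le(s_0/m)^{1/(2\alpha)}$ merely spell out what the paper leaves implicit.
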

{The proof is given in Appendix~\ref{app:preliminary-main-paper-proofs}.}
Together, Theorem~\ref{thm:quantitative-sampled-covariance-glt} and
Lemma~\ref{lem:renorm-glt-inversion} provide the matrix estimates needed for
the likelihood analysis. Under the differentiated spectral-tail assumption
introduced in the next section, corresponding estimates for the covariance
derivatives yield the normalized trace and quadratic-trace limits that enter
the likelihood score and the limiting information matrix.

\section{Maximum likelihood in the fitted model}
\label{sec:maximum-likelihood}

We now study maximum likelihood estimation of the tail scale \(c_\infty\)
and tail exponent \(\alpha\) from Assumption~\ref{ass:block-tail-detailed}
within a parametric Gaussian family. We use the parameterization
\[
        \varphi=(\tau,\alpha),
        \qquad
        \tau=\log c_\infty,
        \qquad
        \varphi\in\R\times(d/2,\infty).
\]

We observe the field on the isotropically refined grid introduced in
Section~\ref{sec:Rgrid}. For \(\varphi=(\tau,\alpha)\), let
\(S_\varphi\) denote the fitted spectral density and let \(\rho_\varphi\)
be its covariance function under the Fourier convention in
Equation~\eqref{eq:rho-spectral-representation}. The corresponding sampled
covariance matrix is
\[
        \Sigma_{\varphi,\bn}
        :=
        \bigl(
        \rho_\varphi(
        \bx_{\br}^{(\bn)}-\bx_{\bs}^{(\bn)})
        \bigr)_{\br,\bs\in\mathcal I_{\bn}},
\]
and we write
\(P_{\varphi,\bn}:=\mathcal N(0,\Sigma_{\varphi,\bn})\) for the associated
Gaussian law.

We assume that the model is correctly specified. Thus, for some true
parameter \(\varphi_0=(\tau_0,\alpha_0)\), the sampled data vector satisfies
\(Y_{\bn}\sim P_{\varphi_0,\bn}\). Up to an additive term independent of
\(\varphi\), its Gaussian log-likelihood is
\begin{equation}
        \ell_{\bn}(\varphi)
        =
        -\frac12\log\det\Sigma_{\varphi,\bn}
        -\frac12Y_{\bn}^{T}
        \Sigma_{\varphi,\bn}^{-1}Y_{\bn}.
        \label{eq:working-gaussian-loglik}
\end{equation}

Likelihood-based inference is governed by the local likelihood
geometry around \(\varphi_0\). Following the LAN framework of
\cite[Chapter~6, Section~6.2]{LeCamYang2000}, we seek a
sample-size-dependent local chart whose fitted likelihood ratio has a uniform
nondegenerate quadratic expansion, with a Gaussian linear part and
deterministic positive-definite curvature. The compensated chart, score, and
information are derived below. The score limit established below identifies the Gaussian linear term.
Section~\ref{sec:tail-estimator-consistency} establishes global likelihood
separation and compact LAN, while Appendix~\ref{app:likelihood-separation-and-lan} supplies the additional
localization argument needed to prove that
{\(\Delta_{\bn}^{-1}(\widehat\varphi_{\bn}-\varphi_0)\) is tight.} Together, these ingredients yield the asymptotic
normality of the maximum likelihood estimator stated below.

To establish LAN, we must show that these log-likelihood ratios admit a
nondegenerate quadratic expansion, uniformly for \(u\) in bounded subsets of
\(\R^2\), with an asymptotically Gaussian linear term and deterministic
curvature. Since \(\varphi\) enters the likelihood through
\(\Sigma_{\varphi,\bn}\), whose entries are determined by \(S_\varphi\), we
impose the required differentiability and remainder control directly on the
{spectral family.} The following assumption makes the dependence on
\(\tau\) explicit and controls differentiation with respect to \(\alpha\).
\begin{assumption}
\label{ass:differentiated-spectral-tail-family}
There is an open neighborhood \(\mathcal U\) of \(\varphi_0\) such that the
fitted spectral density has the scale-family representation
\[
        S_{\tau,\alpha}(\bomega)
        =e^\tau\overline S_{\alpha}(\bomega),
\]
where \(\overline S_{\alpha}\) is even, strictly positive, and satisfies
Assumption~\ref{ass:block-tail-detailed} uniformly on \(\mathcal U\), with
leading constant one. Moreover, for some \(\varepsilon>0\),
\[
        \overline S_{\alpha}(\bomega)
        =(1+\|\bomega\|^2)^{-\alpha}
        +S^{\mathrm{rem}}_{\alpha}(\bomega),
\]
where, for each \(\bomega\), the remainder is continuously differentiable
in \(\alpha\), and
the map
\((\alpha,\bomega)\mapsto S^{\mathrm{rem}}_{\alpha}(\bomega)\)
is jointly continuous.
Uniformly over the \(\alpha\)-projection of \(\mathcal U\),
\begin{align}
        |S^{\mathrm{rem}}_{\alpha}(\bomega)|
        &\le C(1+\|\bomega\|^2)^{-\alpha-\varepsilon},
        \label{eq:spectral-tail-remainder-bound}\\
        |\partial_\alpha S^{\mathrm{rem}}_{\alpha}(\bomega)|
        &\le C(1+\|\bomega\|^2)^{-\alpha-\varepsilon}
        \{1+\log(2+\|\bomega\|)\}.
        \label{eq:spectral-tail-alpha-remainder-bound}
\end{align}
\end{assumption}
We now restrict likelihood optimization to a compact rectangle contained in
\(\mathcal U\):
\begin{equation}
        \Theta_{\mathrm{tail}}
        :=[\tau_-,\tau_+]\times[\alpha_-,\alpha_+]
        \Subset\mathcal U.
        \label{eq:working-tail-chart}
\end{equation}
We assume that
\(\varphi_0=(\tau_0,\alpha_0)\in
\operatorname{int}(\Theta_{\mathrm{tail}})\).
Compactness of \(\Theta_{\mathrm{tail}}\) and continuity of
\(\ell_{\bn}\) ensure that its argmax is nonempty. The measurable maximum
theorem provides a measurable selection \cite{BrownPurves1973}, which we
denote by
\begin{equation}
        \widehat\varphi_{\bn}
        =(\widehat\tau_{\bn},\widehat\alpha_{\bn})
        \in\argmax_{\varphi\in\Theta_{\mathrm{tail}}}
        \ell_{\bn}(\varphi).
        \label{eq:working-tail-mle}
\end{equation}

The following result provides the global localization needed for the
subsequent LAN analysis.

\begin{theorem}
\label{thm:tail-chart-consistency}
Suppose Assumption~\ref{ass:differentiated-spectral-tail-family} holds.
Then, under \(P_{\varphi_0,\bn}\), every measurable maximizer in
Equation~\eqref{eq:working-tail-mle} satisfies
\[
        \widehat\varphi_{\bn}
        \longrightarrow\varphi_0
        \qquad\text{in probability}.
\]
\end{theorem}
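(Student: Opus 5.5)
The plan is to reduce the two-dimensional problem to a one-dimensional one by profiling out the scale, and then to prove a uniform Kullback--Leibler-type separation in \(\alpha\) using the GLT machinery of Section~\ref{sec:sampled-glt}.

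\textbf{Step 1: profile out the scale.} Since \(\Sigma_{\varphi,\bn}=e^{\tau}\overline\Sigma_{\alpha,\bn}\), with \(\overline\Sigma_{\alpha,\bn}\) the sampled covariance of \(\overline S_\alpha\), we have
\[
\ell_{\bn}(\tau,\alpha)=-\tfrac{N}{2}\tau-\tfrac12\log\det\overline\Sigma_{\alpha,\bn}-\tfrac12e^{-\tau}Q_{\bn}(\alpha),
\qquad Q_{\bn}(\alpha):=Y_{\bn}^T\overline\Sigma_{\alpha,\bn}^{-1}Y_{\bn}.
\]
Write \(N=N_{\bn}\). For fixed \(\alpha\) the map \(\tau\mapsto\ell_{\bn}\) is strictly concave with unconstrained maximiser \(\tau^*(\alpha)=\log(Q_{\bn}(\alpha)/N)\); on \([\tau_-,\tau_+]\) the maximiser is the projection of \(\tau^*(\alpha)\).

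\textbf{Step 2: normalise.} Set \(A_{\bn}:=h^{d-2\alpha_0}\overline\Sigma_{\alpha_0,\bn}\) and \(B_{\bn}(\alpha):=h^{d-2\alpha}\overline\Sigma_{\alpha,\bn}\). Write \(Y_{\bn}=e^{\tau_0/2}h^{\alpha_0-d/2}A_{\bn}^{1/2}Z\) with \(Z\) standard normal. Then
\[
\frac{Q_{\bn}(\alpha)}{N}=e^{\tau_0}h^{2(\alpha_0-\alpha)}\,\frac{Z^TM_{\bn}(\alpha)Z}{N},
\qquad M_{\bn}(\alpha):=A_{\bn}^{1/2}B_{\bn}(\alpha)^{-1}A_{\bn}^{1/2}.
\]
Up to additive terms independent of \(\varphi\), the normalised contrast is
\[
\frac{\ell_{\bn}(\varphi_0)-\ell_{\bn}(\varphi)}{N}
=\tfrac12\Bigl[t\,\tfrac{Z^TMZ}{N}-\tfrac{\|Z\|^2}{N}-\log t-\tfrac1N\log\det M\Bigr],
\qquad t:=e^{\tau_0-\tau}h^{2(\alpha_0-\alpha)}.
\]

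\textbf{Step 3: deterministic limits.} I would prove
\[
\frac1N\operatorname{tr}M_{\bn}(\alpha)\to\int_Q\psi_\alpha\,d\nu_Q,
\qquad
\frac1N\log\det M_{\bn}(\alpha)\to\int_Q\log\psi_\alpha\,d\nu_Q,
\qquad
\psi_\alpha(\bth)=\frac{\sum_{\bk}\|\bth+2\pi\bk\|^{-2\alpha_0}}{\sum_{\bk}\|\bth+2\pi\bk\|^{-2\alpha}},
\]
together with \(N^{-2}\operatorname{tr}M_{\bn}(\alpha)^2\to0\). The convergence should be uniform in \(\alpha\in[\alpha_-,\alpha_+]\).

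\emph{GLT limit.} Theorem~\ref{thm:quantitative-sampled-covariance-glt} gives GLT symbols for \(A_{\bn}\) and \(B_{\bn}\). Lemma~\ref{lem:renorm-glt-inversion} gives \(\|B_{\bn}^{-1}\|_2\le m^{-1}\), and hence invertibility in the GLT algebra. Together these give \(\{M_{\bn}(\alpha)\}\sim_{\GLT}\psi_\alpha\).

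\emph{Trace limits.} The symbol \(\psi_\alpha\) is bounded near \(0\) when \(\alpha\le\alpha_0\), and behaves like \(\|\bth\|^{2(\alpha-\alpha_0)}\) when \(\alpha>\alpha_0\); it is integrable in either case. GLT convergence alone does not control traces, as the example \(A_N=I_N+Ne_1e_1^*\) shows. To pass to traces I would use the decomposition \(\text{regular core}+H+F\) of Theorem~\ref{thm:quantitative-sampled-covariance-glt} applied to \(A_{\bn}\), as follows.

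\emph{Regular core.} Here the operator norms are bounded by \(C(1+r^{-2\alpha_0})\) and \(m^{-1}\), so bounded convergence of spectral sums applies.

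\emph{Low-rank part \(H\).} Using \(\operatorname{tr}(B^{-1}H)\le m^{-1}\operatorname{tr}H\), I would bound \(\operatorname{tr}H/N\) by the spectral mass of \(h^{-2\alpha_0}\overline S_{\alpha_0}(\cdot/h)\) on \(B_r(0)\). This requires revisiting the compression in Lemma~\ref{lem:localized-fourier-kernel-compression}: \(\operatorname{tr}\mathcal P^{(R)}_{\bn}(w)=N(2\pi)^{-d}\int_{\|\bomega\|<R}w\), which equals \(\int_{B_r}\widetilde\phi_h\) up to the factor \(N\). This is \(O(1)\) only after comparing with \(B\), which carries the same singularity when \(\alpha\) is near \(\alpha_0\). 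I would therefore work with \(B^{-1/2}AB^{-1/2}\) and estimate its low-frequency part by a Toeplitz-quotient argument rather than by \(m^{-1}\) alone. I expect this to be the main obstacle, specifically the case \(\alpha<\alpha_0\), where \(\psi_\alpha\) blows up at the origin.

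\emph{Uniformity in \(\alpha\).} This would come from equicontinuity. Assumption~\ref{ass:differentiated-spectral-tail-family} gives \(\|\partial_\alpha B_{\bn}\|\)-type bounds with only logarithmic losses, through the \(\log(2+\|\bomega\|)\) factor, so one can take a finite \(\alpha\)-net and interpolate.

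\textbf{Step 4: stochastic part.} For each fixed \(\alpha\), Step 3 gives
\[
\operatorname{Var}\!\left(\frac{Z^TMZ}{N}\right)=\frac{2\operatorname{tr}M^2}{N^2}\to0,
\]
so \(Z^TMZ/N-\operatorname{tr}M/N\to0\) in probability. Extending this to uniform convergence over the \(\alpha\)-net, with Lipschitz control in between, yields uniform convergence in probability.

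\textbf{Step 5: identification.} Let \(g(x)=x-1-\log x\ge0\), and write \(\bar\psi_\alpha=\int_Q\psi_\alpha\,d\nu_Q\). The limiting contrast is
\[
\tfrac12\bigl[g(t\bar\psi_\alpha)+\log\bar\psi_\alpha-\textstyle\int_Q\log\psi_\alpha\,d\nu_Q\bigr].
\]
For \(|\alpha-\alpha_0|\ge\delta\) the Jensen gap \(\log\bar\psi_\alpha-\int\log\psi_\alpha\) is bounded below by a positive constant, because \(\psi_\alpha\) is nonconstant. It is continuous in \(\alpha\), so the bound is uniform on compacts.

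\textbf{Step 6: consistency.} With probability tending to one, \(\ell_{\bn}(\varphi)<\ell_{\bn}(\varphi_0)\) for every \(\varphi\in\Theta_{\mathrm{tail}}\) with \(|\alpha-\alpha_0|\ge\delta\), since the contrast there exceeds \(c(\delta)N\). Hence \(\widehat\alpha_{\bn}\to\alpha_0\).

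For the scale I would not use the factor \(t\): \(h^{2(\alpha_0-\widehat\alpha)}\) need not tend to one, because \(\widehat\alpha-\alpha_0\) is not known to be \(o(1/\log N)\). I would argue through the profile instead. With \(\widehat\alpha\) close to \(\alpha_0\), the profile gives \(e^{\widehat\tau}\approx e^{\tau_0}h^{2(\alpha_0-\widehat\alpha)}\bar\psi_{\widehat\alpha}\). Suppose \(\widehat\tau\) stayed away from \(\tau_0\). Then \(h^{2(\alpha_0-\widehat\alpha)}\) would stay away from \(1\), which forces \(|\widehat\alpha-\alpha_0|\log(1/h)\) to stay away from \(0\). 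This must be excluded by a refined separation: a second-order expansion of the Jensen gap near \(\alpha_0\), showing it is of order \((\alpha-\alpha_0)^2\), balanced against the log-det drift. If this refinement proves too delicate, I would settle for consistency of \(\widehat\tau\) by combining \(\widehat\alpha\to\alpha_0\) with the compact LAN localization of the next section.
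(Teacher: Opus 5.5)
There is a genuine gap in Step~6. At best your argument yields $\widehat\alpha_{\bn}\to\alpha_0$; consistency of $\widehat\tau_{\bn}$, which is half of the theorem, is left open. Your own computation shows why this is the crux. One has $t=e^{-g_{\bn}(\varphi)}$ with $g_{\bn}(\varphi)=(\tau-\tau_0)-a_{\bn}(\alpha-\alpha_0)$, and the unconstrained profile maximiser satisfies $\tau^*(\alpha)-\tau_0=a_{\bn}(\alpha-\alpha_0)+\log\{Z^TM_{\bn}(\alpha)Z/N_{\bn}\}$. So you need $\widehat\alpha_{\bn}-\alpha_0=o_{\mathbb P}(1/a_{\bn})$.

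That requires two things on the shrinking sets $\{\epsilon/a_{\bn}\le|\alpha-\alpha_0|\le\delta\}$: a lower bound of order $N_{\bn}(\alpha-\alpha_0)^2\gtrsim N_{\bn}\epsilon^2/a_{\bn}^2$ for the finite-$\bn$ contrast, and stochastic control over this moving set at that scale. Steps~3--5 cannot supply either. GLT trace and log-determinant limits carry no rate. A second-order expansion of the limiting gap $\log\bar\psi_\alpha-\int_Q\log\psi_\alpha\,d\nu_Q$ therefore says nothing about the finite-$\bn$ contrast at distance $O(1/a_{\bn})$, where the approximation error need not be $o(a_{\bn}^{-2})$. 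What is needed is coercivity along moving parameters. The paper obtains this in Lemma~\ref{lem:working-local-cayley-coercivity} by a subsequence--contradiction argument, using the GLT symbol $r_{\alpha_0}$ of the moving secant $\mathsf K_{\delta_{\bn},\bn}$ (Lemma~\ref{lem:local-fixed-whitening-secant-spectral-control}) together with $v_{\alpha_0}>0$.

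Your fallback does not close the gap. Theorem~\ref{thm:compact-lan} only concerns $\|u\|\le M$, i.e.\ $|\tau-\tau_0|=O(a_{\bn}/\sqrt{N_{\bn}})$, and knowing $\widehat\alpha_{\bn}\to\alpha_0$ at an unknown rate does not put $\widehat\varphi_{\bn}$ there. The localization step, Lemma~\ref{lem:spectral-comparison-localization}, is applied on the event $\widehat\varphi_{\bn}\in U$ that consistency supplies. Its proof also rests on Proposition~\ref{prop:working-glt-hellinger-separation}, which is the very tool that proves this theorem.

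The paper never profiles. It works in the compensated coordinates $(g_{\bn}(\varphi),\alpha-\alpha_0)$ and proves $-\log\mathcal A(P_{\varphi_0,\bn},P_{\varphi,\bn})\ge cN_{\bn}q_{\bn}^2/(1+q_{\bn}^2)$ on all of $\Theta_{\mathrm{tail}}$, via the Cayley bound $-\log\mathcal A\ge\frac14\tr\{\mathfrak C(V_{\varphi,\bn})^2\}$. Because $\mathfrak C$ is bounded, only GLT eigenvalue distributions are needed, never traces of unbounded functions. Off a fixed neighbourhood, $q_{\bn}\ge d_U/(a_{\bn}+2)$, which gives the rate $N_{\bn}/a_{\bn}^2$. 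Three ingredients turn this into a union bound: the inequality $\mathbb P_{\varphi_0}\{\ell_{\bn}(\varphi)-\ell_{\bn}(\varphi_0)\ge-r\}\le e^{r/2}\mathcal A$, a likelihood modulus with $\log\mathfrak L_{\bn}=O(a_{\bn})$, and a net of size $O(\mathfrak L_{\bn}^2)$.

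This route also sidesteps your Step~3 obstacle, which is misstated in two ways. First, $\psi_\alpha\sim\|\bth\|^{2(\alpha-\alpha_0)}$ is bounded near the origin for $\alpha\ge\alpha_0$, not for $\alpha\le\alpha_0$. Second, for $\alpha<\alpha_0-d/2$ it is not integrable, and this range is allowed since only $\alpha_->d/2$ is imposed. There, testing with the constant vector gives $\|M_{\bn}(\alpha)\|_2\gtrsim h^{-2(\alpha_0-\alpha)}\gg N_{\bn}$, so $N_{\bn}^{-2}\tr M_{\bn}(\alpha)^2\not\to0$ and Step~4 fails as written. For $\alpha$-separation alone this is repairable, using $M\succeq\min(M,K)$ and eigenvalue-tail bounds for the log-determinant. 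It does not, however, touch the $\tau$ gap.
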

\subsection{LAN and asymptotic normality}
Consistency localizes the maximizer near \(\varphi_0\), but does not determine
its rate. We now derive the compensated local parameterization and information
matrix that govern the LAN expansion and the limiting distribution of
\(\widehat\varphi_{\bn}\).

We will now use the GLT symbols to derive the asymptotic behavior of the likelihood score and information matrix.
For each fixed
\(\varphi=(\tau,\alpha)\in\Theta_{\mathrm{tail}}\),
Theorem~\ref{thm:quantitative-sampled-covariance-glt}, applied to
\(S_\varphi\), gives
\[
        \{\Sigma_{\varphi,\bn}\}_{\bn}
        \sim_{\GLT,h^{d-2\alpha}}
        e^\tau\psi_\alpha,
\]
where
$
        \psi_\alpha(\bth)
        :=
        \sum_{\bk\in\mathbb Z^d}
        \|\bth+2\pi\bk\|^{-2\alpha}
$
for $\bth\in Q\setminus\{0\}$. 

For \(j\in\{\tau,\alpha\}\), differentiation of
Equation~\eqref{eq:working-gaussian-loglik} gives
\[
        \partial_j\ell_{\bn}(\varphi)
        =
        -\frac12
        \operatorname{tr}\!\left(
        \Sigma_{\varphi,\bn}^{-1}
        \partial_j\Sigma_{\varphi,\bn}
        \right)
        +
        \frac12Y_{\bn}^{T}
        \Sigma_{\varphi,\bn}^{-1}
        (\partial_j\Sigma_{\varphi,\bn})
        \Sigma_{\varphi,\bn}^{-1}Y_{\bn}.
\]
At \(\varphi_0\), set
$
        Z_{\bn}:=\Sigma_{\varphi_0,\bn}^{-1/2}Y_{\bn},$
        and $
        G_{j,\bn}
        :=
        \Sigma_{\varphi_0,\bn}^{-1/2}
        \left.
        (\partial_j\Sigma_{\varphi,\bn})
        \right|_{\varphi=\varphi_0}
        \Sigma_{\varphi_0,\bn}^{-1/2}.
$
Then
\[
        \partial_j\ell_{\bn}(\varphi_0)
        =
        \frac12
        \left\{
        Z_{\bn}^{T}G_{j,\bn}Z_{\bn}
        -\operatorname{tr}G_{j,\bn}
        \right\}.
\]
Under \(P_{\varphi_0,\bn}\),
\(Z_{\bn}\sim\mathcal N(0,I_{N_{\bn}})\). Thus each score component is a
centered Gaussian quadratic form. Its limiting behavior is controlled by
normalized trace and operator-norm estimates for \(G_{j,\bn}\). Writing \(a_{\bn}:=2\log(1/h)\), the scale representation and the
differentiated GLT analysis gives
\begin{align*}
        G_{\tau,\bn}
        =I_{N_{\bn}}, \quad
        G_{\alpha,\bn} =-a_{\bn}I_{N_{\bn}}+R_{\alpha,\bn},
\end{align*}
where
$
        R_{\alpha,\bn}
        :=G_{\alpha,\bn}+a_{\bn}I_{N_{\bn}},
$
which Toeplitz representation is given in Appendix~\ref{app:moving-tail-spectral-proofs}
. The GLT yields
$
        \{R_{\alpha,\bn}\}_{\bn}
        \sim_{\GLT}r_{\alpha_0}
$
where
\begin{equation}
        r_\alpha(\bth)
        :=
        \partial_\alpha\log\psi_\alpha(\bth)
        =
        -
        \frac{
        \displaystyle\sum_{\bk\in\mathbb Z^d}
        \log\!\bigl(\|\bth+2\pi\bk\|^2\bigr)
        \|\bth+2\pi\bk\|^{-2\alpha}}
        {\displaystyle\sum_{\bk\in\mathbb Z^d}
        \|\bth+2\pi\bk\|^{-2\alpha}},
        \qquad
        \bth\in Q\setminus\{0\}.
        \label{eq:ralpha}
\end{equation}

The term \(-a_{\bn}I_{N_{\bn}}\) in \(G_{\alpha,\bn}\) is proportional to
the scale direction \(G_{\tau,\bn}\). It is therefore removed by the
compensated score
\[
        \partial_\alpha\ell_{\bn}(\varphi_0)
        +a_{\bn}\partial_\tau\ell_{\bn}(\varphi_0)
        =
        \frac12
        \left\{
        Z_{\bn}^{T}R_{\alpha,\bn}Z_{\bn}
        -\operatorname{tr}R_{\alpha,\bn}
        \right\}.
\]

To encode this compensation in the local parameterization, write
\(u=(u_\tau,u_\alpha)^T\), where \(u_\tau\) indexes a perturbation of
\(\tau\) alone, whereas \(u_\alpha\) indexes a perturbation of \(\alpha\)
accompanied by a change in \(\tau\) at relative rate \(a_{\bn}\).
The corresponding directions in the \((\tau,\alpha)\)-coordinates are
\((1,0)^T\) and \((a_{\bn},1)^T\), respectively, and the latter produces
the compensated score above. We therefore define the sample-size-dependent local parameter chart
\begin{equation}
        \Delta_{\bn}
        :=
        \frac{1}{\sqrt{N_{\bn}}}
        \begin{pmatrix}
                1 & a_{\bn}\\
                0 & 1
        \end{pmatrix}.
        \label{eq:tail-rate-matrix}
\end{equation}
{For local coordinates \(u=(u_\tau,u_\alpha)^T\), this chart gives}
\[
        \varphi_{\bn}(u)
        =
        \varphi_0+\Delta_{\bn}u
        =
        \begin{pmatrix}
        \tau_0+N_{\bn}^{-1/2}(u_\tau+a_{\bn}u_\alpha)\\
        \alpha_0+N_{\bn}^{-1/2}u_\alpha
        \end{pmatrix},
        \qquad
        u=(u_\tau,u_\alpha)^T.
\]

The score associated with this local chart is
\[
        \mathcal S_{\bn}
        :=
        \Delta_{\bn}^{T}\nabla_\varphi
        \ell_{\bn}(\varphi_0)
        =
        \frac{1}{2\sqrt{N_{\bn}}}
        \begin{pmatrix}
        Z_{\bn}^{T}Z_{\bn}-N_{\bn}\\
        Z_{\bn}^{T}R_{\alpha,\bn}Z_{\bn}
        -\operatorname{tr}R_{\alpha,\bn}
        \end{pmatrix}.
\]
Let
\(m_1:=\int_Q r_{\alpha_0}\,d\nu_Q\),
\(m_2:=\int_Q r_{\alpha_0}^2\,d\nu_Q\), and
\(v_{\alpha_0}:=m_2-m_1^2\), and define
\begin{equation}
        J:=\frac12
        \begin{pmatrix}
        1&m_1\\
        m_1&m_2
        \end{pmatrix}.
        \label{eq:tail-information-matrix}
\end{equation}
{The principal alias gives
\(r_{\alpha_0}(\bth)+\log\|\bth\|^2\to0\) near the origin, whereas the
shifted aliases remain bounded; thus the logarithmic singularity belongs to
\(L^2(\nu_Q)\), and strict Cauchy--Schwarz gives
\(v_{\alpha_0}>0\) and \(\det(J)=v_{\alpha_0}/4>0\).}
Under \(P_{\varphi_0,\bn}\), the Gaussian quadratic-form covariance identity
and the GLT trace limits give
\[
        \operatorname{Cov}_{\varphi_0}(\mathcal S_{\bn})
        =
        \frac{1}{2N_{\bn}}
        \begin{pmatrix}
        N_{\bn} & \operatorname{tr}R_{\alpha,\bn}\\
        \operatorname{tr}R_{\alpha,\bn}
        & \operatorname{tr}(R_{\alpha,\bn}^2)
        \end{pmatrix}
        \longrightarrow J.
\]

\begin{theorem}
\label{thm:tail-exponent-gradient-information}
Suppose Assumption~\ref{ass:differentiated-spectral-tail-family} holds.
Then, under \(P_{\varphi_0,\bn}\),
\[
        \mathcal S_{\bn}\Rightarrow\mathcal N(0,J).
\]
\end{theorem}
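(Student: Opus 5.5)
The plan is to combine the Cramér--Wold device with a central limit theorem for Gaussian quadratic forms. Fix \(t=(t_1,t_2)^T\in\R^2\) and set \(M_{\bn}:=t_1I_{N_{\bn}}+t_2R_{\alpha,\bn}\), which is real symmetric because \(G_{\alpha,\bn}\) is. Then
\[
t^T\mathcal S_{\bn}=\frac{1}{2\sqrt{N_{\bn}}}\bigl\{Z_{\bn}^TM_{\bn}Z_{\bn}-\operatorname{tr}M_{\bn}\bigr\}
=\frac{1}{2\sqrt{N_{\bn}}}\sum_{j}\lambda_j(M_{\bn})(\xi_j^2-1),
\]
where the \(\xi_j\) are i.i.d.\ standard normal, obtained by diagonalizing \(M_{\bn}\) orthogonally. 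Its variance is \(\operatorname{tr}(M_{\bn}^2)/(2N_{\bn})\), which equals \(t^T\operatorname{Cov}_{\varphi_0}(\mathcal S_{\bn})t\) and tends to \(t^TJt\) by the covariance limit displayed before the statement. Lindeberg's theorem, or equivalently the cumulant criterion, gives asymptotic normality of such a weighted \(\chi^2\) sum provided
\[
\max_j|\lambda_j(M_{\bn})|\big/\sqrt{\operatorname{tr}(M_{\bn}^2)}\to0 .
\]
When \(t^TJt>0\), this reduces to \(\|M_{\bn}\|_2=o(\sqrt{N_{\bn}})\), and hence to \(\|R_{\alpha,\bn}\|_2=o(\sqrt{N_{\bn}})\). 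Since \(J\) is positive definite (\(\det J=v_{\alpha_0}/4>0\)), \(t^TJt>0\) for every \(t\neq0\).

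The first step is to justify the trace limits, which the text above asserts but whose proof I would supply. These are \(N_{\bn}^{-1}\operatorname{tr}R_{\alpha,\bn}\to m_1\) and \(N_{\bn}^{-1}\operatorname{tr}(R_{\alpha,\bn}^2)\to m_2\). GLT convergence \(\{R_{\alpha,\bn}\}\sim_{\GLT}r_{\alpha_0}\) alone does not give these, as the \(A_N=I_N+Ne_1e_1^*\) example shows, so I would use the quantitative decomposition.

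Write \(R_{\alpha,\bn}=\Sigma_0^{-1/2}(\partial_\alpha\Sigma+a_{\bn}\Sigma_0)\Sigma_0^{-1/2}\), with \(\Sigma_0:=\Sigma_{\varphi_0,\bn}\) and \(\partial_\alpha\Sigma\) evaluated at \(\varphi_0\). After multiplying by \(h^{d-2\alpha_0}\), the middle matrix is the Toeplitz matrix of the aliased symbol of \(h^{-2\alpha_0}\{\partial_\alpha S_{\alpha}+2\log(1/h)S_{\alpha}\}\). Assumption~\ref{ass:differentiated-spectral-tail-family} bounds this symbol by \(C\{1+|\log\|\bth+2\pi\bk\|\,|\}\|\bth+2\pi\bk\|^{-2\alpha_0}\) at the nonprincipal aliases and for \(\|\bth\|\) away from \(0\). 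I would then apply the same construction as in Theorem~\ref{thm:quantitative-sampled-covariance-glt} to the derivative. This yields a regular core \(D^{\mathrm{reg},r}\), a low-rank part whose range lies in the same subspace \(\mathcal U_{\bn}\) from Lemma~\ref{lem:localized-fourier-kernel-compression} (the subspace is independent of \(w\)), and an operator-norm-small remainder.

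The key domination estimate I would aim for is the matrix inequality
\[
\pm\,\bigl(\partial_\alpha\Sigma+a_{\bn}\Sigma_0\bigr)\preceq C\bigl(1+\log(1/h)\bigr)\,\Sigma_0 .
\]
I would derive it from the quadratic-form identity \eqref{eq:toeplitz-quadratic-form}. The derivative symbol is pointwise bounded by \(C\{1+|\log(\|\bth+2\pi\bk\|/h)|\}\) times the \(S_{\alpha_0}\) term, where \(\log\|\bth+2\pi\bk\|\) is measured at the unscaled frequency \(\bomega\) rather than at \(\bth\). So I need a bound on \(|\log(1+\|\bomega\|^2)-2\log(1/h)|\) that is uniform over the support. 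This is where care is needed: at \(\|\bomega\|\lesssim1\) the discrepancy is \(O(\log(1/h))\).

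The crude bound \(\|R_{\alpha,\bn}\|_2\le C\log(1/h)=C'\log N_{\bn}\) already suffices, because \(\log N_{\bn}=o(\sqrt{N_{\bn}})\). This settles the Lindeberg condition, which I expect to be the easy part. For the traces, I would use this operator bound together with the rank bound of order \(N_{\bn}\{r^d+h^d\log^d(1/h)\}\). Each eigenvalue is \(O(\log N_{\bn})\), so the exceptional directions contribute at most \(C\,r^d\log^2N_{\bn}\) to \(N_{\bn}^{-1}\operatorname{tr}(R^2)\). Since \(r^d\log^2 N_{\bn}\) need not vanish, this bound alone is not enough.

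The main obstacle is therefore sharpening this bound for the low-frequency directions. I would try to show that on \(\mathcal U_{\bn}\) the form \(R_{\alpha,\bn}\) is bounded by \(C(1+\log(1/r))\) rather than by \(\log(1/h)\), using that for \(\|\bomega\|\le r/h\) one has \(|\log\|\bomega\|^2-2\log(1/h)|\le2\log(\|\bomega\|^{-1}h^{-1})\). Weighted by the spectral mass, this stays integrable because \(\bomega\mapsto\log\|\bomega\|\,S(\bomega)\) is in \(L^1\), on the same grounds that \(r_{\alpha_0}\in L^2(\nu_Q)\). The exceptional contribution would then be \(O(r^d\log^2(1/r))\), which vanishes as \(r\downarrow0\). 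On the regular core, the standard GLT trace identity for bounded sequences, together with the algebra property giving \(\Sigma^{-1/2}D\Sigma^{-1/2}\sim r_{\alpha_0}\), yields \(m_1\) and \(m_2\) after taking \(r\downarrow0\) via \(L^2\) convergence of the truncated symbols. Combining the covariance limit with the Lindeberg condition gives \(t^T\mathcal S_{\bn}\Rightarrow\mathcal N(0,t^TJt)\), and Cramér--Wold completes the proof.
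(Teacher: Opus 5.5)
Your Gaussian quadratic-form step is sound and matches the paper. The paper proves the theorem by applying its centered quadratic-form lemma to $A_{1,\bn}=I_{N_{\bn}}$ and $A_{2,\bn}=R_{\alpha,\bn}$, using Lyapunov and Cram\'er--Wold. The crude bound $\|R_{\alpha,\bn}\|_2\le Ca_{\bn}$, which follows from $|\partial_\alpha b_{h,\alpha}|\le Ca_{\bn}b_{h,\alpha}$ and the Toeplitz quadratic-form identity, handles the negligibility condition exactly as you say. If you took the displayed covariance limit as given, you would be done.

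That limit is the real content, however: you need $N_{\bn}^{-1}\tr R_{\alpha,\bn}\to m_1$ and $N_{\bn}^{-1}\tr(R_{\alpha,\bn}^2)\to m_2$, and the argument you propose for it fails. The relative score is \emph{large}, not small, in the low-frequency directions. On $B_r(0)$ one has $r_{\alpha_0}(\bth)=-\log\|\bth\|^2+O(1)\ge 2\log(1/r)-C$. The quantity you cite, $2\log(\|\bomega\|^{-1}h^{-1})$ for $\|\bomega\|\le r/h$, is bounded \emph{below} by $2\log(1/r)$ and grows to order $a_{\bn}$. For example, for $y=\mathbf 1\in\mathcal U_{\bn}$ the measure $b_{h,\alpha_0}|p_y|^2\,d\nu_Q$ is dominated by $\|\bth\|\lesssim h$, where $\partial_\alpha b_{h,\alpha_0}/b_{h,\alpha_0}=a_{\bn}+O(1)$. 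Hence $\lambda_{\max}(R_{\alpha,\bn})\ge a_{\bn}-C$.

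So no bound of order $1+\log(1/r)$ holds on $\mathcal U_{\bn}$. Since the regular core is bounded by $C\{1+\log(1/r)\}$, every eigenvalue above that level must be carried by the low-rank part. Your integrability remark about $\log\|\bomega\|\,S(\bomega)$ is an averaged statement. It cannot yield the operator bound on which your $O(r^d\log^2(1/r))$ estimate relies. With fixed $r$, you are left with the insufficient $r^d a_{\bn}^2$.

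The missing idea is to let the truncation radius depend on the eigenvalue threshold. You should use the $C\{1+\log(1/r)\}$ bound where it actually holds, namely on the regular core. Write $R_{\alpha,\bn}=K^{\mathrm{reg},r}+L^{(r)}+E^{(r)}$ for the whitened regular core, low-rank part, and small remainder. The estimate $|\dot b^r_{h,\alpha_0}|\le C\{1+\log(1/r)\}b_{h,\alpha_0}$ gives $\|K^{\mathrm{reg},r}\|_2\le C\{1+\log(1/r)\}$. Take $r=(1+t)^{-q}$ with $qd>p>2$. Then $\|K^{\mathrm{reg},r}\|_2+\|E^{(r)}\|_2\le t$ for $t\ge t_p$ and all large $\bn$.

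The singular-value rank inequality then gives a tail bound uniform in $\bn$:
$N_{\bn}^{-1}\#\{j:|\lambda_j(R_{\alpha,\bn})|>t\}\le\rank L^{(r)}/N_{\bn}\le C(1+t)^{-p}+Ca_{\bn}^d/N_{\bn}$.
Combine this with $\|R_{\alpha,\bn}\|_2\le Ca_{\bn}$, $a_{\bn}^{d+2}/N_{\bn}\to0$, and the layer-cake formula. This yields $\limsup_{\bn}N_{\bn}^{-1}\tr\{R_{\alpha,\bn}^2\mathbb I_{\{|R_{\alpha,\bn}|>M\}}\}\le C(1+M)^{2-p}$, i.e.\ uniform square integrability of the empirical spectral measures. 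The GLT distribution $\{R_{\alpha,\bn}\}_{\bn}\sim_{\GLT}r_{\alpha_0}$, applied to compactly supported cutoffs of $x$ and $x^2$, then gives $m_1$ and $m_2$. This route also avoids the cross terms such as $\tr(K^{\mathrm{reg},r}L^{(r)})$ that a fixed-$r$ decomposition of $\tr(R_{\alpha,\bn}^2)$ would have to control.
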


Thus \(J\) is positive definite.
The score limit above and the uniform LAN expansion in
Section~\ref{sec:tail-estimator-consistency}, together with consistency and
the Appendix~\ref{app:likelihood-separation-and-lan} localization argument, yield the main theorem.

\begin{theorem}
\label{thm:working-tail-mle-normality}
Suppose Assumption~\ref{ass:differentiated-spectral-tail-family} holds.
Then, under \(P_{\varphi_0,\bn}\), every measurable maximizer in
Equation~\eqref{eq:working-tail-mle} satisfies
\begin{equation}
        \Delta_{\bn}^{-1}
        (\widehat\varphi_{\bn}-\varphi_0)
        \Rightarrow
        \mathcal N(0,J^{-1}).
        \label{eq:working-tail-mle-normality}
\end{equation}
\end{theorem}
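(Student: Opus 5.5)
The proof follows the classical route from LAN to asymptotic normality of an M-estimator in the local chart \(u=\Delta_{\bn}^{-1}(\varphi-\varphi_0)\). Write
\[
\Lambda_{\bn}(u):=\ell_{\bn}(\varphi_0+\Delta_{\bn}u)-\ell_{\bn}(\varphi_0),
\]
and set \(\widehat u_{\bn}:=\Delta_{\bn}^{-1}(\widehat\varphi_{\bn}-\varphi_0)\), which maximizes \(\Lambda_{\bn}\) over \(\Delta_{\bn}^{-1}(\Theta_{\mathrm{tail}}-\varphi_0)\). The proof has three ingredients.

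\emph{First, compact LAN.} For every \(K<\infty\),
\[
\sup_{\|u\|\le K}\Bigl|\Lambda_{\bn}(u)-u^T\mathcal S_{\bn}+\tfrac12u^TJu\Bigr|\to0
\quad\text{in probability}.
\]

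\emph{Second, tightness of \(\widehat u_{\bn}\).} This is the localization step from Appendix~\ref{app:likelihood-separation-and-lan}.

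\emph{Third, argmax continuity.} Given the first two ingredients, the conclusion follows from a standard argmax-continuity argument. The random quadratic \(u\mapsto u^T\mathcal S_{\bn}-\tfrac12u^TJu\) has unique maximizer \(J^{-1}\mathcal S_{\bn}\), because \(J\) is positive definite. Theorem~\ref{thm:tail-exponent-gradient-information} gives \(\mathcal S_{\bn}\Rightarrow\mathcal N(0,J)\). Strict concavity then yields \(\widehat u_{\bn}-J^{-1}\mathcal S_{\bn}\to0\) in probability, and hence \(\widehat u_{\bn}\Rightarrow\mathcal N(0,J^{-1}JJ^{-1})=\mathcal N(0,J^{-1})\).

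The concavity argument runs as follows. On the event \(\|\widehat u_{\bn}\|\le K\), compare \(\Lambda_{\bn}(\widehat u_{\bn})\ge\Lambda_{\bn}(J^{-1}\mathcal S_{\bn})\), which is valid because \(J^{-1}\mathcal S_{\bn}\) is tight and lies in the chart domain with high probability since \(\varphi_0\) is interior. The uniform quadratic approximation then gives
\[
\tfrac12(\widehat u_{\bn}-J^{-1}\mathcal S_{\bn})^TJ(\widehat u_{\bn}-J^{-1}\mathcal S_{\bn})=o_P(1).
\]

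For compact LAN, I would Taylor expand \(\Lambda_{\bn}\) to second order along the segment from \(0\) to \(u\). The first-order term is exactly \(u^T\mathcal S_{\bn}\). The second-order term is \(\tfrac12u^T\Delta_{\bn}^T\nabla^2\ell_{\bn}(\bar\varphi)\Delta_{\bn}u\). I would show that \(\Delta_{\bn}^T\nabla^2\ell_{\bn}(\varphi)\Delta_{\bn}\to-J\) in probability, uniformly over \(\varphi\) in the shrinking neighbourhood \(\varphi_0+\Delta_{\bn}\{\|u\|\le K\}\).

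The Hessian is a combination of traces and Gaussian quadratic forms in \(\Sigma_{\varphi}^{-1}\partial_j\Sigma_\varphi\) and \(\Sigma_\varphi^{-1}\partial_{jk}\Sigma_\varphi\). At \(\varphi_0\), its expectation is minus the Fisher information. In the chart this is \(-\frac1{2N}\) times the matrix of \(\operatorname{tr}\) of \(I\), \(R_{\alpha,\bn}\) and \(R_{\alpha,\bn}^2\), which converges to \(-J\) by the GLT trace limits already used for \(\operatorname{Cov}(\mathcal S_{\bn})\). The variance of the fluctuation is \(O(N^{-1})\) times normalized traces of squares, which are bounded using Theorem~\ref{thm:quantitative-sampled-covariance-glt} and Lemma~\ref{lem:renorm-glt-inversion}.

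To pass from \(\varphi_0\) to \(\bar\varphi\) I would use the compensation structure. Moving \(\alpha\) by \(N^{-1/2}u_\alpha\) multiplies the renormalized symbol by roughly \(h^{-2N^{-1/2}u_\alpha}\), that is, by \(\exp(a_{\bn}N^{-1/2}u_\alpha)\). This factor is exactly cancelled by the accompanying \(\tau\)-shift \(N^{-1/2}a_{\bn}u_\alpha\). Consequently \(\Sigma_{\varphi_0}^{-1/2}\Sigma_{\varphi_{\bn}(u)}\Sigma_{\varphi_0}^{-1/2}-I\) has operator norm \(O(N^{-1/2}\log)\) up to low-rank/small-norm corrections. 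Similarly, \(\partial_\alpha\Sigma_\varphi\) and \(\partial_\alpha^2\Sigma_\varphi\) whitened at \(\varphi_0\), after subtracting their \(a_{\bn}\) and \(a_{\bn}^2\) identity multiples, have normalized traces that are continuous in \(\varphi\). The \(\partial_\alpha^2\) term carries \(\log^2\) factors, so I must check that the second derivative in the chart, namely \((\partial_\alpha+a_{\bn}\partial_\tau)^2\ell\), involves only the compensated matrices. In the scale family this holds because \(\partial_\tau\Sigma=\Sigma\).

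The main obstacle is the tightness step, since the neighbourhood that consistency controls is fixed while LAN is only local in \(u\). I would first try a peeling argument on shells \(2^{j}\le\|u\|\le2^{j+1}\) with \(\|\Delta_{\bn}u\|\le\delta\). On each shell I would bound \(\Lambda_{\bn}(u)\) from above by \(\|u\|\,O_P(1)-c\|u\|^2\), using the exact Gaussian identity
\[
E\Lambda_{\bn}(u)=-\mathrm{KL}\bigl(P_{\varphi_0},P_{\varphi_{\bn}(u)}\bigr)
=-\tfrac12\sum\bigl(\lambda_i-1-\log\lambda_i\bigr),
\]
where \(\lambda_i\) are the eigenvalues of \(\Sigma_{\varphi_0}^{-1/2}\Sigma_{\varphi}\Sigma_{\varphi_0}^{-1/2}\). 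Lower-bounding this by \(cN\min(\|\Delta_{\bn}u\|_{\text{comp}}^2,1)\) should follow from the GLT symbol ratio \(e^{\tau-\tau_0}h^{-2(\alpha-\alpha_0)}\psi_\alpha/\psi_{\alpha_0}\). This ratio is nonconstant in \(\bth\) unless \(\alpha=\alpha_0\), via the nondegeneracy \(v_{\alpha_0}>0\). Fluctuations would be controlled by Hanson--Wright together with the operator-norm bounds. Handling the regime where \(\alpha-\alpha_0\) is small but \(a_{\bn}(\alpha-\alpha_0)\) is not small, the log-mismatch regime, is where I expect the real work to lie.
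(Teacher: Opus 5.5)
Your top-level architecture is exactly the paper's: tightness of $\widehat u_{\bn}$, compact LAN, comparison of $\Lambda_{\bn}(\widehat u_{\bn})$ with $\Lambda_{\bn}(J^{-1}\mathcal S_{\bn})$ using interiority, then the score CLT and Slutsky. Your argmax step is essentially the proof of Corollary~\ref{cor:local-likelihood-geometry}, so with Theorem~\ref{thm:compact-lan} and Lemma~\ref{lem:spectral-comparison-localization} taken as given nothing else is needed.

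\textbf{Compact LAN: a genuine gap.} A second-order expansion with Hessian $\nabla^2\ell_{\bn}(\bar\varphi)$ needs $\partial_\alpha^2\Sigma_{\varphi,\bn}$, that is, second $\alpha$-derivatives of $S^{\mathrm{rem}}_\alpha$ with $\log^2$-weighted decay bounds. Assumption~\ref{ass:differentiated-spectral-tail-family} only makes $S^{\mathrm{rem}}_\alpha$ continuously differentiable in $\alpha$, with a bound on the first derivative. Under the theorem's hypotheses the Hessian therefore need not exist, and your route would prove the result only under a strengthened $C^2$ assumption. Even granting $C^2$, the claim $\sup_\varphi\|\Delta_{\bn}^T\nabla^2\ell_{\bn}(\varphi)\Delta_{\bn}+J\|\to0$ in probability over the moving neighbourhood concerns the supremum of a Gaussian quadratic process. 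Pointwise variance bounds of order $N_{\bn}^{-1}$ do not give it without a third-derivative Lipschitz bound or a chaining argument. The paper uses only first derivatives:
\begin{itemize}
\item it writes $\Lambda_{\bn}(u)=-\tfrac12\log\det\{I+D_{\bn}(u)\}-\tfrac12Z_{\bn}^T[\{I+D_{\bn}(u)\}^{-1}-I]Z_{\bn}$ exactly;
\item it expands in the operator-small matrix $D_{\bn}(u)=V_{\varphi_{\bn}(u),\bn}-I$ rather than in $\varphi$;
\item it replaces $D_{\bn}(u)$ by $L_{\bn}(u)=N_{\bn}^{-1/2}(u_\tau I+u_\alpha R_{\alpha,\bn})$ in Hilbert--Schmidt norm, via the path formula \eqref{eq:Dpath} and the normalized Hilbert--Schmidt continuity of $\mathcal R_{\bn}(\alpha)$ (Lemma~\ref{lem:local-fixed-whitening-score-continuity});
\item it makes the random remainders uniformly negligible with Dirksen's chaining bound (Corollaries~\ref{cor:vanishing-gaussian-quadratic-process} and~\ref{cor:smooth-transform-vanishing-gaussian-quadratic-process}).
\end{itemize}

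\textbf{Tightness: incomplete where you suspect.} The paper uses peeling with a divergence drift only inside the band $|g_{\bn}(\varphi)|+a_{\bn}|\alpha-\alpha_0|\le\rho$. There $\|D_{\bn}(u)\|_2\le C\rho$ and $E\Lambda_{\bn}(u)\le -c\tr\{D_{\bn}(u)^2\}\le -cR^2$ on shells of radius $R$. Your log-mismatch regime lies outside this band, and three things fail there:
\begin{itemize}
\item $D_{\bn}(u)$ is no longer operator-small;
\item $\{I+D_{\bn}(u)\}^{-1}-I$ can have operator norm growing like a power of $h^{-1}$, so a two-sided Hanson--Wright bound is useless;
\item the available separation is only of order $N_{\bn}/a_{\bn}^2$.
\end{itemize}
The paper handles this region differently. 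Proposition~\ref{prop:working-glt-hellinger-separation} gives $-\log\mathcal A\ge c\rho^2N_{\bn}/a_{\bn}^2$ off the band. The one-sided Markov bound on the square-root likelihood ratio (Lemma~\ref{lem:likelihood-ratio-affinity-bound}) turns this into a pointwise probability bound. A union bound over a net of cardinality $e^{O(a_{\bn})}$, transferred to the continuum by the modulus of Lemma~\ref{lem:working-likelihood-modulus} (Lemma~\ref{lem:working-likelihood-exclusion}), then suffices because $N_{\bn}/a_{\bn}^2\gg a_{\bn}$.

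\textbf{Two minor slips.}
\begin{itemize}
\item $E_{\varphi_0}\Lambda_{\bn}(u)=-\mathrm{KL}(P_{\varphi_0,\bn}\Vert P_{\varphi,\bn})=-\tfrac12\sum_i(\lambda_i^{-1}-1+\log\lambda_i)$, not $-\tfrac12\sum_i(\lambda_i-1-\log\lambda_i)$.
\item Increasing $\alpha$ by $\delta$ multiplies the covariance symbol by $h^{2\delta}=e^{-a_{\bn}\delta}$, not $h^{-2\delta}$; this is the factor the $\tau$-shift $+a_{\bn}\delta$ cancels.
\end{itemize}
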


\begin{corollary}
\label{cor:cubic-grid-componentwise-rates}
Under the assumptions of Theorem~\ref{thm:working-tail-mle-normality}, suppose
\(\bn=(n,\ldots,n)\), so that \(N_{\bn}=n^d\) and
\(h=L/(n-1)\). Define
\(\widehat c_{\infty,\bn}:=\exp(\widehat\tau_{\bn})\). Then 
\[
\begin{pmatrix}
\sqrt{N_{\bn}}(\widehat\alpha_{\bn}-\alpha_0)\\[4pt]
\displaystyle
\frac{d\sqrt{N_{\bn}}}{2\log N_{\bn}}
\frac{\widehat c_{\infty,\bn}-c_{\infty,0}}{c_{\infty,0}}
\end{pmatrix}
\Rightarrow
\begin{pmatrix}
Z_\alpha\\ Z_\alpha
\end{pmatrix},
\qquad
Z_\alpha\sim\mathcal N(0,\sigma_{\alpha_0}^2),
\]
where
\(
\sigma_{\alpha_0}^2=(J^{-1})_{22}=2/v_{\alpha_0}
\).
\end{corollary}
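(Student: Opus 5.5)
The corollary follows from Theorem~\ref{thm:working-tail-mle-normality} by inverting the chart, a delta-method step for \(c_\infty=e^\tau\), and matching the logarithmic normalizations. Write \(U_{\bn}:=\Delta_{\bn}^{-1}(\widehat\varphi_{\bn}-\varphi_0)=(U_{\tau,\bn},U_{\alpha,\bn})^T\Rightarrow U\sim\mathcal N(0,J^{-1})\). Since
\[
\Delta_{\bn}^{-1}=\sqrt{N_{\bn}}\begin{pmatrix}1&-a_{\bn}\\0&1\end{pmatrix},
\]
we get \(\sqrt{N_{\bn}}(\widehat\alpha_{\bn}-\alpha_0)=U_{\alpha,\bn}\). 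We also get
\[
\sqrt{N_{\bn}}(\widehat\tau_{\bn}-\tau_0)=U_{\tau,\bn}+a_{\bn}U_{\alpha,\bn}.
\]
Dividing by \(a_{\bn}\to\infty\) gives
\[
\frac{\sqrt{N_{\bn}}}{a_{\bn}}(\widehat\tau_{\bn}-\tau_0)=U_{\alpha,\bn}+a_{\bn}^{-1}U_{\tau,\bn}.
\]
The term \(a_{\bn}^{-1}U_{\tau,\bn}\) is \(o_P(1)\) because \(U_{\tau,\bn}\) is tight. By Slutsky, the pair \(\bigl(\sqrt{N_{\bn}}(\widehat\alpha_{\bn}-\alpha_0),\,\sqrt{N_{\bn}}a_{\bn}^{-1}(\widehat\tau_{\bn}-\tau_0)\bigr)\) therefore converges jointly to \((U_\alpha,U_\alpha)\), with \(U_\alpha\sim\mathcal N(0,(J^{-1})_{22})\). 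Inverting \(J\) in \eqref{eq:tail-information-matrix}, using \(\det J=v_{\alpha_0}/4\), gives \((J^{-1})_{22}=\tfrac12\cdot 4/v_{\alpha_0}=2/v_{\alpha_0}\).

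Next we replace \(a_{\bn}=2\log(1/h)\) by \(2d^{-1}\log N_{\bn}\). With \(h=L/(n-1)\) and \(N_{\bn}=n^d\), we have
\[
\log(1/h)=\log(n-1)-\log L=d^{-1}\log N_{\bn}+O(1).
\]
Hence \(a_{\bn}\cdot d/(2\log N_{\bn})\to1\), and the ratio of the two normalizations tends to one. Slutsky therefore lets us use \(d\sqrt{N_{\bn}}/(2\log N_{\bn})\) in place of \(\sqrt{N_{\bn}}/a_{\bn}\).

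Finally, we pass from \(\tau\) to \(c_\infty\). We have
\[
\frac{\widehat c_{\infty,\bn}-c_{\infty,0}}{c_{\infty,0}}=e^{\widehat\tau_{\bn}-\tau_0}-1=(\widehat\tau_{\bn}-\tau_0)\{1+O_P(|\widehat\tau_{\bn}-\tau_0|)\}.
\]
Since \(\widehat\tau_{\bn}-\tau_0=O_P(\log N_{\bn}/\sqrt{N_{\bn}})=o_P(1)\), the relative error times the normalization differs from the \(\tau\)-version by \(o_P(1)\). This is a mean-value, delta-method step applied jointly with the \(\alpha\)-component.

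The only point that needs care is that the limits hold \emph{jointly}, not just componentwise. This comes for free because both coordinates are continuous transforms of the single vector \(U_{\bn}\) plus \(o_P(1)\) terms, so the continuous mapping theorem applies to the pair. The result is the degenerate pair \((Z_\alpha,Z_\alpha)\) with \(Z_\alpha\sim\mathcal N(0,2/v_{\alpha_0})\).
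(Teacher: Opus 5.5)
Your proposal is correct and follows essentially the same route as the paper's proof. Both invert the chart to get \(\sqrt{N_{\bn}}(\widehat\tau_{\bn}-\tau_0)=U_{\tau,\bn}+a_{\bn}U_{\alpha,\bn}\), use \(d\,a_{\bn}/(2\log N_{\bn})\to1\) on the cubic grid, absorb the exponential Taylor remainder (of order \(O_{\mathbb P}(a_{\bn}^2/N_{\bn})\) before normalization), and conclude by Slutsky with both coordinates equal to \(U_{\alpha,\bn}+o_{\mathbb P}(1)\) and common variance \((J^{-1})_{22}=2/v_{\alpha_0}\).
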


 The same likelihood analysis gives sharper rates when either tail coordinate
is known. Define the corresponding restricted estimators by
\[
        \widehat\alpha_{\bn}^{(\tau_0)}
        \in
        \argmax_{\alpha\in[\alpha_-,\alpha_+]}
        \ell_{\bn}(\tau_0,\alpha),
        \qquad
        \widehat\tau_{\bn}^{(\alpha_0)}
        \in
        \argmax_{\tau\in[\tau_-,\tau_+]}
        \ell_{\bn}(\tau,\alpha_0).
\]

\begin{corollary}
\label{cor:known-tail-parameter-mles}
\label{cor:known-tail-coefficient-alpha-mle}
\label{cor:known-tail-exponent-coefficient-mle}
Suppose Assumption~\ref{ass:differentiated-spectral-tail-family} holds.
Then, under \(P_{\varphi_0,\bn}\), 
\begin{align}
        \sqrt{N_{\bn}}a_{\bn}
        \bigl(\widehat\alpha_{\bn}^{(\tau_0)}-\alpha_0\bigr)
        &\Rightarrow \mathcal N(0,2),
        \label{eq:known-tail-coefficient-alpha-mle}\\
        \sqrt{N_{\bn}}
        \bigl(\widehat\tau_{\bn}^{(\alpha_0)}-\tau_0\bigr)
        &\Rightarrow \mathcal N(0,2).
        \label{eq:known-tail-exponent-coefficient-mle}
\end{align}
\end{corollary}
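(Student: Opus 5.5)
Both restricted estimators are one-parameter versions of the joint analysis, so I would treat them as one-dimensional sub-experiments of the compact LAN expansion in Section~\ref{sec:tail-estimator-consistency} and follow the route of Theorem~\ref{thm:working-tail-mle-normality} with a scalar chart. The argument has three steps: consistency, a local quadratic expansion with Gaussian score, and tightness of the localized maximizer followed by an argmax argument.

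\emph{Known \(\alpha_0\).} Here the model is an exact scale family: \(\Sigma_{(\tau,\alpha_0),\bn}=e^{\tau-\tau_0}\Sigma_{\varphi_0,\bn}\). The likelihood is then explicit:
\[
\ell_{\bn}(\tau,\alpha_0)=-\tfrac{N_{\bn}}2(\tau-\tau_0)-\tfrac12e^{-(\tau-\tau_0)}Z_{\bn}^TZ_{\bn}+\text{const}.
\]
If the maximizer is interior, it is \(\widehat\tau^{(\alpha_0)}_{\bn}-\tau_0=\log(Z_{\bn}^TZ_{\bn}/N_{\bn})\). Since \(Z_{\bn}^TZ_{\bn}\sim\chi^2_{N_{\bn}}\), the law of large numbers places the maximizer in the interior with probability tending to one. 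The CLT gives \(\sqrt{N_{\bn}}(Z^TZ/N-1)\Rightarrow\mathcal N(0,2)\), and the delta method then yields \eqref{eq:known-tail-exponent-coefficient-mle}. No GLT input is needed for this part; it matches \(1/J_{11}=2\).

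\emph{Known \(\tau_0\).} The natural chart is \(\alpha=\alpha_0+u/(\sqrt{N_{\bn}}a_{\bn})\).
\begin{enumerate}
\item[(a)] \emph{Consistency.} I would adapt the global separation argument behind Theorem~\ref{thm:tail-chart-consistency} to the segment \(\{\tau_0\}\times[\alpha_-,\alpha_+]\subset\Theta_{\mathrm{tail}}\). Restricting the parameter set only strengthens the separation: with \(\tau\) fixed, a change in \(\alpha\) alters \(\log\det\) by order \(N_{\bn}a_{\bn}|\alpha-\alpha_0|\), which cannot be compensated.
\item[(b)] \emph{Score.} In the direction \((0,1)\) the score is \(\partial_\alpha\ell_{\bn}(\varphi_0)=\tfrac12\{Z^TG_{\alpha,\bn}Z-\operatorname{tr}G_{\alpha,\bn}\}\), with \(G_{\alpha,\bn}=-a_{\bn}I+R_{\alpha,\bn}\). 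Dividing by \(\sqrt{N_{\bn}}a_{\bn}\), the leading part is \(-\tfrac12(Z^TZ-N_{\bn})/\sqrt{N_{\bn}}\Rightarrow\mathcal N(0,1/2)\). The remaining part is \((Z^TRZ-\operatorname{tr}R)/(2\sqrt{N_{\bn}}a_{\bn})\). By Theorem~\ref{thm:tail-exponent-gradient-information}, its numerator divided by \(\sqrt{N_{\bn}}\) is tight, so this part is \(O_P(1/a_{\bn})=o_P(1)\).
\item[(c)] \emph{Curvature.} In the local chart the curvature is \(\{N_{\bn}a_{\bn}^2\}^{-1}\operatorname{tr}(G_{\alpha,\bn}^2)/2\to\tfrac12\), since \(\operatorname{tr}(R)=O(N_{\bn})\) and \(\operatorname{tr}(R^2)=O(N_{\bn})\). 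This is exactly \(\Delta^T\nabla^2\ell\,\Delta\) along the direction \(\Delta_{\bn}(-a_{\bn},1)^T/a_{\bn}\).
\end{enumerate}

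Step (c) is where I would lean on the earlier expansion rather than redo it. The point \((\tau_0,\alpha_0+t)\) corresponds in joint coordinates to \(u=(-a_{\bn}t',t')\) with \(t'=t\sqrt{N_{\bn}}\). The compact LAN expansion covers only bounded \(u\), but here \(t'=u/a_{\bn}\) is bounded and \(u_\tau=-u\) is bounded, so this \(u\) does lie in a bounded set. Hence the uniform LAN expansion of Section~\ref{sec:tail-estimator-consistency} applies directly. Its quadratic form evaluated at \((-u,u/a_{\bn})\) gives \(-\tfrac12J_{11}u^2\cdot 2\to\) curvature \(\tfrac12\), and its linear term gives \(-\mathcal S_{\bn,1}u+o_P(1)\). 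The restricted log-likelihood ratio is therefore \(-u\mathcal S_{\bn,1}-\tfrac14u^2+o_P(1)\) uniformly on compacts.

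\emph{Main obstacle.} The hard step is tightness of \(\sqrt{N_{\bn}}a_{\bn}(\widehat\alpha^{(\tau_0)}_{\bn}-\alpha_0)\): consistency only gives \(o_P(1)\) in \(\alpha\). I would adapt the localization argument of Appendix~\ref{app:likelihood-separation-and-lan}, which controls the likelihood on shells \(M\le|u|\le\delta\sqrt{N_{\bn}}a_{\bn}\). In this restricted direction the deterministic drift is of order \(N_{\bn}a_{\bn}^2(\alpha-\alpha_0)^2\), which is larger than in the joint case, so the separation is easier. With tightness, the argmax continuous mapping theorem gives
\[
\sqrt{N_{\bn}}a_{\bn}\bigl(\widehat\alpha^{(\tau_0)}_{\bn}-\alpha_0\bigr)=-2\mathcal S_{\bn,1}+o_P(1)\Rightarrow\mathcal N(0,4\cdot\tfrac12)=\mathcal N(0,2),
\]
which is \eqref{eq:known-tail-coefficient-alpha-mle}.
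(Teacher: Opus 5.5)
Your proposal is correct and takes essentially the paper's route. The known-\(\alpha_0\) case goes through the explicit scale-family maximizer \(\tau_0+\log(Z_{\bn}^TZ_{\bn}/N_{\bn})\), the chi-square CLT and the delta method. The known-\(\tau_0\) case goes through restricted consistency inherited from the joint separation argument, then the exact embedding \((\tau_0,\alpha_0+t/(\sqrt{N_{\bn}}a_{\bn}))=\varphi_0+\Delta_{\bn}(-t,t/a_{\bn})^T\) into the compact LAN expansion, which gives \(-t\mathcal S_{\tau,\bn}-t^2/4+o_{\mathbb P}(1)\), and finally an argmax comparison yielding \(-2\mathcal S_{\tau,\bn}\Rightarrow\mathcal N(0,2)\).

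The one place the paper is simpler is tightness, which needs no adaptation of the localization argument. Because \(\varphi_0\) lies on the restricted segment, the restricted maximizer satisfies \(\Lambda_{\bn}\ge0\), and \(\|(-t,t/a_{\bn})\|\ge|t|\), so Lemma~\ref{lem:spectral-comparison-localization} applies verbatim.
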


Both restricted assertions are proved together in Appendix~\ref{app:likelihood-separation-and-lan}.
Corollary~\ref{cor:cubic-grid-componentwise-rates} gives the joint componentwise limit in arbitrary dimension. 
The following example records its two-dimensional specialization together with the restricted rates.
\begin{example}
\label{ex:regular-square-grid-tail-rates}
Consider the regular square grid \(\bn=(n,n)\) in dimension \(d=2\), with
fixed side length \(L>0\). Then \(N_{\bn}=n^2\) and \(h=L/(n-1)\), so
\[
        a_{\bn}
        =2\log\!\left(\frac{n-1}{L}\right)
        =\log N_{\bn}-2\log L+o(1).
\]
In particular, \(a_{\bn}/\log N_{\bn}\to1\). Define
\(\widehat c_{\infty,\bn}:=\exp(\widehat\tau_{\bn})\).
Theorem~\ref{thm:working-tail-mle-normality} and the delta method give
\begin{equation}
        \begin{pmatrix}
        \dfrac{\sqrt{N_{\bn}}}{\log N_{\bn}}
        \dfrac{\widehat c_{\infty,\bn}-c_{\infty,0}}
              {c_{\infty,0}}
        \\[6pt]
        \sqrt{N_{\bn}}
        (\widehat\alpha_{\bn}-\alpha_0)
        \end{pmatrix}
        \Rightarrow
        \begin{pmatrix}
        Z_\alpha\\
        Z_\alpha
        \end{pmatrix},
        \qquad
        Z_\alpha\sim
        \mathcal N\!\left(0,\bigl(J^{-1}\bigr)_{22}\right).
        \label{eq:working-tail-mle-readable-limit}
\end{equation}
Thus, under joint estimation,
\[
        \widehat\alpha_{\bn}-\alpha_0
        =O_{\mathbb P}(N_{\bn}^{-1/2}),
        \qquad
        \widehat\tau_{\bn}-\tau_0
        =O_{\mathbb P}\!\left(
        \frac{\log N_{\bn}}{\sqrt{N_{\bn}}}
        \right).
\]
The latter is also the order of the relative error in
\(\widehat c_{\infty,\bn}\).

For the restricted estimators,
Corollary~\ref{cor:known-tail-parameter-mles} gives
\[
        \sqrt{N_{\bn}}\log N_{\bn}
        \bigl(\widehat\alpha_{\bn}^{(\tau_0)}-\alpha_0\bigr)
        \Rightarrow\mathcal N(0,2),
        \qquad
        \sqrt{N_{\bn}}
        \bigl(\widehat\tau_{\bn}^{(\alpha_0)}-\tau_0\bigr)
        \Rightarrow\mathcal N(0,2).
\]
Consequently, knowing the tail coefficient improves the exponent error from
\(O_{\mathbb P}(N_{\bn}^{-1/2})\) to
\(O_{\mathbb P}\{(\sqrt{N_{\bn}}\log N_{\bn})^{-1}\}\). Similarly, knowing
the tail exponent improves the log-coefficient error from
\(O_{\mathbb P}\{(\log N_{\bn})/\sqrt{N_{\bn}}\}\) to
\(O_{\mathbb P}(N_{\bn}^{-1/2})\).
\end{example}

\section{Fitted likelihood: separation, LAN, and localization}
\label{sec:tail-estimator-consistency}

Theorem~\ref{thm:working-tail-mle-normality} and
Corollary~\ref{cor:known-tail-parameter-mles} state the limiting
distributions of the maximum likelihood estimators under the fitted model.
The score limit supplies the Gaussian term in these distributions. By itself,
however, it neither controls the likelihood away from \(\varphi_0\) nor
ensures that the maximizer remains on the local scale.

The argument therefore has a global and a local stage. The global stage
establishes uniform separation outside every fixed neighborhood of
\(\varphi_0\). The local stage combines compact LAN with a separate
localization argument to prove that
{\(\Delta_{\bn}^{-1}(\widehat\varphi_{\bn}-\varphi_0)\) is tight.} Throughout this section, the fitted spectrum satisfies
Assumption~\ref{ass:differentiated-spectral-tail-family}, and the compact tail
chart and parameter conditions in
Equation~\eqref{eq:working-tail-chart} remain in force.

\subsection{Global likelihood separation}
We begin with the global stage. For each fixed \(\varepsilon>0\), separation
must hold uniformly over all parameters satisfying
\(\|\varphi-\varphi_0\|\ge\varepsilon\). Since these alternatives may vary
with \(\bn\), pointwise separation is not sufficient. We obtain the required
uniformity by expressing the Gaussian affinity in terms of a relative
covariance matrix and separating changes in scale from changes in spectral
shape.

For \(\varphi\in\Theta_{\mathrm{tail}}\), write
\(P_{\varphi,\bn}:=\mathcal N(0,\Sigma_{\varphi,\bn})\). We compare this law
with \(P_{\varphi_0,\bn}\) through the relative covariance matrix
\[
 V_{\varphi,\bn}
 :=
 \Sigma_{\varphi_0,\bn}^{-1/2}
 \Sigma_{\varphi,\bn}
 \Sigma_{\varphi_0,\bn}^{-1/2}.
\]
The scalar
$
 g_{\bn}(\varphi)
 :=
 (\tau-\tau_0)-a_{\bn}(\alpha-\alpha_0)
 =
 \log\left(
 \frac{e^\tau h^{2\alpha-d}}
      {e^{\tau_0}h^{2\alpha_0-d}}
 \right)
$
records the change in the leading covariance factor. Thus
\(g_{\bn}(\varphi)=0\) precisely when
\[
 e^\tau h^{2\alpha-d}
 =
 e^{\tau_0}h^{2\alpha_0-d}.
 \]
The tangent direction to this level set is \((a_{\bn},1)^T\), which is the
compensated direction introduced through {\(\Delta_{\bn}\)} in
Section~\ref{sec:maximum-likelihood}.

To describe the remaining change in spectral shape, let
\(\mathsf K_{\delta,\bn}\), for \(\delta\ne0\), denote the covariance secant
defined in Appendix~\ref{app:moving-tail-spectral-proofs} after whitening by
\(\Sigma_{\varphi_0,\bn}^{-1/2}\), and set
\(\mathsf K_{0,\bn}:=R_{\alpha,\bn}\). Taking
\(\delta=\alpha-\alpha_0\) gives the exact factorization
\begin{equation}
 V_{\varphi,\bn}
 =
 e^{g_{\bn}(\varphi)}
 \{I+(\alpha-\alpha_0)\mathsf K_{\alpha-\alpha_0,\bn}\}.
 \label{eq:working-full-relative-covariance-secant}
\end{equation}
The scalar factor \(e^{g_{\bn}(\varphi)}\) describes the change in overall
covariance magnitude, whereas the matrix factor describes the change in
spectral shape. The separation argument must control both components.

For centered Gaussian laws, the standard determinant formula for the
Hellinger affinity \cite{Kailath1967} gives
\begin{equation}
 \mathcal A(P_{\varphi_0,\bn},P_{\varphi,\bn})
 =
 \frac{\det(V_{\varphi,\bn})^{1/4}}
 {\det\{(I+V_{\varphi,\bn})/2\}^{1/2}}.
 \label{eq:gaussian-affinity-relative-covariance}
\end{equation}
The GLT limits for the two covariance components yield the required uniform
bound.

\begin{proposition}
\label{prop:working-glt-hellinger-separation}
There is \(c>0\) such that, for all sufficiently large sample sizes and all
\(\varphi\in\Theta_{\mathrm{tail}}\),
\[
 -\log\mathcal A(P_{\varphi_0,\bn},P_{\varphi,\bn})
 \ge cN_{\bn}
 \frac{g_{\bn}(\varphi)^2+(\alpha-\alpha_0)^2}
 {1+g_{\bn}(\varphi)^2+(\alpha-\alpha_0)^2}.
\]
\end{proposition}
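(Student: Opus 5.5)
Write \(\delta:=\alpha-\alpha_0\) and \(g:=g_{\bn}(\varphi)\). The plan is to reduce the affinity to a spectral sum over the eigenvalues of \(V_{\varphi,\bn}\), use \eqref{eq:working-full-relative-covariance-secant} to split each log-eigenvalue into the common shift \(g\) plus a shape part, and then bound the sum from below in three regimes of \((g,\delta)\).

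\textbf{Spectral reduction.} By \eqref{eq:gaussian-affinity-relative-covariance},
\[
-\log\mathcal A(P_{\varphi_0,\bn},P_{\varphi,\bn})=\sum_{j=1}^{N_{\bn}} f(\lambda_j),
\qquad
f(e^x)=\tfrac12\log\cosh(x/2),
\]
where the \(\lambda_j\) are the eigenvalues of \(V_{\varphi,\bn}\). The function \(f\) satisfies \(f(e^x)\ge c_0\min(x^2,|x|)\) for all real \(x\). By \eqref{eq:working-full-relative-covariance-secant}, \(\log\lambda_j=g+x_j\), where the \(x_j=\log\mu_j\) are the log-eigenvalues of the shape factor \(W_{\delta,\bn}:=I+\delta\mathsf K_{\delta,\bn}\). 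This factor depends only on \(\delta\) and \(\bn\), not on \(g\).

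Applying Theorem~\ref{thm:quantitative-sampled-covariance-glt} to \(\overline S_\alpha\) and \(\overline S_{\alpha_0}\), and using the GLT algebra (products and inverses, with invertibility from Lemma~\ref{lem:renorm-glt-inversion}), gives \(\{W_{\delta,\bn}\}\sim_{\GLT}\psi_\alpha/\psi_{\alpha_0}\). The powers of \(h\) have already been absorbed into \(g\). The target then reduces to showing
\[
\frac1{N_{\bn}}\sum_j\min\{(g+x_j)^2,|g+x_j|\}\ \ge\ c\,\frac{g^2+\delta^2}{1+g^2+\delta^2},
\]
uniformly in \(g\in\R\) and \(\delta\in[\alpha_--\alpha_0,\alpha_+-\alpha_0]\).

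\textbf{Three regimes.}
\begin{enumerate}
\item[(a)] \emph{Large \(|g|\).} Uniform operator-norm bounds for \(W_{\delta,\bn}\) and \(W_{\delta,\bn}^{-1}\) on the bulk should follow from the regular-core bounds of Theorem~\ref{thm:quantitative-sampled-covariance-glt}, with the low-rank part \(H\) affecting at most a fraction \(Cr^d\) of eigenvalues. This should give \(|x_j|\le M\) for at least half of the indices, uniformly in \(\delta\). If \(|g|\ge 2M\), those indices contribute at least \(cN\min(g^2,|g|)\), which dominates the right-hand side.
\item[(b)] \emph{Small \((g,\delta)\).} For \(|g|+|\delta|\le\eta\), I would Taylor expand \(\log\mu_j\approx\delta k_j\), where the \(k_j\) are the eigenvalues of \(\mathsf K_{\delta,\bn}\approx R_{\alpha,\bn}\). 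Then
\[
\sum_j(g+\delta k_j)^2=N\,\bigl[g^2+2g\delta\,\tfrac{\operatorname{tr}R}{N}+\delta^2\,\tfrac{\operatorname{tr}R^2}{N}\bigr]+o(N)(g^2+\delta^2).
\]
The trace limits behind the score covariance give \(\operatorname{tr}R/N\to m_1\) and \(\operatorname{tr}R^2/N\to m_2\). Positive definiteness of \(J\) (\(v_{\alpha_0}>0\)) then yields the bound \(\ge cN(g^2+\delta^2)\). Two controls are needed here: continuity of \(\mathsf K_{\delta,\bn}\) in \(\delta\) in normalized Hilbert--Schmidt norm, and a truncation that discards the \(o(N)\) exceptional eigenvalues. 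Discarding terms is legitimate because each summand is nonnegative, so only a lower bound is required.
\item[(c)] \emph{Intermediate regime} \(|g|\le 2M\), \(|\delta|\ge\eta\) or \(|g|\ge\eta\). Here it suffices to get \(\ge cN\). Take a bounded continuous \(F\le\min(x^2,|x|)\) that vanishes only at \(0\). The GLT limit gives
\[
\frac1N\sum_jF(g+x_j)\to\int_QF\bigl(g+\log(\psi_\alpha/\psi_{\alpha_0})\bigr)\,d\nu_Q.
\]
This integral is positive because \(\log(\psi_\alpha/\psi_{\alpha_0})\sim-2\delta\log\|\bth\|\) near \(0\) is nonconstant when \(\delta\ne0\), and is \(g\ne0\) when \(\delta=0\). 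Continuity in \((g,\delta)\) and compactness give a positive minimum.
\end{enumerate}

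\textbf{Main obstacle.} The hardest step is making the GLT limit in (c), and the trace approximation in (b), uniform over the compact \((g,\delta)\)-set, since GLT gives only pointwise-in-parameter convergence. My first attempt would be an equicontinuity argument. One needs \(\delta\mapsto W_{\delta,\bn}\) to be Lipschitz in the normalized trace-type distance \(N^{-1}\|\cdot\|_1\) outside a low-rank set of rank \(\le C(r^d+h^d\log^dh^{-1})N\). This should follow from the derivative bound \eqref{eq:spectral-tail-alpha-remainder-bound} together with Lemma~\ref{lem:localized-fourier-kernel-compression} applied to the \(\alpha\)-derivative of the spectral density. Since \(F\) is Lipschitz, this Lipschitz bound transfers to \(N^{-1}\sum F(g+x_j)\). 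Finitely many grid points in \((g,\delta)\) plus this uniform modulus then give uniformity.
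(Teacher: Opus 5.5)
Your outline is sound and, with the uniformity estimates you flag, it would prove the proposition. The paper, however, argues quite differently.

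\textbf{The paper's route.} It replaces $\tfrac12\log\cosh(x/2)$ by the bounded Cayley signal, $-\log\mathcal A\ge\tfrac14\tr\{\mathfrak C(V_{\varphi,\bn})^2\}$ with $\mathfrak C(A)=(A-I)(A+I)^{-1}$. It gets uniformity by contradiction along parameter sequences rather than by explicit moduli.
\begin{itemize}
\item[(i)] Along a bad sequence with $q_{\bn}=(g^2+\delta^2)^{1/2}\to0$, it normalizes by $q_{\bn}$ and extracts a limiting direction $(u,v)$. The secant GLT limit for $\delta_{\bn}\to0$ then gives $q_{\bn}^{-1}\mathfrak C(V)\sim_{\GLT}(u+vr_{\alpha_0})/2$. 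A test function $\chi\in C_c(\R)$ with $0\le\chi(x)\le x^2$ yields the quadratic bound with no trace limits and no uniform integrability. Nondegeneracy enters as $u+vr_{\alpha_0}\not\equiv0$.
\item[(ii)] When $q_{\bn}$ stays bounded below, a saturation lemma finishes: the normalized Cayley trace tends to $1$ if $e^{g}\to0$ or $\infty$. This is combined with GLT along $\alpha_{\bn}\to\alpha_*$ and the bound $|g_{\bn}|\ge a_{\bn}|\delta|-C$ on $\Theta_{\mathrm{tail}}$.
\end{itemize}
That last bound makes your regime (c) with $|\delta|\ge\eta$ empty for large $n$. Decoupling $g$ from $\delta$, as you do, proves more than is needed.

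\textbf{What your route buys and costs.} It is more constructive, and (a), by counting eigenvalues from the regular core, neatly replaces the saturation lemma. It costs two estimates the paper never needs in that form.
\begin{itemize}
\item[(1)] A Lipschitz-modulo-low-rank bound for $\delta\mapsto W_{\delta,\bn}$. This does hold: it follows from $\|\dot b_{h,\alpha}^{r}\|_{L^\infty(Q)}\le C_r$, the Toeplitz trace-norm bound, and common-range compression. You must, however, rerun Theorem~\ref{thm:quantitative-sampled-covariance-glt} with constants uniform in $\alpha$, using Lemma~\ref{lem:uniform-tail-family-comparison}.
\item[(2)] In (b), uniform square-integrability of the spectra of $\mathsf K_{\delta,\bn}$ for small $|\delta|$. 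Discarding $o(N_{\bn})$ eigenvalues is harmless only if their share of $\delta^2\tr\mathsf K_{\delta,\bn}^2$ is also small. That is the tail bound of Lemma~\ref{lem:local-fixed-whitening-secant-tails}. It is proved along sequences $\delta_{\bn}\to0$, and a subsequence argument makes it uniform as $\eta\downarrow0$.
\end{itemize}
The same subsequence device also disposes of your stated ``main obstacle'' more cheaply than equicontinuity plus a finite net.

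\textbf{Minor point.} Your regimes miss the set $\max\{|g|,|\delta|\}<\eta<|g|+|\delta|$; use $\eta/2$ in (c).
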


In particular, for every \(\varepsilon>0\), there is
\(c_\varepsilon>0\) such that
\[
 \inf_{\substack{\varphi\in\Theta_{\mathrm{tail}}\\
                  \|\varphi-\varphi_0\|\ge\varepsilon}}
 \{-\log\mathcal A(P_{\varphi_0,\bn},P_{\varphi,\bn})\}
 \ge c_\varepsilon\frac{N_{\bn}}{a_{\bn}^2}\longrightarrow\infty.
\]
Let \(U\) be a fixed neighborhood of \(\varphi_0\) relative to
\(\Theta_{\mathrm{tail}}\). The preceding bound separates \(\varphi_0\)
uniformly from \(\Theta_{\mathrm{tail}}\setminus U\) at rate
\(N_{\bn}/a_{\bn}^2\). Although \(U\) is fixed, a potential maximizer in its
complement may vary with \(\bn\). The proof of
Theorem~\ref{thm:tail-chart-consistency} in Appendix~\ref{app:likelihood-separation-and-lan} therefore covers the
entire complement by a finite parameter grid \(\mathcal T_{\bn}\) satisfying
\[
 \log|\mathcal T_{\bn}|=O(a_{\bn}),
 \qquad
 \frac{N_{\bn}}{a_{\bn}^2}\gg\log|\mathcal T_{\bn}|.
\]
A modulus bound for \(\ell_{\bn}\) ensures that any likelihood improvement
outside \(U\) is reproduced, up to a negligible error, at a point of
\(\mathcal T_{\bn}\). At each grid point, the pairwise affinity inequality
controls the probability of such an improvement. The displayed comparison
then allows a union bound over \(\mathcal T_{\bn}\).

Thus, with probability tending to one, no maximizer lies outside \(U\).
Because \(U\) is arbitrary, this gives the global consistency step underlying
Theorem~\ref{thm:tail-chart-consistency}. Determining the rate of the
maximizer requires the local analysis developed next.

\subsection{Local expansion and estimator localization}

We now return to the local chart \(\varphi_{\bn}(u)\) defined in
Section~\ref{sec:maximum-likelihood}. Its coordinates satisfy
\[
 u_\tau
 =
 \sqrt{N_{\bn}}\,g_{\bn}\{\varphi_{\bn}(u)\},
 \qquad
 u_\alpha
 =
 \sqrt{N_{\bn}}(\alpha-\alpha_0).
\]
Thus, \(u_\tau\) measures displacement across the level set
\(g_{\bn}=0\), whereas \(u_\alpha\) is the coordinate in its compensated
tangent direction.

The feasible domain of the local chart is
$
 U_{\bn}
 :=
 \{u:\varphi_{\bn}(u)\in\Theta_{\mathrm{tail}}\}.
$
Because \(\varphi_0\) is interior and
\(\|\Delta_{\bn}\|_2=O(a_{\bn}/\sqrt{N_{\bn}})\to0\), every fixed \(u\)-ball
is contained in \(U_{\bn}\) for all sufficiently large sample sizes. For
\(u\in U_{\bn}\), define the local log-likelihood ratio by
\[
 \Lambda_{\bn}(u)
 :=
 \ell_{\bn}\{\varphi_{\bn}(u)\}-\ell_{\bn}(\varphi_0).
\]
The next theorem gives a uniform quadratic expansion of
\(\Lambda_{\bn}\) on each fixed \(u\)-ball.

\begin{theorem}
\label{thm:compact-lan}
Suppose Assumption~\ref{ass:differentiated-spectral-tail-family} holds.
Then, under \(P_{\varphi_0,\bn}\), for every \(M<\infty\),
\[
 \sup_{\|u\|\le M}
 \left|
 \Lambda_{\bn}(u)
 -u^T\mathcal S_{\bn}
 +\frac12u^TJu
 \right|
 \xrightarrow{\mathbb P}0.
\]
\end{theorem}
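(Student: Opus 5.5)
The plan is to write the log-likelihood ratio exactly in whitened form and then expand it by eigenvalues. Using the relative covariance $V_{\bn}(u):=V_{\varphi_{\bn}(u),\bn}$ and $Z_{\bn}=\Sigma_{\varphi_0,\bn}^{-1/2}Y_{\bn}$, we have
\[
 \Lambda_{\bn}(u)=-\tfrac12\log\det V_{\bn}(u)-\tfrac12 Z_{\bn}^T\{V_{\bn}(u)^{-1}-I\}Z_{\bn}.
\]
By the factorization \eqref{eq:working-full-relative-covariance-secant} at $\varphi=\varphi_{\bn}(u)$, we have $g_{\bn}=u_\tau/\sqrt{N_{\bn}}$ and $\delta:=\alpha-\alpha_0=u_\alpha/\sqrt{N_{\bn}}$. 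Hence
\[
 V_{\bn}(u)=e^{u_\tau/\sqrt{N_{\bn}}}\{I+\delta\mathsf K_{\delta,\bn}\}.
\]
Write $D_{\bn}(u):=(u_\tau/\sqrt{N_{\bn}})I+\delta\mathsf K_{\delta,\bn}$ to first order, and $E_{\bn}:=V_{\bn}(u)-I$. I will use the scalar expansions $-\tfrac12\log(1+x)=-\tfrac12x+\tfrac14x^2+O(x^3)$ and $(1+x)^{-1}-1=-x+x^2+O(x^3)$. Applied spectrally to the Hermitian matrix $E_{\bn}$, these give
\[
 \Lambda_{\bn}(u)=\tfrac12\{Z^TE Z-\operatorname{tr}E\}-\tfrac12 Z^TE^2Z+\tfrac14\operatorname{tr}E^2+\mathrm{Rem}.
\]
The remainder is bounded by $C\|E\|_2\,(\operatorname{tr}E^2+Z^TE^2Z)$, provided $\|E_{\bn}\|_2\to0$ uniformly in $\|u\|\le M$.

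\textbf{Step 1 (uniform operator and trace control of the secant).} The main obstacle is to show that
\[
 \sup_{|\delta|\le M/\sqrt{N_{\bn}}}\|\mathsf K_{\delta,\bn}\|_2=o(\sqrt{N_{\bn}})
\]
(in fact $O(\log(1/h))$ should hold), and that
\[
 \sup_{|\delta|\le M/\sqrt{N_{\bn}}}N_{\bn}^{-1}\|\mathsf K_{\delta,\bn}-R_{\alpha,\bn}\|_{\mathrm{HS}}^2\to0 .
\]
I would attack this at the symbol level. The secant $\delta^{-1}(\overline S_{\alpha_0+\delta}-\overline S_{\alpha_0})$ equals $\int_0^1\partial_\alpha\overline S_{\alpha_0+t\delta}\,dt$. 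Assumption~\ref{ass:differentiated-spectral-tail-family} bounds it by $C(1+\|\bomega\|^2)^{-\alpha_0+|\delta|}\{1+\log(2+\|\bomega\|)\}$. The whitened matrix has the same structure as $G_{\alpha,\bn}$, namely a Toeplitz matrix with a log-weighted symbol sandwiched between $\Sigma_{\varphi_0,\bn}^{-1/2}$. The Rayleigh-quotient identity \eqref{eq:toeplitz-quadratic-form}, the lower bound of Lemma~\ref{lem:renorm-glt-inversion}, and the pointwise comparison $|\partial_\alpha\overline S|\lesssim\log(2+\|\bomega\|)\,\overline S_{\alpha_0}\,(1+\|\bomega\|^2)^{|\delta|}$ together bound the operator norm. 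Since $\|\bomega\|\lesssim h^{-1}$ on the principal alias, the factor $(1+\|\bomega\|^2)^{|\delta|}$ is $h^{-2|\delta|}=1+o(1)$ uniformly because $|\delta|\log(1/h)\to0$. Aliases need the same treatment, with the tail controlled by summability. The log factor gives $\|\mathsf K_{\delta,\bn}\|_2=O(a_{\bn})$ and hence $\|E_{\bn}\|_2=O(a_{\bn}/\sqrt{N_{\bn}})\to0$. For the Hilbert--Schmidt closeness, I would bound the Toeplitz symbol difference $\mathsf K_\delta-\mathsf K_0$ pointwise by $C|\delta|\log^2$. The factor $h^{-2|\delta|}$ enters only through $|\delta|a_{\bn}\to0$, and the same sandwich argument then gives an operator-norm bound $O(|\delta|a_{\bn}^2)=o(1)$, which is stronger than needed.

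\textbf{Step 2 (linear term).} With $E=D+O(\|D\|_2^2)$ entrywise in operator norm, the term $\tfrac12\{Z^TDZ-\operatorname{tr}D\}$ equals $u^T\mathcal S_{\bn}$, with $\mathsf K_\delta$ in place of $R_{\alpha,\bn}$ in the second coordinate. The replacement error is
\[
 \tfrac{u_\alpha}{2\sqrt{N_{\bn}}}\{Z^T(\mathsf K_\delta-R)Z-\operatorname{tr}(\cdot)\},
\]
which has variance $u_\alpha^2N_{\bn}^{-1}\|\mathsf K_\delta-R\|_{\mathrm{HS}}^2/2\to0$. The second-order part of $E-D$ (from $e^{x}$ and the product) contributes $O(\|D\|^2)$ times a centered quadratic form, of order $a_{\bn}^2/\sqrt{N_{\bn}}\to0$ in $L^2$.

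\textbf{Step 3 (quadratic term).} We have $-\tfrac12Z^TE^2Z+\tfrac14\operatorname{tr}E^2=-\tfrac14\operatorname{tr}E^2+O_P(\|E^2\|_{\mathrm{HS}})$, and $\|E^2\|_{\mathrm{HS}}\le\|E\|_2\|E\|_{\mathrm{HS}}\to0$. Moreover $\operatorname{tr}D^2=N_{\bn}^{-1}\{u_\tau^2N_{\bn}+2u_\tau u_\alpha\operatorname{tr}R+u_\alpha^2\operatorname{tr}R^2\}+o(1)$, which converges to $2u^TJu$ by the trace limits used for $\operatorname{Cov}(\mathcal S_{\bn})\to J$. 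This gives $-\tfrac12u^TJu$. The cubic remainder is $O(\|E\|_2)\,O_P(1)\to0$.

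\textbf{Uniformity.} All bounds above are uniform over $\|u\|\le M$ except the stochastic ones. For those I would use equicontinuity: $\partial_u\Lambda_{\bn}$ is bounded in probability uniformly on the ball, since it has the same structure with one fewer order. Combined with pointwise convergence on a finite net, this yields the supremum statement.
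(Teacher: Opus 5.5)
You have the right algebraic skeleton, and it matches the paper: the whitened representation, the factorization $V=e^{u_\tau/\sqrt{N_{\bn}}}\{I+\delta\mathsf K_{\delta,\bn}\}$, the log-determinant/resolvent expansion, the bound $\|\mathsf K_{\delta,\bn}\|_2=O(a_{\bn})$, and the trace limit giving $-\tfrac12u^TJu$. However, two steps fail under Assumption~\ref{ass:differentiated-spectral-tail-family}.

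\emph{Hilbert--Schmidt closeness in Step~1.} A pointwise bound $C|\delta|\log^2$ on the symbol of $\mathsf K_{\delta,\bn}-R_{\alpha,\bn}$, relative to $b_{h,\alpha_0}$, is a second-order Taylor bound in $\alpha$. The assumption only makes $S^{\mathrm{rem}}_\alpha$ continuously differentiable in $\alpha$ with a first-derivative envelope. It gives no second derivative, and not even joint continuity of $\partial_\alpha S^{\mathrm{rem}}_\alpha$. Your bound holds for the reference part $(1+\|\bomega\|^2)^{-\alpha}$. The term $h^{-2\alpha_0}\{\partial_\alpha S^{\mathrm{rem}}_{\alpha_0+s}-\partial_\alpha S^{\mathrm{rem}}_{\alpha_0}\}(\bth/h)$, however, is relative to $b_{h,\alpha_0}$ only of order one on $\|\bth\|\lesssim h$, with no rate in $s$. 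So the operator-norm bound $O(|\delta|a_{\bn}^2)$ is unavailable, and normalized Hilbert--Schmidt closeness must be proved without it.

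The paper does this in Lemma~\ref{lem:local-fixed-whitening-score-continuity}, in three parts:
\begin{itemize}
\item Off $B_r(0)$, the remainder derivative is uniformly small relative to $b_{h,\alpha_0}$, so an $L^2$ symbol bound suffices there.
\item The principal alias on $B_r(0)$ is compressed onto $O(r^dN_{\bn}+a_{\bn}^d)$ directions via Lemma~\ref{lem:explicit-localized-fourier-kernel-compression}.
\item The resulting eigenvalue count is turned into a Hilbert--Schmidt bound using the uniform fourth-moment estimate of Lemma~\ref{lem:local-fixed-whitening-score-spectral-tails} and Cauchy--Schwarz.
\end{itemize}
Some such low-frequency localization is genuinely needed. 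A whitened Toeplitz matrix is not diagonal in the Fourier basis, so smallness of a symbol outside a small ball does not by itself give smallness on all but $o(N_{\bn})$ directions.

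\emph{Uniformity.} The claim $\sup_{\|u\|\le M}\|\partial_u\Lambda_{\bn}(u)\|=O_{\mathbb P}(1)$ is itself a supremum over a continuum of centered Gaussian chaoses $Z^TB(u)Z-\tr B(u)$, where $B(u)=(I+E)^{-1}(\partial_{u_j}E)(I+E)^{-1}$. Pointwise bounds do not control it. The crude bound $\|Z\|^2\|B(u)\|_2$ is only $O_{\mathbb P}(a_{\bn}\sqrt{N_{\bn}})$. Bounding its increments would need $u$-derivatives of $B$, i.e.\ second $\alpha$-derivatives of $\overline S_\alpha$, which again are not assumed.

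Only your cubic remainder can be handled crudely, since $\|E\|_2^3\|Z\|^2=O_{\mathbb P}(a_{\bn}^3/\sqrt{N_{\bn}})$. The centered term $Z^TE^2Z-\tr E^2$ and the replacement of $\delta\mathsf K_{\delta,\bn}$ by $\delta R_{\alpha,\bn}$ cannot be handled this way, even granting your operator-norm claim.

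The paper never touches the derivative process. It applies the chaining tail bound of Lemma~\ref{lem:gaussian-quadratic-process-chaining} directly to the processes that must vanish. These are $D_{\bn}-L_{\bn}$ and $f(D_{\bn})$ for $f(x)=x^2$ and $f(x)=-x^3/(1+x)$, where the paper's $D_{\bn}=V-I$ is your $E$ (Corollaries~\ref{cor:vanishing-gaussian-quadratic-process} and~\ref{cor:smooth-transform-vanishing-gaussian-quadratic-process}, the latter via the Daleckii--Krein formula). This needs only first-order information, all supplied by Corollary~\ref{cor:compact-relative-covariance-expansion}: $\sup_u\|D_{\bn}(u)\|_2\to0$, $\sup_u\|\partial_{u_j}D_{\bn}(u)\|_{\mathrm{HS}}=O(1)$, and $\sup_u\|\partial_{u_j}(D_{\bn}-L_{\bn})(u)\|_{\mathrm{HS}}\to0$.

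With additional $\alpha$-regularity of $\overline S_\alpha$ (higher derivatives with logarithmic envelopes), your more elementary Taylor-expansion route could be completed. That, however, is a stronger hypothesis than the theorem's.
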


Completing the quadratic form in
Theorem~\ref{thm:compact-lan} identifies its center. Write
\[
 u_{\bn}^{\mathrm q}:=J^{-1}\mathcal S_{\bn},
 \qquad
 \varphi_{\bn}^{\mathrm q}
 :=
 \varphi_0+\Delta_{\bn}u_{\bn}^{\mathrm q}
 =
 (\tau_{\bn}^{\mathrm q},\alpha_{\bn}^{\mathrm q})^T.
\]

\begin{corollary}
\label{cor:local-likelihood-geometry}
Under the assumptions of Theorem~\ref{thm:compact-lan}, for every
\(M<\infty\),
\[
 \sup_{\|u\|\le M}
 \left|
 \Lambda_{\bn}(u)
 -\frac12\mathcal S_{\bn}^{T}J^{-1}\mathcal S_{\bn}
 +\frac12
 (u-u_{\bn}^{\mathrm q})^{T}
 J
 (u-u_{\bn}^{\mathrm q})
 \right|
 \xrightarrow{\mathbb P}0.
\]
Consequently, any locally tight sequence
$
 \widehat u_{\bn}
 \in
 \argmax_{u\in U_{\bn}}\Lambda_{\bn}(u)
$
satisfies
$
 \widehat u_{\bn}
 =
 u_{\bn}^{\mathrm q}+o_{\mathbb P}(1).
$
In the original coordinates, the quadratic loss is
\[
 \frac12
 (u-u_{\bn}^{\mathrm q})^{T}
 J
 (u-u_{\bn}^{\mathrm q})
 =
 \frac{N_{\bn}}4
 \begin{pmatrix}
  \tau-\tau_{\bn}^{\mathrm q}\\
  \alpha-\alpha_{\bn}^{\mathrm q}
 \end{pmatrix}^{T}
 \begin{pmatrix}
  1 & -(a_{\bn}-m_1)\\
  -(a_{\bn}-m_1) & (a_{\bn}-m_1)^2+v_{\alpha_0}
 \end{pmatrix}
 \begin{pmatrix}
  \tau-\tau_{\bn}^{\mathrm q}\\
  \alpha-\alpha_{\bn}^{\mathrm q}
 \end{pmatrix}.
\]
For each fixed \(\alpha\), this loss is minimized over \(\tau\), and hence
the quadratic approximation to \(\Lambda_{\bn}\) is maximized, when
\[
 (\tau-\tau_{\bn}^{\mathrm q})
 -(a_{\bn}-m_1)(\alpha-\alpha_{\bn}^{\mathrm q})
 =
 0.
\]
\end{corollary}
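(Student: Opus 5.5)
The corollary has three parts: a completed-square identity, an argmax statement for locally tight maximizers, and a change of coordinates. I will take them in that order, using only Theorem~\ref{thm:compact-lan}, positive definiteness of \(J\), and the explicit form of \(\Delta_{\bn}\).

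\emph{Completing the square.} Since \(J\) is symmetric and positive definite (\(\det J=v_{\alpha_0}/4>0\)) and \(u_{\bn}^{\mathrm q}=J^{-1}\mathcal S_{\bn}\), we have the exact identity
\[
u^T\mathcal S_{\bn}-\tfrac12u^TJu=\tfrac12\mathcal S_{\bn}^TJ^{-1}\mathcal S_{\bn}-\tfrac12(u-u_{\bn}^{\mathrm q})^TJ(u-u_{\bn}^{\mathrm q}).
\]
Substituting this into Theorem~\ref{thm:compact-lan} gives the first display, with the same \(o_{\mathbb P}(1)\) uniformly on \(\|u\|\le M\).

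\emph{Argmax statement.} By Theorem~\ref{thm:tail-exponent-gradient-information}, \(\mathcal S_{\bn}\) is tight, so \(u_{\bn}^{\mathrm q}\) is tight. Fix \(\eta,\varepsilon>0\). Choose \(M\) so that, with probability at least \(1-\eta\), both \(\|u_{\bn}^{\mathrm q}\|\le M/2\) and \(\|\widehat u_{\bn}\|\le M/2\); this uses the local tightness hypothesis. Let \(\lambda>0\) be the smallest eigenvalue of \(J\), and write \(q_{\bn}(u)\) for the quadratic approximation. On the ball \(\|u\|\le M\):
\begin{itemize}
\item \(\Lambda_{\bn}(\widehat u_{\bn})\ge\Lambda_{\bn}(u_{\bn}^{\mathrm q})\), since \(\widehat u_{\bn}\) maximizes \(\Lambda_{\bn}\) and \(u_{\bn}^{\mathrm q}\) is feasible for large \(\bn\).
\item The uniform approximation then gives \(q_{\bn}(\widehat u_{\bn})\ge q_{\bn}(u_{\bn}^{\mathrm q})-o_{\mathbb P}(1)\).
\item Hence \(\tfrac12\lambda\|\widehat u_{\bn}-u_{\bn}^{\mathrm q}\|^2\le o_{\mathbb P}(1)\).
\end{itemize}
Since \(\eta\) is arbitrary, \(\widehat u_{\bn}=u_{\bn}^{\mathrm q}+o_{\mathbb P}(1)\). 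This is the standard argmax-continuity argument. The only care needed is the order of quantifiers: \(M\) is chosen first, uniformly in \(\bn\).

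\emph{Change of coordinates.} Because \(\varphi-\varphi_{\bn}^{\mathrm q}=\Delta_{\bn}(u-u_{\bn}^{\mathrm q})\), the loss equals
\[
\tfrac12(\varphi-\varphi_{\bn}^{\mathrm q})^T\Delta_{\bn}^{-T}J\Delta_{\bn}^{-1}(\varphi-\varphi_{\bn}^{\mathrm q}).
\]
We have
\[
\Delta_{\bn}^{-1}=\sqrt{N_{\bn}}\begin{pmatrix}1&-a_{\bn}\\0&1\end{pmatrix}.
\]
A direct multiplication with \(J=\tfrac12\bigl(\begin{smallmatrix}1&m_1\\ m_1&m_2\end{smallmatrix}\bigr)\) gives the following entries for \(\Delta_{\bn}^{-T}J\Delta_{\bn}^{-1}\):
\begin{itemize}
\item the \((1,1)\) entry is \(N_{\bn}/2\);
\item the off-diagonal entry is \(\tfrac{N_{\bn}}2(m_1-a_{\bn})\);
\item the \((2,2)\) entry is \(\tfrac{N_{\bn}}2(a_{\bn}^2-2a_{\bn}m_1+m_2)=\tfrac{N_{\bn}}2\{(a_{\bn}-m_1)^2+v_{\alpha_0}\}\).
\end{itemize}
With the outer factor \(\tfrac12\), this yields the stated \(N_{\bn}/4\) form.

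\emph{Minimizing over \(\tau\).} For fixed \(\alpha\), the loss is a convex quadratic in \(\tau\) with leading coefficient \(N_{\bn}/4>0\). Setting its \(\tau\)-derivative to zero gives \((\tau-\tau_{\bn}^{\mathrm q})-(a_{\bn}-m_1)(\alpha-\alpha_{\bn}^{\mathrm q})=0\). Because the loss enters the quadratic approximation with a negative sign, this minimizer maximizes the approximation to \(\Lambda_{\bn}\).

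The main obstacle is the argmax step, specifically the justification that the maximizer over the full feasible set \(U_{\bn}\) can be compared on a fixed ball. This is exactly what the local tightness hypothesis supplies. The remaining steps are algebraic.
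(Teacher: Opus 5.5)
Your proposal is correct and follows essentially the same route as the paper: complete the square via Theorem~\ref{thm:compact-lan}, use local tightness and the score limit to place both $\widehat u_{\bn}$ and $u_{\bn}^{\mathrm q}$ in a fixed ball with high probability, note that this ball lies in $U_{\bn}$ eventually because $\varphi_0$ is interior, and bound $\tfrac12\lambda_{\min}(J)\|\widehat u_{\bn}-u_{\bn}^{\mathrm q}\|^2$ by twice the uniform LAN remainder. Your explicit computation of $\Delta_{\bn}^{-T}J\Delta_{\bn}^{-1}$ and of the $\tau$-minimizer supplies algebra that the paper leaves implicit, and it checks out.
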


{
\begin{proof}
Choose \(M<\infty\) sufficiently large, and write
\[
 r_{\bn,M}(u)
 :=\Lambda_{\bn}(u)-u^T\mathcal S_{\bn}
 +\frac12u^TJu,
 \qquad \|u\|\le M.
\]
By Theorem~\ref{thm:compact-lan},
\(\sup_{\|u\|\le M}|r_{\bn,M}(u)|=o_{\mathbb P}(1)\).
The local tightness of \(\widehat u_{\bn}\) and the tightness of
\(u_{\bn}^{\mathrm q}=J^{-1}\mathcal S_{\bn}\) ensure that both points
belong to the ball with probability arbitrarily close to one.  Because
\(\varphi_0\) is an interior point of
\(\Theta_{\mathrm{tail}}\), that fixed ball is contained in \(U_{\bn}\)
for all sufficiently large sample sizes.  Thus
\(u_{\bn}^{\mathrm q}\in U_{\bn}\) on this high-probability event, and
the maximality of \(\widehat u_{\bn}\) gives
\[
 \frac12\lambda_{\min}(J)
 \|\widehat u_{\bn}-u_{\bn}^{\mathrm q}\|^2
 \le
 \frac12(\widehat u_{\bn}-u_{\bn}^{\mathrm q})^TJ
 (\widehat u_{\bn}-u_{\bn}^{\mathrm q})
 \le2\sup_{\|u\|\le M}|r_{\bn,M}(u)|
 =o_{\mathbb P}(1).
\]
The asserted argmax approximation follows.
\end{proof}
}

The argmax conclusion in
Corollary~\ref{cor:local-likelihood-geometry} is conditional on local
tightness. This condition does not follow from consistency alone: although
\(\Theta_{\mathrm{tail}}\) is compact, \(U_{\bn}\) expands as
\(\Delta_{\bn}\to0\), so
\[
 \widehat\varphi_{\bn}\to\varphi_0
 \quad\not\Longrightarrow\quad
 \widehat u_{\bn}
 =
 \Delta_{\bn}^{-1}
 (\widehat\varphi_{\bn}-\varphi_0)
 =
 O_{\mathbb P}(1).
\]

Appendix~\ref{app:likelihood-separation-and-lan} supplies this final localization step and proves
\(\widehat u_{\bn}=O_{\mathbb P}(1)\). Combining this result with
Corollary~\ref{cor:local-likelihood-geometry} and
Theorem~\ref{thm:tail-exponent-gradient-information} yields the estimator
limits stated in Section~\ref{sec:maximum-likelihood}.
Section~\ref{sec:common-tail-misspecification} shows that the same limits hold
when the data follow the misspecified law \(P^\star_{\bn}\).

\section{Misspecification with a common leading tail}
\label{sec:common-tail-misspecification}

This section shows that the maximum likelihood estimator under the fitted
model remains consistent and asymptotically normal when the covariance model
is mildly misspecified. Suppose that
\(Y_{\bn}\sim P^\star_{\bn}:=\mathcal N(0,\Sigma^\star_{\bn})\), where
\(\Sigma^\star_{\bn}\) is obtained by sampling the covariance function
associated with a spectral density \(S^\star\). The fitted model, however, is
based on the spectral density \(S_\varphi\). {The following
assumption identifies \(\varphi_0\) as the reference parameter within the
spectral family under misspecification.}

\begingroup
\begin{assumption}
\label{ass:common-leading-tail-misspecification}
Let \(S^\star\) be a continuous, even, and strictly positive spectral
density. Suppose that the spectral family
\(\{S_\varphi:\varphi\in\mathcal U\}\) satisfies
Assumption~\ref{ass:differentiated-spectral-tail-family} on an open
neighborhood \(\mathcal U\) of a reference parameter
\(\varphi_0=(\tau_0,\alpha_0)\). Write \(S_{\varphi_0}\) for the reference
spectrum.
Assume that there are \(\eta>0\) and constants \(0<c\le C<\infty\) such that
\(4\eta>d\) and
\[
 |S^\star(\bomega)-S_{\varphi_0}(\bomega)|
 \le C(1+\|\bomega\|^2)^{-\alpha_0-\eta}
 \qquad\text{for all \(\bomega\in\R^d\).}
\]
Moreover,
\[
 cS_{\varphi_0}(\bomega)\le S^\star(\bomega)\le CS_{\varphi_0}(\bomega)
 \qquad\text{for all \(\bomega\in\R^d\).}
\]
\end{assumption}
\endgroup

\begingroup

To explain the role of
Assumption~\ref{ass:common-leading-tail-misspecification}, let
$
 C_{\bn}:=\Sigma_{\varphi_0,\bn}^{-1/2}\Sigma^\star_{\bn}
 \Sigma_{\varphi_0,\bn}^{-1/2},
$
Under \(P^\star_{\bn}\), this is the covariance matrix after whitening by
\(\Sigma_{\varphi_0,\bn}\). The assumption ensures that the eigenvalues of
\(C_{\bn}\) are bounded above and away from zero, uniformly in the sample
size, and that \(C_{\bn}-I_{N_{\bn}}\) remains bounded in Hilbert--Schmidt
norm. These conclusions imply mutual contiguity between
\(P^\star_{\bn}\) and \(P_{\varphi_0,\bn}\), which transfers events whose
probabilities tend to zero.
Contiguity alone does not preserve a nondegenerate centered-score limit.
For example, consider the Gaussian laws on \(\R^{N_{\bn}}\)
\[
 \widetilde P^0_{\bn}:=\mathcal N(0,I_{N_{\bn}}),
 \qquad \widetilde P^1_{\bn}:=\mathcal N(0,\widetilde C_{\bn}),
 \qquad \widetilde C_{\bn}:=
 \bigl(1+N_{\bn}^{-1/2}\bigr)I_{N_{\bn}}.
\]
These laws are mutually contiguous. Nevertheless,
\[
 \frac{\widetilde Z_{\bn}^T\widetilde Z_{\bn}-N_{\bn}}{2\sqrt{N_{\bn}}}
 \Rightarrow
 \begin{cases}
  \mathcal N(0,1/2),
  &\text{under \(\widetilde P^0_{\bn}\),}\\
  \mathcal N(1/2,1/2),
  &\text{under \(\widetilde P^1_{\bn}\).}
\end{cases}
\]
This example shows that contiguity does not suffice to transfer the score
limit. 

Under Assumption~\ref{ass:common-leading-tail-misspecification}, all the
preceding results remain valid under \(P^\star_{\bn}\). The following theorem
summarizes the main conclusions.

\begin{theorem}
\label{thm:misspecified-tail-mle-transfer}
Under Assumption~\ref{ass:common-leading-tail-misspecification}, the
consistency conclusion of Theorem~\ref{thm:tail-chart-consistency}, the
compact working-likelihood (quasi-LAN) expansion of
Theorem~\ref{thm:compact-lan}, the joint MLE normality conclusion of
Theorem~\ref{thm:working-tail-mle-normality}, and both restricted-estimator
conclusions of Corollary~\ref{cor:known-tail-parameter-mles} hold under
\(P^\star_{\bn}\). In particular,
\[
 \Delta_{\bn}^{-1}(\widehat\varphi_{\bn}-\varphi_0)
 \Rightarrow\mathcal N(0,J^{-1}).
\]
\end{theorem}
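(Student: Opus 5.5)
The plan is to split the conclusions into two groups. The first group consists of statements of the form ``an event has probability tending to zero''; these transfer from \(P_{\varphi_0,\bn}\) to \(P^\star_{\bn}\) by mutual contiguity. The second group is the score limit of Theorem~\ref{thm:tail-exponent-gradient-information}; as the example preceding the theorem shows, it does not transfer by contiguity, so I will recompute it directly under \(P^\star_{\bn}\).

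\emph{First group.} Consistency (Theorem~\ref{thm:tail-chart-consistency}), the uniform remainder \(\sup_{\|u\|\le M}|\Lambda_{\bn}(u)-u^T\mathcal S_{\bn}+\tfrac12u^TJu|\xrightarrow{\mathbb P}0\) of Theorem~\ref{thm:compact-lan}, and the localization \(\widehat u_{\bn}=O_{\mathbb P}(1)\) from Appendix~\ref{app:likelihood-separation-and-lan} are all statements of this form. The same holds for their one-dimensional analogues behind Corollary~\ref{cor:known-tail-parameter-mles}. It therefore suffices to show that \(P^\star_{\bn}\) and \(P_{\varphi_0,\bn}\) are mutually contiguous. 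Write \(C_{\bn}=\Sigma_{\varphi_0,\bn}^{-1/2}\Sigma^\star_{\bn}\Sigma_{\varphi_0,\bn}^{-1/2}\). The two-sided bound \(cS_{\varphi_0}\le S^\star\le CS_{\varphi_0}\) passes to the aliased symbols and then, through \eqref{eq:toeplitz-quadratic-form}, to \(c\,\Sigma_{\varphi_0,\bn}\preceq\Sigma^\star_{\bn}\preceq C\,\Sigma_{\varphi_0,\bn}\). Hence the spectrum of \(C_{\bn}\) lies in \([c,C]\). If in addition \(\|C_{\bn}-I\|_{\mathrm{HS}}=O(1)\), the log-likelihood ratio has uniformly bounded variance. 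Le Cam's first lemma then gives mutual contiguity, and with it the first group of conclusions.

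\emph{Second group.} Under \(P^\star_{\bn}\) write \(Z_{\bn}=C_{\bn}^{1/2}\xi\) with \(\xi\sim\mathcal N(0,I)\). Each component of \(\mathcal S_{\bn}\) becomes \((2\sqrt{N_{\bn}})^{-1}\{\xi^TC_{\bn}^{1/2}GC_{\bn}^{1/2}\xi-\operatorname{tr}G\}\) with \(G\in\{I,R_{\alpha,\bn}\}\). The shift relative to the centered form is \((2\sqrt{N_{\bn}})^{-1}\operatorname{tr}\{G(C_{\bn}-I)\}\). The covariance changes by \(O(N_{\bn}^{-1}\|G\|_{\mathrm{HS}}\|G\|_2\|C_{\bn}-I\|_{\mathrm{HS}})\). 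Since \(R_{\alpha,\bn}\) is not uniformly bounded in operator norm, I will handle it with the decomposition of Theorem~\ref{thm:quantitative-sampled-covariance-glt}, exactly as in the correctly specified trace limits. With this, the Gaussian quadratic-form CLT (Lyapunov or fourth-cumulant condition) gives \(\mathcal S_{\bn}\Rightarrow\mathcal N(0,J)\) provided
\[
 \frac{\|C_{\bn}-I\|_1}{\sqrt{N_{\bn}}}\to0 .
\]
Indeed, \(|\operatorname{tr}\{G(C_{\bn}-I)\}|\le\|G\|_2\|C_{\bn}-I\|_1\), and the low-rank part of \(G\) is controlled using the eigenvalue bounds on \(C_{\bn}\). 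This trace-norm condition also implies \(\|C_{\bn}-I\|_{\mathrm{HS}}^2\le C\|C_{\bn}-I\|_1=o(\sqrt{N_{\bn}})\), which suffices for the covariance part. Once \(\mathcal S_{\bn}\Rightarrow\mathcal N(0,J)\) holds, Corollary~\ref{cor:local-likelihood-geometry} together with the transferred tightness gives \(\Delta_{\bn}^{-1}(\widehat\varphi_{\bn}-\varphi_0)\Rightarrow\mathcal N(0,J^{-1})\). The restricted scores are the two components taken separately, so Corollary~\ref{cor:known-tail-parameter-mles} follows in the same way.

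\emph{Main obstacle: the trace bound.} Set \(D_{\bn}:=\Sigma^\star_{\bn}-\Sigma_{\varphi_0,\bn}=T_{\bn}(\phi^D_h)\). Its aliased symbol satisfies \(|\phi^D_h|\le g_h:=h^{-d}\sum_{\bk}C(1+\|(\bth+2\pi\bk)/h\|^2)^{-\alpha_0-\eta}\). By \eqref{eq:toeplitz-quadratic-form}, \(\pm D_{\bn}\preceq T_{\bn}(g_h)\), so splitting \(D_{\bn}\) into positive and negative parts gives \(\|C_{\bn}-I\|_1\le2\operatorname{tr}\{\Sigma_{\varphi_0,\bn}^{-1}T_{\bn}(g_h)\}\). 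I first want \(\operatorname{tr}\{\Sigma_{\varphi_0,\bn}^{-1}T_{\bn}(g_h)\}\lesssim N_{\bn}\int_Q(1+\|\bth\|^2/h^2)^{-\eta}\,d\nu_Q\). The heuristic is that the pointwise ratio \(g_h/\phi_{h}\lesssim(1+\|\bth\|^2/h^2)^{-\eta}\); I will aim to make this rigorous by comparing both matrices with circulant or Fourier-mode proxies via Lemma~\ref{lem:localized-fourier-kernel-compression}, using Lemma~\ref{lem:renorm-glt-inversion} for the inverse. The low-rank pieces have rank \(O(\{1+R+\log\varepsilon^{-1}\}^d)\) and contribute at most their rank times \(O(1)\), because \(C_{\bn}\preceq CI\). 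The integral is of order \(h^{2\eta}\), \(h^d\log h^{-1}\), or \(h^d\) according as \(2\eta<d\), \(2\eta=d\), or \(2\eta>d\). Since \(N_{\bn}\asymp h^{-d}\), this yields \(\|C_{\bn}-I\|_1/\sqrt{N_{\bn}}=O(h^{2\eta-d/2}+h^{d/2}\log h^{-1})\), which tends to zero exactly because \(4\eta>d\). This is the step I expect to require the most care: turning the symbol ratio into a genuine trace bound for \(\Sigma_{\varphi_0,\bn}^{-1}T_{\bn}(g_h)\). The same bound gives the \(\|C_{\bn}-I\|_{\mathrm{HS}}\) control needed for contiguity in the first step.
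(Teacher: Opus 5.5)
Your architecture matches the paper's: contiguity handles the vanishing-probability statements, and the score's centering and covariance are recomputed directly under $P^\star_{\bn}$. The score step is sound once $\|C_{\bn}-I\|_1=o(\sqrt{N_{\bn}}/a_{\bn})$ is in hand. You do not even need the GLT decomposition there, since $\|R_{\alpha,\bn}\|_2\le Ca_{\bn}$ by Lemma~\ref{lem:local-fixed-whitening-path-control}.

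\textbf{The gap is in the contiguity step.} Mutual contiguity of $\mathcal N(0,I)$ and $\mathcal N(0,C_{\bn})$, with spectrum in $[c,C]$, requires $\sup_{\bn}\|C_{\bn}-I\|_{\mathrm{HS}}<\infty$; if it diverges along a subsequence, the Hellinger affinity tends to zero. The only Hilbert--Schmidt control your plan produces is $\|C_{\bn}-I\|_{\mathrm{HS}}^2\le C\|C_{\bn}-I\|_1$. By your own computation, the right side is of order $N_{\bn}h^{2\eta}\asymp h^{2\eta-d}$ when $2\eta<d$, and of order $\log h^{-1}$ when $2\eta=d$. Both are unbounded, so your claim that ``the same bound gives the HS control needed for contiguity'' fails whenever $d/4<\eta\le d/2$. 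That regime is permitted by Assumption~\ref{ass:common-leading-tail-misspecification}. It contains every case of Proposition~\ref{prop:common-leading-tail-covariance-pairs} with $d\ge2$ (where $\eta\le1\le d/2$), including the Mat\'ern--Wendland simulation pair. What you need is the squared form of your heuristic, $\operatorname{tr}\{(C_{\bn}-I)^2\}\lesssim N_{\bn}\int_Q(1+\|\bth\|^2/h^2)^{-2\eta}\,d\nu_Q=O(1)$; this is where $4\eta>d$ genuinely enters. The paper obtains it, together with the trace bound, from the grid-uniform decay $s_j(C_{\bn}-I)\le Kj^{-2\eta/d}$ of Lemma~\ref{lem:relative-covariance-control}.

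\textbf{The tools you name do not deliver this uniformity either.} Lemma~\ref{lem:localized-fourier-kernel-compression} controls the error only in unwhitened operator norm, and Lemma~\ref{lem:renorm-glt-inversion} only gives $\|\Sigma_{\varphi_0,\bn}^{-1}\|_2\le Ch^{d-2\alpha_0}$. The whitened error is therefore of order $h^{-2\alpha_0}\varepsilon$, so $\varepsilon$ must be polynomially small in $h$. The rank bound then becomes $C\{1+R+\log h^{-1}\}^d$, and the leading $\asymp\log^d(1/h)$ singular values of $C_{\bn}-I$ are controlled only by $\|C_{\bn}-I\|_2=O(1)$. This yields $\|C_{\bn}-I\|_1=O(\log^dN_{\bn}+\sum_{j\le N_{\bn}}j^{-2\eta/d})$, which suffices for your score step. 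It gives only $\|C_{\bn}-I\|_{\mathrm{HS}}^2=O(\log^dN_{\bn})$, however, which does not suffice for contiguity.

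The paper removes the grid dependence through the native-space bound $\|\mathcal W_{\bn}g_X\|_2\le C\|g\|_{H^q}$, which is uniform in $\bn$. It applies this to cutoff Taylor residuals of the Fourier modes (Lemmas~\ref{lem:cutoff-taylor-stirling} and~\ref{lem:whitened-localized-fourier-compression}), obtaining rank $O(R^d)$ with whitened error $O(R^{-2\eta})$ independently of the grid. Alternatively, you could import equivalence of the two Gaussian measures on $[0,L]^d$, whose high-frequency integral test on $\{(S^\star-S_{\varphi_0})/S_{\varphi_0}\}^2$ again reduces to $4\eta>d$. You would then use the fact that grid marginals of equivalent laws have uniformly integrable likelihood ratios. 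Either way, some such grid-uniform ingredient is missing from your plan.
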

Our proof uses polynomial approximation in
Appendix~\ref{app:working-likelihood-proofs} to control frequencies near the origin. This yields explicit rates
that may be useful for deriving further results, such as an Edgeworth
expansion. The following result gives a more immediate application.

\begin{lemma}
\label{lem:expected-scale-loglikelihood-misspecification}
Suppose Assumption~\ref{ass:common-leading-tail-misspecification} holds, fix
\(\alpha=\alpha_0\), and define \(\beta:=2\eta/d>1/2\). Let
\(\overline\ell^\star_{\bn}(\tau):=\mathbb E_{P^\star_{\bn}}
\ell_{\bn}(\tau,\alpha_0)\) for \(\tau\in[\tau_-,\tau_+]\). For all
sufficiently large \(N_{\bn}\), its unique maximizer is
\[
\tau^\dagger_{\bn}=\tau_0+\log\left\{
\frac{\operatorname{tr}C_{\bn}}{N_{\bn}}\right\}.
\]
Moreover,
\[
 \tau^\dagger_{\bn}-\tau_0
 =
 \begin{cases}
 O(N_{\bn}^{-\beta}),
     &1/2<\beta<1,\\
 O(N_{\bn}^{-1}\log N_{\bn}),
     &\beta=1,\\
 O(N_{\bn}^{-1}),
     &\beta>1.
 \end{cases}
\]
\end{lemma}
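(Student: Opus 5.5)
First compute the expected log-likelihood explicitly. Since \(\Sigma_{(\tau,\alpha_0),\bn}=e^{\tau-\tau_0}\Sigma_{\varphi_0,\bn}\) by the scale-family representation in Assumption~\ref{ass:differentiated-spectral-tail-family}, we have
\[
\overline\ell^\star_{\bn}(\tau)
=-\tfrac12 N_{\bn}(\tau-\tau_0)-\tfrac12\log\det\Sigma_{\varphi_0,\bn}
-\tfrac12 e^{-(\tau-\tau_0)}\operatorname{tr}C_{\bn}.
\]
This function is strictly concave in \(\tau\), and its derivative vanishes exactly at \(e^{\tau-\tau_0}=\operatorname{tr}C_{\bn}/N_{\bn}\). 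This gives the formula for \(\tau^\dagger_{\bn}\). It is the unique maximizer over \([\tau_-,\tau_+]\) once we know \(\tau^\dagger_{\bn}\to\tau_0\), which lies in the interior; that convergence follows from the rate bound below. So everything reduces to bounding \(N_{\bn}^{-1}\operatorname{tr}(C_{\bn}-I)\), because \(\log(1+x)=O(x)\).

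Next, write \(\operatorname{tr}(C_{\bn}-I)=\operatorname{tr}(\Sigma_{\varphi_0,\bn}^{-1}D_{\bn})\), where \(D_{\bn}:=\Sigma^\star_{\bn}-\Sigma_{\varphi_0,\bn}=T_{\bn}(\delta_h)\) and \(\delta_h\) is the aliased symbol of \(S^\star-S_{\varphi_0}\). Both matrices are positive definite after the sign split: \(D_{\bn}\) is the Toeplitz matrix of a real symbol with \(|\delta_h|\le\) the aliased symbol of \(C(1+\|\bomega\|^2)^{-\alpha_0-\eta}\). It therefore suffices to bound \(\operatorname{tr}(\Sigma_{\varphi_0,\bn}^{-1}T_{\bn}(g_h))\) for the nonnegative majorant symbol \(g_h\), since \(|\operatorname{tr}(A^{-1}T(\delta))|\le\operatorname{tr}(A^{-1}T(g))\) whenever \(-g\le\delta\le g\).

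Renormalize by \(h^{d-2\alpha_0}\). Lemma~\ref{lem:renorm-glt-inversion} gives \(\|(h^{d-2\alpha_0}\Sigma_{\varphi_0,\bn})^{-1}\|_2\le m^{-1}\), so the quantity is at most \(m^{-1}h^{2\alpha_0-d}\operatorname{tr}T_{\bn}(g_h)\). However, \(\operatorname{tr}T_{\bn}(g_h)=N_{\bn}\,C\!\int(1+\|\bomega\|^2)^{-\alpha_0-\eta}\,d\bomega/(2\pi)^d\) only when \(2\alpha_0+2\eta>d\), and this crude bound gives \(O(h^{2\alpha_0-d})\), which is too weak.

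The main obstacle is therefore to exploit the extra decay \(\eta\) frequency by frequency. The plan is to split \(\operatorname{tr}(\Sigma^{-1}T(g_h))\) using the quadratic-form identity \eqref{eq:toeplitz-quadratic-form}. We compare the symbols pointwise: on the aliased torus, \(h^{2\alpha_0-d}g_h(\bth)\lesssim h^{2\eta}\,\widetilde\phi\text{-like}(\bth)\cdot\min(1,\|\bth/h\|^{-2\eta})\). That is, the ratio \(g_h/\phi_h\) is bounded by \(C\min\{1,(h/\|\bth\|)^{2\eta}\}\), using the lower bound \(cS_{\varphi_0}\le S^\star\) and the envelope of \(S_{\varphi_0}\).

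To turn this symbol-ratio bound into a trace bound, I would first try a preconditioned Toeplitz argument. The trace \(\operatorname{tr}(T(\phi)^{-1}T(g))\) should behave like \(N_{\bn}\int_Q (g/\phi)\,d\nu_Q\), up to a low-frequency correction handled by the polynomial approximation and compression (Lemma~\ref{lem:localized-fourier-kernel-compression}) near the origin. The correction costs a rank of order \((\log h^{-1})^d\) times bounded operator norm, which is \(O(1)\) up to logs relative to \(N_{\bn}\).

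Given that, \(\int_Q\min\{1,(h/\|\bth\|)^{2\eta}\}\,d\bth\) is \(O(h^{2\eta})\) when \(2\eta<d\), \(O(h^d\log h^{-1})\) when \(2\eta=d\), and \(O(h^d)\) when \(2\eta>d\). With \(h\asymp N_{\bn}^{-1/d}\) and \(\beta=2\eta/d\), this gives exactly the three cases \(N^{-\beta}\), \(N^{-1}\log N\), and \(N^{-1}\). The ancillary low-frequency correction must be shown to contribute at most \(O(N_{\bn}^{-1})\) to \(N_{\bn}^{-1}\operatorname{tr}(C_{\bn}-I)\). I expect this justification of \(\operatorname{tr}(T(\phi)^{-1}T(g))\approx N\int g/\phi\) as an upper bound with the stated error to be the delicate step. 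A fallback is the operator inequality \(T(g)\preceq T(\phi\, q)\) with \(q=C\min\{1,(h/\|\bth\|)^{2\eta}\}\), combined with a Szegő-type trace inequality for \(T(\phi)^{-1/2}T(\phi q)T(\phi)^{-1/2}\).
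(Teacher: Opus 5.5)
Your reduction is sound. The explicit formula for \(\overline\ell^\star_{\bn}\), strict concavity, the sign-split reduction to a nonnegative majorant \(g_h\), and the ratio bound \(g_h/\phi_h\le C\min\{1,(h/\|\bth\|)^{2\eta}\}\) are all correct. The ratio bound uses the lower envelope \(S_{\varphi_0}\ge c(1+\|\bomega\|^2)^{-\alpha_0}\), not \(cS_{\varphi_0}\le S^\star\).

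The gap is the step that carries the entire content of the lemma: passing from this pointwise ratio to \(\operatorname{tr}\{\Sigma_{\varphi_0,\bn}^{-1}T_{\bn}(g_h)\}\lesssim N_{\bn}\int_Q(g_h/\phi_h)\,d\nu_Q+O(1)\). A symbol ratio only yields a Loewner bound for a full-rank whitened matrix. That gives a trace bound of order \(N_{\bn}\sup(g_h/\phi_h)\), which is useless near \(\bth=0\), where the ratio is of order one. You would need a Szeg\H{o}-type trace inequality with \(O(1)\) error for symbols as singular and \(h\)-dependent as \(\phi_h\); this is not a standard result, and you do not prove it.

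Your low-frequency repair via Lemma~\ref{lem:localized-fourier-kernel-compression} is also too lossy. That lemma bounds its residual only in unwhitened norm, \(\|F\|_2\le CN_{\bn}\varepsilon\|w\|_{L^1}\). Whitening costs a factor \(\|\Sigma_{\varphi_0,\bn}^{-1}\|_2\) of order \(h^{d-2\alpha_0}\), so \(\varepsilon\) must be polynomially small in \(h\), and the rank becomes of order \((\log h^{-1})^d\), as you observe. That adds \(O\{(\log N_{\bn})^d\}\) to \(\operatorname{tr}(C_{\bn}-I)\). This exceeds the \(O(1)\) budget when \(\beta>1\), and the \(O(\log N_{\bn})\) budget when \(\beta=1\) and \(d\ge2\). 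Your final sentence asks for \(O(N_{\bn}^{-1})\) but gives no mechanism that could deliver it.

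The missing ingredient is control of whitened sampled functions that is uniform in \(\bn\). The paper gets this from the native-space inequality \(\|\mathcal W_{\bn}g_X\|_2^2\le\|g\|_{\mathcal H_{\varphi_0}}^2\le\int_{\R^d}|\widehat g(\bomega)|^2/S_{\varphi_0}(\bomega)\,d\bomega\) of Appendix~\ref{app:working-likelihood-proofs}.
\begin{itemize}
\item Applied to cutoff Taylor residuals, it gives a whitened compression of \(C_{\bn}-I\) with rank \(O(R^d)\) and error \(C\{(1+R^2)^{-\eta}+e^{-c(M-C_0R)}\}\), independent of \(\bn\) (Lemma~\ref{lem:whitened-localized-fourier-compression}).
\item Letting \(R\) vary gives \(s_j(C_{\bn}-I)\le Kj^{-\beta}\) (Lemma~\ref{lem:relative-covariance-control}).
\item Then \(|\operatorname{tr}(C_{\bn}-I)|\le K\sum_{j\le N_{\bn}}j^{-\beta}\), which yields the three rates.
\end{itemize}

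The same inequality would also make your frequency-domain heuristic rigorous, with no correction term at all. With \(\Delta S:=S^\star-S_{\varphi_0}\), one has \(\Sigma^\star_{\bn}-\Sigma_{\varphi_0,\bn}=\mathcal P^{(\infty)}_{\bn}(\Delta S)\), so
\[
\operatorname{tr}(C_{\bn}-I)=(2\pi)^{-d}\int_{\R^d}\Delta S(\bomega)\,\|\mathcal W_{\bn}v_{\bn}(\bomega)\|_2^2\,d\bomega .
\]
Take \(\chi\in C_c^\infty(\R^d)\) equal to one on \([0,L]^d\). Applying the native-space bound to the real and imaginary parts of \(\chi(x)e^{-\mathrm i x\cdot\bomega}\), together with Peetre's inequality, gives \(\|\mathcal W_{\bn}v_{\bn}(\bomega)\|_2^2\le C(1+\|\bomega\|^2)^{\alpha_0}\) uniformly in \(\bn\). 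Lemma~\ref{lem:renorm-glt-inversion} supplies the complementary cap \(\|\mathcal W_{\bn}v_{\bn}(\bomega)\|_2^2\le N_{\bn}\|\Sigma_{\varphi_0,\bn}^{-1}\|_2\le Ch^{-2\alpha_0}\). Hence
\[
|\operatorname{tr}(C_{\bn}-I)|\le C\int_{\|\bomega\|\le1/h}(1+\|\bomega\|^2)^{-\eta}\,d\bomega+Ch^{-2\alpha_0}\int_{\|\bomega\|>1/h}\|\bomega\|^{-2\alpha_0-2\eta}\,d\bomega ,
\]
which is \(O(N_{\bn}^{1-\beta})\), \(O(\log N_{\bn})\), or \(O(1)\) in the three cases. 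This is precisely the rigorous one-sided form of your heuristic \(\operatorname{tr}\approx N_{\bn}\int_Q (g_h/\phi_h)\,d\nu_Q\).
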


\begin{proof}
At \(\alpha=\alpha_0\),
$
 \Sigma_{(\tau,\alpha_0),\bn}
 =
 e^{\tau-\tau_0}\Sigma_{\varphi_0,\bn}.
$
Taking expectation of the corresponding exact likelihood identity gives
\[
\overline\ell^\star_{\bn}(\tau)-\overline\ell^\star_{\bn}(\tau_0)
=-\frac{N_{\bn}}2(\tau-\tau_0)
-\frac{\operatorname{tr}C_{\bn}}2\{e^{-(\tau-\tau_0)}-1\}.
\]
Differentiation gives
\[
 \partial_\tau\overline\ell^\star_{\bn}(\tau)
 =
 -\frac{N_{\bn}}2
 +\frac{\operatorname{tr}C_{\bn}}2e^{-(\tau-\tau_0)},
 \qquad
 \partial_\tau^2\overline\ell^\star_{\bn}(\tau)<0.
\]
Thus its unique critical point is \(\tau^\dagger_{\bn}\). Since
\(\tau^\dagger_{\bn}\to\tau_0\) and
\(\tau_0\in(\tau_-,\tau_+)\), this point belongs to the optimization
interval for all sufficiently large \(N_{\bn}\). Substitution of
\(\tau^\dagger_{\bn}\) into the first identity gives the centered
expected-log-likelihood formula.

Finally,  we have \(C_{\bn}-I_{N_{\bn}}\) with eigenvalues
\(\lambda_j(C_{\bn}-I_{N_{\bn}})=\lambda_j(C_{\bn})-1\) and singular values
\(s_j(C_{\bn}-I_{N_{\bn}})=|\lambda_j(C_{\bn})-1|\). By
{Lemma~\ref{lem:relative-covariance-control}}, these singular values are
bounded by \(Kj^{-\beta}\) for some \(K<\infty\). Hence
\[
 \left|
 \operatorname{tr}(C_{\bn}-I_{N_{\bn}})
 \right|
 \le
 \sum_{j=1}^{N_{\bn}}
 s_j(C_{\bn}-I_{N_{\bn}})
 \le
 K\sum_{j=1}^{N_{\bn}}j^{-\beta},
\]
and
$
 \left|
 \frac{\operatorname{tr}C_{\bn}}{N_{\bn}}-1
 \right|
 \le
 \frac{K}{N_{\bn}}
 \sum_{j=1}^{N_{\bn}}j^{-\beta}.
$
The standard bounds for this sum give the three stated rates, and applying
the logarithm preserves them.
\end{proof}
\endgroup
\section{Standard covariance families}
\label{sec:standard-covariance-families}
{We now apply the preceding likelihood theory to the Mat\'ern and generalized
Wendland families. The first two propositions put each fitted family in the
tail chart \(\varphi=(\tau,\alpha)\) and verify
Assumption~\ref{ass:differentiated-spectral-tail-family}; the final
proposition checks when an external spectrum \(S^\star\) and a fitted
reference spectrum \(S_{\varphi_0}\) satisfy
Assumption~\ref{ass:common-leading-tail-misspecification}. Together, these
checks make the fitted likelihood conclusions, and their transfer under a
common leading tail, available for the covariance families below. Here
\(\tau\) is the logarithm of the tail coefficient and \(\alpha\) is the tail
exponent. All remaining family-specific parameters are held fixed and are not
estimated.}

{For the Mat\'ern family, let \(\nu,\kappa,\sigma^2>0\) denote the
smoothness, range, and variance parameters, respectively. Its covariance
function is}
\[
 \rho_{\mathrm M}(\bx)
 =\sigma^2\frac{2^{1-\nu}}{\Gamma(\nu)}
 (\kappa\|\bx\|)^\nu K_\nu(\kappa\|\bx\|).
\]
{Here \(K_\nu\) is the modified Bessel function of the second kind.
We keep \(\kappa\) fixed and set \(\alpha=\nu+d/2\). The spectral density is
then}
\[
 S_{\mathrm M}(\bomega)
 =\sigma^2
 \frac{2^d\pi^{d/2}\Gamma(\alpha)}
      {\Gamma(\alpha-d/2)}
 \kappa^{2\alpha-d}
 (\kappa^2+\|\bomega\|^2)^{-\alpha}.
\]
Its tail coefficient is
\[
 c_{\infty,\mathrm M}
 =\sigma^2
 \frac{2^d\pi^{d/2}\Gamma(\alpha)}
      {\Gamma(\alpha-d/2)}
 \kappa^{2\alpha-d}.
\]
{The following proposition verifies the differentiated-tail
conditions for this fixed-range chart.}
\begin{proposition}
\label{prop:matern-tail-family-verification}
Fix \(\kappa>0\) and a reference point
\(\varphi_0=(\tau_0,\alpha_0)\in\R\times(d/2,\infty)\).  Under the tail chart
\(\alpha=\nu+d/2\) and \(\tau=\log c_{\infty,\mathrm M}\), there exists an
open neighborhood \(\mathcal U\) of \(\varphi_0\), with compact closure in
\(\R\times(d/2,\infty)\), on which the resulting two-parameter Mat\'ern
family satisfies Assumption~\ref{ass:differentiated-spectral-tail-family}
with \(\varepsilon=1\).  Moreover, the constants in that assumption may be
chosen uniformly whenever \(\overline{\mathcal U}\) is restricted to a
prescribed compact subset of \(\R\times(d/2,\infty)\).
\end{proposition}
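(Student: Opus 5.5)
The plan is to write the Matérn spectrum in the tail chart explicitly and verify each requirement of Assumption~\ref{ass:differentiated-spectral-tail-family} by elementary calculus. Since \(c_{\infty,\mathrm M}\) is the prefactor, we have
\[
 S_{\mathrm M}(\bomega)=e^\tau\overline S_\alpha(\bomega),
 \qquad
 \overline S_\alpha(\bomega):=(\kappa^2+\|\bomega\|^2)^{-\alpha},
\]
with \(\tau=\log c_{\infty,\mathrm M}\). The chart map \((\sigma^2,\nu)\mapsto(\tau,\alpha)\) is a bijection onto \(\R\times(d/2,\infty)\) for fixed \(\kappa\), because \(\sigma^2\) enters only as a positive multiplicative factor. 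Hence every \((\tau,\alpha)\) corresponds to a genuine Matérn model.

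First I check the conditions on \(\overline S_\alpha\). It is even, continuous and strictly positive, and \(\|\bomega\|^{2\alpha}\overline S_\alpha(\bomega)\to1\), so the leading constant is one. The envelope bound \(\overline S_\alpha\le c_2(1+\|\bomega\|^2)^{-\alpha}\) holds with \(c_2=\max(1,\kappa^{-2})^{\alpha_+}\). This constant is uniform over \(\alpha\) in a compact interval \([\alpha_-,\alpha_+]\subset(d/2,\infty)\). I take \(\mathcal U\) to be an open rectangle around \(\varphi_0\) whose closure lies in the prescribed compact set, so that \(\alpha_->d/2\). The tail limit is uniform in \(\alpha\) on compacts, because
\[
 \|\bomega\|^{2\alpha}(\kappa^2+\|\bomega\|^2)^{-\alpha}=(1+\kappa^2/\|\bomega\|^2)^{-\alpha}.
\]

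Next I handle the remainder
\[
 S^{\mathrm{rem}}_\alpha(\bomega)=(\kappa^2+x)^{-\alpha}-(1+x)^{-\alpha},
 \qquad x=\|\bomega\|^2.
\]
It is jointly smooth in \((\alpha,\bomega)\), so the required differentiability and joint continuity are immediate. For the bound with \(\varepsilon=1\), I use the mean value theorem in the additive shift: with \(t\) between \(\kappa^2\) and \(1\),
\[
 S^{\mathrm{rem}}_\alpha(\bomega)=-\alpha(t+x)^{-\alpha-1}(\kappa^2-1).
\]
Since \(t+x\ge\min(\kappa^2,1)(1+x)\), this gives
\[
 |S^{\mathrm{rem}}_\alpha|\le\alpha_+|\kappa^2-1|\min(\kappa^2,1)^{-\alpha_+-1}(1+x)^{-\alpha-1},
\]
which is uniform over the compact \(\alpha\)-range.

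For the \(\alpha\)-derivative, I write
\[
 \partial_\alpha S^{\mathrm{rem}}_\alpha=-\log(\kappa^2+x)(\kappa^2+x)^{-\alpha}+\log(1+x)(1+x)^{-\alpha}.
\]
I then apply the mean value theorem in the shift variable to the function \(g(t)=\log(t+x)(t+x)^{-\alpha}\). Its derivative is
\[
 g'(t)=(t+x)^{-\alpha-1}\{1-\alpha\log(t+x)\},
\]
whose absolute value is at most \(C(1+x)^{-\alpha-1}\{1+\log(2+\|\bomega\|)\}\). Here I use \(\log(t+x)\le C'\{1+\log(2+\|\bomega\|)\}\) and, when \(t+x<1\), the bound \(|\log(t+x)|\le|\log\min(\kappa^2,1)|\). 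This yields \eqref{eq:spectral-tail-alpha-remainder-bound} with \(\varepsilon=1\).

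All constants depend only on \(\kappa\), \(d\) and the bounds \(\alpha_\pm\). This gives the claimed uniformity whenever \(\overline{\mathcal U}\) stays in a fixed compact subset of \(\R\times(d/2,\infty)\). The \(\tau\)-range is irrelevant, because \(\tau\) enters only through \(e^\tau\).

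I expect no real obstacle here. The only point needing care is the logarithmic factor in the derivative bound when \(\kappa<1\) and \(x\) is small, where \(\log(t+x)\) can be negative. That case is covered by boundedness on the compact set \(\{x\le1\}\).
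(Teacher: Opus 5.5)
Your proposal is correct and follows essentially the paper's proof: both write the spectrum as \(e^\tau(\kappa^2+\|\bomega\|^2)^{-\alpha}\) and compare it with \((1+\|\bomega\|^2)^{-\alpha}\) by moving the additive shift between \(1\) and \(\kappa^2\). This yields the extra power \((1+\|\bomega\|^2)^{-\alpha-1}\), hence \(\varepsilon=1\), and the factor \(1-\alpha\log(\cdot)\) in the derivative supplies the logarithm. The only difference is cosmetic: the paper writes the shift as an exact integral over \(t\in[0,1]\) via the fundamental theorem of calculus and differentiates under that integral in \(\alpha\), whereas you use a mean-value intermediate point.
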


{The generalized Wendland family differs because compact support contributes
a term from the support boundary. In the fitted family, fixing the boundary
parameter \(\mu_0\) and logarithmic support radius \(\ell_0\) leaves the same
\((\tau,\alpha)\) chart. The gap \(\mu_0-\alpha\) determines the available
remainder margin, and neither the boundary nor the support parameter is
differentiated. Fix \(\mu_0>(d+1)/2\), \(\ell_0\in\R\), and
\(\beta_0=e^{\ell_0}\). For \(k>0\), its covariance function is}
\[
        \rho_{\mathrm W}(\bx)
        =\frac{\sigma^2\mathbb I_{\{\|\bx\|<\beta_0\}}}
        {B(2k,\mu_0+1)}
        \int_{\|\bx\|/\beta_0}^1
        u\left\{u^2-\frac{\|\bx\|^2}{\beta_0^2}\right\}^{k-1}
        (1-u)^{\mu_0}\,du.
\]
{Here \(B\) is the beta function. Writing
\(\alpha=k+(d+1)/2\), this covariance is positive definite on \(\R^d\) when
\(\alpha\le\mu_0\); see \cite[Theorem~2.2]{ChernihHubbert2014} and
\cite{bevilacqua2019gw}.}

The corresponding tail coefficient is
\[
        c_{\infty,\mathrm W}
        =2^d\pi^{d/2}\sigma^2
        \frac{\Gamma(\alpha)\Gamma(\mu_0+2\alpha-d)}
        {\Gamma(\alpha-d/2)\Gamma(\mu_0)}
        e^{-(2\alpha-d)\ell_0}.
\]
{Thus \(\tau=\log c_{\infty,\mathrm W}\) replaces the raw variance
coordinate, while \(\mu_0\) and \(\ell_0\) remain fixed. The resulting
covariance parameter is \(\varphi=(\tau,\alpha)\).}
\begin{proposition}
\label{prop:wendland-tail-family-verification}
{Fix \(\mu_0>(d+1)/2\) and \(\ell_0\in\R\), and let
\(\varphi_0=(\tau_0,\alpha_0)\in
\R\times((d+1)/2,\mu_0)\).  There are compact intervals
\(\mathcal K_\tau\Subset\R\) and
\(\mathcal K_\alpha\Subset((d+1)/2,\mu_0)\) whose interiors contain
\(\tau_0\) and \(\alpha_0\), respectively, and an open neighborhood
\(\mathcal U\) of \(\varphi_0\) with
\(\overline{\mathcal U}\subset
\mathcal K_\tau\times\mathcal K_\alpha\), such that, for every
\(\varepsilon\) satisfying
\[
 0<\varepsilon<
 \min\left\{1,\frac12
 \inf_{\alpha\in\mathcal K_\alpha}(\mu_0-\alpha)\right\},
\]
the generalized Wendland family \(\{S_\varphi:\varphi\in\mathcal U\}\)
satisfies Assumption~\ref{ass:differentiated-spectral-tail-family}.}
\end{proposition}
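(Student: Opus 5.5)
The plan is to work directly with the closed-form spectral density of the generalized Wendland covariance. With $\beta_0=e^{\ell_0}$, $\alpha=k+(d+1)/2$ and $z=\|\bomega\|^2\beta_0^2/4$, this density is a hypergeometric function:
\[
S_{\mathrm W}(\bomega)=K_\alpha\,\sigma^2\beta_0^{d}\;{}_1F_2\!\left(\alpha;\,\alpha+\tfrac{\mu_0}{2},\,\alpha+\tfrac{\mu_0+1}{2};\,-z\right).
\]
Here $K_\alpha$ is a Gamma-ratio constant that is smooth in $\alpha$ (see \cite{bevilacqua2019gw,ChernihHubbert2014}). Dividing by $c_{\infty,\mathrm W}$ gives the scale-family representation $S_{\tau,\alpha}=e^\tau\overline S_\alpha$, with $\overline S_\alpha:=S_{\mathrm W}/c_{\infty,\mathrm W}$. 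Because $\tau=\log c_{\infty,\mathrm W}$ absorbs $\sigma^2$, $\overline S_\alpha$ depends only on $\alpha$ once $\mu_0$ and $\ell_0$ are fixed. Evenness and continuity are immediate. For strict positivity I would invoke the positive-definiteness criterion for $\alpha<\mu_0$ in the form proved by Chernih--Hubbert and Zastavnyi: the ${}_1F_2$ above is strictly positive when the lower parameters exceed the upper one by a strict margin. Continuity and positivity on compact frequency sets then handle all bounds at bounded $\|\bomega\|$, uniformly over $\alpha\in\mathcal K_\alpha$ by joint continuity in $(\alpha,\bomega)$.

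The core step is a large-$z$ expansion of ${}_1F_2(-z)$ that is uniform in $\alpha$. I would use the Mellin--Barnes integral representation and shift the contour past the first pole of $\Gamma(\alpha+s)$ (more precisely $\Gamma(s)\Gamma(\alpha-s)$) to isolate the algebraic part. The algebraic part is $A(\alpha)z^{-\alpha}\{1+O(z^{-1})\}$, and $A(\alpha)$ reproduces exactly the leading coefficient, so that after normalization the tail constant is one. The remaining exponential/oscillatory part, which corresponds to the boundary singularity $(1-u)^{\mu_0}$ at the edge of the support, is of order $z^{-(\alpha+\mu_0)/2-\dots}$. Concretely, its size is $\|\bomega\|^{-(\mu_0+\alpha)}$ up to the half-integer shift coming from $(d+1)/2$; I would pin this exponent down by comparing with the $d$-dimensional Hankel transform of $(\beta_0-r)_+^{\mu_0+k}$. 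Since $\mu_0+\alpha\ge 2\alpha+2\varepsilon$ whenever $2\varepsilon\le\mu_0-\alpha$, and $(1+\|\bomega\|^2)^{-\alpha}$ differs from $\|\bomega\|^{-2\alpha}$ by $O(\|\bomega\|^{-2\alpha-2})$, the remainder $S^{\mathrm{rem}}_\alpha$ obeys \eqref{eq:spectral-tail-remainder-bound} with $\varepsilon<\min\{1,\tfrac12\inf(\mu_0-\alpha)\}$. The envelope \eqref{eq:block-envelope-detailed} and the limit \eqref{eq:block-tail-limit-detailed} follow uniformly in $\alpha$ from the same expansion.

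For the derivative bound \eqref{eq:spectral-tail-alpha-remainder-bound}, I would differentiate the Mellin--Barnes representation under the integral in $\alpha$. This is legitimate because the Gamma factors are analytic in $\alpha$ away from poles, and the shifted contour stays a fixed distance from the poles for $\alpha\in\mathcal K_\alpha$. Differentiating $z^{-\alpha}$ or $z^{-s}$ along the contour produces the factor $\log z$, which is the source of the $\{1+\log(2+\|\bomega\|)\}$ term. Derivatives of the Gamma-ratio coefficients are bounded by compactness. The normalizing constant $c_{\infty,\mathrm W}$ is smooth and positive in $\alpha$, so dividing by it preserves both estimates. Joint continuity of $(\alpha,\bomega)\mapsto S^{\mathrm{rem}}_\alpha(\bomega)$ and its $C^1$ dependence on $\alpha$ also follow from the integral representation. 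Finally, I would choose $\mathcal K_\tau,\mathcal K_\alpha$ and $\mathcal U$ so that all constants are uniform.

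I expect the main obstacle to be the uniform control of the oscillatory boundary contribution together with its $\alpha$-derivative. Standard ${}_1F_2$ asymptotics are stated for fixed parameters, so the error terms must be re-derived with constants uniform over the compact parameter set. One also has to verify that the oscillatory exponent genuinely yields the margin $\mu_0-\alpha$ rather than something smaller. If the Mellin--Barnes route becomes unwieldy, my fallback is the radial representation. The covariance is then a mixture over $u$ of truncated powers $(u^2-r^2)_+^{k-1}$ with weight $u(1-u)^{\mu_0}$; each component has an explicit Bessel-type Fourier transform. After an integration by parts in $u$, the boundary behaviour at $u=1$ gives the oscillatory term and the endpoint $u=0$ gives the algebraic tail.
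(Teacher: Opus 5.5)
\textbf{The gap: the $\alpha$-derivative bound.} The genuine gap is the derivative bound \eqref{eq:spectral-tail-alpha-remainder-bound}, together with the sharp uniform oscillatory estimate it rests on. You justify differentiating under the Mellin--Barnes integral by analyticity of the Gamma factors and distance from the poles, but that does not address the real obstruction.

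On a line \(\operatorname{Re} s=c\), the exponential factors in \(\Gamma(\alpha+s)\Gamma(-s)/\{\Gamma(b_1+s)\Gamma(b_2+s)\}\) cancel exactly, where \(b_1+b_2=2\alpha+\mu_0+\tfrac12\). What remains is \(|t|^{-\alpha-\mu_0-1/2-2c}\), so the shifted integral is absolutely convergent only for \(c>\tfrac14-\tfrac12(\alpha+\mu_0)\). Bounding the remainder by absolute values therefore gives \(O(\|\bomega\|^{-2\alpha-2\varepsilon})\) only for \(\varepsilon<\tfrac12(\mu_0-\alpha)-\tfrac14\). That misses the range asserted in the proposition. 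When \(\mu_0-\alpha\le\tfrac12\), such estimates cannot even carry the contour past the first pole \(s=-\alpha\).

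The true oscillatory size is exactly \(z^{-(\alpha+\mu_0)/2}\asymp\|\bomega\|^{-(\mu_0+\alpha)}\), with no extra shift. It comes from a stationary point of the integrand at \(|t|\asymp\sqrt z\). You therefore need a Braaksma-type stationary-phase analysis that is uniform in \(\alpha\). You then need the same analysis for the \(\alpha\)-differentiated integrand, which carries digamma factors; in the oscillatory part, those factors near \(|t|\asymp\sqrt z\), not \(z^{s}\), produce the logarithm. You flag zeroth-order uniformity as the main obstacle. However, the derivative step, which you treat as routine, is the hardest part of your route and is left unproved.

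\textbf{How the paper avoids this.} The paper does not use differentiated asymptotics at all. It proves only the zeroth-order bound \(|\overline S_\alpha(\bomega)-\|\bomega\|^{-2\alpha}|\le C\{\|\bomega\|^{-2\alpha-2}+\|\bomega\|^{-(\mu_0+\alpha)}\}\), uniformly in \(\alpha\). It uses the Hankel representation \(\overline S_\alpha=A_\alpha q^{-\nu}\int_0^1t^{\nu+1}(1-t)^{\mu_0}J_\nu(qt)\,dt\), \(\nu=\alpha-\tfrac32\), split at the two endpoints:
\begin{itemize}
\item at \(t=0\), Wong's expansion with explicit remainder;
\item at \(t=1\), Sonine's integral and repeated integration by parts;
\item throughout, Nemes' uniform Bessel bounds.
\end{itemize}
This is essentially your fallback.

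Next, the paper notes that the compactly supported tail-normalized covariance is \(C^m\) in \(\alpha\) as an \(L^1(\R^d)\)-valued map, because \(\alpha\)-derivatives only add integrable powers of \(\log(u^2-s^2)\). Hence every \(\partial_\alpha^j\overline S_\alpha\) is bounded uniformly in \((\alpha,\bomega)\).

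Finally, it applies a one-dimensional Gagliardo--Nirenberg inequality in \(\alpha\) to \(G_\alpha=\|\bomega\|^{2\alpha}(\overline S_\alpha-\|\bomega\|^{-2\alpha})\). This is done on a slightly larger compact interval \(\mathcal J_\alpha\), with \(\delta=\min\{2,\inf_{\mathcal J_\alpha}(\mu_0-\alpha)\}\) and \(\alpha_+=\sup\mathcal J_\alpha\). The two inputs are \(\sup_\alpha|G_\alpha|\lesssim\|\bomega\|^{-\delta}\) and \(\sup_\alpha|\partial_\alpha^mG_\alpha|\lesssim\|\bomega\|^{2\alpha_+}\{1+\log(2+\|\bomega\|)\}^m\). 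The output is \(|\partial_\alpha G_\alpha|\lesssim\{1+\log(2+\|\bomega\|)\}\|\bomega\|^{-\delta+(\delta+2\alpha_+)/m}\). Taking \(m\) large loses only an arbitrarily small power, which is exactly why the condition on \(\varepsilon\) is strict.

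This interpolation, between a decaying zeroth-order bound and crude high-order parameter bounds, is the idea your proposal lacks. With it, your fallback route suffices, and no uniform asymptotics for \(\partial_\alpha\,{}_1F_2\) are needed.
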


{The preceding propositions verify
Assumption~\ref{ass:differentiated-spectral-tail-family} for the fitted
Mat\'ern and generalized Wendland families. We next identify the direction
needed for the misspecification transfer: \(S^\star\) is the external
spectrum, whereas \(S_{\varphi_0}\) is the reference spectrum in the fitted
family. When their leading tails agree with the stated margin, the likelihood
conclusions for \(S_{\varphi_0}\) transfer to data generated under
\(S^\star\).

Let \(\{S_\varphi:\varphi\in\mathcal U\}\) be one of the fitted families
above, and fix \(\varphi_0=(\tau_0,\alpha_0)\in\mathcal U\). Thus
\(S_{\varphi_0}\) is either a Mat\'ern spectrum with fixed range
\(\kappa_0\), or a generalized Wendland spectrum in which \(\mu_0\) is the
fixed boundary parameter and \(\ell_0\) is the logarithmic support radius. Let
\(S^\star\) be a Mat\'ern or generalized Wendland external spectrum with the
same tail coefficient \(e^{\tau_0}\) and tail exponent \(\alpha_0\). When
\(S^\star\) is Mat\'ern, denote its range by \(\kappa^\star\); when it is
generalized Wendland, denote its boundary parameter by \(\mu^\star\).}

\begin{proposition}
\label{prop:common-leading-tail-covariance-pairs}
Suppose that \(d\le3\) and \(S^\star\ne S_{\varphi_0}\). Then
\(S^\star\) and the family
\(\{S_\varphi:\varphi\in\mathcal U\}\) satisfy
Assumption~\ref{ass:common-leading-tail-misspecification} in each of the
following cases:
\begin{enumerate}
\item \(S^\star\) and \(S_{\varphi_0}\) are both Mat\'ern spectra;
\item \(S^\star\) is generalized Wendland and \(S_{\varphi_0}\) is
Mat\'ern, provided
\[
 \mu^\star-\alpha_0>\frac d2;
\]
\item \(S^\star\) is Mat\'ern and \(S_{\varphi_0}\) is generalized
Wendland, provided
\[
 \mu_0-\alpha_0>\frac d2;
\]
\item \(S^\star\) and \(S_{\varphi_0}\) are both generalized Wendland,
provided
\[
 \min\{\mu^\star-\alpha_0,\mu_0-\alpha_0\}>\frac d2.
\]
\end{enumerate}
\end{proposition}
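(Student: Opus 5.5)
The plan is to check the two spectral conditions of Assumption~\ref{ass:common-leading-tail-misspecification} separately, namely the difference bound with some \(\eta\) satisfying \(4\eta>d\), and the two-sided comparison \(cS_{\varphi_0}\le S^\star\le CS_{\varphi_0}\). The family part of the assumption, Assumption~\ref{ass:differentiated-spectral-tail-family} on \(\mathcal U\), is already supplied by Propositions~\ref{prop:matern-tail-family-verification} and~\ref{prop:wendland-tail-family-verification}. The argument rests on a high-frequency expansion of each spectrum with a common leading term \(e^{\tau_0}\|\bomega\|^{-2\alpha_0}\):
\[
S(\bomega)=e^{\tau_0}\|\bomega\|^{-2\alpha_0}\{1+O(\|\bomega\|^{-\gamma})\},\qquad \|\bomega\|\to\infty,
\]
where \(\gamma\) depends on the family. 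Once such expansions are available, the difference bound holds with \(\eta=\gamma/2\) for large \(\|\bomega\|\). On bounded sets it holds trivially, because both spectra are continuous and \((1+\|\bomega\|^2)^{-\alpha_0-\eta}\) is bounded below there.

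For Mat\'ern spectra, I would expand \((\kappa^2+\|\bomega\|^2)^{-\alpha_0}=\|\bomega\|^{-2\alpha_0}\{1-\alpha_0\kappa^2\|\bomega\|^{-2}+O(\|\bomega\|^{-4})\}\), which gives \(\gamma=2\). The two Mat\'ern spectra have equal tail coefficients but ranges \(\kappa_0\neq\kappa^\star\). Their difference is therefore of exact order \(\|\bomega\|^{-2\alpha_0-2}\), so \(\eta=1\) and \(4\eta=4>d\) because \(d\le3\). This is where the dimension restriction enters, consistent with \cite{Pillai2026}.

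For generalized Wendland spectra, I would use the uniform spectral bounds established for that family in the proof of Proposition~\ref{prop:wendland-tail-family-verification}. These give a two-term structure: the leading tail, a smooth correction of relative order \(\|\bomega\|^{-2}\), and an oscillatory boundary term of order \(\|\bomega\|^{-(\mu+(d+1)/2)}\) coming from the compact support. The relative size of the boundary term is therefore \(\|\bomega\|^{-2(\mu-\alpha_0)}\cdot\|\bomega\|^{\,2\alpha_0-\mu-(d+1)/2+\ldots}\). I will keep track of it precisely and aim for the statement that the Wendland remainder relative to the leading tail is \(O(\|\bomega\|^{-2\min\{1,\mu-\alpha_0\}})\), possibly up to a small loss. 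Under this statement, case 2 requires \(2\eta\) equal to \(2\min\{1,\mu^\star-\alpha_0\}\), and \(4\eta>d\) becomes \(\mu^\star-\alpha_0>d/2\) (with \(2>d/2\) automatic for \(d\le3\)). Case 3 follows symmetrically with \(\mu_0\). Case 4 follows from the triangle inequality through the common leading term, and the worse of the two margins governs. I expect the main obstacle to be extracting this exact remainder exponent for the Wendland family uniformly in \(\bomega\). If the available bound is only the \(\varepsilon\)-margin of Proposition~\ref{prop:wendland-tail-family-verification}, I would go back to the hypergeometric (\({}_1F_2\)) representation of the Wendland spectrum and its asymptotic expansion, as in \cite{bevilacqua2019gw}, to sharpen it.

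For the two-sided comparison, both spectra are continuous and strictly positive, so their ratio is bounded above and below on any compact ball. For large \(\|\bomega\|\), the common leading tail forces \(S^\star/S_{\varphi_0}\to1\), so the ratio lies in \([1/2,2]\) outside a large ball. Combining the two regions yields \(c\) and \(C\). Strict positivity of the Wendland spectrum for \(\alpha\le\mu\) would be quoted from \cite{ChernihHubbert2014}.
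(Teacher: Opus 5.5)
Your overall architecture is the paper's. You verify the difference clause through the common term $e^{\tau_0}(1+\|\bomega\|^2)^{-\alpha_0}$, the triangle inequality, and continuity on a ball. You take $\eta=1$ for two Mat\'ern spectra, which is where $d\le3$ enters. You obtain the two-sided comparison from $S^\star/S_{\varphi_0}\to1$ at infinity, together with continuity and strict positivity on a compact ball. The genuine problem is your generalized Wendland bookkeeping.

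The support-boundary term is $D_\alpha\|\bomega\|^{-(\mu+\alpha)}\cos\Phi_\alpha(\bomega)$; see \eqref{eq:wendland-fixed-parameter-tail-expansion}. It is not of order $\|\bomega\|^{-(\mu+(d+1)/2)}$. Relative to the leading tail it therefore has size $\|\bomega\|^{-(\mu-\alpha_0)}$, and the total relative remainder is $O(\|\bomega\|^{-\min\{2,\mu-\alpha_0\}})$. The target you set yourself, $O(\|\bomega\|^{-2\min\{1,\mu-\alpha_0\}})$ even "up to a small loss", is false whenever $\mu-\alpha_0<2$. The reason is that $D_\alpha>0$ and the cosine does not decay, so going back to the ${}_1F_2$ representation cannot deliver it. Your stated conditions come out right only through a compensating slip. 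With your exponent you would have $\eta=\min\{1,\mu-\alpha_0\}$, and $4\eta>d$ would read $\mu-\alpha_0>d/4$, not $d/2$.

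With the correct exponent the argument closes without any sharpening. The $\varepsilon$-margin you were worried about is exactly what is needed. Proposition~\ref{prop:wendland-tail-family-verification} allows any $\varepsilon<\min\{1,\tfrac12\inf(\mu-\alpha)\}$ on a compact $\alpha$-interval around $\alpha_0$. Because the gap $\mu-\alpha_0>d/2$ is strict, you can pick $\eta=\varepsilon\in\bigl(d/4,\min\{1,(\mu-\alpha_0)/2\}\bigr)$, intersecting the two upper bounds in case 4. This interval is nonempty since $d/4<1$ for $d\le3$.

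Apply that proposition to the fitted family and, at the fixed parameter $\mu^\star$, to $S^\star$. This gives $|S-e^{\tau_0}(1+\|\bomega\|^2)^{-\alpha_0}|\le C(1+\|\bomega\|^2)^{-\alpha_0-\eta}$ for each generalized Wendland spectrum. In cases 2 and 3, the Mat\'ern side satisfies the same bound with exponent $-\alpha_0-1$, which is stronger. The triangle inequality through the common term then gives the difference clause at high frequency. This is precisely the paper's proof.
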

Thus the likelihood conclusions for the fitted family transfer in each listed
pair.  The simulation below uses a two-dimensional Mat\'ern--Wendland pair
from the third case and compares it with the matched Mat\'ern fit.
\section{Simulation study}
\label{sec:simulation-study}

We compare estimation under Mat\'ern likelihood
and a generalized-Wendland likelihood fitted to data generated from a Mat\'ern distribution.  For
each \(m\in\{9,13,16,25,31,41,61,81\}\), we observe 2,000 independent
centered Gaussian fields, shared across grid widths and fitted families, on
the \(m\times m\)
regular grid in \([0,0.125]^2\).    Thus \(\bn=(m,m)\), \(N=m^2\),
\(h=0.125/(m-1)\), and \(a_{\bn}=2\log(1/h)\).  The generating Mat\'ern
field has unit variance, \(\kappa=120^{1/3}\), \(\nu_0=1.5\),
\(\alpha_0=2.5\), and thus \(\tau_0=\log(720\pi)\).  With \(r\) denoting
Euclidean distance, the Mat\'ern covariane is
\[
        \rho_{\mathrm M}(r)=(1+\kappa r)\exp(-\kappa r).
\]
Figure~\ref{fig:covariance-comparison} displays this covariance and the 
tail-matched generalized-Wendland covariance.

\begin{figure}[t]
\centering
\includegraphics[width=0.60\textwidth]{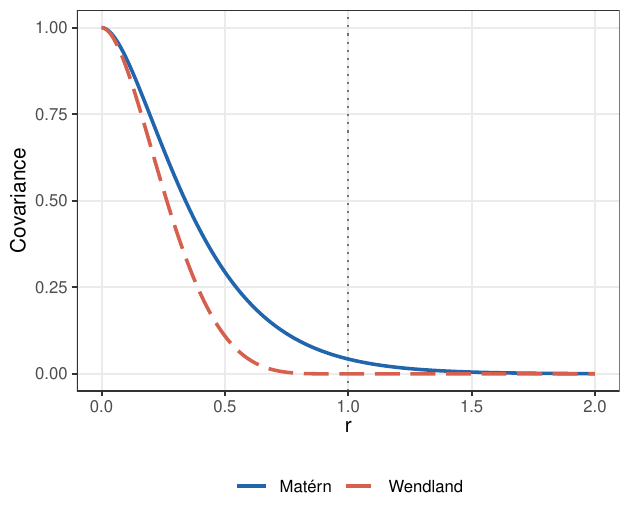}
\caption{Mat\'ern and tail-matched generalized-Wendland covariance.  The
Wendland covariance vanishes at its support radius one.}
\label{fig:covariance-comparison}
\end{figure}

For each fit, we maximize the likelihood jointly over
\(\alpha\in[1.75,3.25]\) and \(\tau\in[\tau_0-4,\tau_0+4]\), and also fit
\(\alpha\) with \(\tau_0\) fixed and \(\tau\) with \(\alpha_0\) fixed.

\subsection{Correctly specified Mat\'ern likelihood}
\label{sec:matern-simulation}

For the joint Mat\'ern fit, define
\[
\begin{aligned}
        \widehat Z_\alpha&=m(\widehat\alpha-\alpha_0),
        &\widehat Z_\tau&=\frac{m}{a_{\bn}}
        (\widehat\tau-\tau_0),\\
        (\widehat Z_\alpha,\widehat Z_\tau)
        &\Rightarrow (Z_\alpha,Z_\alpha),
        &Z_\alpha&\sim\mathcal N(0,2/v_{\alpha_0}).
\end{aligned}
\]
Theorem~\ref{thm:working-tail-mle-normality} gives this joint limit,
where \(v_{\alpha_0}\approx1.209674\).  Corollary~\ref{cor:known-tail-parameter-mles} gives, for the two restricted fits,
\[
 m a_{\bn}(\widehat\alpha^{(\tau_0)}-\alpha_0)
 \Rightarrow\mathcal N(0,2),
 \qquad
 m(\widehat\tau^{(\alpha_0)}-\tau_0)
 \Rightarrow\mathcal N(0,2).
\]

Figure~\ref{fig:matern-mle-simulation} summarizes joint estimation of \(\alpha\) and \(\tau\), 
together with estimation of either parameter when the other is fixed at its true value.
In panels (a) and (b), the solid curves show the empirical
2.5\%, 50\%, and 97.5\% quantiles of the normalized estimation
errors across the 2,000 simulations. The dashed curves show
the 2.5\% and 97.5\% quantiles of the corresponding limiting
Gaussian distributions. In the joint fit, the empirical correlation between
\(\widehat Z_\alpha\) and \(\widehat Z_\tau\) is about \(.99\)
at \(m=81\), as shown in panel (c). This is consistent with
their joint limit \((Z_\alpha,Z_\alpha)\), in which the two
components are equal. Fixing either parameter at its true
value improves the convergence rate for the other by a
factor \(a_{\bn}\). The simulation results therefore support the theoretical
convergence rates and limiting Gaussian distributions.

\begin{figure}[t]
\centering
\begin{minipage}[t]{0.32\textwidth}\centering\textbf{(a)}\par
\includegraphics[width=\textwidth]{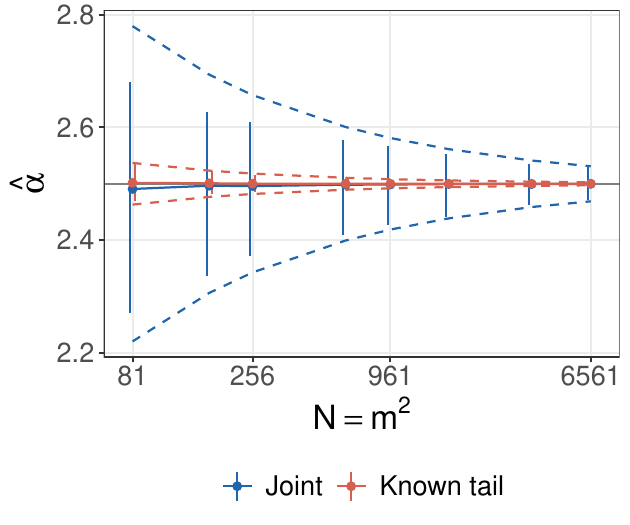}\end{minipage}
\hfill
\begin{minipage}[t]{0.32\textwidth}\centering\textbf{(b)}\par
\includegraphics[width=\textwidth]{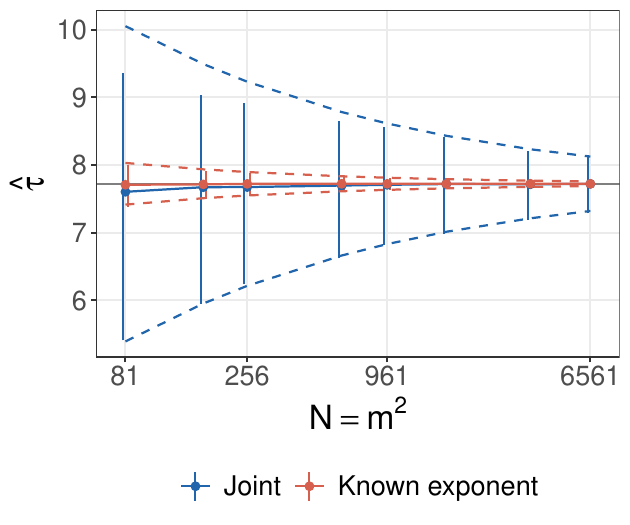}\end{minipage}
\hfill
\begin{minipage}[t]{0.32\textwidth}\centering\textbf{(c)}\par
\includegraphics[width=\textwidth]{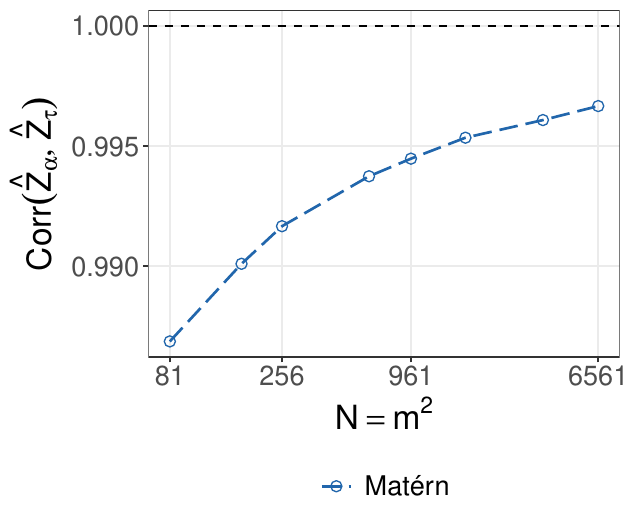}\end{minipage}
\caption{Matched Mat\'ern likelihood: empirical medians and central 95\%
ranges for the exponent and log-tail estimators, and the correlation of the
joint normalized errors.  Dashed bands are leading Gaussian 95\% reference
ranges.}
\label{fig:matern-mle-simulation}
\end{figure}

Corollary~\ref{cor:local-likelihood-geometry} shows that the
quadratic approximation to the likelihood has a ridge with
slope \(a_{\bn}-m_1\). We investigate this behavior in
Figure~\ref{fig:matern-likelihood-surfaces}, which shows the
Mat\'ern log likelihood for the same simulated field observed
on \(9\times9\) and \(81\times81\) grids in \([0,0.125]^2\).
Both surfaces exhibit the predicted ridge, which is narrower
at \(m=81\).
\begin{figure}[t]
\centering
\begin{minipage}[t]{0.48\textwidth}\centering\textbf{(a)}\par
\includegraphics[width=\textwidth]{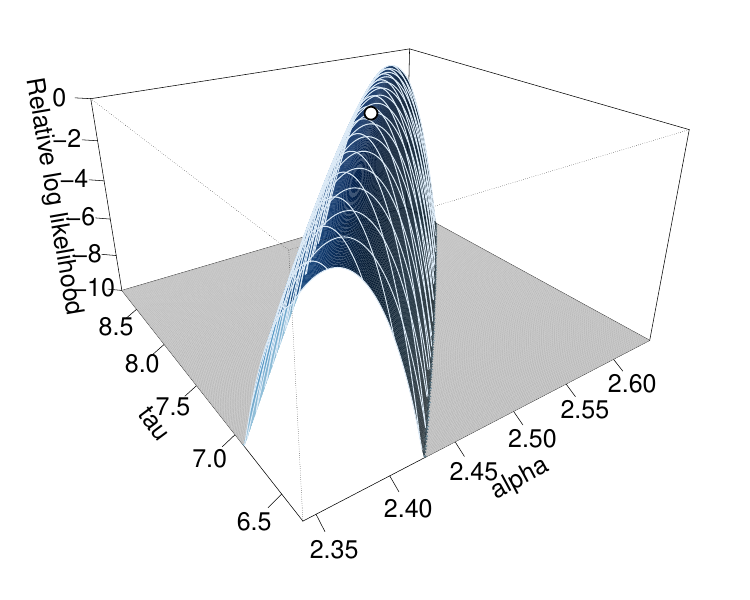}\end{minipage}
\hfill
\begin{minipage}[t]{0.48\textwidth}\centering\textbf{(b)}\par
\includegraphics[width=\textwidth]{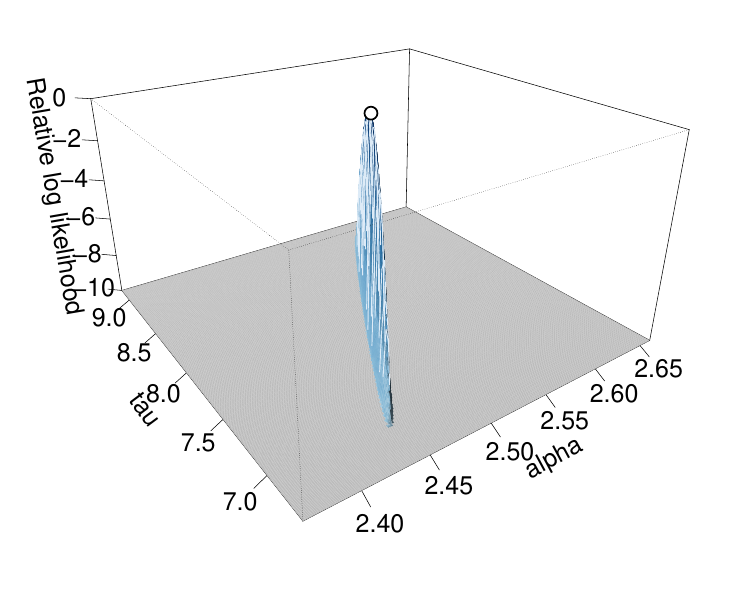}\end{minipage}
\caption{Relative exact Mat\'ern log likelihood over \((\alpha,\tau)\) in
the selected shared-study realization on \([0,0.125]^2\), at \(m=9\) and
\(m=81\) (left and right, respectively).  The surfaces are floored at \(-10\),
and circles mark the MLEs.}
\label{fig:matern-likelihood-surfaces}
\end{figure}

\subsection{Generalized Wendland likelihood fitted to Mat\'ern data}
\label{sec:wendland-matern-simulation}

We fix \(\mu_0=4\) and \(\beta_0=e^{\ell_0}=1\), as in the full fitted
family in the preceding section.  At \(\alpha_0=2.5\), the specialization is
\[
        \rho_{\mathrm W}(r)=(1-r)_+^5(1+5r).
\]
The generalized-Wendland model matches the variance and leading
spectral tail of the Mat\'ern model. Since
\(\mu_0-\alpha_0=1.5>d/2=1\),
Figure~\ref{fig:wendland-matern-simulation} compares joint
estimation of \(\alpha\) and \(\tau\) with estimation of either
parameter when the other is fixed at its true value. 

The joint estimators show a downward bias at smaller sample
sizes, particularly for \(\tau\). Its empirical bias decreases
in magnitude from about \(-0.374\) at \(m=9\) to \(-0.024\)
at \(m=81\). Fixing either parameter at its true value
substantially reduces the bias in estimating the other.

These results provide empirical support for the convergence
rates and Gaussian limits predicted under covariance
misspecification, and show how the finite-sample bias diminishes
as the sample size increases.

\begin{figure}[t]
\centering
\begin{minipage}[t]{0.32\textwidth}\centering\textbf{(a)}\par
\includegraphics[width=\textwidth]{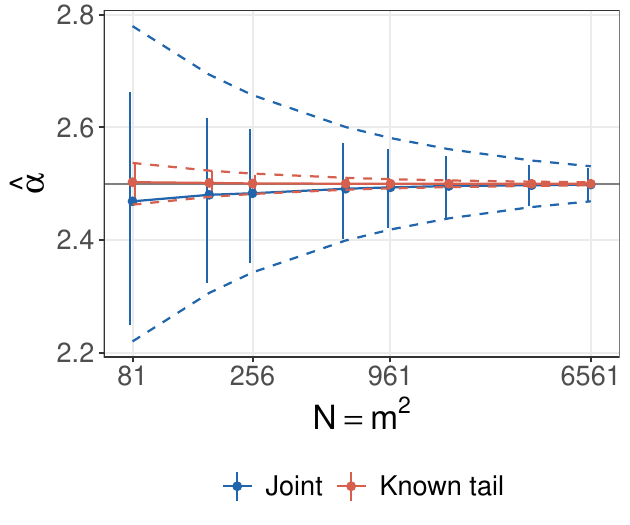}\end{minipage}
\hfill
\begin{minipage}[t]{0.32\textwidth}\centering\textbf{(b)}\par
\includegraphics[width=\textwidth]{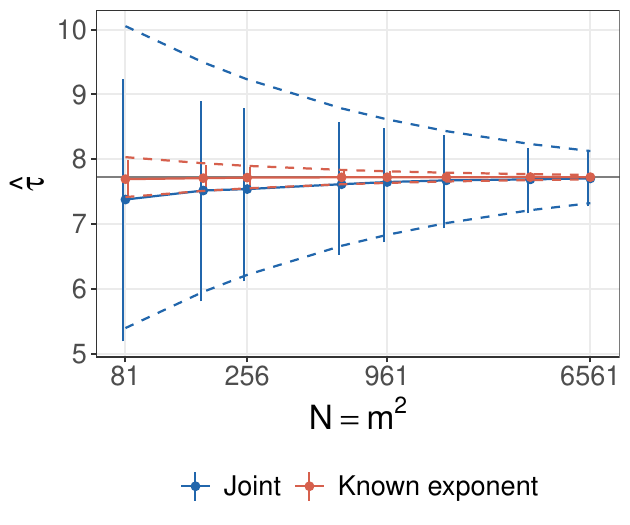}\end{minipage}
\hfill
\begin{minipage}[t]{0.32\textwidth}\centering\textbf{(c)}\par
\includegraphics[width=\textwidth]{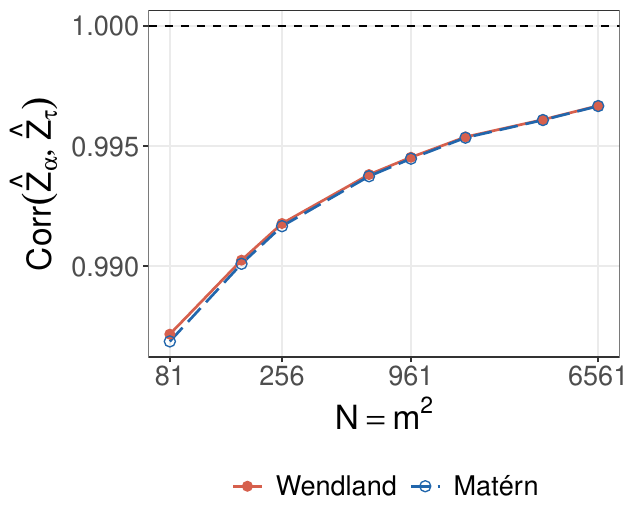}\end{minipage}
\caption{Generalized-Wendland likelihood fitted to Mat\'ern data: empirical
medians and central 95\% ranges for the exponent and log-tail estimators, and
the correlation of the joint normalized errors.  Dashed bands are leading
Gaussian 95\% reference ranges.}
\label{fig:wendland-matern-simulation}
\end{figure}

\section{Discussion}
\label{sec:discussion}

We have established consistency and local asymptotic normality 
for joint estimation of scale and roughness under fixed-domain sampling on regular grids.
 The resulting limits describe the interaction between the two parameters 
 and the logarithmic loss in convergence rates caused by estimating them jointly. 
 GLT theory and quantitative control of low frequencies provide the analytical framework,
  while uniform spectral bounds extend its applicability to generalized Wendland models alongside the Matérn family. 
  The estimator limits also persist under covariance misspecification satisfying the stated spectral conditions.
The GLT framework provides a natural route to nonstationary Gaussian processes, 
since its symbols can depend on both spatial location and frequency.
 Extending the present LAN theory to models with spatially varying scale or roughness is therefore an interesting direction. 
 Another question is whether the same limits remain valid when the remaining covariance parameters are estimated,
  particularly when information about these parameters stays limited under fixed-domain sampling. 
  Extensions to irregular observation locations would further broaden the scope of the theory.
   Finally, the quantitative bounds developed here may support higher-order likelihood approximations and a more precise account of finite-sample bias under covariance misspecification.

\clearpage
\appendix
\part*{Appendices}

\section*{Standing setup}
We work on the fixed domain \([0,L]^d\) with the regular grid
\(\bx_{\bi}^{(\bn)}=(Li_1/(n_1-1),\ldots,Li_d/(n_d-1))\),
\(\bi\in\mathcal I_{\bn}:=\prod_{j=1}^d\{0,\ldots,n_j-1\}\), and
\(N_{\bn}:=\prod_{j=1}^d n_j\).  Under isotropic refinement,
\(h:=L/(n_j-1)\) is independent of \(j\) and \(h\to0\) as
\(N_{\bn}\to\infty\).  Our Fourier convention is
\(\rho(\bh)=(2\pi)^{-d}\int_{\mathbb R^d}e^{\mathrm i\bomega\cdot\bh}S(\bomega)\,d\bomega\). 
{The spectral family \((S_{\varphi},\rho_{\varphi},\Sigma_{\varphi,\bn},P_{\varphi,\bn})\) is indexed by \(\varphi\), 
whereas the fixed external data-generating objects are \((S^\star,\rho^\star,\Sigma^\star_{\bn},P^\star_{\bn})\). 
The tail chart is \(\varphi=(\tau,\alpha)\), with reference parameter \(\varphi_0=(\tau_0,\alpha_0)\) 
and \(c_{\infty,0}=e^{\tau_0}\). Exact spectral specification, \(S^\star=S_{\varphi_0}\),
 implies \(P^\star_{\bn}=P_{\varphi_0,\bn}\). The scale-family interface is \(S_{\tau,\alpha}=e^\tau\overline S_\alpha\), where \(\overline S_\alpha=(1+\|\bomega\|^2)^{-\alpha}+S^{\mathrm{rem}}_\alpha\).} We set \(\Sigma_{\varphi,\bn}=(\rho_{\varphi}(\bx_{\br}^{(\bn)}-\bx_{\bs}^{(\bn)}))_{\br,\bs\in\mathcal I_{\bn}}\), \(P_{\varphi,\bn}:=\mathcal N(0,\Sigma_{\varphi,\bn})\), and analogously \(\Sigma^\star_{\bn}=(\rho^\star(\bx_{\br}^{(\bn)}-\bx_{\bs}^{(\bn)}))_{\br,\bs\in\mathcal I_{\bn}}\) and \(P^\star_{\bn}:=\mathcal N(0,\Sigma^\star_{\bn})\). For the observed sampled vector \(Y_{\bn}\), we use, up to an additive term independent of \(\varphi\),
\[
        \ell_{\bn}(\varphi)
        =-\frac{1}{2}\log\det \Sigma_{\varphi,\bn}
        -\frac{1}{2}Y_{\bn}^T(\Sigma_{\varphi,\bn})^{-1}Y_{\bn}.
\]
{Unless a norm is explicitly subscripted otherwise, \(\|\cdot\|_2\)
denotes the Euclidean norm for vectors and the induced operator (spectral)
norm for matrices; \(\|\cdot\|_1\) and \(\|\cdot\|_{\mathrm{HS}}\) denote
the trace and Hilbert--Schmidt norms, respectively.}
{The compact parameter set \(\Theta_{\mathrm{tail}}\) contains
\(\varphi_0\) in its interior. We write \(a_{\bn}:=2\log(1/h)\).}

\section*{Roadmap of the proofs}

The reader route follows the five stages A--E below.  Appendix~\ref{app:aliased-symbols}
starts from the sampled-covariance Toeplitz representation.  Appendix~\ref{app:preliminary-main-paper-proofs}
then develops the reusable controls, which lead to the moving-tail interfaces
in Appendix~\ref{app:moving-tail-spectral-proofs} and to the comparison and
transfer arguments in Appendix~\ref{app:working-likelihood-proofs}.
{Appendix~\ref{app:moving-tail-spectral-proofs} 
establishes the score and secant interfaces for the spectral family. 
Appendix~\ref{app:working-likelihood-proofs} 
then supplies whitening, polynomial compression, 
and transfer to \(P^\star_{\bn}\) under misspecification. 
Appendix~\ref{app:likelihood-separation-and-lan} proves working-model separation and LAN; the proof below combines its compact working-likelihood quasi-LAN expansion with the score transfer under the external law. Assumption~\ref{ass:common-leading-tail-misspecification} and Theorem~\ref{thm:misspecified-tail-mle-transfer} identify that setting and result.}

This route organizes the reading; it is not a literal list of proof imports.
Appendix~\ref{app:wendland-differentiated-tail} is a separate lane for
verifying the model families.  It checks the framework assumptions for the
Mat\'ern and generalized-Wendland examples with fixed boundary and support
parameters.  Assumption~\ref{ass:differentiated-spectral-tail-family} is an entry
condition cited in Appendices~\ref{app:moving-tail-spectral-proofs} and
\ref{app:working-likelihood-proofs}; the verification establishes the
examples against that condition.

\begin{center}
\begin{tikzpicture}[
  stage/.style={
    draw=black!65,
    rounded corners=2pt,
    text width=7.0cm,
    minimum height=11mm,
    inner sep=3pt,
    align=center,
    font=\footnotesize
  },
  spine/.style={
    stage,
    draw=black!85,
    fill=black!8,
    line width=0.75pt
  },
  verification/.style={
    stage,
    text width=4.6cm,
    fill=black!3,
    line width=0.6pt
  },
  route arrow/.style={
    -{Stealth[length=2mm,width=1.4mm]},
    draw=black!75,
    line width=0.65pt
  },
  entry link/.style={
    dashed,
    draw=black!60,
    line width=0.55pt
  }
]
\node[spine] (A) {
  \textbf{Appendix~\ref{app:aliased-symbols}}\\
  Sampled-covariance Toeplitz representation
};
\node[spine, below=4.5mm of A] (B) {
  \textbf{Appendix~\ref{app:preliminary-main-paper-proofs}}\\
  Reusable truncation, compression, and inversion controls
};
\node[spine, below=4.5mm of B] (C) {
  \textbf{Appendix~\ref{app:moving-tail-spectral-proofs}}\\
  Moving-tail covariance, secant, whitening, and score interfaces
};
\node[spine, below=4.5mm of C] (D) {
  \textbf{Appendix~\ref{app:working-likelihood-proofs}}\\
  Fitted--external comparison, relative covariance, contiguity, and transfer
};
\node[spine, below=4.5mm of D] (E) {
  \textbf{Appendix~\ref{app:likelihood-separation-and-lan}}\\
  Hellinger separation, consistency, LAN, and MLE limits
};
\node[verification, right=8mm of C, yshift=-7mm] (F) {
  \textbf{Appendix~\ref{app:wendland-differentiated-tail}}\\
  Model-family verification for the Mat\'ern and generalized-Wendland examples with fixed boundary and support parameters\\[-1pt]
  {\scriptsize Dashed links: entry condition cited in C and D}
};

\draw[route arrow] (A) -- (B);
\draw[route arrow] (B) -- (C);
\draw[route arrow] (C) -- (D);
\draw[route arrow] (D) -- (E);
\draw[entry link] (F.west) -- (C.east);
\draw[entry link] (F.south west) |- (D.east);
\end{tikzpicture}
\end{center}

The route above organizes the appendices.  The map below records the proof
locations and direct interfaces used at the final proof stage for each
article result; it suppresses upstream dependencies already contained in
those interfaces.

\begingroup
\makeatletter
\newcommand{\roadmapresult}[2]{#1~\ref{#2}}
\newcommand{\roadmapsupport}[2]{\hyperref[#1]{#2}}
\begin{center}
\begin{tikzpicture}[
  support/.style={
    draw=black!65,
    rounded corners=2pt,
    text width=6.25cm,
    minimum height=8mm,
    inner sep=2.5pt,
    align=left,
    font=\scriptsize
  },
  result/.style={
    support,
    text width=4.35cm,
    align=center,
    fill=black!8,
    draw=black!85
  },
  map arrow/.style={
    -{Stealth[length=1.7mm,width=1.2mm]},
    draw=black!75,
    line width=0.55pt
  },
  applicability arrow/.style={
    map arrow,
    dashed,
    draw=black!60
  }
]
\node[support] (s1) {\textbf{Truncation proof in the article; explicit compression and inversion proofs in Appendix~\ref{app:preliminary-main-paper-proofs}}};
\node[result, right=6mm of s1] (r1) {\roadmapresult{Theorem}{thm:triangular-array-toeplitz-truncation-detailed}; \roadmapresult{Lemma}{lem:localized-fourier-kernel-compression}; \roadmapresult{Lemma}{lem:renorm-glt-inversion}};
\node[support, below=2.5mm of s1] (s2) {Lemma~\ref{lem:explicit-localized-fourier-kernel-compression} and Corollary~\ref{cor:localized-fourier-kernel-compression-interface}};
\node[result, right=6mm of s2] (r2) {\roadmapresult{Theorem}{thm:quantitative-sampled-covariance-glt} (proof in Appendix~\ref{app:preliminary-main-paper-proofs})};
\node[support, below=2.5mm of s2] (s3) {\roadmapsupport{lem:centered-quadratic-form-transfer}{Centered quadratic-form transfer}};
\node[result, right=6mm of s3] (r3) {\roadmapresult{Theorem}{thm:tail-exponent-gradient-information}; \roadmapresult{Corollary}{cor:known-tail-parameter-mles}};
\node[support, below=2.5mm of s3] (s4) {\roadmapsupport{lem:working-cayley-hellinger}{Cayley affinity}, \roadmapsupport{lem:working-local-cayley-coercivity}{local coercivity}, and \roadmapsupport{lem:working-noncritical-cayley-separation}{noncritical separation}};
\node[result, right=6mm of s4] (r4) {\roadmapresult{Proposition}{prop:working-glt-hellinger-separation}};
\node[support, below=2.5mm of s4] (s5) {\roadmapsupport{cor:working-glt-fixed-neighborhood-separation}{Fixed-neighborhood separation}, \roadmapsupport{lem:working-likelihood-exclusion}{likelihood exclusion}, and \roadmapsupport{lem:common-leading-tail-mutual-contiguity}{mutual contiguity}};
\node[result, right=6mm of s5] (r5) {\roadmapresult{Theorem}{thm:tail-chart-consistency}};
\node[support, below=2.5mm of s5] (s6) {\roadmapsupport{cor:compact-relative-covariance-expansion}{Compact relative-covariance expansion}, \roadmapsupport{cor:relative-covariance-trace-comparison}{trace comparison}, \roadmapsupport{cor:smooth-transform-vanishing-gaussian-quadratic-process}{control after a smooth spectral transform}, \roadmapsupport{cor:vanishing-gaussian-quadratic-process}{control of the original Gaussian process}, and \roadmapsupport{lem:common-leading-tail-mutual-contiguity}{mutual contiguity}};
\node[result, right=6mm of s6] (r6) {\roadmapresult{Theorem}{thm:compact-lan}};
\node[support, below=2.5mm of s6] (s7) {\roadmapresult{Theorem}{thm:compact-lan}};
\node[result, right=6mm of s7] (r7) {\roadmapresult{Corollary}{cor:local-likelihood-geometry} {(direct proof in the article)}};
\node[support, below=2.5mm of s7] (s8) {\roadmapresult{Theorem}{thm:tail-chart-consistency}, \roadmapsupport{lem:spectral-comparison-localization}{spectral-comparison localization}, \roadmapresult{Theorem}{thm:compact-lan}, \roadmapresult{Theorem}{thm:tail-exponent-gradient-information}, and \roadmapresult{Corollary}{cor:local-likelihood-geometry}};
\node[result, right=6mm of s8] (r8) {\roadmapresult{Theorem}{thm:working-tail-mle-normality}};
\node[support, below=2.5mm of s8] (s9) {the restricted-model separation argument in the proof of \roadmapresult{Theorem}{thm:tail-chart-consistency}, \roadmapsupport{lem:spectral-comparison-localization}{spectral-comparison localization}, \roadmapresult{Theorem}{thm:compact-lan}, and \roadmapresult{Theorem}{thm:tail-exponent-gradient-information}};
\node[result, right=6mm of s9] (r9) {\roadmapresult{Corollary}{cor:known-tail-parameter-mles}};
\node[support, below=2.5mm of s9] (s10) {Assumption~\ref{ass:common-leading-tail-misspecification}; Appendix~\ref{app:working-likelihood-proofs} transfer and contiguity; fitted results in Appendix~\ref{app:likelihood-separation-and-lan}};
\node[result, right=6mm of s10] (r10) {Theorem~\ref{thm:misspecified-tail-mle-transfer}};
\node[support, below=2.5mm of s10] (s11) {\textbf{Verification in Appendix~\ref{app:wendland-differentiated-tail}}};
\node[result, right=6mm of s11] (r11) {\roadmapresult{Proposition}{prop:matern-tail-family-verification}; \roadmapresult{Proposition}{prop:wendland-tail-family-verification}; \roadmapresult{Proposition}{prop:common-leading-tail-covariance-pairs}};

\draw[map arrow] (s1) -- (r1);
\draw[map arrow] (s2) -- (r2);
\draw[map arrow] (s3) -- (r3);
\draw[map arrow] (s4) -- (r4);
\draw[map arrow] (s5) -- (r5);
\draw[map arrow] (s6) -- (r6);
\draw[map arrow] (s7) -- (r7);
\draw[map arrow] (s8) -- (r8);
\draw[map arrow] (s9) -- (r9);
\draw[map arrow] (s10) -- (r10);
\draw[applicability arrow] (s11) -- (r11);
\end{tikzpicture}
\end{center}
\makeatother
\endgroup

The two tables below are a reader key, not a replacement glossary: the
defining equations remain at their native sites.  They collect the recurring
interfaces in the order in which the proof route uses them.

\begin{table}[!ht]
\centering
\footnotesize
\setlength{\tabcolsep}{3pt}
\renewcommand{\arraystretch}{1.12}
\begin{tabular}{p{0.19\textwidth}p{0.18\textwidth}p{0.35\textwidth}p{0.18\textwidth}}
\hline
\textbf{Object or family} & \textbf{Defined in} & \textbf{Reader meaning} & \textbf{Carried forward to} \\
\hline
\(Q\), \(T_{\bn}(f)\), \(\Sigma_{\bn}\), and \(\phi_h\)
& Appendix~\ref{app:aliased-symbols}, Equation~\eqref{eq:aliased-symbol-def-app}
& The torus, Toeplitz construction, sampled covariance, and its aliased symbol; the sampled covariance is represented by the Toeplitz matrix with symbol \(\phi_h\).
& Appendices~\ref{app:preliminary-main-paper-proofs} and \ref{app:moving-tail-spectral-proofs} \\

\(a_{\bn}\), \(\beta_{\bk,h,\alpha}\), \(b_{h,\alpha}\), and \(\psi_{h,\alpha}\)
& Appendix~\ref{app:moving-tail-spectral-proofs}, Equations~\eqref{eq:supp-aliased-likelihood-factorization} and \eqref{eq:tail-alias-term-and-sum}
& \(a_{\bn}\) is the mesh-log rate isolating the common exponent-induced scale drift; \(\beta_{\bk,h,\alpha}\) is the \(\bk\)-th term in the sum defining \(b_{h,\alpha}\), and \(\psi_{h,\alpha}\) is the finite-mesh alias series used to compare with \(b_{h,\alpha}\).
& Appendix~\ref{app:moving-tail-spectral-proofs} \\

\(\psi_\alpha\) and \(r_{\alpha_0}\)
& Equation~\eqref{eq:tail-limit-symbols}
& The limiting alias shape and its relative exponent score; their moments determine the information used in the local likelihood analysis.
& Appendices~\ref{app:working-likelihood-proofs} and \ref{app:likelihood-separation-and-lan} \\

\(R_{\alpha,\bn}\) and \(\mathsf K_{\delta,\bn}\)
& Equations~\eqref{eq:Ralpha}, \eqref{eq:supp-normalized-tail-secant}, and \eqref{eq:fixed-whitening-compensated-secant-path}
& The fixed-whitened exponent-score matrix and its compensated finite secant.  The path relation identifies the secant with the averaged compensated relative score.
& Appendices~\ref{app:working-likelihood-proofs} and \ref{app:likelihood-separation-and-lan} \\
\hline
\end{tabular}
\caption{Reader key for the representation and compensated local geometry developed in Appendices~\ref{app:aliased-symbols}--\ref{app:moving-tail-spectral-proofs}.}
\label{tab:reader-key-representation}

\vspace{1.25em}
\begin{tabular}{p{0.19\textwidth}p{0.18\textwidth}p{0.35\textwidth}p{0.18\textwidth}}
\hline
\textbf{Object or family} & \textbf{Defined in} & \textbf{Reader meaning} & \textbf{Carried forward to} \\
\hline
\(P_{\varphi,\bn}\), \(P^\star_{\bn}\), \(\mathcal W_{\bn}\), and \(C_{\bn}\)
& Appendix~\ref{app:working-likelihood-proofs}
& The fitted model, external data-generating model, fitted-model whitening matrix, and data covariance after that whitening.
& Transfer, contiguity, and the LAN argument in Appendix~\ref{app:likelihood-separation-and-lan} \\

\(B_{\bn}(\alpha)\)
& Appendix~\ref{app:working-likelihood-proofs}
& The compensated relative covariance along the exponent direction in the fitted model; its derivative supplies the quadratic score form whose centering and variance are transferred to \(P^\star_{\bn}\).
& Score transfer and the local likelihood construction \\

\(V_{\varphi,\bn}\), \(g_{\bn}(\varphi)\), and \(q_{\bn}(\varphi)\)
& Equation~\eqref{eq:supp-full-relative-covariance-secant} and Appendix~\ref{app:likelihood-separation-and-lan}
& The relative covariance, compensated scale--shape coordinate, and its distance.  Their secant representation turns covariance departure into the separation scale used by the affinity bounds.
& Hellinger separation, consistency, and local-band control \\

\(D_{\bn}(u)\), \(L_{\bn}(u)\), \(\mathcal S_{\bn}\), and \(\Lambda_{\bn}(u)\)
& Equations~\eqref{eq:relative-covariance-increment}, \eqref{eq:relative-covariance-linear-part}, \eqref{eq:Dpath}, and \eqref{eq:supp-compensated-score-and-likelihood}
& The local relative-covariance increment, its first-order term, normalized score, and likelihood increment.  Linearizing \(D_{\bn}(u)\) by \(L_{\bn}(u)\) identifies the score term in the LAN expansion.
& The compact LAN theorem and MLE limits \\
\hline
\end{tabular}
\caption{Reader key for the fitted--external comparison and inferential geometry developed in Appendices~\ref{app:working-likelihood-proofs} and \ref{app:likelihood-separation-and-lan}.}
\label{tab:reader-key-inference}
\end{table}

\clearpage
\section{Aliased symbols for sampled covariance sequences}
\label{app:aliased-symbols}

This appendix records the precise relation between a spectral density and the
Toeplitz symbol of its sampled covariance sequence.

Let \(Q:=[-\pi,\pi]^d\) and let
\(\nu_Q(d\bth):=(2\pi)^{-d}\,d\bth\) be normalized Lebesgue measure on
\(Q\).  For a matrix sequence \(\{A_{\bn}\}_{\bn}\), write \(N_{\bn}\) for
the dimension of \(A_{\bn}\).  The relation \(\{A_{\bn}\}_{\bn}\sim_{\GLT}a\)
means that the sequence belongs to the
standard multilevel GLT class with full symbol
\(\kappa(x,\bth)=a(\bth)\) on \([0,1]^d\times Q\); here the symbols depend
only on frequency.  We use the convention of
\cite{GaroniSerraCapizzanoI,GaroniSerraCapizzanoII}.  Thus, for
\(F\in C_c(\R)\),
\[
        \frac{1}{N_{\bn}}\sum_{j=1}^{N_{\bn}}
        F\!\left(\sigma_j(A_{\bn})\right)
        \longrightarrow
        \int_Q F\!\left(|a(\bth)|\right)\,d\nu_Q(\bth).
\]
If the matrices are Hermitian, then \(a\) is real-valued
\(\nu_Q\)-almost everywhere, and the eigenvalue identity also holds.
\[
        \frac{1}{N_{\bn}}\sum_{j=1}^{N_{\bn}}
        F\!\left(\lambda_j(A_{\bn})\right)
        \longrightarrow
        \int_Q F\!\left(a(\bth)\right)\,d\nu_Q(\bth).
\]
Symbols are identified \(\nu_Q\)-almost everywhere, so their
value at \(\bth=0\) is immaterial.  The scaled convention is
\(\{A_{\bn}\}_{\bn}\sim_{\GLT,s_{\bn}}a\) if
\(\{s_{\bn}A_{\bn}\}_{\bn}\sim_{\GLT}a\).

We use the multilevel Toeplitz convention
\[
[T_{\bn}(f)]_{\br,\bs}=\widehat f_{\br-\bs},
\qquad
\widehat f_{\boldsymbol{\ell}}
:=
\frac{1}{(2\pi)^d}\int_Q e^{-\mathrm i\boldsymbol{\ell}\cdot\bth}f(\bth)\,d\bth,
{\qquad \boldsymbol{\ell}\in\Z^d.}
\]

Recall the grid in Equation~\eqref{eq:grid-detailed}.  The next lemma
identifies the aliased symbol associated with the sampled covariance matrix
\begin{equation*}
        \Sigma_{\bn}
        :=
        \bigl(
        \rho(\bx_{\br}^{(\bn)}-\bx_{\bs}^{(\bn)})
        \bigr)_{\br,\bs\in\mathcal I_{\bn}}
        \in\R^{N_{\bn}\times N_{\bn}} .
\end{equation*}

\begin{lemma}
\label{lem:aliased-symbol-rigorous}
Let the scalar spectral density \(S\) be even and belong to \(L^1(\R^d)\),
and let \(h>0\).  Define
\begin{equation}
\label{eq:aliased-symbol-def-app}
\phi_h(\bth)
:=
\frac{1}{h^d}\sum_{\bk\in\Z^d}
S\!\left(\frac{\bth+2\pi\bk}{h}\right),
\qquad \bth\in Q.
\end{equation}
Then \(\phi_h\in L^1(Q)\), and its Fourier coefficients satisfy
\begin{equation}
\label{eq:aliased-symbol-fourier-coeff}
\widehat{\phi_h}_{\boldsymbol{\ell}}=\rho(h\boldsymbol{\ell}),
\qquad \boldsymbol{\ell}\in\Z^d.
\end{equation}
Moreover,
$
\Sigma_{\bn}
=
T_{\bn}\!\bigl(\phi_h\bigr).
$
\end{lemma}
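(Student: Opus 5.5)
The plan is a direct Poisson-summation/periodization argument using Tonelli and Fubini.

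\emph{Integrability.} Since \(S\ge0\) and the translates \(Q+2\pi\bk\), \(\bk\in\Z^d\), tile \(\R^d\) up to null boundaries, Tonelli's theorem lets us exchange sum and integral:
\[
\int_Q\phi_h\,d\bth=h^{-d}\sum_{\bk}\int_{Q+2\pi\bk}S(\bxi/h)\,d\bxi=\int_{\R^d}S(\bomega)\,d\bomega<\infty,
\]
using the substitution \(\bxi=h\bomega\). Thus the series defining \(\phi_h\) converges almost everywhere and \(\phi_h\in L^1(Q)\). No positivity beyond \(S\ge0\) is needed; for a signed \(S\) one would apply the same argument to \(|S|\).

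\emph{Fourier coefficients.} With the dominating function \(h^{-d}\sum_{\bk}|S((\bth+2\pi\bk)/h)|\in L^1(Q)\), Fubini justifies interchanging sum and integral in
\[
\widehat{\phi_h}_{\boldsymbol\ell}=(2\pi)^{-d}h^{-d}\sum_{\bk}\int_Q e^{-\mathrm i\boldsymbol\ell\cdot\bth}S\!\left(\frac{\bth+2\pi\bk}{h}\right)d\bth .
\]
Substitute \(\bxi=\bth+2\pi\bk\) in each term. Because \(\boldsymbol\ell\in\Z^d\), we have \(e^{-\mathrm i\boldsymbol\ell\cdot\bth}=e^{-\mathrm i\boldsymbol\ell\cdot\bxi}\), so the pieces glue into a single integral over \(\R^d\). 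Setting \(\bomega=\bxi/h\) then gives
\[
\widehat{\phi_h}_{\boldsymbol\ell}=(2\pi)^{-d}\int_{\R^d}e^{-\mathrm i h\boldsymbol\ell\cdot\bomega}S(\bomega)\,d\bomega=\rho(-h\boldsymbol\ell)
\]
by \eqref{eq:rho-spectral-representation}. Evenness of \(S\), via the substitution \(\bomega\mapsto-\bomega\), gives \(\rho(-h\boldsymbol\ell)=\rho(h\boldsymbol\ell)\).

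\emph{Matrix identity.} Under isotropic refinement, \(\bx_{\br}^{(\bn)}-\bx_{\bs}^{(\bn)}=h(\br-\bs)\). Hence
\[
[\Sigma_{\bn}]_{\br,\bs}=\rho(h(\br-\bs))=\widehat{\phi_h}_{\br-\bs}=[T_{\bn}(\phi_h)]_{\br,\bs}.
\]

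The only delicate step is justifying the sum–integral interchange. This is handled by the \(L^1\) bound in the first step, so I expect no real obstacle.
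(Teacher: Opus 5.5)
Your proposal is correct and follows the paper's proof essentially step for step. Both arguments use Tonelli over the tiling of $\R^d$ by $(Q+2\pi\bk)/h$ for integrability, Fubini with $e^{2\pi\mathrm i\boldsymbol\ell\cdot\bk}=1$ to glue the pieces into $\rho(-h\boldsymbol\ell)$, evenness of $S$ to flip the sign, and $\bx_{\br}^{(\bn)}-\bx_{\bs}^{(\bn)}=h(\br-\bs)$ for the Toeplitz identity.
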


\begin{proof}
We first show that \(\phi_h\) is well defined as an \(L^1(Q)\) function.
Since \(S\in L^1(\R^d)\), Tonelli's theorem gives
\begin{align*}
\int_Q \sum_{\bk\in\Z^d}
\frac{1}{h^d}\left|S\!\left(\frac{\bth+2\pi\bk}{h}\right)\right|\,d\bth
&=
\sum_{\bk\in\Z^d}\int_Q
\frac{1}{h^d}\left|S\!\left(\frac{\bth+2\pi\bk}{h}\right)\right|\,d\bth \\
&=
\sum_{\bk\in\Z^d}\int_{(Q+2\pi\bk)/h}|S(\bomega)|\,d\bomega \\
&=
\int_{\R^d}|S(\bomega)|\,d\bomega
<\infty,
\end{align*}
because the sets \((Q+2\pi\bk)/h\), \(\bk\in\Z^d\), form a partition of \(\R^d\) up to boundaries of Lebesgue measure zero. Hence the series in \eqref{eq:aliased-symbol-def-app} converges absolutely for almost every \(\bth\in Q\), and \(\phi_h\in L^1(Q)\).

We now compute its Fourier coefficients. Fix \(\boldsymbol{\ell}\in\Z^d\). Since the defining series is absolutely integrable on \(Q\), {Fubini's theorem} again permits interchange of sum and integral:
\begin{align*}
\widehat{\phi_h}_{\boldsymbol{\ell}}
&=
\frac{1}{(2\pi)^d}\int_Q e^{-\mathrm i\boldsymbol{\ell}\cdot\bth}\phi_h(\bth)\,d\bth \\
&=
\frac{1}{(2\pi)^d}
\sum_{\bk\in\Z^d}
\int_Q
e^{-\mathrm i\boldsymbol{\ell}\cdot\bth}
\frac{1}{h^d}
S\!\left(\frac{\bth+2\pi\bk}{h}\right)\,d\bth.
\end{align*}
In the \(\bk\)-th integral, we perform the change of variables
$
        \bomega=\frac{\bth+2\pi\bk}{h},
        \bth=h\bomega-2\pi\bk,
$
which gives \(d\bth=h^d\,d\bomega\).
Since \(\boldsymbol{\ell}\in\Z^d\), we have \(e^{\mathrm i\,2\pi\,\boldsymbol{\ell}\cdot\bk}=1\), and therefore
\begin{align*}
\widehat{\phi_h}_{\boldsymbol{\ell}}
&=
\frac{1}{(2\pi)^d}
\sum_{\bk\in\Z^d}
\int_{(Q+2\pi\bk)/h}
e^{-\mathrm i\boldsymbol{\ell}\cdot(h\bomega-2\pi\bk)}
S(\bomega)\,d\bomega \\
&=
\frac{1}{(2\pi)^d}
\sum_{\bk\in\Z^d}
\int_{(Q+2\pi\bk)/h}
e^{-\mathrm i h\boldsymbol{\ell}\cdot\bomega}
S(\bomega)\,d\bomega \\
&=
\frac{1}{(2\pi)^d}
\int_{\R^d}
e^{-\mathrm i h\boldsymbol{\ell}\cdot\bomega}
S(\bomega)\,d\bomega.
\end{align*}
Using the evenness \(S(\bomega)=S(-\bomega)\),
we obtain, after the change of variables \(\bxi=-\bomega\),
\begin{align*}
\widehat{\phi_h}_{\boldsymbol{\ell}}
&=
\frac{1}{(2\pi)^d}
\int_{\R^d}
e^{-\mathrm i h\boldsymbol{\ell}\cdot\bomega}
S(-\bomega)\,d\bomega =
\frac{1}{(2\pi)^d}
\int_{\R^d}
e^{\mathrm i h\boldsymbol{\ell}\cdot\bxi}
 S(\bxi)\,d\bxi =\rho(h\boldsymbol{\ell}).
\end{align*}
This proves \eqref{eq:aliased-symbol-fourier-coeff}.

Finally, recall \(\bx_{\br}^{(\bn)}-\bx_{\bs}^{(\bn)}=h(\br-\bs)\), and thus
\[
[T_{\bn}(\phi_h)]_{\br,\bs}
=
\widehat{\phi_h}_{\br-\bs}
=
\rho(h(\br-\bs))
=
\rho(\bx_{\br}^{(\bn)}-\bx_{\bs}^{(\bn)}),
\]
which is exactly the desired Toeplitz representation.
\end{proof}

\begin{remark}
\label{rem:aliased-symbol-equivalent-form}
The formula \eqref{eq:aliased-symbol-def-app} may equally be written as
\[
\phi_h(\bth)
=
\frac{1}{h^d}\sum_{\bk\in\Z^d}
S\!\left(\frac{2\pi\bk-\bth}{h}\right),
\]
where \(\bth\in Q\),
since \(S(\bomega)=S(-\bomega)\) and the index set
\(\Z^d\) is invariant under \(\bk\mapsto-\bk\).
\end{remark}

\section{Explicit compression, quantitative GLT, and uniform comparison}
\label{app:preliminary-main-paper-proofs}

This appendix gives the explicit localized compression and the complete proof
of the quantitative GLT theorem.  The truncation theorem is proved in the
article immediately after its statement; the inversion proof follows the
quantitative result below.

For \(\bomega\in\R^d\), let
\begin{equation}\label{eq:supp-localized-fourier-mode}
 [v_{\bn}(\bomega)]_{\bi}:=e^{-\mathrm i\bx_{\bi}^{(\bn)}\cdot\bomega},
 \qquad \bi\in\mathcal I_{\bn},
\end{equation}
and, for a real \(w\in L^1(\R^d)\), set
\begin{equation}\label{eq:supp-localized-fourier-matrix}
 \mathcal P_{\bn}^{(R)}(w):=(2\pi)^{-d}\int_{\|\bomega\|<R}
 w(\bomega)v_{\bn}(\bomega)v_{\bn}(\bomega)^*\,d\bomega.
\end{equation}
For the principal alias, the change of variables \(\bth=h\bomega\) gives
\begin{equation}\label{eq:supp-localized-fourier-principal-alias}
 T_{\bn}\!\left(h^{-2\alpha}S(\,\cdot\,/h)\mathbb I_{B_r(0)}\right)
 =h^{d-2\alpha}\mathcal P_{\bn}^{(r/h)}(S).
\end{equation}
Thus the fixed ball in the \(\bth\)-variable becomes an expanding ball in the
spectral variable.  The matrix in \eqref{eq:supp-localized-fourier-principal-alias}
need not be small in norm.  What enters the a.c.s. argument is instead a
common range for its sampled modes, whose dimension is small relative to
\(N_{\bn}\) when \(r\) is small.

For an integer \(M\ge1\), write
\begin{equation}\label{eq:supp-localized-fourier-taylor-mode}
 t_M(y):=\sum_{m=0}^{M-1}\frac{(-\mathrm i y)^m}{m!},
 \qquad [\widetilde v_{\bn,M}(\bomega)]_{\bi}:=
 t_M(\bx_{\bi}^{(\bn)}\cdot\bomega).
\end{equation}
Define
\begin{equation}\label{eq:supp-localized-fourier-compression}
 L_{\bn,M}^{(R)}(w):=(2\pi)^{-d}\int_{\|\bomega\|<R}
 w(\bomega)\widetilde v_{\bn,M}(\bomega)\widetilde v_{\bn,M}(\bomega)^*\,d\bomega,
\end{equation}
and, with \(|\bmm|=m_1+\cdots+m_d\) and
\(\bx^{\bmm}=\prod_{j=1}^d x_j^{m_j}\), let
\begin{equation}\label{eq:supp-localized-fourier-space}
 \mathcal U_{\bn,M}:=\operatorname{span}\left\{((\bx_{\bi}^{(\bn)})^{\bmm})_{\bi\in\mathcal I_{\bn}}:
 \bmm\in\mathbb N_0^d,\ |\bmm|<M\right\}.
\end{equation}

\begin{lemma}
\label{lem:explicit-localized-fourier-kernel-compression}
Let \(R>0\), \(0<\varepsilon_{\mathrm{comp}}<1\), and let \(w\in L^1(\R^d)\) be real.
\begin{equation}\label{eq:supp-localized-fourier-order}
 M:=\left\lceil1+\max\{2eL\sqrt d\,R,\log_2(2\varepsilon_{\mathrm{comp}}^{-1})\}\right\rceil.
\end{equation}
Then \(L_{\bn,M}^{(R)}(w)\) and
\(\mathcal P_{\bn}^{(R)}(w)-L_{\bn,M}^{(R)}(w)\) are Hermitian, and
\begin{equation}\label{eq:supp-localized-fourier-rank}
 \operatorname{ran}L_{\bn,M}^{(R)}(w)\subseteq \mathcal U_{\bn,M},\qquad
 \rank L_{\bn,M}^{(R)}(w)\le\dim \mathcal U_{\bn,M}\le\binom{M+d-1}{d}.
\end{equation}
Moreover,
\begin{equation}\label{eq:supp-localized-fourier-error}
 \|\mathcal P_{\bn}^{(R)}(w)-L_{\bn,M}^{(R)}(w)\|_2
 \le\frac{3}{(2\pi)^d}N_{\bn}\varepsilon_{\mathrm{comp}}\|w\|_{L^1(\R^d)}.
\end{equation}
The binomial bound and the choice of \(M\) imply
\(\dim\mathcal U_{\bn,M}\le C\{1+R+\log(\varepsilon_{\mathrm{comp}}^{-1})\}^d\).
\end{lemma}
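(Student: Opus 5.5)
The plan is to prove the three claims (Hermitian structure, range inclusion with the dimension count, and the operator-norm error) separately. The error bound is the only one requiring real estimates.

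\emph{Hermitian structure and range.} Since \(w\) is real, \(L_{\bn,M}^{(R)}(w)\) is an integral of the Hermitian rank-one matrices \(w(\bomega)\widetilde v\widetilde v^*\), so it is Hermitian. The same argument applies to \(\mathcal P_{\bn}^{(R)}(w)\), and hence to their difference. For the range, expand \(t_M(\bx_{\bi}\cdot\bomega)=\sum_{m<M}(-\mathrm i)^m(\bx_{\bi}\cdot\bomega)^m/m!\) by the multinomial theorem. Each term is a linear combination, with \(\bomega\)-dependent coefficients, of the monomial vectors \(((\bx_{\bi})^{\bmm})_{\bi}\) with \(|\bmm|<M\). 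Therefore \(\widetilde v_{\bn,M}(\bomega)\in\mathcal U_{\bn,M}\) for every \(\bomega\), and for any \(y\) the vector \(L y=(2\pi)^{-d}\int w\,\widetilde v\,(\widetilde v^*y)\,d\bomega\) lies in the finite-dimensional, hence closed, subspace \(\mathcal U_{\bn,M}\). The number of multi-indices with \(|\bmm|<M\) is \(\binom{M+d-1}{d}\). This is at most \(C M^d\), and with the stated choice of \(M\) (namely \(M\lesssim 1+R+\log\varepsilon_{\mathrm{comp}}^{-1}\), with constants depending on \(d,L\)) we get the final dimension bound.

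\emph{Error bound.} Write \(v=\widetilde v+e\), where \(e:=v-\widetilde v\) is the entrywise Taylor remainder of \(e^{-\mathrm i y}\) at \(y=\bx_{\bi}\cdot\bomega\). Then
\[
vv^*-\widetilde v\widetilde v^*=e\widetilde v^*+\widetilde v e^*+ee^*,
\]
so \(\|vv^*-\widetilde v\widetilde v^*\|_2\le 2\|e\|_2\|\widetilde v\|_2+\|e\|_2^2\). On \(\|\bomega\|<R\) with \(\bx_{\bi}\in[0,L]^d\), we have \(|y|\le L\sqrt d R=:Y\). The Taylor remainder satisfies \(|e_{\bi}|\le Y^M/M!\le (eY/M)^M\). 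The choice \(M\ge 2eY\) gives \(|e_{\bi}|\le 2^{-M}\), and \(M\ge 1+\log_2(2/\varepsilon_{\mathrm{comp}})\) gives \(2^{-M}\le\varepsilon_{\mathrm{comp}}/4\le\varepsilon_{\mathrm{comp}}\). Hence \(\|e\|_2\le\sqrt{N_{\bn}}\,\varepsilon_{\mathrm{comp}}\) and \(\|\widetilde v\|_2\le\|v\|_2+\|e\|_2\le 2\sqrt{N_{\bn}}\). This yields a pointwise bound of at most \(c\,N_{\bn}\varepsilon_{\mathrm{comp}}\), where careful bookkeeping (using \(\varepsilon_{\mathrm{comp}}/4\) and \(\varepsilon_{\mathrm{comp}}<1\)) should give \(c\le3\). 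Finally, pull the operator norm inside the Bochner integral:
\[
\|\mathcal P-L\|_2\le(2\pi)^{-d}\int_{\|\bomega\|<R}|w|\,\sup_{\bomega}\|vv^*-\widetilde v\widetilde v^*\|_2\,d\bomega,
\]
which gives \eqref{eq:supp-localized-fourier-error}.

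\emph{Expected obstacle.} The main care point is the Stirling-type step \(Y^M/M!\le(eY/M)^M\le 2^{-M}\). This is where the factor \(2e\) in \(M\) enters, and the constants must be tracked so that the combined bound is exactly \(3N_{\bn}\varepsilon_{\mathrm{comp}}\). A secondary check is that \(\lceil\cdot\rceil\) keeps \(M\ge1\), and that \(\log_2\) versus \(\log\) only affects the constant \(C(d,L)\).
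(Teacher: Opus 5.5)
Your proposal is correct and follows the paper's proof essentially step for step: Hermitian rank-one integrands with real weights, the multinomial expansion placing $\widetilde v_{\bn,M}(\bomega)$ in $\mathcal U_{\bn,M}$, an entrywise Taylor bound on $\|\bomega\|<R$, and integration of the pointwise outer-product estimate against $|w|$. The differences are minor. The paper cites \cite[Lemma~1]{CandesDemanetYing2007} for $\sup_{|y|\le L\sqrt d R}|e^{-\mathrm i y}-t_M(y)|\le\varepsilon_{\mathrm{comp}}$, whereas you derive the sharper bound $\varepsilon_{\mathrm{comp}}/4$ from the Lagrange remainder and $M!\ge(M/e)^M$. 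The paper also uses the two-term identity $vv^*-\widetilde v\widetilde v^*=(v-\widetilde v)v^*+\widetilde v(v-\widetilde v)^*$, which gives $(2+\varepsilon_{\mathrm{comp}})N_{\bn}\varepsilon_{\mathrm{comp}}\le 3N_{\bn}\varepsilon_{\mathrm{comp}}$ directly. Your three-term split needs your $\varepsilon_{\mathrm{comp}}/4$ slack: with it you get at most $\tfrac{17}{16}N_{\bn}\varepsilon_{\mathrm{comp}}$, while the cruder bounds you first wrote would only give $5N_{\bn}\varepsilon_{\mathrm{comp}}$.
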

\begin{proof}
Because \(w\) is real, the defining integrals express both matrices as
integrals of Hermitian rank-one matrices with real weights.  For
\(0\le m<M\), the multinomial formula gives
\[
 (\bx\cdot\bomega)^m=\sum_{\substack{\bmm\in\mathbb N_0^d\\|\bmm|=m}}
 \frac{m!}{m_1!\cdots m_d!}\bx^{\bmm}\bomega^{\bmm}.
\]
Thus \(\widetilde v_{\bn,M}(\bomega)\in\mathcal U_{\bn,M}\); since this
space is finite-dimensional and closed, the integral defining \(L_{\bn,M}^{(R)}(w)\)
also has range in it.  The number of multi-indices with \(|\bmm|<M\) is
\(\binom{M+d-1}{d}\), which proves the range and dimension assertions.

For \(\bx\in[0,L]^d\) and \(\|\bomega\|<R\), Cauchy--Schwarz gives
\[
 |\bx\cdot\bomega|\le\|\bx\|\,\|\bomega\|\le L\sqrt d\,R=:A_R.
\]
The Taylor estimate in \cite[Lemma~1]{CandesDemanetYing2007}, with this
bound and the chosen \(M\), gives
\[
 \sup_{|y|\le A_R}|e^{-\mathrm i y}-t_M(y)|\le\varepsilon_{\mathrm{comp}}.
\]
Consequently,
\[
 \|v_{\bn}(\bomega)-\widetilde v_{\bn,M}(\bomega)\|_2^2
 =\sum_{\bi\in\mathcal I_{\bn}}|e^{-\mathrm i\bx_{\bi}^{(\bn)}\cdot\bomega}
 -t_M(\bx_{\bi}^{(\bn)}\cdot\bomega)|^2
 \le N_{\bn}\varepsilon_{\mathrm{comp}}^2.
\]
Every entry of \(v_{\bn}(\bomega)\) has modulus one, so
\[
 \begin{aligned}
 \|v_{\bn}(\bomega)-\widetilde v_{\bn,M}(\bomega)\|_2&\le\sqrt{N_{\bn}}\varepsilon_{\mathrm{comp}},
 &\|v_{\bn}(\bomega)\|_2&=\sqrt{N_{\bn}},\\
 \|\widetilde v_{\bn,M}(\bomega)\|_2&\le\sqrt{N_{\bn}}(1+\varepsilon_{\mathrm{comp}}).
 \end{aligned}
\]
For vectors \(u\) and \(\widetilde u\),
\[
 uu^*-\widetilde u\widetilde u^*=(u-\widetilde u)u^*+
 \widetilde u(u-\widetilde u)^*,
\]
and hence, uniformly for \(\|\bomega\|<R\),
\[
 \|v_{\bn}(\bomega)v_{\bn}(\bomega)^*-
 \widetilde v_{\bn,M}(\bomega)\widetilde v_{\bn,M}(\bomega)^*\|_2
 \le3N_{\bn}\varepsilon_{\mathrm{comp}}.
\]
Therefore
\begin{align*}
 \|\mathcal P_{\bn}^{(R)}(w)-L_{\bn,M}^{(R)}(w)\|_2
 &\le(2\pi)^{-d}\int_{\|\bomega\|<R}|w(\bomega)|
 3N_{\bn}\varepsilon_{\mathrm{comp}}\,d\bomega\\
 &\le\frac{3}{(2\pi)^d}N_{\bn}\varepsilon_{\mathrm{comp}}\|w\|_{L^1(\R^d)},
\end{align*}
which proves \eqref{eq:supp-localized-fourier-error}.
\end{proof}

\begin{corollary}
\label{cor:localized-fourier-kernel-compression-interface}
There is a constant \(C=C(d,L)\) such that, for every \(R>0\),
\(0<\varepsilon<1\), and sampling index \(\bn\), there is a subspace
\(\mathcal U_{\bn}(R,\varepsilon)\), independent of \(w\), with
\[
 \dim\mathcal U_{\bn}(R,\varepsilon)\le C\{1+R+\log(\varepsilon^{-1})\}^d.
\]
For every real \(w\in L^1(\R^d)\), there are Hermitian matrices
\(H_{\bn}^{(R)}(w)\) and \(F_{\bn}^{(R)}(w)\) such that
\begin{align*}
 \mathcal P_{\bn}^{(R)}(w)&=H_{\bn}^{(R)}(w)+F_{\bn}^{(R)}(w),\\
 \operatorname{ran}H_{\bn}^{(R)}(w)&\subseteq\mathcal U_{\bn}(R,\varepsilon),
 &\|F_{\bn}^{(R)}(w)\|_2&\le CN_{\bn}\varepsilon\|w\|_{L^1(\R^d)}.
\end{align*}
\end{corollary}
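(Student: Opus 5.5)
This corollary is a repackaging of Lemma~\ref{lem:explicit-localized-fourier-kernel-compression}. The plan is to fix the data \((R,\varepsilon,\bn)\), choose the Taylor order from \(R\) and \(\varepsilon\) alone, and read off the subspace, the splitting and the bounds.

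Given \(R>0\), \(0<\varepsilon<1\) and \(\bn\), set \(\varepsilon_{\mathrm{comp}}:=\varepsilon\). Define \(M\) by \eqref{eq:supp-localized-fourier-order} and put
\[
\mathcal U_{\bn}(R,\varepsilon):=\mathcal U_{\bn,M}.
\]
By \eqref{eq:supp-localized-fourier-space}, this space is spanned by the sampled monomials of degree less than \(M\). Since \(M\) depends only on \(R\), \(\varepsilon\), \(L\) and \(d\), the subspace is independent of \(w\).

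For a real \(w\in L^1(\R^d)\), define the two matrices by
\[
H_{\bn}^{(R)}(w):=L_{\bn,M}^{(R)}(w),
\qquad
F_{\bn}^{(R)}(w):=\mathcal P_{\bn}^{(R)}(w)-L_{\bn,M}^{(R)}(w).
\]
The identity \(\mathcal P_{\bn}^{(R)}(w)=H_{\bn}^{(R)}(w)+F_{\bn}^{(R)}(w)\) then holds by construction.

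The lemma supplies the remaining properties directly:
\begin{itemize}
\item Both matrices are Hermitian.
\item By \eqref{eq:supp-localized-fourier-rank}, the range of \(H_{\bn}^{(R)}(w)\) lies in \(\mathcal U_{\bn,M}\).
\item By \eqref{eq:supp-localized-fourier-error}, \(\|F_{\bn}^{(R)}(w)\|_2\le 3(2\pi)^{-d}N_{\bn}\varepsilon\|w\|_{L^1(\R^d)}\).
\end{itemize}
Any \(C\ge 3(2\pi)^{-d}\) therefore handles the remainder bound.

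The only step needing care is the dimension bound with a constant depending only on \((d,L)\). From \eqref{eq:supp-localized-fourier-order},
\[
M\le 2+2eL\sqrt d\,R+\log_2 2+\log_2(\varepsilon^{-1})\le C_1\{1+R+\log(\varepsilon^{-1})\},
\]
where \(C_1\) depends only on \(d\) and \(L\), and \(\log_2=\log/\log 2\) is used. Next,
\[
\binom{M+d-1}{d}\le (M+d-1)^d/d!\le (dM)^d,
\]
using \(M\ge1\). Raising the bound on \(M\) to the \(d\)-th power gives
\[
\dim\mathcal U_{\bn}(R,\varepsilon)\le C\{1+R+\log(\varepsilon^{-1})\}^d.
\]
Finally, take \(C\) to be the maximum of this constant and \(3(2\pi)^{-d}\).
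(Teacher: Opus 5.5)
Your proposal is correct and follows the paper's proof: you apply Lemma~\ref{lem:explicit-localized-fourier-kernel-compression} with \(\varepsilon_{\mathrm{comp}}=\varepsilon\) and set \(\mathcal U_{\bn}(R,\varepsilon)=\mathcal U_{\bn,M}\), \(H_{\bn}^{(R)}(w)=L_{\bn,M}^{(R)}(w)\), and \(F_{\bn}^{(R)}(w)=\mathcal P_{\bn}^{(R)}(w)-L_{\bn,M}^{(R)}(w)\). Your derivation of the dimension bound from the binomial estimate and the choice of \(M\) spells out what the paper records only in the last sentence of that lemma.
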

\begin{proof}
Set \(\mathcal U_{\bn}(R,\varepsilon)=\mathcal U_{\bn,M}\),
\(H_{\bn}^{(R)}(w)=L_{\bn,M}^{(R)}(w)\), and
\(F_{\bn}^{(R)}(w)=\mathcal P_{\bn}^{(R)}(w)-L_{\bn,M}^{(R)}(w)\),
where Lemma~\ref{lem:explicit-localized-fourier-kernel-compression} is
applied with \(\varepsilon_{\mathrm{comp}}=\varepsilon\).
\end{proof}

For the quantitative argument, define the renormalized aliased symbol and its
limit by
\begin{align}
 \widetilde\phi_h(\bth)&:=h^{-2\alpha}\sum_{\bk\in\mathbb Z^d}
 S\!\left(\frac{\bth+2\pi\bk}{h}\right),
 \label{eq:supp-renormalized-aliased-symbol}\\
 \widetilde\phi(\bth)&:=c_\infty\sum_{\bk\in\mathbb Z^d}
 \|\bth+2\pi\bk\|^{-2\alpha},\qquad \bth\in Q\setminus\{0\},
 \label{eq:supp-renormalized-symbol-limit}
\end{align}
with an arbitrary finite value at the origin.  Thus
\begin{equation}\label{eq:supp-renormalized-covariance-toeplitz}
 h^{d-2\alpha}\Sigma_{\bn}=T_{\bn}(\widetilde\phi_h).
\end{equation}
For \(x\in\mathbb C^{N_{\bn}}\), write
\(p_x(\bth):=\sum_{\bi\in\mathcal I_{\bn}}x_{\bi}e^{\mathrm i\bi\cdot\bth}\).
The Toeplitz quadratic-form identity is
\begin{equation}\label{eq:supp-toeplitz-quadratic-form}
 x^*T_{\bn}(a)x=\int_Qa(\bth)|p_x(\bth)|^2\,d\nu_Q(\bth),
 \qquad a\in L^1(Q).
\end{equation}

\begin{proof}[Proof of Theorem~\ref{thm:quantitative-sampled-covariance-glt}]
Fix \(r\in(0,\pi)\), and write
\[
 p_h^{(r)}(\bth):=h^{-2\alpha}S(\bth/h)\mathbb I_{B_r(0)}(\bth),\quad
 \Sigma_{\bn}^{\mathrm{reg},r}:=T_{\bn}(\widetilde\phi_h-p_h^{(r)}).
\]
Apply Corollary~\ref{cor:localized-fourier-kernel-compression-interface}
with \((R,\varepsilon,w)=(r/h,h^{2\alpha+1},S)\), and set
\[
 H_{\Sigma,\bn}^{(r)}:=h^{d-2\alpha}H_{\bn}^{(r/h)}(S),\qquad
 F_{\Sigma,\bn}^{(r)}:=h^{d-2\alpha}F_{\bn}^{(r/h)}(S).
\]
Equations~\eqref{eq:supp-localized-fourier-principal-alias} and
\eqref{eq:supp-renormalized-covariance-toeplitz} give the stated
decomposition.  Since \(N_{\bn}=(L/h+1)^d\), the corollary gives, for all
sufficiently fine grids and constants independent of \(r\) and \(\bn\),
\[
 \frac{\rank H_{\Sigma,\bn}^{(r)}}{N_{\bn}}\le C\{r^d+h^d\log^d(h^{-1})\},\qquad \|F_{\Sigma,\bn}^{(r)}\|_2\le Ch.
\]
The tail envelope gives
\(\|\widetilde\phi_h-p_h^{(r)}\|_{L^\infty(Q)}\le C(1+r^{-2\alpha})\), so
Equation~\eqref{eq:supp-toeplitz-quadratic-form} gives
\(\|\Sigma_{\bn}^{\mathrm{reg},r}\|_2\le C(1+r^{-2\alpha})\).
{
For the shifted alias \(\bk=\boldsymbol1\), every coordinate of
\(\bth+2\pi\boldsymbol1\) lies in \([\pi,3\pi]\) when \(\bth\in Q\).
Hence
\[
 \pi\sqrt d\le\|\bth+2\pi\boldsymbol1\|\le3\pi\sqrt d,
 \qquad \bth\in Q.
\]
In particular,
\(\inf_{\bth\in Q}\|(\bth+2\pi\boldsymbol1)/h\|
\ge\pi\sqrt d/h\to\infty\), so the tail limit applies uniformly to this
shifted alias.  Therefore, for all sufficiently fine grids,
\[
 h^{-2\alpha}S\!\left(\frac{\bth+2\pi\boldsymbol1}{h}\right)
 \ge \tfrac12c_\infty
 \|\bth+2\pi\boldsymbol1\|^{-2\alpha}
 \ge \tfrac12c_\infty(3\pi\sqrt d)^{-2\alpha}
 =:m_{\mathrm{reg}},
\]
uniformly on \(Q\).  Since the principal contribution outside
\(B_r(0)\) and all remaining aliases are nonnegative, this gives
\(\widetilde\phi_h-p_h^{(r)}\ge m_{\mathrm{reg}}\) uniformly on \(Q\).}  Equation~\eqref{eq:supp-toeplitz-quadratic-form}
therefore gives \(\Sigma_{\bn}^{\mathrm{reg},r}\succeq m_{\mathrm{reg}}I\).

For fixed \(r\), define
\[
 \widetilde\phi^{(r)}(\bth):=
 \begin{cases}
 \widetilde\phi(\bth)-c_\infty\|\bth\|^{-2\alpha}\mathbb I_{B_r(0)}(\bth),&\bth\in Q\setminus\{0\},\\
 c_\infty\displaystyle\sum_{\bk\ne\boldsymbol0}\|2\pi\bk\|^{-2\alpha},&\bth=0.
 \end{cases}
\]
For fixed \(r\), the tail envelope is an integrable majorant, so dominated
convergence gives \(\widetilde\phi_h-p_h^{(r)}\to\widetilde\phi^{(r)}\)
in \(L^1(Q)\).  {Consequently,
\[
 \frac{1}{N_{\bn}}\bigl\|T_{\bn}(\widetilde\phi_h-p_h^{(r)})
 -T_{\bn}(\widetilde\phi^{(r)})\bigr\|_1
 \le(2\pi)^{-d}\|\widetilde\phi_h-p_h^{(r)}-\widetilde\phi^{(r)}\|_{L^1(Q)}
 \longrightarrow0.
\]
By \cite[Theorem~2.3 and Chapter~6, GLT~3]{GaroniSerraCapizzanoII}, the normalized trace-norm estimate makes the difference zero-distributed, and the fixed multilevel Toeplitz sequence \(T_{\bn}(\widetilde\phi^{(r)})\) has GLT symbol \(\widetilde\phi^{(r)}\).  Hence the sequence generated by \(\widetilde\phi_h-p_h^{(r)}\) has symbol \(\widetilde\phi^{(r)}\).}
Finally \(\widetilde\phi^{(r)}\to\widetilde\phi\) in measure; the rank
and residual bounds give the a.c.s. conclusion by taking \(h\to0\) first
and then \(r\downarrow0\).
\end{proof}

\begin{proof}[Proof of Lemma~\ref{lem:renorm-glt-inversion}]
The matrix \(h^{d-2\alpha}\Sigma_{\bn}\) is the Hermitian
multilevel Toeplitz matrix generated by the real, nonnegative renormalized
aliased symbol \(\widetilde\phi_h\) of
\eqref{eq:supp-renormalized-aliased-symbol}.  By
Equation~\eqref{eq:supp-toeplitz-quadratic-form}, for any
\(x\in\mathbb C^{N_{\bn}}\),
\[
        x^*h^{d-2\alpha}\Sigma_{\bn}x
        =\int_Q\widetilde\phi_h(\bth)\,|p_x(\bth)|^2\,d\nu_Q(\bth)
        \ \ge\ \Bigl(\operatorname*{ess\,inf}_{\bth\in Q}\widetilde\phi_h\Bigr)\,\|x\|^2,
        \qquad Q=[-\pi,\pi]^d,
\]
so it suffices to bound \(\operatorname*{ess\,inf}_Q\widetilde\phi_h\) below
uniformly in \(\bn\).  Every summand of
\eqref{eq:supp-renormalized-aliased-symbol} is nonnegative, so discarding
all of them except the one indexed by \(\bk=\boldsymbol0\) leaves the
pointwise lower
bound
\[
        \widetilde\phi_h(\bth)\ \ge\ h^{-2\alpha}S(\bth/h),
        \qquad \bth\in Q .
\]
We bound this \(\bk=\boldsymbol0\) term \(h^{-2\alpha}S(\bth/h)\) on two
regions.  By
\eqref{eq:block-tail-limit-detailed} there is \(R_0\) with
\(S(\bomega)\ge\tfrac12 c_\infty\|\bomega\|^{-2\alpha}\) whenever
\(\|\bomega\|\ge R_0\).  If \(\|\bth\|\ge R_0 h\), then
\(\|\bth/h\|\ge R_0\), and since \(\|\bth\|\le\pi\sqrt d\) on \(Q\),
\[
        h^{-2\alpha}S(\bth/h)
        \ \ge\ \tfrac12 c_\infty\,h^{-2\alpha}\|\bth/h\|^{-2\alpha}
        =\tfrac12 c_\infty\|\bth\|^{-2\alpha}
        \ \ge\ \tfrac12 c_\infty(\pi\sqrt d)^{-2\alpha}=m .
\]
If instead \(\|\bth\|<R_0 h\), then \(\bth/h\) lies in the ball
\(\{\|\bomega\|\le R_0\}\), on which \(S\) attains a positive minimum
\(s_0:=\min_{\|\bomega\|\le R_0}S>0\), because \(S\) is continuous and
strictly positive; hence the same \(\bk=\boldsymbol0\) term satisfies
\(h^{-2\alpha}S(\bth/h)\ge h^{-2\alpha}s_0\), which exceeds \(m\) once \(h\)
is small enough.  In both regions \(\widetilde\phi_h(\bth)\ge m\), so
\(\operatorname*{ess\,inf}_Q\widetilde\phi_h\ge m\) and hence
\(\lambda_{\min}(h^{d-2\alpha}\Sigma_{\bn})\ge m\) for all sufficiently small \(h\).
Uniform invertibility follows.
\end{proof}

\section{Uniform moving-tail spectral estimates}
\label{app:moving-tail-spectral-proofs}

Moving \(\alpha\) produces the common scale drift \(-a_{\bn}\) in the exponent derivative.  The exact relation
\[
h^{-2(\alpha-\alpha_0)}e^{\tau_0}h^{2\alpha-d}
=e^{\tau_0}h^{2\alpha_0-d}
\]
removes the changing leading scale.  The factorization below applies this relation to the sampled covariance, and the compensated direction isolates the remaining spectral shape.

We study
\(e^{-\tau_0}h^{d-2\alpha}\Sigma_{(\tau_0,\alpha),\bn}\).
Throughout this section, \(h=h(\bn)\) is the isotropic mesh scale,
\(a_{\bn}:=2\log(1/h)\), \(\bth\in Q\) is a torus frequency, and
\(\bk\in\mathbb Z^d\) is an alias index.  Let \(\tau_0\) denote the base
value and let
\(\mathcal I_\alpha=[\alpha_-,\alpha_+]\) be a compact interval with
\(\alpha_->d/2\)
and \(\alpha_0\) in its interior, contained in the
\(\tau_0\) exponent slice \(\{\alpha:(\tau_0,\alpha)\in\mathcal U\}\) of
Assumption~\ref{ass:differentiated-spectral-tail-family}.  For
\(\varphi=(\tau,\alpha)\)  write
\begin{equation}
\begin{aligned}
 \Sigma_{\varphi,\bn}
 &=T_{\bn}(\phi_{h,\varphi})
 =e^\tau h^{2\alpha-d}T_{\bn}(b_{h,\alpha}),\\
 e^{-\tau_0}h^{d-2\alpha}\Sigma_{(\tau_0,\alpha),\bn}
 &=T_{\bn}(b_{h,\alpha}).
\end{aligned}
 \label{eq:supp-aliased-likelihood-factorization}
\end{equation}
Thus \(b_{h,\alpha}=e^{-\tau}h^{d-2\alpha}\phi_{h,\varphi}\).  Write $\xi_{\bk}(\bth):=\bth+2\pi\bk,$
\begin{equation}
\begin{aligned}
 \beta_{\bk,h,\alpha}(\bth)&:=h^{-2\alpha}
 \overline S_{\alpha}(\xi_{\bk}(\bth)/h),\\
 b_{h,\alpha}(\bth)&:=\sum_{\bk\in\mathbb Z^d}
 \beta_{\bk,h,\alpha}(\bth).
\end{aligned}
 \label{eq:tail-alias-term-and-sum}
\end{equation}
Below, \(\bth\) is omitted from the notation.

For \(\alpha\in\mathcal I_\alpha\), write
$
 \psi_{h,\alpha}(\bth):=\sum_{\bk\in\mathbb Z^d}
 \{h^2+\|\xi_{\bk}(\bth)\|^2\}^{-\alpha}.
$
The following lemma compares \(b_{h,\alpha}\) with \(\psi_{h,\alpha}\) and
bounds \(\partial_\alpha b_{h,\alpha}\).
\begin{lemma}
\label{lem:uniform-tail-family-comparison}
There are \(0<c_*\le C_*<\infty\), uniformly for
\(\alpha\in\mathcal I_\alpha\), such that
\begin{align}
 c_*(1+\|\bomega\|^2)^{-\alpha}
 &\le\overline S_{\alpha}(\bomega)
 \le C_*(1+\|\bomega\|^2)^{-\alpha},
 \label{eq:uniform-tail-family-comparison}\\
 |\partial_\alpha\overline S_{\alpha}(\bomega)|
 &\le C_*\{1+\log(2+\|\bomega\|)\}(1+\|\bomega\|^2)^{-\alpha},
 \label{eq:uniform-tail-family-derivative}\\
 \intertext{{For all sufficiently fine grids,}}
 c_*\psi_{h,\alpha}\le b_{h,\alpha}\le C_*\psi_{h,\alpha},\qquad
 |\partial_\alpha b_{h,\alpha}|&\le Ca_{\bn}b_{h,\alpha},
 \label{eq:tail-alias-global-comparison}
\end{align}
\end{lemma}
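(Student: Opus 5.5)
The proof proceeds in two steps: pointwise bounds on \(\overline S_\alpha\) and its \(\alpha\)-derivative, uniform over \(\mathcal I_\alpha\), and then a transfer of these bounds to the alias sums by summing over \(\bk\).

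\emph{Pointwise bounds on \(\overline S_\alpha\).} The upper bound in \eqref{eq:uniform-tail-family-comparison} is the uniform envelope \eqref{eq:block-envelope-detailed}, with \(C_*\ge c_2\). For the lower bound, write \(\overline S_\alpha=(1+\|\bomega\|^2)^{-\alpha}\{1+(1+\|\bomega\|^2)^{\alpha}S^{\mathrm{rem}}_\alpha\}\). By \eqref{eq:spectral-tail-remainder-bound}, the correction term is bounded by \(C(1+\|\bomega\|^2)^{-\varepsilon}\). So there is \(R_0\), uniform in \(\alpha\), such that \(\overline S_\alpha\ge\tfrac12(1+\|\bomega\|^2)^{-\alpha}\) for \(\|\bomega\|\ge R_0\). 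On the compact set \(\mathcal I_\alpha\times\{\|\bomega\|\le R_0\}\), the map \((\alpha,\bomega)\mapsto\overline S_\alpha(\bomega)\) is jointly continuous and strictly positive: the remainder is jointly continuous by assumption, and the main term is as well. Hence it has a positive minimum there, and the ratio to \((1+\|\bomega\|^2)^{-\alpha}\) is bounded below. For the derivative bound \eqref{eq:uniform-tail-family-derivative}, differentiate directly:
\[
\partial_\alpha(1+\|\bomega\|^2)^{-\alpha}=-\log(1+\|\bomega\|^2)\,(1+\|\bomega\|^2)^{-\alpha},
\]
and \(\log(1+\|\bomega\|^2)\le 2\log(2+\|\bomega\|)\). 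The remainder derivative is handled by \eqref{eq:spectral-tail-alpha-remainder-bound}, discarding the extra decay factor \((1+\|\bomega\|^2)^{-\varepsilon}\le1\).

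\emph{Transfer to the alias sums.} Substitute \(\bomega=\xi_{\bk}/h\). Since \(h^{-2\alpha}(1+\|\xi_{\bk}\|^2/h^2)^{-\alpha}=(h^2+\|\xi_{\bk}\|^2)^{-\alpha}\), each term satisfies \(c_*(h^2+\|\xi_{\bk}\|^2)^{-\alpha}\le\beta_{\bk,h,\alpha}\le C_*(h^2+\|\xi_{\bk}\|^2)^{-\alpha}\). Summing over \(\bk\) gives \(c_*\psi_{h,\alpha}\le b_{h,\alpha}\le C_*\psi_{h,\alpha}\). The series converge because \(2\alpha_->d\), so \(\sum_{\bk\ne0}\|\xi_{\bk}\|^{-2\alpha}\) converges uniformly on \(Q\).

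\emph{Derivative of the alias sum.} Differentiating \(\beta_{\bk,h,\alpha}=h^{-2\alpha}\overline S_\alpha(\xi_{\bk}/h)\) in \(\alpha\) gives two contributions. The prefactor yields \(a_{\bn}\beta_{\bk,h,\alpha}\), since \(\partial_\alpha h^{-2\alpha}=2\log(1/h)\,h^{-2\alpha}\). The second contribution is \(h^{-2\alpha}\partial_\alpha\overline S_\alpha(\xi_{\bk}/h)\). By the derivative bound and the lower comparison just proved, it is at most
\[
C\{1+\log(2+\|\xi_{\bk}\|/h)\}\,\beta_{\bk,h,\alpha}.
\]
For \(\bth\in Q\), we have \(\|\xi_{\bk}\|\le C(1+\|\bk\|)\), so \(\log(2+\|\xi_{\bk}\|/h)\le\log(1/h)+C+\log(1+\|\bk\|)\).

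The main obstacle is the \(\log(1+\|\bk\|)\) term for large \(\bk\), because it is not bounded by \(a_{\bn}\) pointwise. The plan is to absorb it using the extra tail decay. Choose \(\delta>0\) with \(2(\alpha_- -\delta)>d\). Then \(\log(1+\|\bk\|)\,\|\xi_{\bk}\|^{-2\alpha}\le C_\delta\|\xi_{\bk}\|^{-2(\alpha-\delta)}\) for \(\bk\ne0\), so \(\sum_{\bk\ne0}\log(1+\|\bk\|)\beta_{\bk,h,\alpha}\) is bounded by a constant uniformly in \(\bth\) and \(h\). Separately, the \(\bk=\boldsymbol1\) computation in the proof of Theorem~\ref{thm:quantitative-sampled-covariance-glt} gives \(b_{h,\alpha}\ge\beta_{\boldsymbol1,h,\alpha}\ge c>0\) uniformly on \(Q\). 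Hence this constant is at most \(C\,b_{h,\alpha}\le C a_{\bn}b_{h,\alpha}\) for fine grids, where \(a_{\bn}\ge1\). Differentiation under the sum is justified by dominated convergence using these same majorants, uniformly on \(\mathcal I_\alpha\). Collecting the pieces gives \(|\partial_\alpha b_{h,\alpha}|\le Ca_{\bn}b_{h,\alpha}\) for all sufficiently fine grids.
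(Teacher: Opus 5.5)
Your proposal is correct and follows essentially the same route as the paper: the lower bound comes from the remainder bound at high frequency plus joint continuity and positivity on the compact frequency region, the bounds transfer termwise to the alias sums at \(\bomega=\xi_{\bk}/h\), and the logarithm is split as \(\log(2+\|\xi_{\bk}\|/h)\le a_{\bn}/2+\log(2+\|\xi_{\bk}\|)\), with the logarithmic lattice sum absorbed because \(2\alpha_->d\) and \(b_{h,\alpha}\) is bounded below. The differences are cosmetic. You take the upper envelope directly from the uniform form of Assumption~\ref{ass:block-tail-detailed}, and you absorb the logarithm through a \(\delta\)-loss in the exponent. The uniform positivity of \(b_{h,\alpha}\) is cleaner to read off from your own comparison, \(b_{h,\alpha}\ge c_*(h^2+\|\bth\|^2)^{-\alpha}\ge c_*(1+d\pi^2)^{-\alpha_+}\), than from the fixed-\(S\) computation in the proof of Theorem~\ref{thm:quantitative-sampled-covariance-glt}.
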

\begin{proof}
Let \(\mathcal I_\alpha=[\alpha_-,\alpha_+]\) be the compact
\(\alpha\)-interval under consideration.  By
Equation~\eqref{eq:spectral-tail-remainder-bound}, uniformly in
\(\alpha\in\mathcal I_\alpha\), the ratio
\(|S^{\mathrm{rem}}_\alpha(\bomega)|/(1+\|\bomega\|^2)^{-\alpha}\)
is bounded by \(C(1+\|\bomega\|^2)^{-\varepsilon}\).  Hence we may choose
\(R<\infty\) so that, whenever \(\|\bomega\|\ge R\),
\[
 \frac12(1+\|\bomega\|^2)^{-\alpha}
 \le \overline S_\alpha(\bomega)
 \le \frac32(1+\|\bomega\|^2)^{-\alpha}.
\]
On the complementary compact-frequency region, the decomposition
\(\overline S_\alpha(\bomega)=(1+\|\bomega\|^2)^{-\alpha}
+S^{\mathrm{rem}}_\alpha(\bomega)\) shows that
\((\alpha,\bomega)\mapsto\overline S_\alpha(\bomega)\) is jointly
continuous: the principal term is jointly continuous and the remainder is
jointly continuous by assumption.  Since \(\overline S_\alpha\) is strictly
positive, the function
\(\overline S_\alpha(\bomega)(1+\|\bomega\|^2)^\alpha\) has a positive
minimum and a finite maximum on
\(\mathcal I_\alpha\times\{\|\bomega\|\le R\}\).  Combining the compact and
tail regions proves Equation~\eqref{eq:uniform-tail-family-comparison}.

For the derivative bound, differentiating the principal term gives
\(\partial_\alpha(1+\|\bomega\|^2)^{-\alpha}
=-\log(1+\|\bomega\|^2)(1+\|\bomega\|^2)^{-\alpha}\).  Together with
Equation~\eqref{eq:spectral-tail-alpha-remainder-bound}, this directly yields
Equation~\eqref{eq:uniform-tail-family-derivative}, uniformly for
\(\alpha\in\mathcal I_\alpha\).  No joint continuity of
\(\partial_\alpha S^{\mathrm{rem}}_\alpha\) is needed here.

Evaluating Equation~\eqref{eq:uniform-tail-family-comparison} at
\(\bomega=\xi_{\bk}(\bth)/h\) and multiplying by \(h^{-2\alpha}\) shows that
\(\beta_{\bk,h,\alpha}(\bth)\) is bounded above and below, with uniform
constants, by \(\{h^2+\|\xi_{\bk}(\bth)\|^2\}^{-\alpha}\).  Summing over
\(\bk\) gives the comparison between \(b_{h,\alpha}\) and
\(\psi_{h,\alpha}\).

It remains to control the \(\alpha\)-derivative of the aliased symbol.
Since \(a_{\bn}=2\log(1/h)\), differentiation gives
\(\partial_\alpha\beta_{\bk,h,\alpha}
=a_{\bn}\beta_{\bk,h,\alpha}
+h^{-2\alpha}\partial_\alpha\overline S_\alpha(\xi_{\bk}/h)\).
Using Equation~\eqref{eq:uniform-tail-family-derivative} and
\(\log(2+\|\xi_{\bk}(\bth)\|/h)
\le a_{\bn}/2+\log(2+\|\xi_{\bk}(\bth)\|)\) for \(h\le1\), we obtain a
summable majorant proportional to
\(\{1+a_{\bn}+\log(2+\|\xi_{\bk}(\bth)\|)\}
\{h^2+\|\xi_{\bk}(\bth)\|^2\}^{-\alpha}\).  Because
\(2\alpha_->d\), the standard lattice estimate
\[
 \sum_{\bk\in\mathbb Z^d}
 \log(2+\|\xi_{\bk}(\bth)\|)
 \{h^2+\|\xi_{\bk}(\bth)\|^2\}^{-\alpha}
 \le C\psi_{h,\alpha}(\bth)
\]
holds uniformly for \(\alpha\in\mathcal I_\alpha\), \(\bth\in Q\), and
sufficiently small \(h\).  Indeed, for large \(\|\bk\|\),
\(\|\xi_{\bk}(\bth)\|\asymp\|\bk\|\) uniformly in \(\bth\), and
\(\sum_{\bk}\log(2+\|\bk\|)(1+\|\bk\|)^{-2\alpha_-}<\infty\); the finitely
many remaining indices are absorbed into the constant.  The same majorant
justifies termwise differentiation of the alias series.  Consequently
\(|\partial_\alpha b_{h,\alpha}(\bth)|
\le C(1+a_{\bn})\psi_{h,\alpha}(\bth)\le C a_{\bn}b_{h,\alpha}(\bth)\) for
all sufficiently fine grids, because \(a_{\bn}\to\infty\).  This proves
Equation~\eqref{eq:tail-alias-global-comparison}.
\end{proof}

We first prove the uniform alias estimates and their two regular-region
consequences.  We then prove the principal-alias weight bounds and the common
compression proposition below.
These results supply the regular-region and low-frequency
estimates used in the proof of
Theorem~\ref{thm:tail-exponent-gradient-information}.

\subsection{Alias estimates and affine-secant consequences}
\label{app:affine-secant-alias-estimates}

Choose \(\varepsilon_{\mathrm{tail}}>0\) for which the bounds in
Assumption~\ref{ass:differentiated-spectral-tail-family} hold and set
\(\varepsilon:=\varepsilon_{\mathrm{tail}}/2\).  Define
\[
        A_{\bk,h}(\bth):=h^2+\|\xi_{\bk}(\bth)\|^2,
        \qquad
        \eta_{\bk,h,\alpha}(\bth)
        :=h^{-2\alpha}S^{\mathrm{rem}}_{\alpha}
        \left(\frac{\xi_{\bk}(\bth)}h\right).
\]

\begin{lemma}
\label{lem:uniform-moving-alias-estimates}
Uniformly over \(\alpha\in\mathcal I_\alpha\), \(\bth\in Q\), and
\(\bk\in\mathbb Z^d\),
\begin{equation}
        \beta_{\bk,h,\alpha}
        =A_{\bk,h}^{-\alpha}+\eta_{\bk,h,\alpha}.
        \label{eq:uniform-moving-alias-decomposition}
\end{equation}
There is a constant independent of \(h,\alpha,\bth,\bk\) such that, on all
sufficiently fine grids,
\begin{align}
        |\eta_{\bk,h,\alpha}|
        &\le Ch^{2\varepsilon}A_{\bk,h}^{-\alpha-\varepsilon},
        &
        |\partial_\alpha\eta_{\bk,h,\alpha}|
        &\le Ch^{2\varepsilon}(1+a_{\bn})
        A_{\bk,h}^{-\alpha-\varepsilon},
        \label{eq:uniform-moving-alias-remainder-bounds}\\
        |\beta_{\bk,h,\alpha}|
        &\le CA_{\bk,h}^{-\alpha},
        &
        |\partial_\alpha\beta_{\bk,h,\alpha}|
        &\le C\{1+|\log A_{\bk,h}|\}A_{\bk,h}^{-\alpha}
        +Ch^{2\varepsilon}(1+a_{\bn})A_{\bk,h}^{-\alpha-\varepsilon}.
        \label{eq:uniform-moving-alias-majorants}
\end{align}
\end{lemma}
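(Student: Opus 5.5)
The decomposition \eqref{eq:uniform-moving-alias-decomposition} is immediate. Assumption~\ref{ass:differentiated-spectral-tail-family} gives \(\overline S_\alpha(\bomega)=(1+\|\bomega\|^2)^{-\alpha}+S^{\mathrm{rem}}_\alpha(\bomega)\). We evaluate this at \(\bomega=\xi_{\bk}(\bth)/h\) and multiply by \(h^{-2\alpha}\). The principal term becomes
\[
 h^{-2\alpha}\bigl(1+\|\xi_{\bk}\|^2/h^2\bigr)^{-\alpha}
 =\bigl(h^2+\|\xi_{\bk}\|^2\bigr)^{-\alpha}
 =A_{\bk,h}^{-\alpha},
\]
and the remainder becomes \(\eta_{\bk,h,\alpha}\) by definition.

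For the remainder bounds we use the same scaling identity. We have \(1+\|\xi_{\bk}\|^2/h^2=h^{-2}A_{\bk,h}\). Hence \eqref{eq:spectral-tail-remainder-bound}, with exponent \(\varepsilon_{\mathrm{tail}}\), gives
\[
 |\eta_{\bk,h,\alpha}|\le Ch^{-2\alpha}(h^{-2}A_{\bk,h})^{-\alpha-\varepsilon_{\mathrm{tail}}}
 =Ch^{2\varepsilon_{\mathrm{tail}}}A_{\bk,h}^{-\alpha-\varepsilon_{\mathrm{tail}}}.
\]
We then downgrade to \(\varepsilon=\varepsilon_{\mathrm{tail}}/2\). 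Write
\[
 h^{2\varepsilon_{\mathrm{tail}}}A_{\bk,h}^{-\varepsilon_{\mathrm{tail}}}
 =h^{2\varepsilon}A_{\bk,h}^{-\varepsilon}\,(h^2/A_{\bk,h})^{\varepsilon}.
\]
The last factor is at most \(1\), since \(A_{\bk,h}\ge h^2\). This yields the first bound in \eqref{eq:uniform-moving-alias-remainder-bounds}.

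For \(\partial_\alpha\eta\) we use the product rule:
\[
 \partial_\alpha\eta_{\bk,h,\alpha}
 =a_{\bn}\eta_{\bk,h,\alpha}+h^{-2\alpha}(\partial_\alpha S^{\mathrm{rem}}_\alpha)(\xi_{\bk}/h).
\]
By \eqref{eq:spectral-tail-alpha-remainder-bound}, the second term is bounded by
\[
 Ch^{2\varepsilon_{\mathrm{tail}}}A_{\bk,h}^{-\alpha-\varepsilon_{\mathrm{tail}}}\{1+\log(2+\|\xi_{\bk}\|/h)\}.
\]
This step is the only delicate one, because the logarithm must be absorbed using the spare \(\varepsilon\). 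We use
\[
 \log(2+\|\xi_{\bk}\|/h)\le a_{\bn}/2+\log(2+\|\xi_{\bk}\|)\qquad(h\le1),
\]
and then \(\log(2+\|\xi_{\bk}\|)\le C_\varepsilon(1+\|\xi_{\bk}\|^2)^{\varepsilon/2}\). The leftover factor is
\[
 (h^2/A_{\bk,h})^{\varepsilon}(1+\|\xi_{\bk}\|^2)^{\varepsilon/2}
 \le h^{2\varepsilon}A_{\bk,h}^{-\varepsilon}(1+A_{\bk,h})^{\varepsilon/2}.
\]
This is bounded by a constant: when \(A_{\bk,h}\le1\) it is at most \(2^{\varepsilon/2}\), and when \(A_{\bk,h}>1\) it is at most \(2^{\varepsilon/2}A_{\bk,h}^{-\varepsilon/2}\le 2^{\varepsilon/2}\). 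We get \(Ch^{2\varepsilon}(1+a_{\bn})A_{\bk,h}^{-\alpha-\varepsilon}\), uniformly in \(\bk\) and \(\bth\). The constants are uniform because the assumption's bounds are uniform over the \(\alpha\)-projection of \(\mathcal U\), which contains \(\mathcal I_\alpha\).

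Finally, for \eqref{eq:uniform-moving-alias-majorants} we combine the decomposition with the bounds just proved. From \(A_{\bk,h}\ge h^2\) we get
\[
 h^{2\varepsilon}A_{\bk,h}^{-\alpha-\varepsilon}\le A_{\bk,h}^{-\alpha},
\]
so \(|\beta|\le(1+C)A_{\bk,h}^{-\alpha}\); alternatively, Lemma~\ref{lem:uniform-tail-family-comparison} gives the same bound. For the derivative we have \(\partial_\alpha A_{\bk,h}^{-\alpha}=-\log(A_{\bk,h})A_{\bk,h}^{-\alpha}\). Adding the bound for \(\partial_\alpha\eta\) gives the stated majorant. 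Throughout, "sufficiently fine grids" is needed only to ensure \(h\le1\).
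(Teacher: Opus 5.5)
Your proof is correct and follows the paper's argument. It uses the same scaling identity \(1+\|\xi_{\bk}\|^2/h^2=h^{-2}A_{\bk,h}\), the same downgrade from \(\varepsilon_{\mathrm{tail}}\) to \(\varepsilon=\varepsilon_{\mathrm{tail}}/2\) via \(A_{\bk,h}\ge h^2\), and the same product rule \(\partial_\alpha\eta_{\bk,h,\alpha}=a_{\bn}\eta_{\bk,h,\alpha}+h^{-2\alpha}(\partial_\alpha S^{\mathrm{rem}}_\alpha)(\xi_{\bk}/h)\). The one difference is minor: the paper absorbs \(1+\log(2+\|\xi_{\bk}\|/h)\) directly into the spare factor \((h^2/A_{\bk,h})^{\varepsilon}=(1+\|\xi_{\bk}/h\|^2)^{-\varepsilon}\), so its differentiated-remainder term has no \(a_{\bn}\). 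Your split \(\log(2+\|\xi_{\bk}\|/h)\le a_{\bn}/2+\log(2+\|\xi_{\bk}\|)\) introduces an extra \(a_{\bn}\), which is harmless because \(a_{\bn}\eta_{\bk,h,\alpha}\) already contributes one.
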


\begin{proof}[Proof of Lemma~\ref{lem:uniform-moving-alias-estimates}]
Substituting the expansion in
Assumption~\ref{ass:differentiated-spectral-tail-family} into
Equation~\eqref{eq:tail-alias-term-and-sum} gives
Equation~\eqref{eq:uniform-moving-alias-decomposition}.
The exact remainder bounds
\begin{align*}
 |\eta_{\bk,h,\alpha}|&\le
 Ch^{4\varepsilon}A_{\bk,h}^{-\alpha-2\varepsilon},\\
 h^{-2\alpha}\left|\partial_\alpha
 S^{\mathrm{rem}}_{\alpha}(\xi_{\bk}/h)\right|
 &\le Ch^{2\varepsilon}A_{\bk,h}^{-\alpha-\varepsilon}
\end{align*}
follow respectively from
Equations~\eqref{eq:spectral-tail-remainder-bound} and
\eqref{eq:spectral-tail-alpha-remainder-bound}; in the second, one
\(\varepsilon\)-power absorbs
\(1+\log(2+\|\xi_{\bk}/h\|)\).  Since
\(A_{\bk,h}\ge h^2\), the first bound implies the first remainder bound in
Equation~\eqref{eq:uniform-moving-alias-remainder-bounds}:
\[
 h^{4\varepsilon}A_{\bk,h}^{-\alpha-2\varepsilon}
 =\{h^{2\varepsilon}A_{\bk,h}^{-\varepsilon}\}
 h^{2\varepsilon}A_{\bk,h}^{-\alpha-\varepsilon}
 \le h^{2\varepsilon}A_{\bk,h}^{-\alpha-\varepsilon}.
\]
Differentiating \(h^{-2\alpha}S^{\mathrm{rem}}_{\alpha}
(\xi_{\bk}/h)\) gives the differentiated remainder and
\(a_{\bn}\eta_{\bk,h,\alpha}\).  The two displayed bounds give its
derivative bound.  Finally,
\(\partial_\alpha\beta_{\bk,h,\alpha}=-\log(A_{\bk,h})A_{\bk,h}^{-\alpha}
+\partial_\alpha\eta_{\bk,h,\alpha}\); this identity, the decomposition,
and the two remainder bounds prove
Equation~\eqref{eq:uniform-moving-alias-majorants}.
\end{proof}

The affine path parametrizes the moving exponent secant used in
the proof of Theorem~\ref{thm:tail-exponent-gradient-information}, and the
fundamental theorem of calculus turns its endpoint secant into a path-average
derivative.  The maximal displacement \(\rho_{\bn}\) tracks the cutoff loss.
Since the principal alias is the only alias whose limiting term is singular at
the origin, we subtract only
\(\beta_{\boldsymbol0,h,\alpha}\mathbb I_{B_r(0)}\) and leave the shifted
aliases in the regular symbol.  After integration, the removed contribution
is compressed into a common range independent of the path.

Fix \(r_0\in(0,1)\).  Given \(\delta_{\bn}\to0\), define
\[
        \alpha_{\bn}(t):=\alpha_0+t\delta_{\bn},
        \qquad \rho_{\bn}:=|\delta_{\bn}|.
\]

For \(0<r\le r_0\), define
\[
        b_{h,\alpha}^{r}
        :=b_{h,\alpha}-\beta_{\boldsymbol0,h,\alpha}\mathbb I_{B_r(0)},
        \qquad
        \dot b_{h,\alpha}^{r}
        :=\partial_\alpha b_{h,\alpha}
        -\partial_\alpha \beta_{\boldsymbol0,h,\alpha}\mathbb I_{B_r(0)}.
\]
Along the path, define
\[
        \dot b_{h,\bn}^{\mathrm{sec}}
        :=\int_0^1\partial_\alpha b_{h,\alpha_{\bn}(t)}\,dt,
        \qquad
        \dot b_{h,\bn}^{\mathrm{sec},r}
        :=\int_0^1\dot b_{h,\alpha_{\bn}(t)}^{r}\,dt.
\]
When \(\delta_{\bn}\ne0\), the superscript \(\mathrm{sec}\) denotes the
corresponding difference quotient.

The previously defined finite-mesh alias series
\(\psi_{h,\alpha}\), used to compare \(b_{h,\alpha}\), has
\(\psi_\alpha\) as its limit away from the singular origin; the origin is
excluded because the principal alias is singular.

Recall from Section~\ref{sec:maximum-likelihood} that
\(\psi_\alpha\) is the tail-normalized limiting alias shape and
\(r_{\alpha_0}\) is its relative exponent derivative.  In the present alias
notation, their explicit formulas are
\begin{equation}
\begin{aligned}
\psi_\alpha(\bth)&:=\sum_{\bk\in\mathbb Z^d}
 \|\xi_{\bk}(\bth)\|^{-2\alpha},\\
 \dot\psi_\alpha(\bth)&:=-\sum_{\bk\in\mathbb Z^d}
 \log\!\bigl(\|\xi_{\bk}(\bth)\|^2\bigr)
 \|\xi_{\bk}(\bth)\|^{-2\alpha},\qquad
 r_{\alpha_0}:=\dot\psi_{\alpha_0}/\psi_{\alpha_0}.
\end{aligned}
 \label{eq:tail-limit-symbols}
\end{equation}

{
\begin{lemma}
\label{lem:local-affine-secant-regular-limits}
For \(\alpha_*\in\mathcal I_\alpha\), \(0<r\le r_0\), and
\(\delta_{\bn}\to0\) such that
\(\alpha_{*,\bn}(t):=\alpha_*+t\delta_{\bn}\) belongs to
\(\mathcal I_\alpha\) for \(0\le t\le1\) and all sufficiently large
\(\bn\), let
\[
        \psi_{\alpha_*}^{r}
        :=\psi_{\alpha_*}-\|\bth\|^{-2\alpha_*}\mathbb I_{B_r(0)},
        \qquad
        \dot\psi_{\alpha_*}^{r}
        :=\dot\psi_{\alpha_*}
        +\log(\|\bth\|^2)\|\bth\|^{-2\alpha_*}\mathbb I_{B_r(0)}.
\]
Then
\begin{equation}
        \sup_{0\le t\le1}
        \|b_{h,\alpha_{*,\bn}(t)}^{r}-\psi_{\alpha_*}^{r}\|_{L^1(Q)}
        \longrightarrow0,\quad
        \sup_{0\le t\le1}
        \|\dot b_{h,\alpha_{*,\bn}(t)}^{r}
        -\dot\psi_{\alpha_*}^{r}\|_{L^1(Q)}
        \longrightarrow0,
        \label{eq:local-affine-secant-regular-limits}
\end{equation}
and
$
        \int_0^1\dot b_{h,\alpha_{*,\bn}(t)}^r\,dt
        \longrightarrow\dot\psi_{\alpha_*}^{r}
        \quad\text{in \(L^1(Q)\)}.
$
\end{lemma}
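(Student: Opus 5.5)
Plan: work alias by alias, using the decomposition of Lemma~\ref{lem:uniform-moving-alias-estimates}, and control everything with a single $h$-independent integrable majorant on $Q$. Write $\alpha_t:=\alpha_{*,\bn}(t)$. Since the principal alias is removed only on $B_r(0)$, the regular symbol splits as
\[
 b^r_{h,\alpha_t}=\beta_{\boldsymbol0,h,\alpha_t}\mathbb I_{Q\setminus B_r(0)}+\sum_{\bk\ne\boldsymbol0}\beta_{\bk,h,\alpha_t}.
\]
On $Q\setminus B_r(0)$ and for $\bk\ne\boldsymbol0$ we have $\|\xi_{\bk}(\bth)\|\ge c_r>0$ uniformly in $\bth$. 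Hence $A_{\bk,h}\asymp\|\xi_{\bk}\|^2$ there, and $|\log A_{\bk,h}|\le C_r\log(2+\|\bk\|)$.

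First comparison: we compare $\beta_{\bk,h,\alpha_t}$ with $\|\xi_{\bk}\|^{-2\alpha_*}$ in three pieces.
\begin{enumerate}
\item[\textup{(a)}] The remainder satisfies $|\eta_{\bk,h,\alpha_t}|\le Ch^{2\varepsilon}A_{\bk,h}^{-\alpha_t-\varepsilon}$. Summed over $\bk$ it is $O(h^{2\varepsilon})$ uniformly in $t$ and $\bth$, by the lattice estimate already used in Lemma~\ref{lem:uniform-tail-family-comparison}; note $\alpha_->d/2$.
\item[\textup{(b)}] The regularization error $|A_{\bk,h}^{-\alpha_t}-\|\xi_{\bk}\|^{-2\alpha_t}|$ is at most $Ch^2\|\xi_{\bk}\|^{-2\alpha_t-2}$ by the mean value theorem. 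This is again summable and $O(h^2)$.
\item[\textup{(c)}] The exponent drift satisfies $|\|\xi_{\bk}\|^{-2\alpha_t}-\|\xi_{\bk}\|^{-2\alpha_*}|\le C|\delta_{\bn}|\,\log(2+\|\xi_{\bk}\|)\,\|\xi_{\bk}\|^{-2\alpha_-}$ on the region away from the origin. Summed, this is $O(|\delta_{\bn}|)$.
\end{enumerate}
All three bounds are uniform in $t\in[0,1]$ and $\bth\in Q\setminus B_r(0)$. This gives the first $L^1$ limit even in sup norm.

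Derivative comparison: I would proceed the same way, using
\[
 \partial_\alpha\beta_{\bk,h,\alpha}=-\log(A_{\bk,h})A_{\bk,h}^{-\alpha}+\partial_\alpha\eta_{\bk,h,\alpha}.
\]
The remainder derivative is bounded by $Ch^{2\varepsilon}(1+a_{\bn})A_{\bk,h}^{-\alpha-\varepsilon}$. Its sum is $O(h^{2\varepsilon}\log(1/h))\to0$, which is the one point where the factor $a_{\bn}$ must be beaten by the power gain $h^{2\varepsilon}$. The main term $-\log(A)A^{-\alpha_t}$ is compared with $-\log(\|\xi_{\bk}\|^2)\|\xi_{\bk}\|^{-2\alpha_*}$ by the mean value theorem in $(A,\alpha)$. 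Extra logarithmic factors such as $\log^2(2+\|\bk\|)$ times $\|\bk\|^{-2\alpha_-}$ remain summable. This yields uniform convergence of $\dot b^r_{h,\alpha_t}$ to $\dot\psi^r_{\alpha_*}$ on $Q\setminus\{0\}$ off $B_r$.

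Inside $B_r(0)$: only the shifted aliases survive there, and the same estimates apply to them, since $\|\xi_{\bk}\|\ge\pi$ for $\bk\neq\boldsymbol0$. This matches the definitions of $\psi^r_{\alpha_*}$ and $\dot\psi^r_{\alpha_*}$, which remove exactly the principal term on $B_r$.

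Path average: the last claim follows because
\[
 \Big\|\int_0^1\dot b^r_{h,\alpha_t}\,dt-\dot\psi^r_{\alpha_*}\Big\|_{L^1}\le\sup_t\|\dot b^r_{h,\alpha_t}-\dot\psi^r_{\alpha_*}\|_{L^1}.
\]
This uses measurability in $(t,\bth)$ from joint continuity and Minkowski's integral inequality.

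The main obstacle I expect is the derivative remainder. Its bound carries the growing factor $a_{\bn}$, and the whole argument hinges on the $h^{2\varepsilon}$ gain from Lemma~\ref{lem:uniform-moving-alias-estimates} together with $\log(1/h)h^{2\varepsilon}\to0$. The other delicate point is making sure the constants stay uniform in $t$ through $\alpha_t\ge\alpha_-$.
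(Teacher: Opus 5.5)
Your proposal is correct and follows the paper's proof: it uses the same alias-by-alias split from Lemma~\ref{lem:uniform-moving-alias-estimates} and the same truncated domains ($Q\setminus B_r(0)$ for the principal alias, all of $Q$ for the shifted ones). It also relies on the same $h^{2\varepsilon}(1+a_{\bn})\to0$ control of the differentiated remainder and the same bound of the path average by $\sup_t$. The one difference is that the paper proves pointwise convergence uniform in $t$ and then applies dominated convergence with $h$-independent summable majorants, whereas you use explicit mean-value bounds, which give the slightly stronger sup-norm convergence at rate $O_r(h^{2\varepsilon}(1+a_{\bn})+h^2+|\delta_{\bn}|)$; a small correction is that joint continuity of $\partial_\alpha S^{\mathrm{rem}}_\alpha$ is not assumed, so joint measurability of the path integrand should instead come from its being a pointwise limit of jointly continuous difference quotients in $\alpha$.
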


\begin{proof}[Proof of Lemma~\ref{lem:local-affine-secant-regular-limits}]
For \(q\in\{0,1\}\), let \(D_0=\mathrm{id}\),
\(D_1=\partial_\alpha\), and, away from
\(\xi_{\bk}(\bth)=\boldsymbol0\), write
\[
 g_{0,\bk}(\bth)=\|\xi_{\bk}(\bth)\|^{-2\alpha_*},\qquad
 g_{1,\bk}(\bth)=-\log\!\bigl(\|\xi_{\bk}(\bth)\|^2\bigr)
 \|\xi_{\bk}(\bth)\|^{-2\alpha_*}.
\]
The decomposition in Equation~\eqref{eq:uniform-moving-alias-decomposition},
the derivative identity
\(\partial_\alpha\beta_{\bk,h,\alpha}=-\log(A_{\bk,h})A_{\bk,h}^{-\alpha}
+\partial_\alpha\eta_{\bk,h,\alpha}\), and the two bounds in
Equation~\eqref{eq:uniform-moving-alias-remainder-bounds} imply
\begin{equation}
 \sup_{0\le t\le1}|D_q\beta_{\bk,h,\alpha_{*,\bn}(t)}(\bth)-g_{q,\bk}(\bth)|
 \longrightarrow0,\qquad q\in\{0,1\}.
 \label{eq:common-moving-alias-pointwise-limits}
\end{equation}
Indeed, \(A_{\bk,h}\to\|\xi_{\bk}\|^2>0\),
\(\alpha_{*,\bn}(t)\to\alpha_*\) uniformly in \(t\), and the derivative
remainder has the additional factor \(h^{2\varepsilon}(1+a_{\bn})\to0\).
For \(q=1\), this additional factor is the only distinction from the
zeroth-order limit: differentiating the reference term gives
\(-\log(A_{\bk,h})A_{\bk,h}^{-\alpha}\), whose limit is
\(g_{1,\bk}\).  Thus the same pointwise statement covers the two
branches without identifying their remainder bounds.

Put \(Q_{r,\boldsymbol0}=Q\setminus B_r(0)\) and
\(Q_{r,\bk}=Q\) for \(\bk\ne\boldsymbol0\).  The majorants in
Equation~\eqref{eq:uniform-moving-alias-majorants}, together with
\(A_{\boldsymbol0,h}\ge r^2\) on \(Q_{r,\boldsymbol0}\) and uniform
separation of the shifted aliases, give summable sequences
\begin{align*}
 M_{0,r}(\bk)&=C_r\mathbb I_{\{\bk=\boldsymbol0\}}+
 C(1+\|\bk\|)^{-2\alpha_-}\mathbb I_{\{\bk\ne\boldsymbol0\}},\\
 M_{1,r}(\bk)&=C_r\mathbb I_{\{\bk=\boldsymbol0\}}+
 C\{1+\log(2+\|\bk\|)\}(1+\|\bk\|)^{-2\alpha_-}
 \mathbb I_{\{\bk\ne\boldsymbol0\}},
\end{align*}
such that, for \(q\in\{0,1\}\) and \(\bth\in Q_{r,\bk}\), both
\(\sup_t|D_q\beta_{\bk,h,\alpha_{*,\bn}(t)}(\bth)|\) and
\(|g_{q,\bk}(\bth)|\) are bounded by \(M_{q,r}(\bk)\).  Dominated
convergence in the product of counting measure on \(\mathbb Z^d\) and
\(\nu_Q\), applied to Equation~\eqref{eq:common-moving-alias-pointwise-limits},
therefore yields, for \(q\in\{0,1\}\),
\[
 \sum_{\bk\in\mathbb Z^d}\int_{Q_{r,\bk}}
 \sup_{0\le t\le1}|D_q\beta_{\bk,h,\alpha_{*,\bn}(t)}-g_{q,\bk}|
 \,d\nu_Q\longrightarrow0.
\]
For \(q=0\) and \(q=1\), the triangle inequality identifies these sums
with bounds for the \(b\) and \(\dot b\) limits in
Equation~\eqref{eq:local-affine-secant-regular-limits}.  Finally,
\[
 \left\|\int_0^1\dot b_{h,\alpha_{*,\bn}(t)}^r\,dt
 -\dot\psi_{\alpha_*}^{r}\right\|_{L^1(Q)}
 \le\sup_{0\le t\le1}\|\dot b_{h,\alpha_{*,\bn}(t)}^r-
 \dot\psi_{\alpha_*}^r\|_{L^1(Q)}\longrightarrow0,
\]
which proves the last assertion.
\end{proof}
}

\begin{lemma}
\label{lem:local-fixed-alpha-relative-bound}
{
There exist constants \(C<\infty\) and \(h_0>0\) such that, for every
\(\alpha\in\mathcal I_\alpha\), every \(0<r\le r_0\), and every grid with
\(0<h\le h_0\),
\begin{equation}
        |\dot b_{h,\alpha}^{r}|
        \le C\{1+\log(1/r)\}r^{-2|\alpha-\alpha_0|}b_{h,\alpha_0}
        \quad\text{a.e. on \(Q\)}.
        \label{eq:local-fixed-alpha-relative-bound}
\end{equation}
The constants \(C\) and \(h_0\) do not depend on \(\alpha\) or \(r\).
}
\end{lemma}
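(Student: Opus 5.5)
The plan is a pointwise, alias-by-alias comparison. Everything is split according to whether the principal alias is removed. Recall
\[
\dot b_{h,\alpha}^{r}=\sum_{\bk\ne\boldsymbol0}\partial_\alpha\beta_{\bk,h,\alpha}+\partial_\alpha\beta_{\boldsymbol0,h,\alpha}\mathbb I_{Q\setminus B_r(0)} .
\]
Using the majorant \eqref{eq:uniform-moving-alias-majorants}, I bound each surviving term by
\[
C\{1+|\log A_{\bk,h}|\}A_{\bk,h}^{-\alpha}+Ch^{2\varepsilon}(1+a_{\bn})A_{\bk,h}^{-\alpha-\varepsilon}.
\]
On all surviving terms we have \(A_{\bk,h}\ge r^2\). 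For \(\bk=\boldsymbol0\) this holds because \(\|\bth\|\ge r\) off \(B_r(0)\). For \(\bk\ne\boldsymbol0\) it holds because \(\|\xi_{\bk}\|\ge\pi\ge r_0\ge r\). Hence two factors are controlled:
\[
h^{2\varepsilon}(1+a_{\bn})A_{\bk,h}^{-\varepsilon}\le h^{2\varepsilon}(1+a_{\bn})r^{-2\varepsilon},
\qquad
|\log A_{\bk,h}|\le C\{1+\log(1/r)+\log(2+\|\bk\|)\}.
\]
The first factor is problematic for fixed \(h\) as \(r\to0\). I would avoid it by not using the \(A^{-\varepsilon}\) form. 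Instead I return to the sharper bound \(h^{-2\alpha}|\partial_\alpha S^{\mathrm{rem}}_\alpha(\xi_{\bk}/h)|\le C\{1+\log(2+\|\xi_{\bk}\|/h)\}A_{\bk,h}^{-\alpha-\varepsilon_{\mathrm{tail}}}h^{2\varepsilon_{\mathrm{tail}}}\), together with \(a_{\bn}|\eta_{\bk,h,\alpha}|\). Since \(h^2/A_{\bk,h}\le h^2/r^2\), this gives at most \(C(h/r)^{2\varepsilon}(1+\log(1/h))\cdot A_{\bk,h}^{-\alpha}\). The function \(s\mapsto s^{2\varepsilon}\log(1/s)\) is bounded for \(s=h/r\le1\), and the leftover \(\log(1/r)\) is absorbed. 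A more careful route is to write \(\log(1/h)=\log(1/r)+\log(r/h)\) and absorb the \(\log(r/h)\) part into \((h/r)^{2\varepsilon}\). This covers \(h\le r\). If \(h>r\), the condition \(A_{\boldsymbol0,h}\ge h^2\) already makes \(\log(1/h)\le\log(1/r)\). Either way, each surviving term is bounded by \(C\{1+\log(1/r)+\log(2+\|\bk\|)\}A_{\bk,h}^{-\alpha}\).

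Next I compare \(A_{\bk,h}^{-\alpha}\) with \(A_{\bk,h}^{-\alpha_0}\). I write \(A^{-\alpha}=A^{-\alpha_0}A^{-(\alpha-\alpha_0)}\). For the principal alias off \(B_r(0)\), \(A_{\boldsymbol0,h}\in[r^2,h^2+d\pi^2]\), so \(A^{-(\alpha-\alpha_0)}\le C r^{-2|\alpha-\alpha_0|}\), with \(C\) depending only on the compact interval \(\mathcal I_\alpha\). For the shifted aliases, \(A_{\bk,h}\asymp(1+\|\bk\|)^2\), so \(A^{-(\alpha-\alpha_0)}\le C(1+\|\bk\|)^{2|\alpha-\alpha_0|}\). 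Here the sum over \(\bk\ne\boldsymbol0\) must be compared with \(b_{h,\alpha_0}\) rather than computed termwise. The sum \(\sum_{\bk\ne\boldsymbol0}\log(2+\|\bk\|)(1+\|\bk\|)^{-2\alpha_-}\) is finite because \(2\alpha_->d\). By Lemma~\ref{lem:renorm-glt-inversion}-type reasoning, or directly from the lower bound of Lemma~\ref{lem:uniform-tail-family-comparison}, \(b_{h,\alpha_0}\ge c_*\psi_{h,\alpha_0}\ge c_*A_{\boldsymbol1,h}^{-\alpha_0}\ge c>0\) uniformly on \(Q\). So the whole shifted-alias contribution is bounded by a constant, and hence by \(Cb_{h,\alpha_0}\), which is at most \(C\{1+\log(1/r)\}r^{-2|\alpha-\alpha_0|}b_{h,\alpha_0}\) since \(r\le r_0<1\). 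The principal term is bounded by \(C\{1+\log(1/r)\}r^{-2|\alpha-\alpha_0|}A_{\boldsymbol0,h}^{-\alpha_0}\). Then \(A_{\boldsymbol0,h}^{-\alpha_0}\le c_*^{-1}\beta_{\boldsymbol0,h,\alpha_0}\le c_*^{-1}b_{h,\alpha_0}\), using the lower bound \eqref{eq:uniform-tail-family-comparison} evaluated at \(\bth/h\).

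The main obstacle is the remainder-derivative term, because it carries the factor \(a_{\bn}=2\log(1/h)\). It must be converted into \(\log(1/r)\) with constants uniform in both \(h\) and \(r\). I would handle it through the case split \(h\le r\) versus \(h>r\) described above, exploiting the extra \((h^2/A)^{\varepsilon}\) decay of the remainder. Once that is settled, the proof is just the compact-interval uniformity of the constants and the summability of the lattice majorants, both already established in Lemmas~\ref{lem:uniform-tail-family-comparison} and \ref{lem:uniform-moving-alias-estimates}.
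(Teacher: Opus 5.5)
Your proposal is correct and follows the paper's proof essentially step for step. The common steps are: the split into the truncated principal alias and the retained shifted aliases; the case split $h\le r$ versus $h>r$, using $A_{\boldsymbol0,h}\ge\max\{h^2,r^2\}$ off $B_r(0)$, to turn $h^{2\varepsilon}(1+a_{\bn})A_{\boldsymbol0,h}^{-\varepsilon}$ into $C\{1+\log(1/r)\}$; the comparison $A_{\boldsymbol0,h}^{-(\alpha-\alpha_0)}\le Cr^{-2|\alpha-\alpha_0|}$; and absorbing the summable shifted aliases into $Cb_{h,\alpha_0}$ through a uniform positive lower bound on $b_{h,\alpha_0}$. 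The only differences are cosmetic: you re-derive the remainder-derivative bound from the assumption rather than using the packaged $A_{\bk,h}^{-\varepsilon}$ form, and you bound $b_{h,\alpha_0}$ below via the $\boldsymbol1$ alias instead of the principal one. The packaged form is in fact unproblematic once $A_{\bk,h}\ge h^2$ is used alongside $A_{\bk,h}\ge r^2$, which is the same case split you end up making.
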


\begin{proof}[Proof of Lemma~\ref{lem:local-fixed-alpha-relative-bound}]
{
Fix \(\alpha\in\mathcal I_\alpha\).  By definition of the truncated
symbol, the principal contribution to \(\dot b_{h,\alpha}^{r}\) is
\(\partial_\alpha\beta_{\boldsymbol0,h,\alpha}\) on
\(Q\setminus B_r(0)\), while all shifted derivative aliases are retained.
On \(Q\setminus B_r(0)\),
\[
 r^2\le A_{\boldsymbol0,h}\le1+d\pi^2.
\]
Consequently,
\[
 |\log(A_{\boldsymbol0,h})|A_{\boldsymbol0,h}^{-\alpha}
 \le C\{1+\log(1/r)\}r^{-2|\alpha-\alpha_0|}
 A_{\boldsymbol0,h}^{-\alpha_0}.
\]
Indeed, if \(\alpha\ge\alpha_0\), then
\(A_{\boldsymbol0,h}^{-(\alpha-\alpha_0)}
\le r^{-2(\alpha-\alpha_0)}\).  If \(\alpha<\alpha_0\), compactness of
\(\mathcal I_\alpha\) and the upper bound
\(A_{\boldsymbol0,h}\le1+d\pi^2\) give
\(A_{\boldsymbol0,h}^{\alpha_0-\alpha}\le C\).  Thus, in both cases,
\[
 A_{\boldsymbol0,h}^{-(\alpha-\alpha_0)}
 \le C r^{-2|\alpha-\alpha_0|},
\]
while \(|\log A_{\boldsymbol0,h}|\le C\{1+\log(1/r)\}\) on this set.

For the derivative remainder,
Equation~\eqref{eq:uniform-moving-alias-remainder-bounds} gives
\[
 |\partial_\alpha\eta_{\boldsymbol0,h,\alpha}|
 \mathbb I_{Q\setminus B_r(0)}
 \le C h^{2\varepsilon}(1+a_{\bn})
 A_{\boldsymbol0,h}^{-\varepsilon}A_{\boldsymbol0,h}^{-\alpha}
 \mathbb I_{Q\setminus B_r(0)}.
\]
If \(r\le h\), then \(A_{\boldsymbol0,h}\ge h^2\), so
\(h^{2\varepsilon}A_{\boldsymbol0,h}^{-\varepsilon}\le1\), and
\(a_{\bn}=2\log(1/h)\le2\log(1/r)\).  If \(h<r\), then
\(A_{\boldsymbol0,h}\ge r^2\), hence
\[
 h^{2\varepsilon}(1+a_{\bn})A_{\boldsymbol0,h}^{-\varepsilon}
 \le
 (h/r)^{2\varepsilon}
 \{1+2\log(1/r)+2\log(r/h)\}
 \le C\{1+\log(1/r)\},
\]
because \(x^{-2\varepsilon}\log x\) is bounded on \([1,\infty)\).
In both cases, the same comparison gives
\[
 A_{\boldsymbol0,h}^{-\alpha}
 \le C r^{-2|\alpha-\alpha_0|}A_{\boldsymbol0,h}^{-\alpha_0},
\]
and therefore
\[
 |\partial_\alpha\eta_{\boldsymbol0,h,\alpha}|
 \mathbb I_{Q\setminus B_r(0)}
 \le C\{1+\log(1/r)\}r^{-2|\alpha-\alpha_0|}
 A_{\boldsymbol0,h}^{-\alpha_0}.
\]
The lower comparison in Equation~\eqref{eq:tail-alias-global-comparison}
converts both principal-alias estimates into the corresponding multiple of
\(b_{h,\alpha_0}\).

For \(\bk\ne\boldsymbol0\),
Equation~\eqref{eq:uniform-moving-alias-majorants}, the bound
\(h^{2\varepsilon}(1+a_{\bn})\le C_\varepsilon\), and the positive
separation of the shifted aliases from the origin, with a lower bound that
does not depend on \(h\), give
\[
 \sup_{\substack{\alpha\in\mathcal I_\alpha\\\bth\in Q}}
 |\partial_\alpha\beta_{\bk,h,\alpha}(\bth)|
 \le
 C\{1+\log(2+\|\bk\|)\}
 (1+\|\bk\|)^{-2\alpha_-}.
\]
The right-hand side is summable in \(\bk\).  Since the principal term in
\(\psi_{h,\alpha_0}\) is bounded below on \(Q\) by a positive constant
independent of sufficiently small \(h\), the sum of the shifted aliases is
bounded by \(Cb_{h,\alpha_0}\).  Combining the principal and shifted
contributions proves Equation~\eqref{eq:local-fixed-alpha-relative-bound}
with the same constants \(C\) and \(h_0\) for every
\(\alpha\in\mathcal I_\alpha\) and \(0<r\le r_0\).
}
\end{proof}

{
For the affine path \(\alpha_{\bn}(t)=\alpha_0+t\delta_{\bn}\),
\(0\le t\le1\), put \(\rho_{\bn}=|\delta_{\bn}|\).  The fundamental
theorem of calculus and Equation~\eqref{eq:local-fixed-alpha-relative-bound}
give
\[
\begin{aligned}
 |\dot b_{h,\bn}^{\mathrm{sec},r}|
 &\le \int_0^1 |\dot b_{h,\alpha_{\bn}(t)}^r|\,dt\\
 &\le C\{1+\log(1/r)\}
      \int_0^1 r^{-2|\alpha_{\bn}(t)-\alpha_0|}\,dt\;b_{h,\alpha_0}\\
 &\le C\{1+\log(1/r)\}r^{-2\rho_{\bn}}b_{h,\alpha_0},
\end{aligned}
\]
because \(|\alpha_{\bn}(t)-\alpha_0|=t|\delta_{\bn}|\le\rho_{\bn}\)
and \(0<r\le1\).  Thus
\begin{equation}
        |\dot b_{h,\bn}^{\mathrm{sec},r}|
        \le C\{1+\log(1/r)\}r^{-2\rho_{\bn}}b_{h,\alpha_0}
        \quad\text{a.e. on \(Q\)}.
        \label{eq:local-affine-secant-relative-bound}
\end{equation}
The same constant \(C\) as above may be used; in particular it does not
depend on \(\delta_{\bn}\) or \(r\).
}

\begin{proposition}
\label{prop:local-affine-secant-low-frequency-decomposition}
For the affine segment \(\alpha_{\bn}(t)=\alpha_0+t\delta_{\bn}\) and
\(0<r\le r_0\), set
\(R_{h,r}:=r/h\), \(\varepsilon_{\mathrm{comp},h}:=h^{2\alpha_++1}\), and
\[
        M_{h,r}:=\left\lceil 1+\max\left\{
        2eL\sqrt d\,R_{h,r},
        \log_2(2\varepsilon_{\mathrm{comp},h}^{-1})
        \right\}\right\rceil.
\]
There are Hermitian decompositions
\begin{align*}
        T_{\bn}(b_{h,\alpha_{\bn}(1)}-b_{h,\alpha_{\bn}(1)}^r)
        &=L_{b,\bn}^{(r)}+E_{b,\bn}^{(r)},\\
        T_{\bn}(\dot b_{h,\bn}^{\mathrm{sec}}
        -\dot b_{h,\bn}^{\mathrm{sec},r})
        &=L_{\dot b,\bn}^{(r)}+E_{\dot b,\bn}^{(r)},
\end{align*}
whose low-rank terms have the common range
\(\operatorname{ran}L_{s,\bn}^{(r)}\subseteq
\mathcal U_{\bn,M_{h,r}}\), \(s\in\{b,\dot b\}\), and satisfy
\begin{equation}
\begin{aligned}
        \max_{s\in\{b,\dot b\}}
        \frac{\rank L_{s,\bn}^{(r)}}{N_{\bn}}
        &\le C\left(r^d+\frac{a_{\bn}^d}{N_{\bn}}\right),\\
        \|E_{b,\bn}^{(r)}\|_2&\le Ch,
        &\|E_{\dot b,\bn}^{(r)}\|_2&\le Ch(1+a_{\bn}).
\end{aligned}
        \label{eq:local-affine-secant-low-frequency-decomposition}
\end{equation}
The constant \(C\) does not depend on \(\delta_{\bn}\) or \(r\), once the grid is
sufficiently fine.
\end{proposition}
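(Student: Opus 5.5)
Both symbol differences are supported in \(B_r(0)\) and consist only of the principal alias. Indeed, \(b_{h,\alpha}-b^r_{h,\alpha}=\beta_{\boldsymbol0,h,\alpha}\mathbb I_{B_r(0)}\) and \(\dot b^{\mathrm{sec}}_{h,\bn}-\dot b^{\mathrm{sec},r}_{h,\bn}=\int_0^1\partial_\alpha\beta_{\boldsymbol0,h,\alpha_{\bn}(t)}\,dt\,\mathbb I_{B_r(0)}\). The plan is therefore to rewrite each Toeplitz matrix as a localized Fourier matrix \(\mathcal P^{(R)}_{\bn}(w)\) with \(R=R_{h,r}=r/h\), as in \eqref{eq:supp-localized-fourier-principal-alias}, and then to apply Lemma~\ref{lem:explicit-localized-fourier-kernel-compression} with the common order \(M_{h,r}\). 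For the first term, the change of variables \(\bth=h\bomega\) gives
\[
T_{\bn}(\beta_{\boldsymbol0,h,\alpha}\mathbb I_{B_r(0)})=h^{d-2\alpha}\mathcal P^{(r/h)}_{\bn}(\overline S_\alpha),
\qquad \alpha=\alpha_{\bn}(1).
\]
For the second term, \(\partial_\alpha\beta_{\boldsymbol0,h,\alpha}(\bth)=h^{-2\alpha}\{a_{\bn}\overline S_\alpha+\partial_\alpha\overline S_\alpha\}(\bth/h)\). Hence the \(t\)-integrated matrix equals \(h^d\mathcal P^{(r/h)}_{\bn}(w_{\bn})\), where
\[
w_{\bn}(\bomega):=\int_0^1 h^{-2\alpha_{\bn}(t)}\{a_{\bn}\overline S_{\alpha_{\bn}(t)}+\partial_\alpha\overline S_{\alpha_{\bn}(t)}\}(\bomega)\,dt .
\]
This weight is real, so Fubini lets the \(t\)-integral pass inside the \(\bomega\)-integral. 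I would set \(L^{(r)}_{s,\bn}\) to be the corresponding scaled \(L^{(R)}_{\bn,M}\)-compressions and \(E^{(r)}_{s,\bn}\) the remainders. Both are Hermitian by the lemma. Their ranges lie in the common space \(\mathcal U_{\bn,M_{h,r}}\), which is independent of the weight and hence of the path.

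The rank bound follows from the dimension estimate \(\dim\mathcal U_{\bn,M}\le C\{1+R+\log(\varepsilon^{-1}_{\mathrm{comp}})\}^d\). With \(R=r/h\) and \(\log(\varepsilon_{\mathrm{comp},h}^{-1})=(2\alpha_++1)\log(1/h)\le Ca_{\bn}\), this gives a dimension at most \(C(r/h+a_{\bn})^d\). Dividing by \(N_{\bn}=(L/h+1)^d\asymp h^{-d}\) yields \(C(r^d+a_{\bn}^d/N_{\bn})\). The constant does not depend on \(\delta_{\bn}\) or on \(r\).

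For the operator-norm bounds I would use \eqref{eq:supp-localized-fourier-error}, which gives \(\|E\|_2\le CN_{\bn}\varepsilon_{\mathrm{comp}}\times(\text{scale})\times\|w\|_{L^1(\R^d)}\). By \eqref{eq:uniform-tail-family-comparison} and \eqref{eq:uniform-tail-family-derivative}, and because \(\alpha_->d/2\), we have \(\|\overline S_\alpha\|_{L^1}+\|\partial_\alpha\overline S_\alpha\|_{L^1}\le C\) uniformly on \(\mathcal I_\alpha\). For \(E_b\) this gives a bound of order \(h^{-d}\cdot h^{2\alpha_++1}\cdot h^{d-2\alpha}\), which is at most \(Ch\) because \(\alpha\le\alpha_+\). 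For \(E_{\dot b}\) there is the extra factor \(1+a_{\bn}\) coming from \(a_{\bn}\overline S_\alpha\), and \(h^{-2\alpha_{\bn}(t)}\le h^{-2\alpha_+}\) holds uniformly in \(t\). Together these give \(Ch(1+a_{\bn})\).

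The only delicate point is the step \(T_{\bn}(\int_0^1\cdots dt)=\int_0^1 T_{\bn}(\cdots)\,dt\) combined with placing everything into the single integral \(\mathcal P^{(R)}_{\bn}(w_{\bn})\). This requires that \(w_{\bn}\in L^1\), which holds by the uniform majorants above. It is also what makes the range common across \(t\) and across the two matrices. I expect this bookkeeping, rather than any estimate, to be the main point requiring care.
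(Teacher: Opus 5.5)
Your proposal is correct and follows the paper's proof essentially step for step. You identify both differences as the principal alias restricted to $B_r(0)$, rewrite them via $\bth=h\bomega$ (and Fubini for the path average) as localized Fourier matrices $\mathcal P^{(r/h)}_{\bn}$ with real weights, and compress them by Lemma~\ref{lem:explicit-localized-fourier-kernel-compression} with the common order $M_{h,r}$ and tolerance $h^{2\alpha_++1}$, reading off the rank and error bounds from the dimension count and the $L^1$ norms of the weights. The only differences are cosmetic: you keep the factors $h^{d-2\alpha}$ and $h^d$ outside the weights where the paper absorbs them into $w_{b,\bn}$ and $w_{\dot b,\bn}$, and the Fubini step rests on absolute integrability of the integrand on $[0,1]\times\R^d$ (which your uniform majorants supply), not on realness of the weight.
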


\begin{proof}[Proof of
Proposition~\ref{prop:local-affine-secant-low-frequency-decomposition}]
The two differences in the statement equal, respectively,
\(\beta_{\boldsymbol0,h,\alpha_{\bn}(1)}\mathbb I_{B_r(0)}\) and
\(\{\int_0^1\partial_\alpha\beta_{\boldsymbol0,h,\alpha_{\bn}(t)}dt\}
\mathbb I_{B_r(0)}\).  Write
\begin{align*}
 w_{b,\bn}(\bomega)&:=h^{d-2\alpha_{\bn}(1)}
 \overline S_{\alpha_{\bn}(1)}(\bomega),\\
 w_{\dot b,\bn}(\bomega)&:=\int_0^1h^{d-2\alpha_{\bn}(t)}
 \left\{a_{\bn}\overline S_{\alpha_{\bn}(t)}(\bomega)+
 \partial_\alpha\overline S_{\alpha_{\bn}(t)}(\bomega)\right\}
 \,dt.
\end{align*}
Assumption~\ref{ass:differentiated-spectral-tail-family} gives
real-valuedness and evenness of \(w_{b,\bn}\) and \(w_{\dot b,\bn}\), while
Equations~\eqref{eq:uniform-tail-family-comparison}
and~\eqref{eq:uniform-tail-family-derivative} give
\begin{align*}
 \|w_{b,\bn}\|_{L^1(\mathbb R^d)}
 &\le Ch^{d-2\alpha_+},\\
 \|w_{\dot b,\bn}\|_{L^1(\mathbb R^d)}
 &\le C(1+a_{\bn})h^{d-2\alpha_+}.
\end{align*}
The integrand defining \(w_{\dot b,\bn}\) is absolutely integrable on
\([0,1]\times\mathbb R^d\), so Fubini applies.  The change of variables
\(\bth=h\bomega\) therefore gives the exact identities
\begin{align*}
 T_{\bn}\!\left(\beta_{\boldsymbol0,h,\alpha_{\bn}(1)}\mathbb I_{B_r(0)}\right)
 &=\mathcal P_{\bn}^{(R_{h,r})}(w_{b,\bn}),\\
 T_{\bn}\!\left(\left\{\int_0^1\partial_\alpha
 \beta_{\boldsymbol0,h,\alpha_{\bn}(t)}\,dt\right\}\mathbb I_{B_r(0)}\right)
 &=\mathcal P_{\bn}^{(R_{h,r})}(w_{\dot b,\bn}).
\end{align*}
Apply Lemma~\ref{lem:explicit-localized-fourier-kernel-compression} to
\(w_{b,\bn}\) and \(w_{\dot b,\bn}\), in both cases with \(R=R_{h,r}\) and
\(\varepsilon_{\mathrm{comp}}=h^{2\alpha_++1}\).  It gives the Hermitian
decompositions in the statement and places both low-rank terms in
\(\mathcal U_{\bn,M_{h,r}}\).  Since \(M_{h,r}\le C(r/h+a_{\bn})\) and
\(h^dN_{\bn}=(L+h)^d\), its rank conclusion gives the rank bound in
Equation~\eqref{eq:local-affine-secant-low-frequency-decomposition}.  Its
error conclusion and the \(L^1\) bounds for \(w_{b,\bn}\) and
\(w_{\dot b,\bn}\) give
\[
 \|E_{b,\bn}^{(r)}\|_2\le
 CN_{\bn}h^{2\alpha_++1}h^{d-2\alpha_+}\le Ch,
\qquad \|E_{\dot b,\bn}^{(r)}\|_2\le Ch(1+a_{\bn}).
\]
\end{proof}

\begin{lemma}
\label{lem:working-moving-rescaled-glt}
If \(\alpha_{\bn}\to\alpha_*\) in \(\mathcal I_\alpha\), then
\[\{\Sigma_{(\tau_0,\alpha_{\bn}),\bn}\}_{\bn}
\sim_{\GLT,\,e^{-\tau_0}h^{d-2\alpha_{\bn}}}\psi_{\alpha_*}.\]
There is \(m>0\) such that, uniformly in \(\alpha\in\mathcal I_\alpha\),
\[e^{-\tau_0}h^{d-2\alpha}\Sigma_{(\tau_0,\alpha),\bn}\succeq mI.\]
\end{lemma}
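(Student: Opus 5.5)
By \eqref{eq:supp-aliased-likelihood-factorization}, \(e^{-\tau_0}h^{d-2\alpha_{\bn}}\Sigma_{(\tau_0,\alpha_{\bn}),\bn}=T_{\bn}(b_{h,\alpha_{\bn}})\). I would therefore apply the truncation principle, Theorem~\ref{thm:triangular-array-toeplitz-truncation-detailed}, with \(f_{\bn}=b_{h,\alpha_{\bn}}\) and \(f=\psi_{\alpha_*}\). Two points need care. First, the theorem truncates by \(\mathbb I_{Q\setminus B_r(0)}\) applied to the whole symbol. Lemma~\ref{lem:local-affine-secant-regular-limits}, however, is phrased for \(b^r_{h,\alpha}\), which removes only the principal alias on \(B_r(0)\). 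To avoid this mismatch, I would run the a.c.s. argument directly rather than quote the theorem verbatim. Set \(A_{\bn}^{(r)}:=T_{\bn}(b^r_{h,\alpha_{\bn}})\).

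\emph{Regular part.} I would apply Lemma~\ref{lem:local-affine-secant-regular-limits} with base point \(\alpha_*\) and \(\delta_{\bn}:=\alpha_{\bn}-\alpha_*\to0\), taking \(t=1\). This gives \(b^r_{h,\alpha_{\bn}}\to\psi^r_{\alpha_*}\) in \(L^1(Q)\). For fixed \(r\), \(\psi^r_{\alpha_*}\in L^1(Q)\), since it is bounded by \(r^{-2\alpha_*}\) plus the bounded shifted aliases. The trace-norm estimate \cite[Theorem~3.2]{GaroniSerraCapizzanoII} and GLT~3 then give \(\{A_{\bn}^{(r)}\}_{\bn}\sim_{\GLT}\psi^r_{\alpha_*}\), exactly as in the proof of Theorem~\ref{thm:triangular-array-toeplitz-truncation-detailed}. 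Moreover \(\psi^r_{\alpha_*}\to\psi_{\alpha_*}\) pointwise on \(Q\setminus\{0\}\), hence in measure.

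\emph{Low-frequency part.} The difference is \(T_{\bn}(b_{h,\alpha_{\bn}})-A_{\bn}^{(r)}=T_{\bn}(\beta_{\boldsymbol0,h,\alpha_{\bn}}\mathbb I_{B_r(0)})\). This is precisely the \(b\)-part of Proposition~\ref{prop:local-affine-secant-low-frequency-decomposition}, applied with the affine segment from \(\alpha_0\) to \(\alpha_{\bn}\), that is, \(\delta_{\bn}:=\alpha_{\bn}-\alpha_0\). Its endpoint is \(\alpha_{\bn}(1)=\alpha_{\bn}\), and the \(b\)-decomposition uses only this endpoint, so \(\delta_{\bn}\to0\) is not required there. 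Inspecting that proof confirms this: only the \(L^1\) bound \(\|w_{b,\bn}\|_{L^1}\le Ch^{d-2\alpha_+}\) with \(\alpha_{\bn}\in\mathcal I_\alpha\) is used. This yields \(\rank/N_{\bn}\le C(r^d+a_{\bn}^d/N_{\bn})\) and \(\|E\|_2\le Ch\). Given \(\varepsilon\), I would choose \(r_\varepsilon\) with \(Cr_\varepsilon^d<\varepsilon/2\). For \(\bn\) large the second rank term and \(Ch\) are below \(\varepsilon/2\), which is the a.c.s. condition of Definition~\ref{def:acs-detailed}. The GLT approximation principle then gives the first claim.

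\emph{Uniform lower bound.} This repeats the shifted-alias argument from the proof of Theorem~\ref{thm:quantitative-sampled-covariance-glt}, now made uniform in \(\alpha\). By Lemma~\ref{lem:uniform-tail-family-comparison}, \(b_{h,\alpha}\ge c_*\psi_{h,\alpha}\ge c_*\{h^2+\|\bth+2\pi\boldsymbol1\|^2\}^{-\alpha}\). On \(Q\), for \(h\le1\), this is at least \(c_*(1+9\pi^2d)^{-\alpha_+}=:m>0\), uniformly in \(\alpha\in\mathcal I_\alpha\). The quadratic-form identity \eqref{eq:supp-toeplitz-quadratic-form} then gives \(T_{\bn}(b_{h,\alpha})\succeq mI\). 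The step I expect to need most care is making sure the constants in Proposition~\ref{prop:local-affine-secant-low-frequency-decomposition} hold for a general convergent \(\alpha_{\bn}\), rather than only along \(\alpha_0+t\delta_{\bn}\) with \(\delta_{\bn}\to0\). If this fails, I would apply Lemma~\ref{lem:explicit-localized-fourier-kernel-compression} directly to \(w=h^{d-2\alpha_{\bn}}\overline S_{\alpha_{\bn}}\) with \(R=r/h\) and \(\varepsilon_{\mathrm{comp}}=h^{2\alpha_++1}\).
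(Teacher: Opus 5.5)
Your proposal is correct and follows the paper's proof essentially step for step. Both use the factorization, the partial truncation $b^r_{h,\alpha_{\bn}}$ that removes only the principal alias, Lemma~\ref{lem:local-affine-secant-regular-limits} with base point $\alpha_*$, a compression bound on the removed principal alias to obtain the a.c.s., and the GLT approximation principle. The differences are cosmetic. For the low-frequency step the paper cites Corollary~\ref{cor:localized-fourier-kernel-compression-interface} directly, which is your fallback and sidesteps the $\delta_{\bn}\to0$ convention of Proposition~\ref{prop:local-affine-secant-low-frequency-decomposition}. The paper also takes the lower bound $m=c_*(1+d\pi^2)^{-\alpha_+}$ from the principal alias rather than the $\bk=\boldsymbol1$ alias.
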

\begin{proof}
The factorization in
Equation~\eqref{eq:supp-aliased-likelihood-factorization} gives
\[
e^{-\tau_0}h^{d-2\alpha_{\bn}}
\Sigma_{(\tau_0,\alpha_{\bn}),\bn}
=
T_{\bn}(b_{h,\alpha_{\bn}}).
\]
Fix \(r\in(0,r_0]\).  The truncated symbol
\(b_{h,\alpha_{\bn}}^r\) removes only the principal alias on \(B_r(0)\).
Lemma~\ref{lem:local-affine-secant-regular-limits}, applied with base point
\(\alpha_*\) and \(\delta_{\bn}=\alpha_{\bn}-\alpha_*\), gives
\[
b_{h,\alpha_{\bn}}^r\longrightarrow\psi_{\alpha_*}^r
\qquad\text{in }L^1(Q).
\]
Set
$
Z_{\bn}^{(r)}
:=
T_{\bn}(b_{h,\alpha_{\bn}}^r)
-
T_{\bn}(\psi_{\alpha_*}^r).
$
The \(p=1\) multilevel Toeplitz norm bound in
\cite[Theorem~3.2, p.~47]{GaroniSerraCapizzanoII} therefore yields
\[
\frac{1}{N_{\bn}}\|Z_{\bn}^{(r)}\|_1\longrightarrow0.
\]
It follows from
\cite[Theorem~2.3, p.~28]{GaroniSerraCapizzanoII}
that \(\{Z_{\bn}^{(r)}\}_{\bn}\) is zero-distributed.  Using the GLT rules
\cite[GLT~3--4, p.~118]{GaroniSerraCapizzanoII} therefore gives
\[
\{T_{\bn}(b_{h,\alpha_{\bn}}^r)\}_{\bn}
=
\{T_{\bn}(\psi_{\alpha_*}^r)+Z_{\bn}^{(r)}\}_{\bn}
\sim_{\GLT}\psi_{\alpha_*}^r.
\]

Corollary~\ref{cor:localized-fourier-kernel-compression-interface}, applied
to the removed principal alias, shows that
$
\bigl\{\{T_{\bn}(b_{h,\alpha_{\bn}}^r)\}_{\bn}\bigr\}_{r\downarrow0}
$
is an a.c.s.\ for \(\{T_{\bn}(b_{h,\alpha_{\bn}})\}_{\bn}\).
Since
\(\psi_{\alpha_*}^r\to\psi_{\alpha_*}\) in measure as \(r\downarrow0\),
the GLT approximation theorem
\cite[Theorem~5.4, p.~95]{GaroniSerraCapizzanoII} yields
\[
\{T_{\bn}(b_{h,\alpha_{\bn}})\}_{\bn}
\sim_{\GLT}\psi_{\alpha_*},
\]
where \(\bn\to\infty\) is taken before \(r\downarrow0\).  Together with the
factorization above, this proves the stated GLT limit.
For all sufficiently fine grids, Equation~\eqref{eq:tail-alias-global-comparison}
and the definition of \(\psi_{h,\alpha}\) give, uniformly in
\(\alpha\in\mathcal I_\alpha\) and \(\bth\in Q\),
\[
b_{h,\alpha}(\bth)
\ge c_*\bigl(h^2+\|\bth\|^2\bigr)^{-\alpha}
\ge c_*(1+d\pi^2)^{-\alpha_+}
=:m.
\]
The Toeplitz quadratic-form identity
\eqref{eq:supp-toeplitz-quadratic-form} therefore yields
\[
e^{-\tau_0}h^{d-2\alpha}
\Sigma_{(\tau_0,\alpha),\bn}
=
T_{\bn}(b_{h,\alpha})
\succeq mI.
\]
\end{proof}
\subsection{Local fixed-whitening and secant controls}
\label{app:local-fixed-whitening-secant-controls}

At the parameter level, \(\tau=\tau_0+a_{\bn}(\alpha-\alpha_0)\) keeps \(e^\tau h^{2\alpha-d}\) constant.  The affine and fixed-whitening constructions below instead compare \(\Sigma_{(\tau_0,\alpha_0+\delta),\bn}\) with \(\Sigma_{\varphi_0,\bn}\) after multiplication by \(h^{-2\delta}\), so their secants follow the same compensated relation on local exponent bands without requiring the parameter pair to remain in the compact chart.

At the true exponent, write $b_{h,0}:=b_{h,\alpha_0}$ and
\begin{align}
        \label{eq:Ralpha}
        R_{\alpha,\bn}:=T_{\bn}(b_{h,0})^{-1/2}
        T_{\bn}\!\left(\left.\partial_\alpha b_{h,\alpha}
        \right|_{\alpha=\alpha_0}\right)T_{\bn}(b_{h,0})^{-1/2}.
\end{align}
The subscript \(\alpha\) labels the exponent-score coordinate, whereas the
derivative is evaluated at \(\alpha_0\).

For \(\alpha\in\mathcal I_\alpha\), write
\begin{align}
        \label{eq:Rmath_alpha}
 \mathcal R_{\bn}(\alpha):=\Sigma_{\varphi_0,\bn}^{-1/2}
 \{\partial_\alpha\Sigma_{(\tau_0,\alpha),\bn}
 +a_{\bn}\Sigma_{(\tau_0,\alpha),\bn}\}\Sigma_{\varphi_0,\bn}^{-1/2}.
\end{align}
For \(\delta\ne0\), set
\begin{equation}
 \mathsf K_{\delta,\bn}:=\frac{h^{-2\delta}\Sigma_{\varphi_0,\bn}^{-1/2}
 \Sigma_{(\tau_0,\alpha_0+\delta),\bn}\Sigma_{\varphi_0,\bn}^{-1/2}-I
 }{\delta},\qquad \mathsf K_{0,\bn}:=R_{\alpha,\bn}.
 \label{eq:supp-normalized-tail-secant}
\end{equation}
Fixed whitening compares the covariance at \(\alpha_0+\delta\) with the base
covariance.  The term \(a_{\bn}\Sigma_{(\tau_0,\alpha),\bn}\) compensates
the scale direction, and the factor \(h^{-2\delta}\) makes the
finite secant in \eqref{eq:supp-normalized-tail-secant} follow the
compensated direction.  With
\(W_{\bn}=T_{\bn}(b_{h,\alpha_0})^{-1/2}\) and
\(\alpha_{\bn}(t)=\alpha_0+t\delta_{\bn}\), applying the fundamental theorem
of calculus along this affine path gives the first equality below, and the
definition of \(\mathcal R_{\bn}\) gives the second:
\begin{equation}
 \begin{aligned}
 \mathsf K_{\delta_{\bn},\bn}
 &=W_{\bn}T_{\bn}\!\left(\int_0^1\partial_\alpha
 b_{h,\alpha_{\bn}(t)}\,dt\right)W_{\bn}\\
 &=\int_0^1 h^{-2t\delta_{\bn}}
 \mathcal R_{\bn}(\alpha_{\bn}(t))\,dt.
 \end{aligned}
 \label{eq:fixed-whitening-compensated-secant-path}
\end{equation}
Both equalities hold with the derivative at \(\alpha_0\) when
\(\delta_{\bn}=0\).  The first equality is unweighted in
\(b_{h,\alpha}\), while the second rewrites the same matrix in terms of the
compensated relative score.  Assumption~\ref{ass:differentiated-spectral-tail-family} supplies
the \(C^1\) dependence in \(\alpha\), and
Equation~\eqref{eq:uniform-tail-family-derivative} supplies summable
derivative majorants; the standing path assumption keeps
\(\alpha_{\bn}([0,1])\) inside \(\mathcal I_\alpha\).

The following stages give path control, the fixed-whitening secant
decomposition and GLT limit, score continuity, and the passage from secant
tails to trace and operator-norm control.
For the rest of this subsection, let \(\delta_{\bn}\to0\),  suppose that
\(\alpha_{\bn}([0,1])\subseteq\mathcal I_\alpha\) for all sufficiently large
\(\bn\).  Theorem~\ref{thm:quantitative-sampled-covariance-glt} and
Lemma~\ref{lem:renorm-glt-inversion} give
\(W_{\bn}\sim_{\GLT}\psi_{\alpha_0}^{-1/2}\) and
\(\sup_{\bn}\|W_{\bn}\|_2<\infty\).

\begin{lemma}
\label{lem:local-fixed-whitening-path-control}
Let \(V_{(\tau_0,\alpha),\bn}:=\Sigma_{\varphi_0,\bn}^{-1/2}
\Sigma_{(\tau_0,\alpha),\bn}\Sigma_{\varphi_0,\bn}^{-1/2}\).  If
\(|\delta_{\bn}|a_{\bn}\le\rho\), then {for all sufficiently
fine grids,}
\begin{equation}
 \sup_{0\le t\le1}\|V_{(\tau_0,\alpha_{\bn}(t)),\bn}-I\|_2
 \le e^{C\rho}-1.
 \label{eq:moving-fixed-whitening-covariance-band}
\end{equation}
Moreover,
{for those grids,}
\begin{equation}
 \|\mathsf K_{\delta_{\bn},\bn}\|_2\le Ca_{\bn}e^{C\rho}.
 \label{eq:fixed-whitening-secant-global-bound}
\end{equation}
\end{lemma}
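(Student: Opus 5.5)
The plan is to reduce both assertions to pointwise symbol inequalities and then transfer them to matrices through the Toeplitz quadratic-form identity \eqref{eq:supp-toeplitz-quadratic-form}. By the factorization \eqref{eq:supp-aliased-likelihood-factorization},
\[
 \Sigma_{(\tau_0,\alpha),\bn}=e^{\tau_0}h^{2\alpha-d}T_{\bn}(b_{h,\alpha}),
 \qquad
 \Sigma_{\varphi_0,\bn}=e^{\tau_0}h^{2\alpha_0-d}T_{\bn}(b_{h,\alpha_0}).
\]
Hence, with \(W_{\bn}=T_{\bn}(b_{h,\alpha_0})^{-1/2}\),
\[
 V_{(\tau_0,\alpha),\bn}
 =h^{2(\alpha-\alpha_0)}\,W_{\bn}T_{\bn}(b_{h,\alpha})W_{\bn}.
\]
For \(\alpha=\alpha_{\bn}(t)\), the prefactor is \(h^{2t\delta_{\bn}}=e^{-t\delta_{\bn}a_{\bn}}\). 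Its modulus of logarithm is at most \(\rho\).

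The next step is a log-Lipschitz comparison of symbols. By Lemma~\ref{lem:uniform-tail-family-comparison}, \(|\partial_\alpha\log b_{h,\alpha}|\le Ca_{\bn}\) pointwise, uniformly for \(\alpha\in\mathcal I_\alpha\) and sufficiently fine grids. Integrating along the path gives
\[
 e^{-Ct\rho}\,b_{h,\alpha_0}\le b_{h,\alpha_{\bn}(t)}\le e^{Ct\rho}\,b_{h,\alpha_0}
\]
pointwise on \(Q\). Since \(T_{\bn}\) is order preserving by \eqref{eq:supp-toeplitz-quadratic-form}, this yields \(e^{-C\rho}I\preceq W_{\bn}T_{\bn}(b_{h,\alpha_{\bn}(t)})W_{\bn}\preceq e^{C\rho}I\). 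Multiplying by the scalar prefactor, which lies in \([e^{-\rho},e^{\rho}]\), places the spectrum of \(V\) in \([e^{-(C+1)\rho},e^{(C+1)\rho}]\). Renaming \(C\) and using \(1-e^{-x}\le e^{x}-1\) then gives \eqref{eq:moving-fixed-whitening-covariance-band}.

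For the secant bound \eqref{eq:fixed-whitening-secant-global-bound}, I will use the first equality in \eqref{eq:fixed-whitening-compensated-secant-path}:
\[
 \mathsf K_{\delta_{\bn},\bn}=W_{\bn}T_{\bn}(\dot b^{\mathrm{sec}}_{h,\bn})W_{\bn}.
\]
The pointwise bound \(|\partial_\alpha b_{h,\alpha}|\le Ca_{\bn}b_{h,\alpha}\) from \eqref{eq:tail-alias-global-comparison}, combined with the comparison above, gives \(|\dot b^{\mathrm{sec}}_{h,\bn}|\le Ca_{\bn}e^{C\rho}b_{h,\alpha_0}\). For a real symbol \(f\) with \(|f|\le g\), the identity \eqref{eq:supp-toeplitz-quadratic-form} yields \(-T_{\bn}(g)\preceq T_{\bn}(f)\preceq T_{\bn}(g)\). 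Conjugating by \(W_{\bn}\) turns \(T_{\bn}(b_{h,\alpha_0})\) into \(I\), so \(\|\mathsf K_{\delta_{\bn},\bn}\|_2\le Ca_{\bn}e^{C\rho}\). When \(\delta_{\bn}=0\), the same argument applies to \(R_{\alpha,\bn}\).

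The main obstacle is making the log-derivative bound genuinely uniform. One needs \(b_{h,\alpha}>0\) everywhere, which holds by Lemma~\ref{lem:working-moving-rescaled-glt}. One also needs the constant and grid threshold in \eqref{eq:tail-alias-global-comparison} to hold simultaneously for all \(\alpha\) on the path. This requires the standing assumption \(\alpha_{\bn}([0,1])\subseteq\mathcal I_\alpha\), together with termwise differentiation of the alias series, which is justified by the summable majorants of Lemma~\ref{lem:uniform-tail-family-comparison}.
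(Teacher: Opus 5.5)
Your proof is correct and follows the paper's argument. Like the paper, you write \(V_{(\tau_0,\alpha),\bn}=W_{\bn}T_{\bn}\bigl(h^{2(\alpha-\alpha_0)}b_{h,\alpha}\bigr)W_{\bn}\), obtain a log-Lipschitz bound of order \(a_{\bn}\) from \eqref{eq:tail-alias-global-comparison}, integrate it along the path, and transfer the resulting symbol sandwich through the Toeplitz quadratic-form identity.

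The only difference is cosmetic and lies in the secant bound. The paper uses the second (weighted) equality in \eqref{eq:fixed-whitening-compensated-secant-path} and bounds \(\mathcal R_{\bn}(\alpha_{\bn}(t))\) by \(\pm Ca_{\bn}V_{(\tau_0,\alpha_{\bn}(t)),\bn}\) together with the first part of the lemma. You instead bound the unweighted secant symbol directly by \(Ca_{\bn}e^{C\rho}b_{h,\alpha_0}\). The two routes are equivalent.
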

\begin{proof}
Set \(q_\alpha=h^{2(\alpha-\alpha_0)}b_{h,\alpha}\).  Then
\(V_{(\tau_0,\alpha),\bn}=W_{\bn}T_{\bn}(q_\alpha)W_{\bn}\), and
Equation~\eqref{eq:tail-alias-global-comparison} gives
\(|\partial_\alpha\log q_\alpha|\le Ca_{\bn}\).  Integrating along the
path yields \(e^{-C\rho}b_{h,\alpha_0}\le q_{\alpha_{\bn}(t)}
\le e^{C\rho}b_{h,\alpha_0}\), proving
Equation~\eqref{eq:moving-fixed-whitening-covariance-band}.

By the second equality in
Equation~\eqref{eq:fixed-whitening-compensated-secant-path},
\(\mathsf K_{\delta_{\bn},\bn}\) is the integral of
\(h^{-2t\delta_{\bn}}\mathcal R_{\bn}(\alpha_{\bn}(t))\).  Since
\(h^{-2t\delta_{\bn}}\le\exp(|\delta_{\bn}|a_{\bn})\), the Toeplitz
quadratic-form bound and
Equation~\eqref{eq:moving-fixed-whitening-covariance-band} give
\(-Ca_{\bn}e^{C\rho}I\preceq\mathcal R_{\bn}(\alpha_{\bn}(t))
\preceq Ca_{\bn}e^{C\rho}I\), after increasing \(C\).  Integrating
the preceding integrand bound proves
Equation~\eqref{eq:fixed-whitening-secant-global-bound}.
\end{proof}

\begin{lemma}
\label{lem:local-fixed-whitening-secant-spectral-control}
For every fixed \(r\in(0,r_0]\), set
\(\mathcal V_{\bn}^{(r)}:=W_{\bn}\mathcal U_{\bn,M_{h,r}}\).  Define the
Hermitian matrices
\[
 \begin{aligned}
 K_{\bn}^{\mathrm{reg},r}
 &:=W_{\bn}T_{\bn}(\dot b_{h,\bn}^{\mathrm{sec},r})W_{\bn},\\
 L_{\bn}^{(r)}&:=W_{\bn}L_{\dot b,\bn}^{(r)}W_{\bn},\\
 E_{\bn}^{(r)}&:=W_{\bn}E_{\dot b,\bn}^{(r)}W_{\bn}.
 \end{aligned}
\]
The first equality in
Equation~\eqref{eq:fixed-whitening-compensated-secant-path} gives
\begin{equation}
 \mathsf K_{\delta_{\bn},\bn}=K_{\bn}^{\mathrm{reg},r}
 +L_{\bn}^{(r)}+E_{\bn}^{(r)}.
 \label{eq:fixed-whitening-secant-decomposition}
\end{equation}
The range of \(L_{\bn}^{(r)}\) is contained in \(\mathcal V_{\bn}^{(r)}\), and
\[
 \{K_{\bn}^{\mathrm{reg},r}\}_{\bn}\sim_{\GLT}
 \dot\psi_{\alpha_0}^r/\psi_{\alpha_0},\qquad
 \|K_{\bn}^{\mathrm{reg},r}\|_2
 \le C\{1+\log(1/r)\}r^{-2|\delta_{\bn}|},
\]
while
\[
 \frac{\rank L_{\bn}^{(r)}}{N_{\bn}},\
 \frac{\dim\mathcal V_{\bn}^{(r)}}{N_{\bn}}
 \le C\left(r^d+\frac{a_{\bn}^d}{N_{\bn}}\right),\qquad
 \|E_{\bn}^{(r)}\|_2\le Ch(1+a_{\bn}).
\]
Consequently, \(\{\mathsf K_{\delta_{\bn},\bn}\}_{\bn}
\sim_{\GLT}r_{\alpha_0}\).
\end{lemma}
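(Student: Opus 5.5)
The plan is to whiten, term by term, the two Hermitian decompositions of Proposition~\ref{prop:local-affine-secant-low-frequency-decomposition} together with the truncated regular symbol. By the first equality in \eqref{eq:fixed-whitening-compensated-secant-path}, $\mathsf K_{\delta_{\bn},\bn}=W_{\bn}T_{\bn}(\dot b_{h,\bn}^{\mathrm{sec}})W_{\bn}$. Splitting
\[
\dot b_{h,\bn}^{\mathrm{sec}}=\dot b_{h,\bn}^{\mathrm{sec},r}+(\dot b_{h,\bn}^{\mathrm{sec}}-\dot b_{h,\bn}^{\mathrm{sec},r})
\]
and inserting the decomposition $L_{\dot b,\bn}^{(r)}+E_{\dot b,\bn}^{(r)}$ of the second piece gives \eqref{eq:fixed-whitening-secant-decomposition} by linearity of $T_{\bn}$. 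Hermitian symmetry is inherited from the real symbols and from $W_{\bn}$.

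The rank, range and error bounds are then immediate.
\begin{itemize}
\item $\operatorname{ran}(W_{\bn}L W_{\bn})\subseteq W_{\bn}\operatorname{ran}L\subseteq\mathcal V_{\bn}^{(r)}$.
\item $\dim\mathcal V_{\bn}^{(r)}\le\dim\mathcal U_{\bn,M_{h,r}}\le CM_{h,r}^d$, and $M_{h,r}\le C(r/h+a_{\bn})$ with $h^dN_{\bn}\asymp1$, which gives the stated ratio.
\item $\|E_{\bn}^{(r)}\|_2\le\|W_{\bn}\|_2^2\|E_{\dot b,\bn}^{(r)}\|_2\le Ch(1+a_{\bn})$, using $\sup\|W_{\bn}\|_2<\infty$ from Lemma~\ref{lem:renorm-glt-inversion}.
\end{itemize}

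For the operator norm of the regular part I use \eqref{eq:local-affine-secant-relative-bound}: $|\dot b^{\mathrm{sec},r}_{h,\bn}|\le c_r\,b_{h,\alpha_0}$ pointwise, with $c_r=C\{1+\log(1/r)\}r^{-2|\delta_{\bn}|}$. The Toeplitz quadratic-form identity \eqref{eq:supp-toeplitz-quadratic-form} turns this into $-c_rT_{\bn}(b_{h,0})\preceq T_{\bn}(\dot b^{\mathrm{sec},r})\preceq c_rT_{\bn}(b_{h,0})$. Conjugating by $W_{\bn}=T_{\bn}(b_{h,0})^{-1/2}$ gives $-c_rI\preceq K^{\mathrm{reg},r}_{\bn}\preceq c_rI$. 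This step does not need the singular symbol to be integrable.

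For the GLT symbol of $K^{\mathrm{reg},r}_{\bn}$, Lemma~\ref{lem:local-affine-secant-regular-limits} with $\alpha_*=\alpha_0$ gives $\dot b^{\mathrm{sec},r}_{h,\bn}\to\dot\psi^r_{\alpha_0}$ in $L^1(Q)$. As in the proof of Lemma~\ref{lem:working-moving-rescaled-glt}, the trace-norm bound and GLT~3 then give $\{T_{\bn}(\dot b^{\mathrm{sec},r})\}\sim_{\GLT}\dot\psi^r_{\alpha_0}$. Theorem~\ref{thm:quantitative-sampled-covariance-glt} applied to $S_{\varphi_0}$, together with uniform invertibility, gives $\{T_{\bn}(b_{h,0})\}\sim_{\GLT}\psi_{\alpha_0}$. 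The GLT functional calculus (continuous function $x\mapsto x^{-1/2}$ on $[m,\infty)$, GLT~4) then gives $W_{\bn}\sim_{\GLT}\psi_{\alpha_0}^{-1/2}$. The product rule of the GLT algebra yields $\{K^{\mathrm{reg},r}_{\bn}\}\sim_{\GLT}\dot\psi^r_{\alpha_0}\psi_{\alpha_0}^{-1}$.

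This is the delicate step, because $\psi_{\alpha_0}$ is unbounded near the origin. I expect it to be acceptable because GLT symbols need only be measurable and a.e. finite, and $W_{\bn}$ is uniformly bounded. If the functional-calculus citation needs a bounded sequence with bounded symbol, I would instead obtain $W_{\bn}$ as an a.c.s. limit of $(\Sigma^{\mathrm{reg},r'}_{\bn})^{-1/2}$. Those matrices are uniformly bounded and positive by Theorem~\ref{thm:quantitative-sampled-covariance-glt}, and they differ from $W_{\bn}$ by low rank plus a small norm, controlled via the resolvent identity and $m I\preceq$ both.

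For the final claim, note that $L^{(r)}_{\bn}+E^{(r)}_{\bn}$ has rank fraction at most $C(r^d+a_{\bn}^d/N_{\bn})$ plus operator norm at most $Ch(1+a_{\bn})\to0$. Hence $\{\{K^{\mathrm{reg},r}_{\bn}\}_{\bn}\}_{r\downarrow0}$ is an a.c.s. for $\{\mathsf K_{\delta_{\bn},\bn}\}$, taking $\bn\to\infty$ first. Since $\dot\psi^r_{\alpha_0}/\psi_{\alpha_0}\to\dot\psi_{\alpha_0}/\psi_{\alpha_0}=r_{\alpha_0}$ a.e., and hence in measure, the GLT approximation principle \cite[Theorem~5.4]{GaroniSerraCapizzanoII} gives $\{\mathsf K_{\delta_{\bn},\bn}\}\sim_{\GLT}r_{\alpha_0}$.
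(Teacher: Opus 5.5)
Your proposal is correct and follows the paper's proof essentially step for step. You whiten the decomposition of Proposition~\ref{prop:local-affine-secant-low-frequency-decomposition}, bound $K_{\bn}^{\mathrm{reg},r}$ via \eqref{eq:local-affine-secant-relative-bound} and congruence by $W_{\bn}$, identify its symbol from Lemma~\ref{lem:local-affine-secant-regular-limits}, and finish with the GLT approximation principle. Your explicit product-rule and functional-calculus derivation of the symbol $\dot\psi^r_{\alpha_0}/\psi_{\alpha_0}$ only fills in what the paper leaves implicit; Hermitian GLT functional calculus needs merely a continuous function, not a bounded symbol, so your fallback is unnecessary.
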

\begin{proof}
Proposition~\ref{prop:local-affine-secant-low-frequency-decomposition}
gives the common range, rank bound, and error bound before whitening.
Boundedness of \(W_{\bn}\) preserves the rank and error conclusions and
maps the common range to \(\mathcal V_{\bn}^{(r)}\).
Lemma~\ref{lem:local-affine-secant-regular-limits} gives the regular GLT
symbol.  For the operator-norm bound,
Equation~\eqref{eq:local-affine-secant-relative-bound} gives
\[
 |\dot b_{h,\bn}^{\mathrm{sec},r}|
 \le C\{1+\log(1/r)\}r^{-2|\delta_{\bn}|}b_{h,\alpha_0}
 \quad\text{a.e. on \(Q\)}.
\]
Hence the Toeplitz quadratic-form inequality gives
\[
 -c_{\bn,r}T_{\bn}(b_{h,\alpha_0})
 \preceq T_{\bn}(\dot b_{h,\bn}^{\mathrm{sec},r})
 \preceq c_{\bn,r}T_{\bn}(b_{h,\alpha_0}),
 \qquad
 c_{\bn,r}:=C\{1+\log(1/r)\}r^{-2|\delta_{\bn}|}.
\]
Congruence by \(W_{\bn}\), together with
\(W_{\bn}T_{\bn}(b_{h,\alpha_0})W_{\bn}=I\), yields
\(\|K_{\bn}^{\mathrm{reg},r}\|_2\le c_{\bn,r}\).
The GLT approximation principle then gives the claimed GLT limit.
\end{proof}
{
\begin{lemma}
\label{lem:local-fixed-whitening-score-spectral-tails}
For every fixed \(\rho_0>0\), there exist constants \(c,C>0\) and
\(h_0>0\) such that, for every grid with \(0<h\le h_0\), every
\(\alpha\in\mathcal I_\alpha\) satisfying
\(|\alpha-\alpha_0|a_{\bn}\le\rho_0\), and every \(t\ge0\),
\begin{equation}
 \frac{1}{N_{\bn}}
 \#\{j:|\lambda_j(\mathcal R_{\bn}(\alpha))|>t\}
 \le Ce^{-ct}+C\frac{a_{\bn}^d}{N_{\bn}}.
 \label{eq:fixed-whitening-eigenvalue-tail}
\end{equation}
The constants \(c,C,h_0\) do not depend on \(\alpha\) or \(t\).
Consequently,
\[
 \sup_{\substack{\alpha\in\mathcal I_\alpha\\
 |\alpha-\alpha_0|a_{\bn}\le\rho_0}}
 \frac{\tr|\mathcal R_{\bn}(\alpha)|^4}{N_{\bn}}
 \le C+C\frac{a_{\bn}^{d+4}}{N_{\bn}}=O(1).
\]
\end{lemma}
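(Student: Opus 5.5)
The plan is to reduce \(\mathcal R_{\bn}(\alpha)\) to a whitened Toeplitz matrix, and then split its symbol with a cutoff radius \(r=r(t)\) that shrinks exponentially in \(t\).

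First, the factorization \eqref{eq:supp-aliased-likelihood-factorization} gives \(\Sigma_{(\tau_0,\alpha),\bn}=e^{\tau_0}h^{2\alpha-d}T_{\bn}(b_{h,\alpha})\). Since \(\partial_\alpha h^{2\alpha}=-a_{\bn}h^{2\alpha}\), the compensating term cancels the scale drift. Hence
\[
 \mathcal R_{\bn}(\alpha)=h^{2(\alpha-\alpha_0)}\,W_{\bn}T_{\bn}(\partial_\alpha b_{h,\alpha})W_{\bn},
\]
with \(h^{2(\alpha-\alpha_0)}\le e^{\rho_0}\) on the band \(|\alpha-\alpha_0|a_{\bn}\le\rho_0\).

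Two preliminary bounds follow. By \eqref{eq:tail-alias-global-comparison}, \(|\partial_\alpha b_{h,\alpha}|\le Ca_{\bn}b_{h,\alpha}\). The band gives \(b_{h,\alpha}\le e^{C\rho_0}b_{h,\alpha_0}\), exactly as in the proof of Lemma~\ref{lem:local-fixed-whitening-path-control}. The Toeplitz quadratic form identity and \(W_{\bn}T_{\bn}(b_{h,\alpha_0})W_{\bn}=I\) then give \(\|\mathcal R_{\bn}(\alpha)\|_2\le C_1a_{\bn}\). So the count in \eqref{eq:fixed-whitening-eigenvalue-tail} is zero for \(t>C_1a_{\bn}\). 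For \(t\) below a fixed constant the bound is trivial, since the left side is at most \(1\) and we may take \(C\ge e^{ct}\) there.

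For intermediate \(t\), set \(r:=\max\{h,\,r_0e^{-c_0t}\}\) and split
\[
 \partial_\alpha b_{h,\alpha}=\dot b^{r}_{h,\alpha}+\partial_\alpha\beta_{\boldsymbol0,h,\alpha}\mathbb I_{B_r(0)}.
\]
For the regular part, Lemma~\ref{lem:local-fixed-alpha-relative-bound} gives \(|\dot b^r_{h,\alpha}|\le C\{1+\log(1/r)\}r^{-2|\alpha-\alpha_0|}b_{h,\alpha_0}\). Because \(r\ge h\), we have \(\log(1/r)\le a_{\bn}/2\), so \(r^{-2|\alpha-\alpha_0|}\le e^{\rho_0}\). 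After whitening, the regular part therefore has operator norm at most \(C_2(1+c_0t)\).

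For the low-frequency part, the change of variables \(\bth=h\bomega\) writes it as \(\mathcal P_{\bn}^{(r/h)}(w)\), with \(w=h^{d-2\alpha}\{a_{\bn}\overline S_\alpha+\partial_\alpha\overline S_\alpha\}\). Its \(L^1\) norm is at most \(C(1+a_{\bn})h^{d-2\alpha_+}\). Lemma~\ref{lem:explicit-localized-fourier-kernel-compression}, applied with \(\varepsilon_{\mathrm{comp}}=h^{2\alpha_++1}\) as in Proposition~\ref{prop:local-affine-secant-low-frequency-decomposition} but at the single point \(\alpha\), splits it into two pieces. The first has rank at most \(C(r/h+a_{\bn})^d\). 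The second has norm at most \(Ch(1+a_{\bn})\to0\). Whitening by the bounded \(W_{\bn}\) preserves both bounds. Dividing by \(N_{\bn}\asymp h^{-d}\) turns the rank bound into \(C(r^d+a_{\bn}^d/N_{\bn})\).

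Now choose \(c_0\) small enough that \(C_2(1+c_0t)+o(1)\le t/2\) for \(t\ge t_0\). Since \(\lambda_{\max}(A+B)\le\lambda_{\max}(A)+\|B\|_2\) and rank-\(k\) perturbations shift eigenvalue counts by at most \(k\) (Weyl interlacing), at most \(\rank\) eigenvalues exceed \(t\) in modulus. This gives \(N_{\bn}^{-1}\#\{|\lambda_j|>t\}\le C(e^{-c_0dt}+a_{\bn}^d/N_{\bn})\). When \(r=h\), the \(r^d\) term is absorbed into \(a_{\bn}^d/N_{\bn}\). All constants come from Lemmas~\ref{lem:uniform-tail-family-comparison} and~\ref{lem:local-fixed-alpha-relative-bound}, so they are uniform in \(\alpha\) and \(t\).

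The fourth moment then follows from the layer-cake formula and the cutoff \(t\le C_1a_{\bn}\):
\[
 \frac{\tr|\mathcal R_{\bn}(\alpha)|^4}{N_{\bn}}=\int_0^{C_1a_{\bn}}4t^3\,\frac{\#\{j:|\lambda_j|>t\}}{N_{\bn}}\,dt\le C+C\frac{a_{\bn}^{d+4}}{N_{\bn}}.
\]
The main obstacle I expect is keeping the regular-part norm linear in \(t\) while \(r\) shrinks. This is where the band condition \(|\alpha-\alpha_0|a_{\bn}\le\rho_0\) and the floor \(r\ge h\) are essential: they neutralize the factor \(r^{-2|\alpha-\alpha_0|}\). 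Without them, \(r^{-2|\alpha-\alpha_0|}\) would grow like \(e^{2c_0t|\alpha-\alpha_0|}\) and destroy the exponential tail.
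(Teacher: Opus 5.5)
Your proposal is correct and follows essentially the same route as the paper. The shared steps are the factorization with the $h^{2(\alpha-\alpha_0)}$ prefactor, the global bound $\|\mathcal R_{\bn}(\alpha)\|_2\le Ca_{\bn}$, and an exponentially shrinking cutoff radius, with Lemma~\ref{lem:local-fixed-alpha-relative-bound} handling the regular part and the explicit compression lemma handling the low-frequency part, followed by the singular-value rank inequality and the layer-cake bound. The only difference is a minor one: you neutralize $r^{-2|\alpha-\alpha_0|}$ with the floor $r\ge h$, so your regular-part constant does not depend on $c_0$. The paper instead takes $r=e^{-\kappa t}$ and uses $t\le C_0a_{\bn}$, which requires the extra step of choosing $\kappa$ with $\kappa A_\kappa\le 1/8$.
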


\begin{proof}
Put \(\delta=\alpha-\alpha_0\).  The aliased likelihood factorization
and the definition of \(\mathcal R_{\bn}(\alpha)\) give
\[
 \mathcal R_{\bn}(\alpha)
 =h^{2\delta}W_{\bn}
 T_{\bn}(\partial_\alpha b_{h,\alpha})W_{\bn}.
\]
For \(0<r\le r_0\), the substitution \(\bth=h\bomega\) identifies the
principal derivative alias on \(B_r(0)\) with the localized Fourier
matrix having weight
\[
 w_{\alpha,\bn}^{\mathrm{lf}}(\bomega)
 :=h^{d-2\alpha_0}
 \{a_{\bn}\overline S_\alpha(\bomega)
   +\partial_\alpha\overline S_\alpha(\bomega)\}.
\]
Apply Lemma~\ref{lem:explicit-localized-fourier-kernel-compression}
with \(R=r/h\), compression tolerance \(h^{2\alpha_++1}\), and the
polynomial order \(M_{h,r}\) from
Proposition~\ref{prop:local-affine-secant-low-frequency-decomposition}.
Together with boundedness of \(W_{\bn}\), this gives a Hermitian
decomposition
\[
 \mathcal R_{\bn}(\alpha)
 =G_{\alpha,\bn}^{(r)}
  +L_{\alpha,\bn}^{(r)}+E_{\alpha,\bn}^{(r)},
 \qquad
 G_{\alpha,\bn}^{(r)}
 :=h^{2\delta}W_{\bn}
 T_{\bn}(\dot b_{h,\alpha}^{r})W_{\bn},
\]
where the constants in
\[
 \frac{\rank L_{\alpha,\bn}^{(r)}}{N_{\bn}}
 \le C\left(r^d+\frac{a_{\bn}^d}{N_{\bn}}\right),
 \qquad
 \|E_{\alpha,\bn}^{(r)}\|_2\le Ch(1+a_{\bn})
\]
do not depend on \(r\) or \(\alpha\).  Indeed, the unwhitened compression
error is bounded by a constant multiple of
\[
 N_{\bn}h^{2\alpha_++1}(1+a_{\bn})h^{d-2\alpha_0}
 =(1+a_{\bn})(L+h)^d h^{2(\alpha_+-\alpha_0)+1}
 \le Ch(1+a_{\bn}).
\]

Equation~\eqref{eq:local-fixed-alpha-relative-bound} gives
\[
 |\dot b_{h,\alpha}^{r}|
 \le C\{1+\log(1/r)\}r^{-2|\delta|}b_{h,\alpha_0}.
\]
Since \(|\delta|a_{\bn}\le\rho_0\),
\(h^{2\delta}=\exp(-\delta a_{\bn})\le e^{\rho_0}\).

The Toeplitz
quadratic-form identity and
\(W_{\bn}T_{\bn}(b_{h,\alpha_0})W_{\bn}=I\) therefore imply
\begin{equation}
 \|G_{\alpha,\bn}^{(r)}\|_2
 \le Ce^{\rho_0}\{1+\log(1/r)\}r^{-2|\delta|}.
 \label{eq:fixed-whitening-score-regular-op}
\end{equation}

Equation~\eqref{eq:tail-alias-global-comparison} gives
\[
 -Ca_{\bn}V_{(\tau_0,\alpha),\bn}
 \preceq \mathcal R_{\bn}(\alpha)
 \preceq Ca_{\bn}V_{(\tau_0,\alpha),\bn}.
\]
Lemma~\ref{lem:local-fixed-whitening-path-control}, applied along the
affine segment from \(\alpha_0\) to \(\alpha\), yields
\(\|V_{(\tau_0,\alpha),\bn}\|_2\le e^{C\rho_0}\).  Hence there is a
constant \(C_0\), depending on \(\rho_0\) but not on \(\alpha\), such that
\[
 \|\mathcal R_{\bn}(\alpha)\|_2\le C_0a_{\bn}.
\]
Thus the eigenvalue count in the statement is zero for \(t>C_0a_{\bn}\).

For the remaining range choose
\(r=e^{-\kappa t}\).  We shall apply this choice only after selecting a
fixed \(t_0\) such that \(e^{-\kappa t_0}\le r_0\).  When
\(t\le C_0a_{\bn}\),
\[
 t|\delta|\le C_0a_{\bn}|\delta|\le C_0\rho_0,
\]
and Equation~\eqref{eq:fixed-whitening-score-regular-op} gives
\[
 \|G_{\alpha,\bn}^{(r)}\|_2
 \le Ce^{\rho_0}e^{2\kappa C_0\rho_0}(1+\kappa t).
\]
Set
\[
 A_\kappa:=Ce^{\rho_0}e^{2\kappa C_0\rho_0}.
\]
Then
\[
 \|G_{\alpha,\bn}^{(r)}\|_2
 \le A_\kappa+\kappa A_\kappa t.
\]
Since
\[
 \kappa A_\kappa
 =Ce^{\rho_0}\kappa e^{2\kappa C_0\rho_0}\longrightarrow0
 \qquad(\kappa\downarrow0),
\]
choose \(\kappa>0\) so that \(\kappa A_\kappa\le1/8\).  Having fixed
\(\kappa\), choose \(t_0\) so large that
\[
 t_0\ge8A_\kappa,
 \qquad
 e^{-\kappa t_0}\le r_0.
\]
For every \(t\ge t_0\),
\[
 A_\kappa\le\frac{t}{8},
 \qquad
 \kappa A_\kappa t\le\frac{t}{8},
\]
and therefore
\[
 \|G_{\alpha,\bn}^{(r)}\|_2\le\frac{t}{4},
 \qquad t_0\le t\le C_0a_{\bn}.
\]
Because \(h(1+a_{\bn})\to0\), by decreasing \(h_0\) if necessary we
also have
\[
 \|E_{\alpha,\bn}^{(r)}\|_2\le\frac{t_0}{4}\le\frac{t}{4}
\]
for every \(t\ge t_0\) and every grid with \(0<h\le h_0\).  Hence
\[
 \|G_{\alpha,\bn}^{(r)}+E_{\alpha,\bn}^{(r)}\|_2\le\frac{t}{2}.
\]
Let \(m=\rank L_{\alpha,\bn}^{(r)}\).  The singular-value rank
inequality gives
\[
 s_{m+1}\bigl(\mathcal R_{\bn}(\alpha)\bigr)
 \le \|G_{\alpha,\bn}^{(r)}+E_{\alpha,\bn}^{(r)}\|_2
 \le \frac{t}{2}.
\]
Since \(\mathcal R_{\bn}(\alpha)\) is Hermitian, its singular values are
the absolute values of its eigenvalues.  Thus
\[
 \begin{aligned}
 \frac{1}{N_{\bn}}
 \#\{j:|\lambda_j(\mathcal R_{\bn}(\alpha))|>t\}
 &\le \frac{\rank L_{\alpha,\bn}^{(r)}}{N_{\bn}}\\
 &\le Ce^{-\kappa dt}+C\frac{a_{\bn}^d}{N_{\bn}},
 \end{aligned}
\]
for \(t_0\le t\le C_0a_{\bn}\).

It remains to cover \(0\le t<t_0\).  Let \(C_1\) be a constant for which
the preceding estimate holds on \([t_0,C_0a_{\bn}]\), set
\[
 c:=\kappa d,
 \qquad
 C_2:=\max\{C_1,e^{ct_0}\}.
\]
For \(0\le t<t_0\), the normalized eigenvalue count is at most one and
\(e^{-ct}\ge e^{-ct_0}\), so
\[
 \frac{1}{N_{\bn}}
 \#\{j:|\lambda_j(\mathcal R_{\bn}(\alpha))|>t\}
 \le1\le C_2e^{-ct}
 \le C_2e^{-ct}+C_2\frac{a_{\bn}^d}{N_{\bn}}.
\]
For \(t>C_0a_{\bn}\) the count is zero.  Replacing \(C_1\) by \(C_2\)
therefore proves Equation~\eqref{eq:fixed-whitening-eigenvalue-tail} for
all \(t\ge0\).

Finally, the layer-cake identity gives
\[
 \begin{aligned}
 \frac{\tr|\mathcal R_{\bn}(\alpha)|^4}{N_{\bn}}
 &=4\int_0^{C_0a_{\bn}}t^3
 \frac{\#\{j:|\lambda_j(\mathcal R_{\bn}(\alpha))|>t\}}{N_{\bn}}\,dt\\
 &\le C+C\frac{a_{\bn}^{d+4}}{N_{\bn}}.
 \end{aligned}
\]
The last term tends to zero because
\(a_{\bn}=2\log(1/h)\) and \(N_{\bn}\asymp h^{-d}\).
\end{proof}
}

\begin{lemma}
\label{lem:local-fixed-whitening-score-continuity}
In normalized Hilbert--Schmidt norm, \(\mathcal R_{\bn}(\alpha)\) converges
to \(R_{\alpha,\bn}\) uniformly over \(a_{\bn}^{-1}\)-neighborhoods of
\(\alpha_0\):
\[
 \lim_{\rho\downarrow0}\limsup_{\bn}
 \sup_{\substack{\alpha\in\mathcal I_\alpha\\
 |\alpha-\alpha_0|a_{\bn}\le\rho}}
 \frac{\|\mathcal R_{\bn}(\alpha)-R_{\alpha,\bn}\|_{\mathrm{HS}}}{\sqrt{N_{\bn}}}=0.
\]
\end{lemma}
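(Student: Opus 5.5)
The plan is to split the difference into a regular part, which is controlled pointwise through its symbol, and a low-frequency part, which has small rank. The small-rank part will be handled through the fourth-moment bound of Lemma~\ref{lem:local-fixed-whitening-score-spectral-tails} rather than through any operator-norm bound. The limit is taken as \(\bn\to\infty\) first, then \(\rho\downarrow0\), and finally \(r\downarrow0\).

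\emph{Setup.} Write \(\delta=\alpha-\alpha_0\), so that \(|\delta|a_{\bn}\le\rho\). The aliased factorization gives
\[
\mathcal R_{\bn}(\alpha)-R_{\alpha,\bn}
=W_{\bn}T_{\bn}\bigl(h^{2\delta}\partial_\alpha b_{h,\alpha}-\partial_\alpha b_{h,\alpha_0}\bigr)W_{\bn}
=:D_{\bn}(\alpha).
\]
Fix \(r\in(0,r_0]\). As in the proof of Lemma~\ref{lem:local-fixed-whitening-score-spectral-tails}, decompose
\[
D_{\bn}(\alpha)=D^{\mathrm{reg},r}_{\bn}(\alpha)+L_{\bn}^{(r)}(\alpha)+E_{\bn}^{(r)}(\alpha),
\qquad
D^{\mathrm{reg},r}_{\bn}(\alpha):=W_{\bn}T_{\bn}\bigl(h^{2\delta}\dot b^r_{h,\alpha}-\dot b^r_{h,\alpha_0}\bigr)W_{\bn}.
\]
The two low-frequency pieces come from compressing the principal derivative alias on \(B_r(0)\) by Lemma~\ref{lem:explicit-localized-fourier-kernel-compression}. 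Their compressed parts lie in the common space \(\mathcal U_{\bn,M_{h,r}}\), which does not depend on the weight. Hence
\[
\frac{\rank L_{\bn}^{(r)}(\alpha)}{N_{\bn}}\le C\Bigl(r^d+\frac{a_{\bn}^d}{N_{\bn}}\Bigr),
\qquad
\|E_{\bn}^{(r)}(\alpha)\|_2\le Ch(1+a_{\bn}),
\]
uniformly in \(\alpha\). The error term therefore contributes \(o(1)\) to \(\|\cdot\|_{\mathrm{HS}}/\sqrt{N_{\bn}}\).

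\emph{Regular part.} I would prove a pointwise bound
\[
\bigl|h^{2\delta}\dot b^r_{h,\alpha}-\dot b^r_{h,\alpha_0}\bigr|\le\eta_{\bn}(\rho,r)\,b_{h,\alpha_0}\quad\text{a.e. on }Q,
\qquad
\lim_{\rho\downarrow0}\limsup_{\bn}\eta_{\bn}(\rho,r)=0\ \text{for each fixed }r.
\]
This uses the alias decomposition in Lemma~\ref{lem:uniform-moving-alias-estimates} and proceeds term by term.
\begin{itemize}
\item The scale factor gives \(|h^{2\delta}-1|\le e^{\rho}-1\).
\item For the principal alias off \(B_r(0)\), the reference terms satisfy
\[
\bigl|\log(A)A^{-\alpha}-\log(A)A^{-\alpha_0}\bigr|\le C|\delta|\log^2(1/r)\,r^{-2|\delta|}A^{-\alpha_0},
\]
and \(|\delta|\le\rho/a_{\bn}\to0\).
\item The remainder derivatives \(\partial_\alpha\eta_{\bk,h,\alpha}\) carry the factor \(h^{2\varepsilon}(1+a_{\bn})\,r^{-2\varepsilon}\), which tends to zero for fixed \(r\).
\item The shifted aliases are handled the same way, with the summable majorants used in Lemma~\ref{lem:local-fixed-alpha-relative-bound}.
\end{itemize}
Converting to multiples of \(b_{h,\alpha_0}\) via \eqref{eq:tail-alias-global-comparison}, the Toeplitz quadratic-form identity and \(W_{\bn}T_{\bn}(b_{h,\alpha_0})W_{\bn}=I\) then give \(\|D^{\mathrm{reg},r}_{\bn}(\alpha)\|_2\le\eta_{\bn}(\rho,r)\). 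Consequently
\[
\frac{\|D^{\mathrm{reg},r}_{\bn}(\alpha)\|_{\mathrm{HS}}^2}{N_{\bn}}\le\eta_{\bn}(\rho,r)^2.
\]

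\emph{Low-rank part (main obstacle).} The low-rank piece \(L^{(r)}_{\bn}(\alpha)\) can have operator norm of order \(a_{\bn}\), so the bound \(\rank\times\|\cdot\|_2^2\) fails. Instead, write \(m=\rank L\) and apply Cauchy--Schwarz to the singular values:
\[
\|L\|_{\mathrm{HS}}^2\le\sqrt m\,\bigl(\tr|L|^4\bigr)^{1/2}.
\]
Next, \(L=D-D^{\mathrm{reg},r}-E\), and Schatten-\(4\) norms obey the triangle inequality. Lemma~\ref{lem:local-fixed-whitening-score-spectral-tails}, applied at \(\alpha\) and at \(\alpha_0\), gives \(\tr|\mathcal R_{\bn}(\alpha)|^4/N_{\bn}=O(1)\) and \(\tr|R_{\alpha,\bn}|^4/N_{\bn}=O(1)\). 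The other two terms are bounded in operator norm. Hence \(\tr|L|^4\le C_rN_{\bn}\), where \(C_r\) depends on \(r\) only through the bound \(C\{1+\log(1/r)\}e^{\rho_0}\) on \(\|D^{\mathrm{reg},r}\|_2\). This gives
\[
\frac{\|L\|_{\mathrm{HS}}^2}{N_{\bn}}\le C_r\Bigl(r^d+\frac{a_{\bn}^d}{N_{\bn}}\Bigr)^{1/2}.
\]
I expect the delicate point to be ensuring that \(C_r\) grows slower than \(r^{-d/2}\). To avoid this, I would bound \(\tr|D^{\mathrm{reg},r}|^4\) by \(\eta_{\bn}(\rho,r)^4N_{\bn}\), which tends to zero, rather than by its crude operator-norm bound. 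Then \(C_r\) reduces to an absolute constant.

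\emph{Conclusion.} Combining the three pieces,
\[
\limsup_{\bn}\sup_{\alpha}\frac{\|D_{\bn}(\alpha)\|_{\mathrm{HS}}}{\sqrt{N_{\bn}}}\le C\Bigl(\limsup_{\bn}\eta_{\bn}(\rho,r)+r^{d/4}\Bigr).
\]
Letting \(\rho\downarrow0\) removes the first term, and letting \(r\downarrow0\) removes the second, which proves the statement.
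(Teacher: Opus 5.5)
Your proof is correct and follows the paper's argument. It uses the same regular/low-rank/error splitting inherited from Lemma~\ref{lem:local-fixed-whitening-score-spectral-tails}, the same use of that lemma's fourth-moment bound together with Cauchy--Schwarz to absorb the possibly large low-rank eigenvalues, and the same order of limits.

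The differences are bookkeeping only. You bound the regular part in operator norm through a pointwise relative symbol estimate, where the paper uses an \(L^2(\nu_Q)\) estimate. You also apply \(\|L\|_{\mathrm{HS}}^2\le\sqrt{\rank L}\,(\tr|L|^4)^{1/2}\) directly to the low-rank piece, where the paper instead thresholds the singular values of the whole difference at a level \(\eta\). Finally, the \(C_r\) issue you flag is harmless even with the crude bound, since it only costs a factor \(\{1+\log(1/r)\}^2\) against \(r^{d/2}\).
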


\begin{proof}
Put \(D_{\alpha,\bn}:=\mathcal R_{\bn}(\alpha)-R_{\alpha,\bn}\).
Since \(R_{\alpha,\bn}=\mathcal R_{\bn}(\alpha_0)\), subtracting the two
decompositions in the preceding proof, with the same \(r\), compression
tolerance, and polynomial order, gives
\[
 D_{\alpha,\bn}
 =\widetilde G_{\alpha,\bn}^{(r)}
  +\widetilde L_{\alpha,\bn}^{(r)}
  +\widetilde E_{\alpha,\bn}^{(r)},
\]
where
\[
 \begin{aligned}
 \widetilde G_{\alpha,\bn}^{(r)}
 &:=W_{\bn}T_{\bn}\!\left(
 h^{2(\alpha-\alpha_0)}\dot b_{h,\alpha}^{r}
 -\dot b_{h,\alpha_0}^{r}\right)W_{\bn},\\
 \widetilde L_{\alpha,\bn}^{(r)}
 &:=L_{\alpha,\bn}^{(r)}-L_{\alpha_0,\bn}^{(r)},\\
 \widetilde E_{\alpha,\bn}^{(r)}
 &:=E_{\alpha,\bn}^{(r)}-E_{\alpha_0,\bn}^{(r)}.
 \end{aligned}
\]
{
We spell out the fixed-\(r\) estimate used here.  Since the desired result
takes the ordered limit \(\rho\downarrow0\), it is enough to consider
\(0<\rho\le1\).  Put \(\delta:=\alpha-\alpha_0\).  On the truncated
principal region \(Q\setminus B_r(0)\), and on every shifted alias over
\(Q\), the reference part of the differentiated alias satisfies
\[
 \partial_\alpha A_{\bk,h}^{-\alpha}
 =-\log(A_{\bk,h})A_{\bk,h}^{-\alpha}.
\]
Hence
\[
\begin{aligned}
 &h^{2\delta}
   \{-\log(A_{\bk,h})A_{\bk,h}^{-\alpha}\}
 -\{-\log(A_{\bk,h})A_{\bk,h}^{-\alpha_0}\}\\
 &\quad=
 (h^{2\delta}-1)
 \{-\log(A_{\bk,h})A_{\bk,h}^{-\alpha}\}
 -\log(A_{\bk,h})
 \{A_{\bk,h}^{-\alpha}-A_{\bk,h}^{-\alpha_0}\}.
\end{aligned}
\]
Because \(|\delta|a_{\bn}\le\rho\),
\(|h^{2\delta}-1|\le e^\rho-1\).  For fixed \(r\), the truncated
principal alias is bounded away from zero, while the shifted aliases are
uniformly separated from zero and satisfy the summable logarithmic majorants
from Lemma~\ref{lem:uniform-moving-alias-estimates}.  Consequently the
first term, after summing the aliases, has \(L^2(\nu_Q)\)-norm at most
\(C_r(e^\rho-1)\), uniformly whenever \(|\delta|a_{\bn}\le\rho\).
For the second term, the mean-value theorem gives
\[
 |A_{\bk,h}^{-\alpha}-A_{\bk,h}^{-\alpha_0}|
 \le
 |\delta|\,|\log A_{\bk,h}|
 \sup_{\gamma\in[\alpha\wedge\alpha_0,
                      \alpha\vee\alpha_0]}
 A_{\bk,h}^{-\gamma}.
\]
The same fixed-\(r\) summable alias majorants, now with one additional
logarithmic factor, imply an \(L^2(\nu_Q)\) bound \(C_r|\delta|\).
Since \(a_{\bn}\ge1\) on all sufficiently fine grids,
\(|\delta|\le\rho\), and this contribution is at most \(C_r\rho\).

For the differentiated remainder, the triangle inequality and
Equation~\eqref{eq:uniform-moving-alias-remainder-bounds} give
\[
\begin{aligned}
 &\left|
 h^{2\delta}\partial_\alpha\eta_{\bk,h,\alpha}
 -\partial_\alpha\eta_{\bk,h,\alpha_0}
 \right|\\
 &\qquad\le
 e^\rho|\partial_\alpha\eta_{\bk,h,\alpha}|
 +|\partial_\alpha\eta_{\bk,h,\alpha_0}|
 \le
 C h^{2\varepsilon}(1+a_{\bn})
 \{A_{\bk,h}^{-\alpha-\varepsilon}
   +A_{\bk,h}^{-\alpha_0-\varepsilon}\},
\end{aligned}
\]
where \(e^\rho\le e\) has been absorbed into \(C\).  On the same
truncated alias domains the right-hand side is summable in \(\bk\) and
bounded in \(L^2(\nu_Q)\) by
\(C_rh^{2\varepsilon}(1+a_{\bn})\).  Summing the reference and remainder
contributions therefore proves
\[
 \left\|
 h^{2(\alpha-\alpha_0)}\dot b_{h,\alpha}^{r}
 -\dot b_{h,\alpha_0}^{r}
 \right\|_{L^2(\nu_Q)}
 \le C_r\{e^\rho-1+\rho+h^{2\varepsilon}(1+a_{\bn})\}
\]
whenever \(|\alpha-\alpha_0|a_{\bn}\le\rho\), as claimed.
}
The inequality
\(\|T_{\bn}(f)\|_{\mathrm{HS}}
 \le\sqrt{N_{\bn}}\|f\|_{L^2(\nu_Q)}\)
and boundedness of \(W_{\bn}\) therefore yield
\[
 \begin{aligned}
 \frac{\|\widetilde G_{\alpha,\bn}^{(r)}\|_{\mathrm{HS}}}{\sqrt{N_{\bn}}}
 &\le C_r\{e^\rho-1+\rho+h^{2\varepsilon}(1+a_{\bn})\},\\
 \frac{\rank\widetilde L_{\alpha,\bn}^{(r)}}{N_{\bn}}
 &\le C\left(r^d+\frac{a_{\bn}^d}{N_{\bn}}\right),\\
 \|\widetilde E_{\alpha,\bn}^{(r)}\|_2
 &\le Ch(1+a_{\bn}).
 \end{aligned}
\]
Here the constants do not depend on \(\alpha\) subject to the displayed
restriction.

Fix \(r>0\) and \(\eta>0\).  Since \(h(1+a_{\bn})\to0\), the error norm
is eventually at most \(\eta/2\).  The singular-value rank inequality
then gives
\[
 \frac{\#\{j:s_j(D_{\alpha,\bn})>\eta\}}{N_{\bn}}
 \le
 \frac{\rank\widetilde L_{\alpha,\bn}^{(r)}}{N_{\bn}}
 +\frac{4}{\eta^2}
 \frac{\|\widetilde G_{\alpha,\bn}^{(r)}\|_{\mathrm{HS}}^2}{N_{\bn}}.
\]
Consequently, for each fixed \(r\),
\[
 \lim_{\rho\downarrow0}\limsup_{\bn}
 \sup_{\substack{\alpha\in\mathcal I_\alpha\\
 |\alpha-\alpha_0|a_{\bn}\le\rho}}
 \frac{\#\{j:s_j(D_{\alpha,\bn})>\eta\}}{N_{\bn}}
 \le Cr^d.
\]
Letting \(r\downarrow0\) makes this count vanish.

{
Fix \(\rho_0>0\).  Lemma~\ref{lem:local-fixed-whitening-score-spectral-tails},
applied at \(\alpha\) and at \(\alpha_0\), and the triangle inequality for
the Schatten--\(4\) norm (as a unitarily invariant norm; see
\cite[Theorem~3.5.18, p.~210]{HornJohnson1991}), followed by
\((x+y)^4\le8(x^4+y^4)\) for \(x,y\ge0\), give
\[
 \frac{\tr|D_{\alpha,\bn}|^4}{N_{\bn}}
 \le 8\left\{
 \frac{\tr|\mathcal R_{\bn}(\alpha)|^4}{N_{\bn}}
 +\frac{\tr|R_{\alpha,\bn}|^4}{N_{\bn}}
 \right\}\le C
\]
for every \(\alpha\) satisfying
\(|\alpha-\alpha_0|a_{\bn}\le\rho_0\) and all sufficiently fine grids.
Cauchy--Schwarz therefore yields
\[
 \frac{\|D_{\alpha,\bn}\|_{\mathrm{HS}}^2}{N_{\bn}}
 \le \eta^2+
 \left(\frac{\tr|D_{\alpha,\bn}|^4}{N_{\bn}}\right)^{1/2}
 \left(\frac{\#\{j:s_j(D_{\alpha,\bn})>\eta\}}{N_{\bn}}\right)^{1/2}.
\]
Taking first \(\bn\to\infty\), then \(\rho\downarrow0\), then
\(r\downarrow0\), and finally \(\eta\downarrow0\), proves the claim.
}
\end{proof}

\begin{lemma}
\label{lem:local-fixed-whitening-secant-tails}
For every sequence \(\delta_{\bn}\to0\),
\[
\frac{\tr\mathsf K_{\delta_{\bn},\bn}}{N_{\bn}}
\rightarrow \int_Q r_{\alpha_0}(\bth)\,d\nu_Q(\bth),
\quad
\frac{\tr\mathsf K_{\delta_{\bn},\bn}^{\,2}}{N_{\bn}}
\rightarrow \int_Q r_{\alpha_0}(\bth)^2\,d\nu_Q(\bth),
\quad
\frac{\|\mathsf K_{\delta_{\bn},\bn}\|_2}{\sqrt{N_{\bn}}}
\rightarrow0.
\]
\end{lemma}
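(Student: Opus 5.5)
The plan is to combine the GLT identification $\{\mathsf K_{\delta_{\bn},\bn}\}_{\bn}\sim_{\GLT}r_{\alpha_0}$ from Lemma~\ref{lem:local-fixed-whitening-secant-spectral-control} with a uniform-integrability estimate on the eigenvalues of $\mathsf K_{\delta_{\bn},\bn}$. Ordinary GLT convergence does not control normalized traces, as the example $A_N=I_N+Ne_1e_1^*$ shows, so the extra input must be quantitative control of the exceptional eigenvalues. Concretely, I will prove an eigenvalue-tail bound of the form
\[
 \frac{1}{N_{\bn}}\#\{j:|\lambda_j(\mathsf K_{\delta_{\bn},\bn})|>t\}\le Ce^{-ct}+\epsilon_{\bn}(t),
\]
where the remainder $\epsilon_{\bn}(t)$ is negligible after integration against $t$ and $t^0$. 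Given such a bound, the test functions $x\mapsto x$ and $x\mapsto x^2$ become admissible in the GLT distribution identity by a standard truncation argument. This yields $\tr\mathsf K/N_{\bn}\to\int_Q r_{\alpha_0}\,d\nu_Q=m_1$ and $\tr\mathsf K^2/N_{\bn}\to m_2$. Both limits are finite because $r_{\alpha_0}$ has only a logarithmic singularity and therefore lies in $L^2(\nu_Q)$.

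\emph{Step 1 (operator norm).} I will use the first equality in \eqref{eq:fixed-whitening-compensated-secant-path} together with $|\partial_\alpha b_{h,\alpha}|\le Ca_{\bn}b_{h,\alpha}$ from \eqref{eq:tail-alias-global-comparison}. Next I will compare $b_{h,\alpha}$ with $b_{h,\alpha_0}$ through $\psi_{h,\alpha}$, using $A_{\bk,h}\ge h^2$ and the bound $A_{\bk,h}\le C$ on the principal alias. This gives $b_{h,\alpha_{\bn}(t)}\le Ch^{-2|\delta_{\bn}|}b_{h,\alpha_0}$. The Toeplitz quadratic-form identity and congruence by $W_{\bn}$ then yield $\|\mathsf K_{\delta_{\bn},\bn}\|_2\le Ca_{\bn}h^{-2|\delta_{\bn}|}$. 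Since $\delta_{\bn}\to0$, we have $h^{-2|\delta_{\bn}|}=O(N_{\bn}^{2|\delta_{\bn}|/d})=o(N_{\bn}^{1/2-\eta})$ for some $\eta>0$, and $a_{\bn}=O(\log N_{\bn})$. This proves the third claim, and it also shows that the spectrum lives on $[0,T_{\bn}]$ with $T_{\bn}:=Ca_{\bn}h^{-2|\delta_{\bn}|}$.

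\emph{Step 2 (tail bound).} I will repeat the argument of Lemma~\ref{lem:local-fixed-whitening-score-spectral-tails}, now applied to the decomposition \eqref{eq:fixed-whitening-secant-decomposition}. For a given level $t$ I choose $r=e^{-\kappa t}$. The regular part then satisfies $\|K_{\bn}^{\mathrm{reg},r}\|_2\le C(1+\kappa t)e^{2\kappa t|\delta_{\bn}|}$, and the error part satisfies $\|E_{\bn}^{(r)}\|_2\le Ch(1+a_{\bn})\to0$. On the range $t\le 1/|\delta_{\bn}|$ the factor $e^{2\kappa t|\delta_{\bn}|}$ is at most $e^{2\kappa}$, so for small $\kappa$ and $t\ge t_0$ the combined regular and error parts have norm at most $t/2$. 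The singular-value rank inequality then bounds the count by $\rank L_{\bn}^{(r)}/N_{\bn}\le Ce^{-\kappa dt}+Ca_{\bn}^d/N_{\bn}$. Integrating $t$ and $t$ (for the layer-cake form of the $x^2$ moment) over $[0,1/|\delta_{\bn}|]$ gives a uniformly integrable contribution, with an error of order $a_{\bn}^d|\delta_{\bn}|^{-2}/N_{\bn}$.

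\emph{Step 3 (the obstacle).} The difficult range is $1/|\delta_{\bn}|<t\le T_{\bn}$, which is nonempty only when $|\delta_{\bn}|a_{\bn}\to\infty$. There the factor $r^{-2|\delta_{\bn}|}$ in the bound for $K^{\mathrm{reg},r}$ is no longer harmless. My first attempt is to fix $r=r(t)=e^{-\kappa/|\delta_{\bn}|}$ throughout this range. The regular part then stays $O(1/|\delta_{\bn}|)<t/2$, so the count in this range is at most $C e^{-\kappa d/|\delta_{\bn}|}+Ca_{\bn}^d/N_{\bn}$. Multiplying by $T_{\bn}^2$ and using Step 1, the required estimate reduces to
\[
 a_{\bn}^2h^{-4|\delta_{\bn}|}\bigl(e^{-\kappa d/|\delta_{\bn}|}+a_{\bn}^d/N_{\bn}\bigr)\to0 .
\]
The second term is fine because $h^{-4|\delta_{\bn}|}=N_{\bn}^{o(1)}$. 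The first term, however, is small only if $|\delta_{\bn}|^2a_{\bn}$ stays bounded. If that condition fails, I will split the path at intermediate exponents and use the second equality in \eqref{eq:fixed-whitening-compensated-secant-path}, bounding each piece with Lemma~\ref{lem:local-fixed-whitening-score-spectral-tails} in place of a single truncation. Should this also be insufficient, I will keep $r$ depending on $t$ and exploit the refined rank bound $C(r^d+a_{\bn}^d/N_{\bn})$ more carefully. Once the tail estimate is in hand, the trace and quadratic-trace limits follow from GLT convergence by the truncation argument described at the start.
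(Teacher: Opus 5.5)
Your architecture matches the paper's. You take the symbol $r_{\alpha_0}$ from Lemma~\ref{lem:local-fixed-whitening-secant-spectral-control}, derive an eigenvalue-tail bound from the decomposition \eqref{eq:fixed-whitening-secant-decomposition} to get square uniform integrability, and finish with a layer-cake cutoff. Your Step~1 bound $\|\mathsf K_{\delta_{\bn},\bn}\|_2\le Ca_{\bn}h^{-2|\delta_{\bn}|}=N_{\bn}^{o(1)}\log N_{\bn}$ is correct and gives the third claim directly.

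However, the proposal is not yet a proof. Step~3 is unresolved precisely when $|\delta_{\bn}|^2a_{\bn}\to\infty$, for instance $\delta_{\bn}=a_{\bn}^{-1/3}$, and the statement covers this case. Neither fallback is carried out. The path-splitting one does not work with Lemma~\ref{lem:local-fixed-whitening-score-spectral-tails} as stated: that lemma is restricted to the band $|\alpha-\alpha_0|a_{\bn}\le\rho_0$, which covers only a vanishing initial portion of the path in exactly this regime. The obstruction comes from aiming for an exponential tail. With $r=e^{-\kappa t}$, the factor $r^{-2|\delta_{\bn}|}=e^{2\kappa t|\delta_{\bn}|}$ in the bound for $K_{\bn}^{\mathrm{reg},r}$ is superlinear in $t$ beyond $1/|\delta_{\bn}|$. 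Freezing $r=e^{-\kappa/|\delta_{\bn}|}$ only yields the tail $e^{-\kappa d/|\delta_{\bn}|}$, which loses against $T_{\bn}^2=Ca_{\bn}^2e^{2|\delta_{\bn}|a_{\bn}}$.

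The missing idea is to take $r$ as a negative \emph{power} of $t$ and settle for a polynomial tail. Only second moments are needed, so a tail of order larger than $2$ suffices. Fix $q$ with $qd>2$ and put $r=(1+t)^{-q}$. Once $2q|\delta_{\bn}|\le1/4$, one has $r^{-2|\delta_{\bn}|}=(1+t)^{2q|\delta_{\bn}|}\le(1+t)^{1/4}$ for every $t\ge0$. Hence $\|K_{\bn}^{\mathrm{reg},r}\|_2\le C\{1+q\log(1+t)\}(1+t)^{1/4}\le t/2$ for all $t\ge t_p$, with $t_p$ independent of $\bn$, and no split at $1/|\delta_{\bn}|$ is needed. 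The rank bound then gives $\#\{j:|\lambda_j(\mathsf K_{\delta_{\bn},\bn})|>t\}/N_{\bn}\le C(1+t)^{-qd}+Ca_{\bn}^d/N_{\bn}$ on all of $[t_p,\infty)$. Since the count vanishes beyond $T_{\bn}$, the layer-cake integral leaves only the residual $Ca_{\bn}^dT_{\bn}^2/N_{\bn}=N_{\bn}^{-1+o(1)}\to0$.

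The same cap repairs a slip in your Step~2. The error $a_{\bn}^d|\delta_{\bn}|^{-2}/N_{\bn}$ need not vanish; it is infinite for $\delta_{\bn}\equiv0$, which is the case used for \eqref{eq:Ralpha-trace-limits}. You should integrate only over $[0,\min\{1/|\delta_{\bn}|,T_{\bn}\}]$.
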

In particular, since
\(\mathsf K_{0,\bn}=R_{\alpha,\bn}\), taking
\(\delta_{\bn}\equiv0\) gives
\begingroup\small
\begin{equation}
 \frac{\operatorname{tr}R_{\alpha,\bn}}{N_{\bn}}
 \rightarrow
 \int_Q r_{\alpha_0}(\bth)\,d\nu_Q(\bth),\quad
 \frac{\operatorname{tr}(R_{\alpha,\bn}^2)}{N_{\bn}}
 \rightarrow
 \int_Q r_{\alpha_0}(\bth)^2\,d\nu_Q(\bth),\quad
 \frac{\|R_{\alpha,\bn}\|_2}{\sqrt{N_{\bn}}}
 \rightarrow0.
\label{eq:Ralpha-trace-limits}
\end{equation}
\endgroup
\begin{proof}
We first establish a polynomial bound for the spectral tails.
Fix \(p>2\), choose \(q>0\) such that \(qd>p\), and take
\(r=(1+t)^{-q}\).  Since \(\delta_{\bn}\to0\), for all sufficiently
large \(\bn\), \(2q|\delta_{\bn}|\le\frac{1}{4}\).  Lemma~
\ref{lem:local-fixed-whitening-secant-spectral-control} gives
\[
\|K_{\bn}^{\mathrm{reg},r}\|_2
\le
C\{1+q\log(1+t)\}(1+t)^{1/4}.
\]
We may therefore choose \(t_p\) independently of \(\bn\) so that
\(r\le r_0\) and
\[
\|K_{\bn}^{\mathrm{reg},r}\|_2\le\frac{t}{2},
\qquad t\ge t_p.
\]
For all sufficiently large \(\bn\), we also have
\(\|E_{\bn}^{(r)}\|_2\le t/2\).  Hence
\(\|K_{\bn}^{\mathrm{reg},r}+E_{\bn}^{(r)}\|_2\le t\),
and Hermitian rank perturbation therefore gives
\[
\begin{aligned}
\frac{1}{N_{\bn}}
\#\bigl\{j:
|\lambda_j(\mathsf K_{\delta_{\bn},\bn})|>t
\bigr\}
&\le
2\frac{\rank L_{\bn}^{(r)}}{N_{\bn}}\\
&\le
C(1+t)^{-qd}+C\frac{a_{\bn}^d}{N_{\bn}}\\
&\le
C_p(1+t)^{-p}+C\frac{a_{\bn}^d}{N_{\bn}},
\qquad t\ge t_p.
\end{aligned}
\]
After increasing \(C_p\), the finitely many initial indices can be
absorbed into the preceding bound.  This proves the polynomial tail estimate.

It remains to prove square uniform integrability.  Define
\[
H_{\bn}(t):=\frac{1}{N_{\bn}}
\#\bigl\{j:|\lambda_j(\mathsf K_{\delta_{\bn},\bn})|>t\bigr\}.
\]
Lemma~\ref{lem:local-fixed-whitening-path-control}, with
\(\rho=|\delta_{\bn}|a_{\bn}\), gives
\[
\|\mathsf K_{\delta_{\bn},\bn}\|_2
\le Ca_{\bn}e^{C|\delta_{\bn}|a_{\bn}}=:T_{\bn}.
\]
Hence
\(H_{\bn}(t)=0\) for \(t>T_{\bn}\). For \(t_p\le M<T_{\bn}\), the
layer-cake representation gives
\[
\begin{aligned}
&\frac{1}{N_{\bn}}
\tr\!\left\{
\mathsf K_{\delta_{\bn},\bn}^{\,2}
\mathbb I_{\{
|\mathsf K_{\delta_{\bn},\bn}|>M
\}}
\right\}\\
&\qquad
=
M^2H_{\bn}(M)
+2\int_M^{T_{\bn}}tH_{\bn}(t)\,dt\\
&\qquad
\le
C_p(1+M)^{2-p}
+C \frac{a_{\bn}^d}{N_{\bn}}T_{\bn}^2.
\end{aligned}
\]
Because \(\log N_{\bn}=(d/2)a_{\bn}+O(1)\) and
\(\delta_{\bn}\to0\), the second term in the preceding bound has the normalized rate
\[
 a_{\bn}^{d+2}N_{\bn}^{-1}
 \exp(2C|\delta_{\bn}|a_{\bn})
 =N_{\bn}^{-1+o(1)}\operatorname{polylog}(N_{\bn})\longrightarrow0.
\]
Consequently,
\[
\limsup_{\bn}
\frac{1}{N_{\bn}}
\sum_{|\lambda_j(\mathsf K_{\delta_{\bn},\bn})|>M}
\lambda_j(\mathsf K_{\delta_{\bn},\bn})^2
\le
C_p(1+M)^{2-p}.
\]
Because \(p>2\), the right-hand side tends to zero as \(M\to\infty\).
This proves square uniform integrability.  Also,
\[
\frac{\|\mathsf K_{\delta_{\bn},\bn}\|_2^2}{N_{\bn}}
\le
\frac{M^2}{N_{\bn}}
+
\frac{1}{N_{\bn}}
\tr\!\left\{
\mathsf K_{\delta_{\bn},\bn}^{\,2}
\mathbb I_{\{
|\mathsf K_{\delta_{\bn},\bn}|>M
\}}
\right\}.
\]
Thus \(\|\mathsf K_{\delta_{\bn},\bn}\|_2/
\sqrt{N_{\bn}}\longrightarrow0\).

{Finally, \(r_{\alpha_0}(\bth)\sim-\log\|\bth\|^2\) belongs to
\(L^2(\nu_Q)\).  Hermitian GLT spectral distribution gives convergence
against continuous compactly supported test functions
\cite[Theorem~5.2, p.~94]{GaroniSerraCapizzanoII}.  Applying this
to bounded continuous cutoffs of \(x^2\), then using the square uniform
integrability just proved to remove the cutoffs, gives the second trace limit.
Cauchy--Schwarz gives uniform integrability of the first moments; applying
the same cutoff argument to \(x\) gives the first trace limit.}
\end{proof}

\begingroup
\section{Likelihood under the fitted model and transfer to the
external data-generating model}
\label{app:working-likelihood-proofs}

Write
\[
 P_{\varphi,\bn}:=\mathcal N(0,\Sigma_{\varphi,\bn}),
 \qquad P^\star_{\bn}:=\mathcal N(0,\Sigma^\star_{\bn}).
\]
We call \(P_{\varphi,\bn}\) the fitted model and \(P^\star_{\bn}\) the
external data-generating model. Exact specification,
\(S^\star=S_{\varphi_0}\), implies
\(P^\star_{\bn}=P_{\varphi_0,\bn}\). The fitted-model arguments first use
exact specification and then transfer their conclusions under the
misspecification described below.
Define its whitening matrix by
\[
 \mathcal W_{\bn}:=(\Sigma_{\varphi_0,\bn})^{-1/2}.
\]
Let \(Y_{\bn}\) denote the sampled observed vector governed by the preceding Gaussian models.
Put \(Z_{\bn}:=\mathcal W_{\bn}Y_{\bn}\). Under exact specification,
\(Z_{\bn}\sim\mathcal N(0,I_{N_{\bn}})\).  Along the compensated
\(\alpha\)-direction, the relative covariance is
\[
 B_{\bn}(\alpha)
 =h^{-2(\alpha-\alpha_0)}
 \mathcal W_{\bn}
 \Sigma_{(\tau_0,\alpha),\bn}
 \mathcal W_{\bn}.
\]
Appendix~\ref{app:working-likelihood-proofs} is the detailed home for this whitened relative covariance, the restricted-score calculation, and the subsequent contiguity and transfer arguments.  Differentiating \(B_{\bn}(\alpha)\) at \(\alpha_0\) gives the centered quadratic score form with matrix \(B'_{\bn}(\alpha_0)\).

{We now allow the working model to be misspecified and retain the same
spectral-family notation.}
Under \(P^\star_{\bn}\), define
$
 C_{\bn}:=\mathcal W_{\bn}\Sigma^\star_{\bn}\mathcal W_{\bn}.
$
Then \(Z_{\bn}\sim\mathcal N(0,C_{\bn})\).

The comparison between the external and reference spectra is governed by
Assumption~\ref{ass:common-leading-tail-misspecification}; its difference
and two-sided comparison clauses are used below.

The score identity is unchanged as a function of the observations:
\[
 \widetilde\ell'_{\bn}(\alpha_0)
 =\frac{1}{2}\left\{
 Z_{\bn}^T(B_{\bn})'(\alpha_0)Z_{\bn}
 -\operatorname{tr}(B_{\bn})'(\alpha_0)
 \right\}.
\]
Under \(P^\star_{\bn}\), its mean and variance are
\begin{align*}
 \mathbb E_{P^\star_{\bn}}\{\widetilde\ell'_{\bn}(\alpha_0)\}
 &=\frac{1}{2}\operatorname{tr}\{(B_{\bn})'(\alpha_0)(C_{\bn}-I)\},\\
 \operatorname{Var}_{P^\star_{\bn}}\{\widetilde\ell'_{\bn}(\alpha_0)\}
 &=\frac{1}{2}\operatorname{tr}\{(B_{\bn})'(\alpha_0)C_{\bn}
 (B_{\bn})'(\alpha_0)C_{\bn}\}.
\end{align*}
Under exact specification, \(C_{\bn}=I_{N_{\bn}}\), so the score is
centered and has the fitted-model variance. Under misspecification,
\(C_{\bn}-I\) produces a finite-sample bias and changes the variance.
The transfer argument proved below shows that, under the common-leading-tail
misspecification condition, as
\(N_{\bn}\to\infty\),
\[
 \frac{\mathbb E_{P^\star_{\bn}}\{\widetilde\ell'_{\bn}(\alpha_0)\}}{\sqrt{N_{\bn}}}\longrightarrow0,
 \qquad
 \frac{1}{N_{\bn}}\left[
 \operatorname{Var}_{P^\star_{\bn}}\{\widetilde\ell'_{\bn}(\alpha_0)\}
 -\frac{1}{2}\operatorname{tr}\{[(B_{\bn})'(\alpha_0)]^2\}
 \right]\longrightarrow0.
\]
Thus the normalized score has asymptotically zero mean and the same limiting
variance under \(P^\star_{\bn}\) as under \(P_{\varphi_0,\bn}\).

The transfer has two steps.  First, we construct finite-rank approximations
to \(C_{\bn}-I\).  Second, we use them to establish mutual contiguity and to
transfer the normalized centering and covariance of the quadratic forms.  The
score proof then applies this transfer to the score matrices, including
\((B_{\bn})'(\alpha_0)\), and thereby transfers the score limit from
\(P_{\varphi_0,\bn}\) to \(P^\star_{\bn}\).

The separation argument compares two parameter values within the fitted
model and is separate from the transfer to the external model.
Equation~\eqref{eq:supp-gaussian-affinity} expresses their Hellinger affinity
through the generalized covariance eigenvalues, and
Lemma~\ref{lem:working-cayley-hellinger} supplies the lower bound used to prove
Proposition~\ref{prop:working-glt-hellinger-separation} in
Appendix~\ref{app:likelihood-separation-and-lan}.

To approximate \(C_{\bn}-I\) by finite-rank matrices, we replace
low-frequency Fourier modes by polynomial modes.  The resulting matrix bound
requires bounds for the residuals after they are evaluated on the grid and
whitened.  The following native-space argument obtains those bounds from
Sobolev norms of the continuous residual functions.

\emph{Native space and whitened samples.}
At \(\varphi_0\), Assumption~\ref{ass:differentiated-spectral-tail-family}
gives a real, even, continuous, strictly positive spectral density
\(S_{\varphi_0}\) in \(L^1(\R^d)\).
Wendland's Corollary~6.9 \citep[p.~73]{wendland2005scattered}
therefore shows that the kernel \(K_{\varphi_0}(x,y)=\rho_{\varphi_0}(x-y)\) is
strictly positive definite.  Wendland's Definition~10.9 and
Theorems~10.10--10.11 \citep[pp.~138--139]{wendland2005scattered} give the
unique native RKHS with reproducing kernel \(K_{\varphi_0}\), which we denote by
\(\mathcal H_{\varphi_0}\).  Thus, for \(g\in\mathcal H_{\varphi_0}\),
\(g(y)=\langle g,K_{\varphi_0}(\cdot,y)\rangle_{\mathcal H_{\varphi_0}}\).  The tail limit and
strict positivity on compact frequency sets give
\[
 S_{\varphi_0}(\bomega)
 \ge c(1+\|\bomega\|^2)^{-\alpha_0}.
\]

Set \(\mu_{\varphi_0}=(2\pi)^{-d}S_{\varphi_0}(\bomega)\,d\bomega\) and define
\(\Phi_x(\bomega)=(\cos(x\cdot\bomega),\sin(x\cdot\bomega))\) in
\(L^2(\mu_{\varphi_0};\R^2)\).  The trigonometric addition formula and evenness of
\(S_{\varphi_0}\) give
\[
 \langle\Phi_x,\Phi_y\rangle_{L^2(\mu_{\varphi_0};\R^2)}
 =\int_{\R^d}\cos\{(x-y)\cdot\bomega\}
 \mu_{\varphi_0}(d\bomega)
 =\rho_{\varphi_0}(x-y).
\]

Thus \(\Phi_x\) is a feature map for \(K_{\varphi_0}\).  For
\((a_c,a_s)\in L^2(\mu_{\varphi_0};\R^2)\), define the synthesis map
\[
 T(a_c,a_s)(x)
 :=\langle (a_c,a_s),\Phi_x\rangle_{L^2(\mu_{\varphi_0};\R^2)}
 =\int_{\R^d}\{a_c(\bomega)\cos(x\cdot\bomega)
 +a_s(\bomega)\sin(x\cdot\bomega)\}\,\mu_{\varphi_0}(d\bomega).
\]
The native-space construction identifies the range of \(T\) with
\(\mathcal H_{\varphi_0}\) and gives the norm
\[
 \|g\|_{\mathcal H_{\varphi_0}}^2=
 \inf\left\{\int(a_c^2+a_s^2)\,d\mu_{\varphi_0}:T(a_c,a_s)=g\right\}.
\]

We write \(C_c^\infty(\R^d)\) for the space of infinitely differentiable
functions on \(\R^d\) with compact support.  Following
\citep[p.~170]{wendland2005scattered}, for an integer \(q\ge0\), let
\(H^q=H^q(\R^d)\) denote the Sobolev space of functions whose weak derivatives
through order \(q\) belong to \(L^2(\R^d)\), equipped with the norm
\[
 \|g\|_{H^q}^2
 :=\sum_{|\beta|\le q}\|\partial^\beta g\|_{L^2(\R^d)}^2.
\]
Here \(\beta=(\beta_1,\ldots,\beta_d)\in\mathbb N_0^d\) is a multi-index,
\(|\beta|=\sum_{j=1}^d\beta_j\), and
\(\partial^\beta=\partial_{x_1}^{\beta_1}\cdots
\partial_{x_d}^{\beta_d}\).

For real \(g\in C_c^\infty(\R^d)\), write
\(\widehat g=u+\mathrm iv\).  Then \(u\) is even and \(v\) is odd.  The
coefficients
\[
 a_c(\bomega)=\frac{(2\pi)^{d/2}u(\bomega)}{S_{\varphi_0}(\bomega)},
 \qquad
 a_s(\bomega)=-\frac{(2\pi)^{d/2}v(\bomega)}{S_{\varphi_0}(\bomega)}
\]
satisfy \(T(a_c,a_s)=g\) by unitary Fourier inversion, and
\[
 \int(a_c^2+a_s^2)\,d\mu_{\varphi_0}
 =\int_{\R^d}\frac{|\widehat g(\bomega)|^2}{S_{\varphi_0}(\bomega)}\,d\bomega.
\]

Thus, for every integer \(q>\alpha_0\), the lower spectral envelope gives
\[
 \|g\|_{\mathcal H_{\varphi_0}}^2\le C\|g\|_{H^q}^2.
\]

Because \(K_{\varphi_0}\) is strictly positive definite and the grid points are
distinct, the covariance matrix \(\Sigma_{\varphi_0,\bn}\) is
positive definite.  For a function \(g\in\mathcal H_{\varphi_0}\), collect its values
at the grid points in
\[
 g_X:=\bigl(g(\bx_{\bi}^{(\bn)})\bigr)_{\bi\in\mathcal I_{\bn}}
 \in\R^{N_{\bn}}.
\]
Wendland's Theorem~13.1 \citep[pp.~223--224]{wendland2005scattered}
identifies kernel interpolation at the grid points as an orthogonal projection
in \(\mathcal H_{\varphi_0}\).  In the present notation, the fact that this projection
cannot increase the \(\mathcal H_{\varphi_0}\)-norm gives
\[
\|\mathcal W_{\bn}g_X\|_2^2
=g_X^T(\Sigma_{\varphi_0,\bn})^{-1}g_X
\le\|g\|_{\mathcal H_{\varphi_0}}^2.
\]
Combining this inequality with the Sobolev-to-native bound and taking square
roots gives the continuous-to-sampled bound used for the cutoff residuals:
\[
 \|\mathcal W_{\bn}g_X\|_2\le C\|g\|_{H^q},
 \qquad g\in C_c^\infty(\R^d),\quad q>\alpha_0.
\]
This inequality controls the values of \(g\) on the grid in \(\calD\), after
whitening, by its Sobolev norm over \(\R^d\).  We apply it to the errors made
by replacing low-frequency cosine and sine modes on \(\calD\) with Taylor
polynomials.

Fix an integer \(q>\alpha_0\), and choose \(\chi\in C_c^\infty(\R^d)\)
equal to one on a neighborhood of \(\calD=[0,L]^d\).  Define the truncated
Taylor polynomials for cosine and sine by
\[
 c_M(t)=\sum_{\substack{0\le j\le M\\j\ \mathrm{even}}}\frac{(-1)^{j/2}t^j}{j!},
 \qquad s_M(t)=\sum_{\substack{0\le j\le M\\j\ \mathrm{odd}}}\frac{(-1)^{(j-1)/2}t^j}{j!}.
\]
For \(\bn\), collect their values on the grid in
\[
 p^c_{\bomega,M}
 =\bigl(c_M(\bx_{\bi}^{(\bn)}\cdot\bomega)\bigr)_{\bi\in\mathcal I_{\bn}},
 \qquad
 p^s_{\bomega,M}
 =\bigl(s_M(\bx_{\bi}^{(\bn)}\cdot\bomega)\bigr)_{\bi\in\mathcal I_{\bn}}.
\]
For \(\|\bomega\|\le R\), define the corresponding cutoff residual functions by
\[
 g^c_{\bomega,M}(x)=\chi(x)\{\cos(x\cdot\bomega)-c_M(x\cdot\bomega)\},
 \qquad
 g^s_{\bomega,M}(x)=\chi(x)\{\sin(x\cdot\bomega)-s_M(x\cdot\bomega)\}.
\]
Because \(\chi=1\) on \(\calD\), \((g^c_{\bomega,M})_X\) is the difference
between the grid cosine values and \(p^c_{\bomega,M}\); the analogous identity
holds for sine.  The continuous-to-grid argument therefore gives the chain
\[
\begin{aligned}
 &\left\|\mathcal W_{\bn}
 \left\{
 \bigl(\cos(\bx_{\bi}^{(\bn)}\cdot\bomega)\bigr)_{\bi\in\mathcal I_{\bn}}
 -p^c_{\bomega,M}
 \right\}\right\|_2 \\
 &\qquad=\|\mathcal W_{\bn}(g^c_{\bomega,M})_X\|_2
 \le\|g^c_{\bomega,M}\|_{\mathcal H_{\varphi_0}}
 \le C\|g^c_{\bomega,M}\|_{H^q},
\end{aligned}
\]
with the same chain for the sine error.  Thus \(\chi\) preserves the error on
the grid while making each residual function compactly supported on \(\R^d\).
It remains to bound their Sobolev norms.  Both \(p^c_{\bomega,M}\) and
\(p^s_{\bomega,M}\) lie in the same degree-\(M\) polynomial evaluation
space, which makes the later low-frequency approximation finite rank.
Figure~\ref{fig:cutoff-taylor-localization} illustrates the construction for
\(\cos x\): \(c_2(x)\) supplies the polynomial part, and \(\chi\) makes the
residual compactly supported.
\begin{figure}[t]
\centering
\makebox[\textwidth][c]{%
 \includegraphics[
  width=\textwidth,
  trim=0 30bp 0 0,
  clip
]{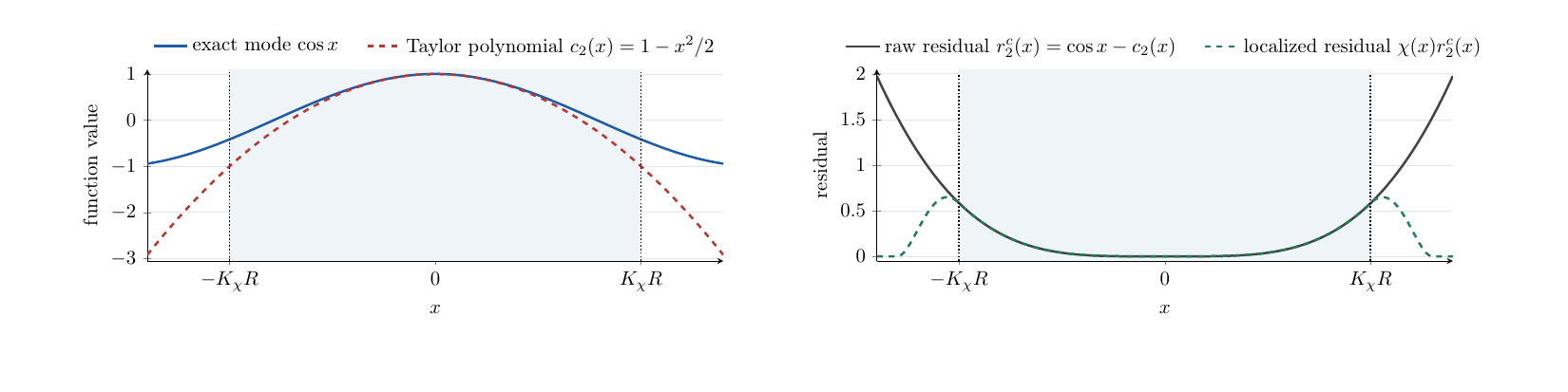} %
}
\vspace{-1.6em}
\caption{Taylor approximation and cutoff localization for \(\cos x\) with
\(c_2(x)=1-x^2/2\).  Left: the mode and approximation.  Right: the raw and
localized residuals.}
\label{fig:cutoff-taylor-localization}
\end{figure}
\begin{lemma}
\label{lem:cutoff-taylor-stirling}
There are constants \(C,c,C_0,C_1>0\), depending only on \(d,q,\chi\),
such that, for every \(R\ge1\), every Taylor truncation order
 \(M\ge C_1R+C_0\), and every \(\bomega\) with
\(\|\bomega\|\le R\),
\[
 \|g^c_{\bomega,M}\|_{H^q}+\|g^s_{\bomega,M}\|_{H^q}
 \le Ce^{-c(M-C_0R)}.
\]
\end{lemma}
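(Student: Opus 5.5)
The plan is to bound each \(H^q\) seminorm term \(\|\partial^\beta g^c_{\bomega,M}\|_{L^2(\R^d)}\), \(|\beta|\le q\), directly. Since \(\chi\) has compact support \(K:=\operatorname{supp}\chi\), every such term is at most \(|K|^{1/2}\) times a sup norm over \(K\). Leibniz's rule gives
\[
\partial^\beta g^c_{\bomega,M}=\sum_{\gamma\le\beta}\binom{\beta}{\gamma}\partial^{\beta-\gamma}\chi\,\partial^\gamma\{\cos(x\cdot\bomega)-c_M(x\cdot\bomega)\}.
\]
It therefore suffices to bound \(\sup_{x\in K}|\partial^\gamma\{\cos(x\cdot\bomega)-c_M(x\cdot\bomega)\}|\) for \(|\gamma|\le q\), uniformly in \(\|\bomega\|\le R\). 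The factors \(\|\partial^{\beta-\gamma}\chi\|_\infty\) and the binomial coefficients depend only on \(q\) and \(\chi\). The sine residual is handled identically.

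\textbf{Key identity.} By the chain rule, \(\partial^\gamma f(x\cdot\bomega)=\bomega^\gamma f^{(|\gamma|)}(x\cdot\bomega)\). Differentiating a Taylor polynomial term by term gives
\[
c_M^{(k)}=\text{the degree-}(M-k)\text{ Taylor polynomial of }\cos^{(k)}=\pm\cos\text{ or }\pm\sin.
\]
Hence \(\cos^{(k)}-c_M^{(k)}\) is the Taylor remainder of order \(M-k+1\) of a function whose derivatives are all bounded by one. With \(A:=\sup_{x\in K}\|x\|\) (a constant depending on \(\chi\)) and \(|t|=|x\cdot\bomega|\le AR\), Lagrange's form gives
\[
\bigl|\cos^{(k)}(t)-c_M^{(k)}(t)\bigr|\le\frac{(AR)^{M-k+1}}{(M-k+1)!}.
\]
Combining with \(|\bomega^\gamma|\le R^{k}\) for \(k=|\gamma|\le q\), each term is bounded by
\[
R^{q}\,\frac{(AR)^{M-q+1}}{(M-q+1)!},
\]
after adjusting constants when \(AR<1\).

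\textbf{Stirling step (main obstacle).} It remains to show that \(R^q(AR)^{m}/m!\le Ce^{-c(M-C_0R)}\), where \(m=M-q+1\), whenever \(M\ge C_1R+C_0\). Stirling's formula gives
\[
\frac{(AR)^m}{m!}\le\Bigl(\frac{eAR}{m}\Bigr)^m.
\]
Choose \(C_1\ge 2e^2A\), so that \(m\ge e^2AR\), and hence \((eAR/m)^m\le e^{-m}\). The polynomial prefactor \(R^q\) is absorbed using \(R\le m\), since \(R^qe^{-m}\le C_qe^{-m/2}\). This yields \(Ce^{-m/2}\le C'e^{-c(M-C_0R)}\) with \(c=1/2\) and a suitable \(C_0\ge q\).

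The only delicacy is bookkeeping the constants so they depend only on \(d,q,\chi\), in particular through \(A\) and \(\|\chi\|_{C^q}\). The case of small \(R\ge1\) is covered by the \(C_0\) offset. Summing over the finitely many multi-indices \(\beta,\gamma\) completes the bound.
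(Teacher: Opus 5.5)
Your proof is correct and follows the paper's argument essentially step for step. Both use Leibniz's rule with the chain-rule factor $\bomega^\gamma$, the Lagrange bound $(AR)^{M+1-k}/(M+1-k)!$ on the $k$-th derivative of the Taylor residual over $\operatorname{supp}\chi$, and Stirling's $m!\ge(m/e)^m$ with $C_1$ chosen so that the base $eAR/m$ is below one and the factor $R^q$ is absorbed; the only difference is that you make the constants explicit ($C_1\ge 2e^2A$, $C_0\ge q$, $c=1/2$), whereas the paper just says ``after increasing $C_1$ and $C_0$''.
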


\begin{proof}
Write \(r^c_M(t)=\cos(t)-c_M(t)\) and
\(r^s_M(t)=\sin(t)-s_M(t)\).  For \(\ell\in\{c,s\}\) and every
multi-index \(\beta\), the decisive Leibniz relation is
\[
 \partial^\beta\{\chi(x)r^\ell_M(x\cdot\bomega)\}
 =\sum_{\gamma\le\beta}\binom{\beta}{\gamma}
 \partial^{\beta-\gamma}\chi(x)\,\bomega^\gamma
 \bigl(r^\ell_M\bigr)^{(|\gamma|)}(x\cdot\bomega).
\]
Choose \(K_\chi\) so that \(|x\cdot\bomega|\le K_\chi R\) on
\(\operatorname{supp}\chi\) for \(\|\bomega\|\le R\).  Enlarging \(C_0\)
so that \(M\ge q\), Taylor's theorem gives, for \(\ell\in\{c,s\}\) and
\(0\le k\le q\),
\[
 \sup_{|t|\le K_\chi R}|\bigl(r^\ell_M\bigr)^{(k)}(t)|
 \le\frac{(K_\chi R)^{M+1-k}}{(M+1-k)!}.
\]
Consequently, uniformly for \(R\ge1\) and \(\|\bomega\|\le R\),
\[
 \max_{|\beta|\le q}\left\{\|\partial^\beta g^c_{\bomega,M}\|_{L^2}
 +\|\partial^\beta g^s_{\bomega,M}\|_{L^2}\right\}
 \le C(1+R)^q\max_{0\le k\le q}\frac{(K_\chi R)^{M+1-k}}{(M+1-k)!}.
\]
For \(m=M+1-k\), Stirling's lower bound \(m!\ge(m/e)^m\) yields
\[
 \frac{(K_\chi R)^m}{m!}\le\left(\frac{eK_\chi R}{m}\right)^m.
\]
After increasing \(C_1\) and \(C_0\), the condition
\(M\ge C_1R+C_0\) makes the base on the right-hand side uniformly smaller
than one for \(0\le k\le q\).  The same linear margin absorbs the factor
\((1+R)^q\), so the preceding aggregation bound is at most
\(Ce^{-c(M-C_0R)}\).  Summing the derivative bounds defining the
\(H^q\) norms proves the claim.
\end{proof}

For \(\bomega\in\R^d\), collect the feature map values on the grid by
$
 \begin{pmatrix}f^c_{\bomega}&f^s_{\bomega}\end{pmatrix}
 :=\bigl(\Phi_{\bx_{\bi}^{(\bn)}}(\bomega)^T\bigr)_{\bi\in\mathcal I_{\bn}}
 \in\R^{N_{\bn}\times2}.
$
Then \(v_{\bn}(\bomega)=f^c_{\bomega}-\mathrm i f^s_{\bomega}\).  For a
real integrable weight \(w\), recall the sampled Fourier outer-product operator
\[
 \mathcal P_{\bn}^{(\infty)}(w)
 :=(2\pi)^{-d}\int_{\R^d}w(\bomega)
 v_{\bn}(\bomega)v_{\bn}(\bomega)^*\,d\bomega,
\]
where \(v_{\bn}\) is the sampled mode defined in
Equation~\eqref{eq:supp-localized-fourier-mode}.  This is the
whole-space version of the local Fourier matrix in Appendix~\ref{app:preliminary-main-paper-proofs}.
The preceding continuous-to-grid construction uses only strict positivity and
a lower spectral envelope for its reference spectrum.  Here the reference
spectrum is the fitted spectrum \(S_{\varphi_0}\), so the construction
with the already defined \(\mathcal W_{\bn}\) gives
\[
 \|\mathcal W_{\bn}g_X\|_2\le C\|g\|_{H^q},
 \qquad g\in C_c^\infty(\R^d),
\]
uniformly in \(\bn\).  Since \(\chi=1\) on \(\calD\), the cutoff
residual identities give, for \(\ell\in\{c,s\}\),
\[
 \|\mathcal W_{\bn}\{f^\ell_{\bomega}-p^\ell_{\bomega,M}\}\|_2
 =\|\mathcal W_{\bn}(g^\ell_{\bomega,M})_X\|_2
 \le C\|g^\ell_{\bomega,M}\|_{H^q}.
\]
Lemma~\ref{lem:cutoff-taylor-stirling} therefore turns the Taylor residual
estimate into the whitened vector bound used below.

\begingroup
\begin{lemma}
\label{lem:whitened-localized-fourier-compression}
Under Assumption~\ref{ass:common-leading-tail-misspecification}, there are constants
\(C,c,C_0,C_1>0\), independent of \(\bn\), \(R\), and \(M\), such that,
for every sampling index \(\bn\), \(R\ge1\), and Taylor order
\(M\ge C_1R+C_0\), there is a self-adjoint matrix \(B_{\bn,R,M}\)
satisfying
\[
 \rank B_{\bn,R,M}\le CM^d,\qquad
 \|C_{\bn}-I-B_{\bn,R,M}\|_2
 \le C\{(1+R^2)^{-\eta}+e^{-c(M-C_0R)}\}.
\]
\end{lemma}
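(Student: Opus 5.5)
We write \(C_{\bn}-I=\mathcal W_{\bn}(\Sigma^\star_{\bn}-\Sigma_{\varphi_0,\bn})\mathcal W_{\bn}\) and represent the spectral difference through the whole-space Fourier operator. Put \(\Delta S:=S^\star-S_{\varphi_0}\), which is real, even and integrable by Assumption~\ref{ass:common-leading-tail-misspecification}. The computation behind Lemma~\ref{lem:aliased-symbol-rigorous} gives
\[
 \Sigma^\star_{\bn}-\Sigma_{\varphi_0,\bn}=\mathcal P_{\bn}^{(\infty)}(\Delta S).
\]
By evenness of \(\Delta S\), the real outer product \(f^c_{\bomega}(f^c_{\bomega})^T+f^s_{\bomega}(f^s_{\bomega})^T\) can replace \(v_{\bn}v_{\bn}^*\). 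We then split the frequency integral at \(\|\bomega\|=R\) into a high-frequency part \(\mathcal P^{>R}\) and a low-frequency part \(\mathcal P^{\le R}\).

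\emph{High frequencies.} These should be small in operator norm without any low-rank part. From the two-sided comparison we get \(|\Delta S|\le C(1+\|\bomega\|^2)^{-\eta}S_{\varphi_0}\), so on \(\|\bomega\|>R\) we have \(|\Delta S|\le C(1+R^2)^{-\eta}S_{\varphi_0}\). For any vector \(x\),
\[
 |x^T\mathcal P^{>R}(\Delta S)x|\le C(1+R^2)^{-\eta}\,x^T\mathcal P^{(\infty)}_{\bn}(S_{\varphi_0})x=C(1+R^2)^{-\eta}\,x^T\Sigma_{\varphi_0,\bn}x .
\]
Congruence by \(\mathcal W_{\bn}\) then gives \(\|\mathcal W_{\bn}\mathcal P^{>R}\mathcal W_{\bn}\|_2\le C(1+R^2)^{-\eta}\).

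\emph{Low frequencies.} For \(\ell\in\{c,s\}\) we replace \(f^\ell_{\bomega}\) by the Taylor polynomial vector \(p^\ell_{\bomega,M}\). Define
\[
 B_{\bn,R,M}:=(2\pi)^{-d}\int_{\|\bomega\|\le R}\Delta S(\bomega)\sum_{\ell}\mathcal W_{\bn}p^\ell_{\bomega,M}(\mathcal W_{\bn}p^\ell_{\bomega,M})^T\,d\bomega .
\]
This matrix is self-adjoint. Its range lies in \(\mathcal W_{\bn}\) applied to the polynomial evaluation space of degree at most \(M\), whose dimension is \(\binom{M+d}{d}\le CM^d\); this gives the rank bound. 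For the error, write \(u_\ell:=\mathcal W_{\bn}f^\ell_{\bomega}\) and \(\tilde u_\ell:=\mathcal W_{\bn}p^\ell_{\bomega,M}\), and use
\[
 u u^T-\tilde u\tilde u^T=(u-\tilde u)u^T+\tilde u(u-\tilde u)^T .
\]
The whitened residual bound preceding the lemma, combined with Lemma~\ref{lem:cutoff-taylor-stirling}, gives \(\|u_\ell-\tilde u_\ell\|_2\le Ce^{-c(M-C_0R)}\).

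\emph{Main obstacle.} The difficulty is bounding \(\|u_\ell\|_2\) uniformly in \(\bn\). Naively \(\|f^\ell_{\bomega}\|_2\sim\sqrt{N_{\bn}}\), which is useless here. The native-space inequality resolves this. For the unmodified mode, take \(g=\chi(x)\cos(x\cdot\bomega)\), which agrees with \(\cos(x\cdot\bomega)\) on \(\calD\). Then
\[
 \|u_c\|_2\le C\|\chi\cos(\cdot\,\cdot\bomega)\|_{H^q}\le C(1+\|\bomega\|)^q\le C(1+R)^q ,
\]
and the same holds for sine. By the triangle inequality this also bounds \(\|\tilde u_\ell\|_2\). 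The rank-one difference is therefore at most \(C(1+R)^qe^{-c(M-C_0R)}\) in operator norm. Integrating over \(\|\bomega\|\le R\) against \(|\Delta S|\in L^1\) keeps this bound up to a constant. Finally, enlarging \(C_0\) (using \((1+R)^q\le C e^{cR}\)) absorbs the polynomial factor and leaves \(Ce^{-c(M-C_0R)}\).

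Combining the high-frequency bound, the low-frequency approximation error and the rank bound proves the lemma. All constants are uniform in \(\bn\), since they come only from the native-space inequality and the spectral comparisons.
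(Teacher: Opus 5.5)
Your proof is correct and follows the paper's argument essentially step for step. It uses the same split of $\mathcal P_{\bn}^{(\infty)}(\Delta S)$ at $\|\bomega\|=R$, the same whitened Taylor-polynomial matrix $B_{\bn,R,M}$, the same native-space bound applied to $x\mapsto\chi(x)\cos(x\cdot\bomega)$ and $x\mapsto\chi(x)\sin(x\cdot\bomega)$ to control $\|\mathcal W_{\bn}f^\ell_{\bomega}\|_2$ uniformly in $\bn$, and the same Loewner domination by $\Sigma_{\varphi_0,\bn}$ at high frequency.

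One justification needs fixing. The decaying ratio bound $|\Delta S|\le C(1+\|\bomega\|^2)^{-\eta}S_{\varphi_0}$ does not follow from the two-sided comparison clause, which only gives a bounded, non-decaying ratio. It comes instead from the difference clause $|\Delta S|\le C(1+\|\bomega\|^2)^{-\alpha_0-\eta}$ combined with the lower envelope $S_{\varphi_0}(\bomega)\ge c(1+\|\bomega\|^2)^{-\alpha_0}$. That envelope follows from the tail limit together with continuity and strict positivity of $S_{\varphi_0}$, exactly as in the paper.
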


\begin{proof}
Put \(\Delta S:=S^\star-S_{\varphi_0}\). The common-leading-tail condition
gives
\[
 |\Delta S(\bomega)|\le
 C(1+\|\bomega\|^2)^{-\alpha_0-\eta}.
\]
We prove the asserted approximation of \(C_{\bn}-I\).  The exact identity
and its low/high-frequency split are
\[
 C_{\bn}-I
 =\mathcal W_{\bn}\mathcal P_{\bn}^{(\infty)}
   (\Delta S\mathbb I_{\{\|\bomega\|<R\}})\mathcal W_{\bn}
 +\mathcal W_{\bn}\mathcal P_{\bn}^{(\infty)}
   (\Delta S\mathbb I_{\{\|\bomega\|>R\}})\mathcal W_{\bn}.
\]
For the low-frequency term, evenness of \(\Delta S\) cancels the mixed
cosine--sine terms in the sampled complex Fourier outer product, giving
\[
 \mathcal P_{\bn}^{(\infty)}
 (\Delta S\mathbb I_{\{\|\bomega\|<R\}})
 =(2\pi)^{-d}\int_{\|\bomega\|<R}\Delta S(\bomega)
 \{f^c_{\bomega}(f^c_{\bomega})^T+
   f^s_{\bomega}(f^s_{\bomega})^T\}\,d\bomega.
\]
Put \(a^\ell_{\bomega}=\mathcal W_{\bn}f^\ell_{\bomega}\) and
\(b^\ell_{\bomega,M}=\mathcal W_{\bn}p^\ell_{\bomega,M}\) for
\(\ell\in\{c,s\}\), and define
\begin{align*}
 B_{\bn,R,M}:={}&(2\pi)^{-d}\int_{\|\bomega\|<R}\Delta S(\bomega)
 \{b^c_{\bomega,M}(b^c_{\bomega,M})^T+
   b^s_{\bomega,M}(b^s_{\bomega,M})^T\}\,d\bomega.
\end{align*}
This matrix is self-adjoint, and its range is contained in the image under
\(\mathcal W_{\bn}\) of the degree-\(M\) polynomial values on the grid.
This space has dimension at most \(\binom{M+d}{d}\), so
\(\rank B_{\bn,R,M}=O(M^d)\).
For \(\|\bomega\|\le R\), apply the estimate established before the lemma,
\[
 \|\mathcal W_{\bn}g_X\|_2\le C\|g\|_{H^q},
 \qquad g\in C_c^\infty(\R^d),
\]
to \(g=g^\ell_{\bomega,M}\).  The cutoff identity
\((g^\ell_{\bomega,M})_X=f^\ell_{\bomega}-p^\ell_{\bomega,M}\) and
Lemma~\ref{lem:cutoff-taylor-stirling} then give, uniformly in \(\bn\),
\[
 \|a^\ell_{\bomega}-b^\ell_{\bomega,M}\|_2
 \le C\|g^\ell_{\bomega,M}\|_{H^q}
 \le Ce^{-c(M-C_0R)},\qquad \ell\in\{c,s\}.
\]
Applying the same estimate to \(x\mapsto\chi(x)\cos(x\cdot\bomega)\) and
\(x\mapsto\chi(x)\sin(x\cdot\bomega)\) gives
\(\|a^\ell_{\bomega}\|_2\le C(1+R)^q\).  The preceding residual bound
then also gives \(\|b^\ell_{\bomega,M}\|_2\le C(1+R)^q\).  Therefore,
\[
 \begin{aligned}
 &\|a^\ell_{\bomega}(a^\ell_{\bomega})^T
   -b^\ell_{\bomega,M}(b^\ell_{\bomega,M})^T\|_2\\
 &\quad\le \|a^\ell_{\bomega}-b^\ell_{\bomega,M}\|_2
      \{\|a^\ell_{\bomega}\|_2+\|b^\ell_{\bomega,M}\|_2\}\\
 &\quad\le C(1+R)^q e^{-c(M-C_0R)}.
 \end{aligned}
\]
Integrating this outer-product bound against \(|\Delta S(\bomega)|\)
(which is integrable) gives
\[
 \left\|\mathcal W_{\bn}\mathcal P_{\bn}^{(\infty)}
 (\Delta S\mathbb I_{\{\|\bomega\|<R\}})\mathcal W_{\bn}
 -B_{\bn,R,M}\right\|_2
 \le C(1+R)^q e^{-c(M-C_0R)}.
\]
After decreasing \(c\) and increasing \(C_0\), the linear margin
\(M\ge C_1R+C_0\) absorbs the factor \(C(1+R)^q\).  Thus
the low-frequency contribution is at most \(Ce^{-c(M-C_0R)}\).

For the high-frequency term, the tail representation and strict positivity of
\(S_{\varphi_0}\) give a lower envelope
\(S_{\varphi_0}(\bomega)\ge c(1+\|\bomega\|^2)^{-\alpha_0}\).  Hence
\[
 \delta_R:=\sup_{\|\bomega\|>R}
 \frac{|\Delta S(\bomega)|}{S_{\varphi_0}(\bomega)}
 \le C(1+R^2)^{-\eta}.
\]
Thus \(|\Delta S(\bomega)|\le\delta_R S_{\varphi_0}(\bomega)\) for
\(\|\bomega\|>R\).  Quadratic-form domination consequently gives
\[
 -\delta_R\Sigma_{\varphi_0,\bn}\preceq
 \mathcal P_{\bn}^{(\infty)}
 (\Delta S\mathbb I_{\{\|\bomega\|>R\}})
 \preceq\delta_R\Sigma_{\varphi_0,\bn}.
\]
Whitening this Loewner bound gives the same inequality with
\(\Sigma_{\varphi_0,\bn}\) replaced by \(I\), so the high-frequency contribution
has operator norm at most \(C(1+R^2)^{-\eta}\). Combining the
two terms in the exact identity proves the stated bound.

\end{proof}
\endgroup

If \(S^\star=S_{\varphi_0}\), then
\(P^\star_{\bn}=P_{\varphi_0,\bn}\) and \(C_{\bn}=I_{N_{\bn}}\), so the
event and quadratic-form transfers below are immediate.  The following lemma
treats the misspecified case.

\begingroup
\begin{lemma}
\label{lem:relative-covariance-control}
Suppose that the external spectrum \(S^\star\) and the reference spectrum
\(S_{\varphi_0}\) satisfy
Assumption~\ref{ass:common-leading-tail-misspecification}. Then, for
constants \(0<c<C<\infty\), there is a \(K<\infty\), independent of
\(\bn\) and \(j\), such that
\[
 cI_{N_{\bn}}\preceq C_{\bn}\preceq CI_{N_{\bn}},
 \qquad s_j(C_{\bn}-I_{N_{\bn}})\le Kj^{-\beta},
 \quad \beta:=\frac{2\eta}{d}>\frac{1}{2},
\]
where singular values are ordered nonincreasingly and padded by zero after
dimension \(N_{\bn}\). Consequently,
\(\sup_{\bn}\|C_{\bn}-I_{N_{\bn}}\|_{\mathrm{HS}}<\infty\) and
\(\sup_{\bn}\sum_{j>m}s_j(C_{\bn}-I_{N_{\bn}})^2\le
Km^{1-2\beta}\).
\end{lemma}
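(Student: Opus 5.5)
The plan has three parts: the two-sided Loewner bound, the singular-value decay obtained from Lemma~\ref{lem:whitened-localized-fourier-compression} by optimizing over \(R\) and \(M\), and the Hilbert--Schmidt consequences obtained by summing that decay.

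\emph{Loewner bounds.} We use the two-sided spectral comparison \(cS_{\varphi_0}\le S^\star\le CS_{\varphi_0}\) in Assumption~\ref{ass:common-leading-tail-misspecification}. The quadratic-form representation gives
\[
x^*\Sigma^\star_{\bn}x=(2\pi)^{-d}\int S^\star|v_{\bn}(\bomega)^*x|^2\,d\bomega ,
\]
and the same identity holds for \(\Sigma_{\varphi_0,\bn}\). Hence
\(c\Sigma_{\varphi_0,\bn}\preceq\Sigma^\star_{\bn}\preceq C\Sigma_{\varphi_0,\bn}\). Congruence by \(\mathcal W_{\bn}\) then yields \(cI\preceq C_{\bn}\preceq CI\). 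In particular \(\|C_{\bn}-I\|_2\le K_0\), which handles every index \(j\) below any fixed threshold.

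\emph{Singular-value decay.} This is the main step. For \(j\) large we will invoke Lemma~\ref{lem:whitened-localized-fourier-compression} and Weyl's inequality for singular values,
\[
s_{m+1}(A)\le\|A-B\|_2 \quad\text{whenever } \rank B\le m .
\]
Given \(j\), we choose \(R\) and \(M\) so that the rank budget satisfies \(CM^d\le j-1\). This forces \(M\asymp j^{1/d}\), so we take \(M=\lfloor ((j-1)/C)^{1/d}\rfloor\). We then set
\[
R=(M-C_0)/C_1' ,
\]
with \(C_1'\) chosen larger than both \(C_1\) and \(2C_0\). This keeps \(M\ge C_1R+C_0\) and \(M-C_0R\ge M/2\) for large \(j\), and it makes \(R\asymp j^{1/d}\). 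The lemma then gives
\[
s_j(C_{\bn}-I)\le C\bigl\{(1+R^2)^{-\eta}+e^{-cM/2}\bigr\}\le C\bigl\{j^{-2\eta/d}+e^{-c'j^{1/d}}\bigr\}\le Kj^{-\beta}.
\]
Indices with \(j\) below the threshold where \(R\ge1\) and the margin conditions hold are finitely many and are absorbed using \(s_j\le K_0\). For \(j>N_{\bn}\) we have \(s_j=0\) by padding, so the bound is trivially true there. All constants are independent of \(\bn\) because the lemma's constants are. The delicate point is this balancing of polynomial rank against the two error terms: the Taylor order must grow at least linearly in \(R\), which forces \(R\lesssim j^{1/d}\), and therefore the high-frequency term \((1+R^2)^{-\eta}\) caps the decay at exactly \(j^{-2\eta/d}\).

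\emph{Consequences.} The hypothesis \(4\eta>d\) gives \(2\beta>1\). Hence
\[
\sum_j s_j^2\le K^2\sum_j j^{-2\beta}<\infty ,
\]
uniformly in \(\bn\), which bounds \(\|C_{\bn}-I\|_{\mathrm{HS}}\). The tail sum satisfies \(\sum_{j>m}j^{-2\beta}\le Cm^{1-2\beta}\) by integral comparison, which gives the last assertion after enlarging \(K\).
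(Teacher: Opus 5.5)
Your proposal is correct and follows the paper's proof essentially step for step. The steps match: the Loewner bounds come from the two-sided spectral comparison and congruence by \(\mathcal W_{\bn}\); the decay \(s_j\le Kj^{-2\eta/d}\) comes from Lemma~\ref{lem:whitened-localized-fourier-compression} with \(M\asymp R\asymp j^{1/d}\) together with the rank-perturbation inequality \(s_{m+1}(A)\le\|A-B\|_2\); and the Hilbert--Schmidt consequences come from summing \(j^{-2\beta}\) using \(4\eta>d\). The only difference is bookkeeping: you fix \(M\) from the rank budget, solve for \(R\), and make the margin \(M-C_0R\ge M/2\) explicit, whereas the paper first fixes \(M\asymp R\) and then chooses \(R\asymp m^{1/d}\).
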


\begin{proof}
With \(\Delta S=S^\star-S_{\varphi_0}\), the difference-bound clause of
Assumption~\ref{ass:common-leading-tail-misspecification}
and the lower spectral envelope for
\(S_{\varphi_0}\) give
\(\lvert\Delta S(\bomega)\rvert/S_{\varphi_0}(\bomega)
\le C(1+\|\bomega\|^2)^{-\eta}\).
For \(u\in\R^{N_{\bn}}\), the sampled Fourier covariance representation
therefore gives
\[
 u^T\Sigma^\star_{\bn}u
 =(2\pi)^{-d}\int_{\R^d}S^\star(\bomega)
 \left|\sum_{\bi\in\mathcal I_{\bn}}u_{\bi}
 e^{\mathrm i\bx_{\bi}^{(\bn)}\cdot\bomega}\right|^2d\bomega,
\]
and the two-sided spectral-comparison clause of
Assumption~\ref{ass:common-leading-tail-misspecification} yields
\(c\Sigma_{\varphi_0,\bn}\preceq\Sigma^\star_{\bn}
\preceq C\Sigma_{\varphi_0,\bn}\).  Congruence by
\(\mathcal W_{\bn}\) consequently gives
\(cI_{N_{\bn}}\preceq C_{\bn}\preceq CI_{N_{\bn}}\).
Choose \(M\asymp R\) in
Lemma~\ref{lem:whitened-localized-fourier-compression}.  After changing a
constant \(K\), it gives, for every \(R\ge1\), a matrix \(D_{\bn,R}\)
such that
\[
 \rank D_{\bn,R}\le KR^d,
 \qquad
 \|C_{\bn}-I_{N_{\bn}}-D_{\bn,R}\|_2
 \le KR^{-2\eta}.
\]
For every sufficiently large integer \(m\), choose \(R\asymp m^{1/d}\)
with the constants fixed so that \(\rank D_{\bn,R}\le m\).  If
\(m<N_{\bn}\), the singular-value perturbation/min--max inequality in
\cite[Theorem~3.3.16(c)]{HornJohnson1991}, applied to
\(C_{\bn}-I_{N_{\bn}}=D_{\bn,R}+
\{C_{\bn}-I_{N_{\bn}}-D_{\bn,R}\}\), gives
\[
 s_{m+1}(C_{\bn}-I_{N_{\bn}})
 \le \|C_{\bn}-I_{N_{\bn}}-D_{\bn,R}\|_2
 \le Km^{-2\eta/d}.
\]
If \(m\ge N_{\bn}\), take \(D=C_{\bn}-I_{N_{\bn}}\), so the
singular-value and Hilbert--Schmidt tails beyond \(m\) are empty.  Together
with the uniform operator bound from the covariance comparison, the preceding
inequality gives, with \(s_\ell=0\) for \(\ell>N_{\bn}\), for all \(\ell\ge1\),
\(s_\ell(C_{\bn}-I_{N_{\bn}})\le
K\ell^{-2\eta/d}\), after increasing \(K\). Consequently,
\[
 \sup_{\bn}\sum_{\ell>m}s_\ell(C_{\bn}-I_{N_{\bn}})^2
 \le K\sum_{\ell>m}\ell^{-4\eta/d}
 \le Km^{1-4\eta/d}.
\]
This tends to zero because \(4\eta>d\), proving the stated
Hilbert--Schmidt consequences.
\end{proof}
\endgroup

Mutual contiguity means that, for every sequence of events \(E_{\bn}\),
\(P_{\varphi_0,\bn}(E_{\bn})\to0\) if and only if
\(P^\star_{\bn}(E_{\bn})\to0\).  Thus it transfers probability statements,
whereas the next lemma separately transfers normalized quadratic forms.

\begin{lemma}
\label{lem:common-leading-tail-mutual-contiguity}
{Under Assumption~\ref{ass:common-leading-tail-misspecification}, \(P^\star_{\bn}\) and}
\(P_{\varphi_0,\bn}\) are mutually contiguous.
\end{lemma}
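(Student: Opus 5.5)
We reduce everything to the whitened coordinates. Since \(\mathcal W_{\bn}\) is a fixed invertible linear map, contiguity of \(P^\star_{\bn}\) and \(P_{\varphi_0,\bn}\) is equivalent to contiguity of the laws of \(Z_{\bn}=\mathcal W_{\bn}Y_{\bn}\), namely \(\mathcal N(0,C_{\bn})\) and \(\mathcal N(0,I_{N_{\bn}})\). Diagonalize \(C_{\bn}=U_{\bn}\operatorname{diag}(\lambda_{1},\ldots,\lambda_{N_{\bn}})U_{\bn}^T\) with \(U_{\bn}\) orthogonal. Applying \(U_{\bn}^T\) reduces the problem to the product measures \(\bigotimes_j\mathcal N(0,\lambda_j)\) and \(\bigotimes_j\mathcal N(0,1)\). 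Lemma~\ref{lem:relative-covariance-control} supplies the two facts needed: \(c\le\lambda_j\le C\) uniformly, and \(\sup_{\bn}\sum_j(\lambda_j-1)^2=\sup_{\bn}\|C_{\bn}-I\|_{\mathrm{HS}}^2<\infty\).

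\textbf{Step 1: bound the Hellinger affinity.} Using the determinant formula \eqref{eq:gaussian-affinity-relative-covariance} with \(V=C_{\bn}\), the affinity factorizes as
\[
\mathcal A=\prod_j\frac{\lambda_j^{1/4}}{\{(1+\lambda_j)/2\}^{1/2}} .
\]
Each factor satisfies \(-\log(\cdot)\le K(\lambda_j-1)^2\) uniformly for \(\lambda_j\in[c,C]\), by a second-order Taylor bound of a smooth function vanishing to second order at \(1\). Hence \(-\log\mathcal A\le K\|C_{\bn}-I\|_{\mathrm{HS}}^2\le K'\), and the affinity stays bounded away from zero.

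\textbf{Step 2: pass from affinity to contiguity.} A bounded-away-from-zero affinity alone does not quite give contiguity, so we use the cleaner route through the chi-square divergence. For Gaussians,
\[
\int\Bigl(\frac{dP^\star}{dP_0}\Bigr)^2dP_0=\prod_j\bigl\{\lambda_j(2-\lambda_j)\bigr\}^{-1/2}
\]
whenever all \(\lambda_j<2\), and \(-\tfrac12\log\{\lambda(2-\lambda)\}\le K(\lambda-1)^2\) near \(1\). Since \(\lambda_j\) may reach \(C\ge2\), we handle the finitely many exceptional eigenvalues separately. From \(s_j(C_{\bn}-I)\le Kj^{-\beta}\), only a bounded number \(m_0\) of indices (independent of \(\bn\)) have \(|\lambda_j-1|>1/2\). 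We split the product into these coordinates and the rest.
\begin{enumerate}
\item On the \(m_0\)-dimensional block, the two Gaussians have covariances in the fixed compact set \([c,C]\), so they are mutually absolutely continuous with likelihood ratios that are tight in both directions.
\item On the remaining block, the chi-square bound gives \(\sup_{\bn}\mathbb E_{P_0}(dP^\star/dP_0)^2<\infty\), so likelihood ratios are uniformly integrable under \(P_0\). The symmetric computation, with \(1/\lambda_j\) in place of \(\lambda_j\) and the same HS bound on \(C_{\bn}^{-1}-I\), gives the reverse direction.
\end{enumerate}
By Le Cam's first lemma, uniform integrability of the likelihood ratios in each direction yields mutual contiguity. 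Products of mutually contiguous sequences of independent components remain mutually contiguous, which gives contiguity of the full laws.

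\textbf{Expected main obstacle.} The hard part is the bookkeeping for eigenvalues far from \(1\), where the chi-square integral diverges. The split above handles this using the uniform singular-value decay. An alternative avoiding the split is to use Le Cam's characterization: show that \(\log(dP^\star/dP_0)\) is tight with \(\sum_j(\lambda_j-1)^2\) bounded, hence asymptotically of bounded mean and variance under both laws. If time permits, we would choose whichever route is shorter.
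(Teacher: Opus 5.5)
Your argument is correct in substance, but it takes a longer route than the paper. Both proofs whiten, diagonalize \(C_{\bn}\), and rest on the same two facts from Lemma~\ref{lem:relative-covariance-control}: \(c\le\lambda_j\le C\) and \(\sup_{\bn}\|C_{\bn}-I\|_{\mathrm{HS}}<\infty\).

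The difference is the choice of moment. You use the second moment (chi-square), which is finite only for \(\lambda_j<2\), and in the reverse direction only for \(\lambda_j>1/2\). That forces you to split off exceptional coordinates, handle them by tightness of a finite-dimensional log-likelihood ratio, and then invoke a product rule for contiguity.

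The paper instead takes \(p>1\) close to \(1\), so that \(1-p(1-x^{-1})>0\) for every \(x\in[c,C]\), and similarly with \(x^{-1}\) in place of \(x\) for the reverse direction. Then \(\log\mathbb E_{P_{\varphi_0,\bn}}L_{\bn}^p=\sum_\ell g_p(c_{\ell,\bn})\le K_p\|C_{\bn}-I\|_{\mathrm{HS}}^2\) holds over all eigenvalues at once, with no splitting. H\"older's inequality, \(P^\star_{\bn}(E_{\bn})\le\{\mathbb E L_{\bn}^p\}^{1/p}P_{\varphi_0,\bn}(E_{\bn})^{1-1/p}\), then gives each vanishing-event implication in one line. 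This avoids Le Cam's first lemma and any statement about products of contiguous sequences.

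Your Step~1 (the Hellinger affinity bound) is never used and can be dropped. The bounded number \(m_0\) of exceptional indices already follows from the Hilbert--Schmidt bound, \(\#\{j:|\lambda_j-1|>\epsilon\}\le\epsilon^{-2}\|C_{\bn}-I\|_{\mathrm{HS}}^2\), so the \(j^{-\beta}\) decay is not needed.

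If you keep your route, make two small repairs.
\begin{enumerate}
\item With threshold \(1/2\), the remaining block may contain \(\lambda_j=1/2\). There \(\mathbb E_{P^\star}(dP_0/dP^\star)^2=\lambda_j/\sqrt{2\lambda_j-1}\) is infinite, so your reverse-direction chi-square bound fails. Take the threshold strictly below \(1/2\) (say \(1/4\)) so both directions have finite second moments.
\item The product step needs a justification. One way is Le Cam's first lemma: along subsequences, the independent block likelihood ratios converge jointly to independent limits \(V,V'\) with \(\mathbb E V=\mathbb E V'=1\), so \(\mathbb E(VV')=1\).
\end{enumerate}
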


\begin{proof}
{Under Assumption~\ref{ass:common-leading-tail-misspecification}, mutual contiguity follows}
from the standard
\(L^p\) likelihood-ratio argument, which we give in both directions; see
\cite[Chapter~3]{LeCamYang2000} for general contiguity and likelihood-ratio
criteria.

Lemma~\ref{lem:relative-covariance-control} gives
\(cI\preceq C_{\bn}\preceq CI\) and a uniform Hilbert--Schmidt bound.
Define the likelihood ratio of the data-generating law with respect to
the fitted law by
$
        L_{\bn}
        :
        =
        \frac{dP^\star_{\bn}}{dP_{\varphi_0,\bn}}.
$
Following the Gaussian whitening and congruence-diagonalization argument
of \cite[Equations~(22)--(24), p.~613]{Hajek1958}, work in coordinates
in which \(P_{\varphi_0,\bn}\) is standard Gaussian and \(C_{\bn}\) is diagonal
with eigenvalues \(c_{\ell,\bn}\).
In these coordinates,
\[
 L_{\bn}
 =
 \prod_\ell c_{\ell,\bn}^{-1/2}
 \exp\left\{
 -\frac{1}{2}(c_{\ell,\bn}^{-1}-1)Z_{\ell,\bn}^2
 \right\},
 \qquad
 Z_{\ell,\bn}\stackrel{\mathrm{iid}}{\sim}\mathcal N(0,1).
\]
Choose \(p>1\) sufficiently close to \(1\) that
$
        1-p(1-x^{-1})>0,
         x\in[c,C].
$
Therefore, using
\(\mathbb E\exp(-a\xi^2/2)=(1+a)^{-1/2}\), we obtain
\[
\begin{aligned}
        \mathbb E_{P_{\varphi_0,\bn}}L_{\bn}^p
        &=
        \prod_{\ell}
        c_{\ell,\bn}^{-p/2}
        \{1-p(1-c_{\ell,\bn}^{-1})\}^{-1/2}  =
        \prod_{\ell}\exp\{g_p(c_{\ell,\bn})\},
\end{aligned}
\]
where
$
        g_p(x)
        :=
        -\frac{p}{2}\log x
        -\frac{1}{2}\log\{1-p(1-x^{-1})\}.
$
Since \(g_p(1)=g_p'(1)=0\) and \(g_p''\) is bounded on
\([c,C]\), Taylor's theorem gives
\[
        |g_p(x)|
        \le K_p(x-1)^2,
        \qquad x\in[c,C],
\]
for some finite constant \(K_p\). Consequently,
\[
\begin{aligned}
        \log\mathbb E_{P_{\varphi_0,\bn}}L_{\bn}^p
        =
        \sum_{\ell}g_p(c_{\ell,\bn}) 
        \le
        K_p\sum_{\ell}(c_{\ell,\bn}-1)^2 
        =
        K_p\|C_{\bn}-I\|_{\mathrm{HS}}^2.
\end{aligned}
\]
The uniform Hilbert--Schmidt bound therefore yields
\[
        \sup_{\bn}\mathbb E_{P_{\varphi_0,\bn}}L_{\bn}^p
        \le
        \exp\left\{
        K_p\sup_{\bn}\|C_{\bn}-I\|_{\mathrm{HS}}^2
        \right\}
        <\infty.
\]

For
\(M_{\bn}=dP_{\varphi_0,\bn}/dP^\star_{\bn}\), the same calculation, with
\(c_{\ell,\bn}\) replaced by \(c_{\ell,\bn}^{-1}\), gives
\(\sup_{\bn}\mathbb E_{P^\star_{\bn}}M_{\bn}^p<\infty\).  Hence, for every
event \(E_{\bn}\), H\"older's inequality gives
\[
 P^\star_{\bn}(E_{\bn})
 \le \{\mathbb E_{P_{\varphi_0,\bn}}L_{\bn}^p\}^{1/p}
 P_{\varphi_0,\bn}(E_{\bn})^{1-1/p},
 \qquad
 P_{\varphi_0,\bn}(E_{\bn})
 \le \{\mathbb E_{P^\star_{\bn}}M_{\bn}^p\}^{1/p}
 P^\star_{\bn}(E_{\bn})^{1-1/p}.
\]
By H\"older's inequality, each sequence of likelihood ratios yields the
corresponding vanishing-event implication.  By
\cite[Definition~1]{LeCamYang2000},
the two implications are precisely mutual contiguity.
\end{proof}

\begin{lemma}
\label{lem:centered-quadratic-form-transfer}
Suppose that \(C_{\bn}=I_{N_{\bn}}\) for every \(\bn\), or that
{Assumption~\ref{ass:common-leading-tail-misspecification} holds.} Let \(q\) be fixed,
and let \(A_{1,\bn},\ldots,A_{q,\bn}\) be a fixed finite collection of
sequences of real symmetric \(N_{\bn}\)-by-\(N_{\bn}\) matrices satisfying
\[
 \max_j\frac{\|A_{j,\bn}\|_2}{\sqrt{N_{\bn}}}\to0,
 \qquad
 \frac{\tr(A_{j,\bn}A_{k,\bn})}{N_{\bn}}\to\Gamma_{jk}
 \quad(1\le j,k\le q).
\]
Then, for \(1\le j,k\le q\),
\[
 \frac{\tr\{A_{j,\bn}(C_{\bn}-I)\}}{\sqrt{N_{\bn}}}\longrightarrow0,
 \qquad
 \frac{\tr(A_{j,\bn}C_{\bn}A_{k,\bn}C_{\bn})}{N_{\bn}}
 \longrightarrow\Gamma_{jk},
\]
and, under \(P^\star_{\bn}\),
\[
 \left\{\frac{Z_{\bn}^TA_{j,\bn}Z_{\bn}-\tr A_{j,\bn}}{\sqrt{N_{\bn}}}\right\}_{j=1}^q
 \Rightarrow\mathcal N(0,2\Gamma).
\]
\end{lemma}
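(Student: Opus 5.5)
The proof reduces the statement to three trace estimates for \(D_{\bn}:=C_{\bn}-I_{N_{\bn}}\), followed by a Cramér--Wold CLT for Gaussian quadratic forms. All the control on \(D_{\bn}\) comes from Lemma~\ref{lem:relative-covariance-control}. Under Assumption~\ref{ass:common-leading-tail-misspecification} it gives
\[
 cI\preceq C_{\bn}\preceq CI,\qquad \sup_{\bn}\|D_{\bn}\|_{\mathrm{HS}}<\infty,\qquad \sup_{\bn}\sum_{j>m}s_j(D_{\bn})^2\le Km^{1-2\beta}
\]
with \(\beta>1/2\). When \(C_{\bn}=I_{N_{\bn}}\) we have \(D_{\bn}=0\), the two trace statements are trivial, and only the CLT step below is needed.

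\emph{Step 1: the centering term.} For fixed \(m\), write \(D_{\bn}=D_{\bn}^{\le m}+D_{\bn}^{>m}\) using the singular value decomposition, where \(D_{\bn}^{\le m}\) keeps the top \(m\) singular values. The rank-\(m\) part satisfies
\[
|\tr(A_{j,\bn}D_{\bn}^{\le m})|\le m\|A_{j,\bn}\|_2\|D_{\bn}\|_2=o(\sqrt{N_{\bn}}),
\]
since \(\|D_{\bn}\|_2\) is bounded and \(\|A_{j,\bn}\|_2=o(\sqrt{N_{\bn}})\). For the tail part, Cauchy--Schwarz in the Hilbert--Schmidt inner product and \(\|A_{j,\bn}\|_{\mathrm{HS}}^2=\tr(A_{j,\bn}^2)=O(N_{\bn})\) give
\[
|\tr(A_{j,\bn}D_{\bn}^{>m})|\le\|A_{j,\bn}\|_{\mathrm{HS}}\|D_{\bn}^{>m}\|_{\mathrm{HS}}\le C\sqrt{N_{\bn}}\,m^{(1-2\beta)/2}.
\]
Dividing by \(\sqrt{N_{\bn}}\), letting \(\bn\to\infty\) and then \(m\to\infty\) proves the first assertion. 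This is the only step that uses the singular-value tail (\(\beta>1/2\)) rather than the bare Hilbert--Schmidt bound, and I expect it to be the main point of the proof.

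\emph{Step 2: the covariance term.} Expanding \(C_{\bn}=I+D_{\bn}\) gives
\[
\tr(A_jCA_kC)-\tr(A_jA_k)=\tr(A_jD A_k)+\tr(A_jA_kD)+\tr(A_jDA_kD)
\]
(dropping the index \(\bn\)). The two linear terms are bounded by
\[
\|D\|_{\mathrm{HS}}\|A_kA_j\|_{\mathrm{HS}}\le\|D\|_{\mathrm{HS}}\|A_j\|_2\|A_k\|_{\mathrm{HS}}=O(1)\,o(\sqrt{N_{\bn}})\,O(\sqrt{N_{\bn}})=o(N_{\bn}).
\]
The quadratic term is bounded by \(\|D\|_{\mathrm{HS}}^2\|A_j\|_2\|A_k\|_2=o(N_{\bn})\). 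So no splitting is needed here, and the second assertion follows from \(\tr(A_jA_k)/N_{\bn}\to\Gamma_{jk}\).

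\emph{Step 3: the joint CLT.} Fix \(t\in\R^q\) and set \(A_t:=\sum_jt_jA_{j,\bn}\). Write \(Z_{\bn}=C_{\bn}^{1/2}\xi\) with \(\xi\sim\mathcal N(0,I)\), and put \(M_{\bn}:=C_{\bn}^{1/2}A_tC_{\bn}^{1/2}\). Then
\[
Z_{\bn}^TA_tZ_{\bn}-\tr A_t=\{\xi^TM_{\bn}\xi-\tr M_{\bn}\}+\tr\{A_tD_{\bn}\}.
\]
The last term is \(o(\sqrt{N_{\bn}})\) by Step~1. Diagonalizing \(M_{\bn}=\sum_\ell\mu_\ell u_\ell u_\ell^T\) turns the first term into \(\sum_\ell\mu_\ell(\chi^2_{1,\ell}-1)\), a sum of independent centered terms. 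Its variance is
\[
2\tr(M_{\bn}^2)=2\tr(A_tC_{\bn}A_tC_{\bn})=2N_{\bn}\,t^T\Gamma t+o(N_{\bn})
\]
by Step~2. The Lyapunov ratio is controlled by
\[
\frac{\max_\ell|\mu_\ell|}{\sqrt{\tr(M_{\bn}^2)}}\le\frac{C\|A_t\|_2}{\sqrt{N_{\bn}}}\cdot\frac{\sqrt{N_{\bn}}}{\sqrt{\tr(M_{\bn}^2)}}\to0
\]
whenever \(t^T\Gamma t>0\). Hence the normalized form converges to \(\mathcal N(0,2t^T\Gamma t)\). If \(t^T\Gamma t=0\), Chebyshev's inequality gives convergence to zero directly. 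Cramér--Wold then yields the joint limit \(\mathcal N(0,2\Gamma)\).
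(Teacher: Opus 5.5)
Your proposal is correct and follows essentially the same route as the paper. The centering uses a rank-$m$ spectral truncation of $C_{\bn}-I$ together with the uniform singular-value tail from Lemma~\ref{lem:relative-covariance-control}; the covariance limit comes from expanding $C_{\bn}=I+D_{\bn}$ and applying the same Hilbert--Schmidt and operator-norm bounds; and the joint limit uses the whitened Lyapunov CLT followed by Cram\'er--Wold. The one cosmetic difference is that the paper works with the linear combination $A_{\bn}(t)$ throughout and recovers the entries $\Gamma_{jk}$ by polarization, whereas you bound the cross terms for each pair $(j,k)$ directly.
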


\begin{proof}
When \(C_{\bn}=I_{N_{\bn}}\), the centering and covariance conclusions are
identities and the CLT is the standard-Gaussian case, so the result is
immediate. Otherwise {Assumption~\ref{ass:common-leading-tail-misspecification} holds,} and
Lemma~\ref{lem:relative-covariance-control} applies. \begingroup
Fix \(t\in\R^q\)
and define the one linear combination
\(A_{\bn}(t):=\sum_{j=1}^q t_jA_{j,\bn}\).  The trace limits give
\(\|A_{\bn}(t)\|_{\mathrm{HS}}/\sqrt{N_{\bn}}=O(1)\). The
singular-value bound in that lemma permits the self-adjoint rank-\(m\)
spectral truncation \(B_{m,\bn}\) of \(C_{\bn}-I\), with a
Hilbert--Schmidt remainder tending uniformly to zero. It gives
\[
 \frac{|\tr\{A_{\bn}(t)(C_{\bn}-I)\}|}{\sqrt{N_{\bn}}}
 \le \frac{mC\|A_{\bn}(t)\|_2}{\sqrt{N_{\bn}}}
 +\frac{\|A_{\bn}(t)\|_{\mathrm{HS}}}{\sqrt{N_{\bn}}}
 \|C_{\bn}-I-B_{m,\bn}\|_{\mathrm{HS}}.
\]
Letting \(\bn\to\infty\), then \(m\to\infty\), proves centering.  The
uniform Hilbert--Schmidt bound and
\[
 \begin{aligned}
 &\left|\tr\{A_{\bn}(t)C_{\bn}A_{\bn}(t)C_{\bn}\}
 -\tr\{A_{\bn}(t)^2\}\right|\\
 &\quad\le 2\|A_{\bn}(t)\|_2\|A_{\bn}(t)\|_{\mathrm{HS}}
 \|C_{\bn}-I\|_{\mathrm{HS}}
 +\|A_{\bn}(t)\|_2^2\|C_{\bn}-I\|_{\mathrm{HS}}^2.
 \end{aligned}
\]
show that the left-hand side is \(o(N_{\bn})\).
Thus its normalized limit is \(t^T\Gamma t\); polarization gives the
stated covariance limits.

\endgroup
Under \(P^\star_{\bn}\), write \(Z_{\bn}=C_{\bn}^{1/2}\xi_{\bn}\), where
\(\xi_{\bn}\sim\mathcal N(0,I_{N_{\bn}})\).  The identity
\[
 \begin{aligned}
 \frac{Z_{\bn}^TA_{\bn}(t)Z_{\bn}-\tr A_{\bn}(t)}{\sqrt{N_{\bn}}}
 &=
 \frac{\xi_{\bn}^TC_{\bn}^{1/2}A_{\bn}(t)C_{\bn}^{1/2}\xi_{\bn}
 -\tr\{C_{\bn}^{1/2}A_{\bn}(t)C_{\bn}^{1/2}\}}{\sqrt{N_{\bn}}}\\
 &\quad+\frac{\tr\{A_{\bn}(t)(C_{\bn}-I)\}}{\sqrt{N_{\bn}}}.
 \end{aligned}
\]
separates the centered quadratic form from its deterministic centering, whose
second term tends to zero by the preceding centering result.  Put
\(D_{\bn}(t)=C_{\bn}^{1/2}A_{\bn}(t)C_{\bn}^{1/2}\).  The covariance
comparison and the preceding covariance limit give
\[
 \frac{\|D_{\bn}(t)\|_2}{\sqrt{N_{\bn}}}
 \longrightarrow0,
 \qquad \frac{\tr\{D_{\bn}(t)^2\}}{N_{\bn}}
 \longrightarrow t^T\Gamma t.
\]
If \(\lambda_{\ell,\bn}\) are the eigenvalues of \(D_{\bn}(t)\),
orthogonal diagonalization gives the triangular-array representation
\[
 \frac{\xi_{\bn}^TD_{\bn}(t)\xi_{\bn}-\tr D_{\bn}(t)}{\sqrt{N_{\bn}}}
 \ \stackrel{d}{=}\
 \sum_{\ell=1}^{N_{\bn}}
 \frac{\lambda_{\ell,\bn}}{\sqrt{N_{\bn}}}
 \bigl(\xi_{\ell,\bn}^2-1\bigr),
\]
whose variance converges to \(2t^T\Gamma t\).  If \(t^T\Gamma t>0\),
the fourth-moment Lyapunov condition is
\[
 \frac{\sum_\ell
 \mathbb E\left|\lambda_{\ell,\bn}
 (\xi_{\ell,\bn}^2-1)/\sqrt{N_{\bn}}\right|^4}{\left\{2N_{\bn}^{-1}\sum_\ell\lambda_{\ell,\bn}^2\right\}^2}
 =\frac{\mathbb E(\xi^2-1)^4}{4}
 \frac{\sum_\ell\lambda_{\ell,\bn}^4}{\{\sum_\ell\lambda_{\ell,\bn}^2\}^2}
 \le\frac{\mathbb E(\xi^2-1)^4}{4}
 \frac{\|D_{\bn}(t)\|_2^2}{\tr\{D_{\bn}(t)^2\}}
 \longrightarrow0.
\]
The Lyapunov triangular-array central limit theorem
\cite[Theorem~27.3, p.~362]{Billingsley1995} therefore gives the scalar
\(\mathcal N(0,2t^T\Gamma t)\) limit.  If \(t^T\Gamma t=0\), its variance
tends to zero, so Chebyshev's inequality gives the same degenerate Gaussian
limit.  The deterministic centering already tends to zero, and hence, for
every \(t\in\R^q\), the scalar statistic
\[
 \sum_{j=1}^q t_j
 \frac{Z_{\bn}^TA_{j,\bn}Z_{\bn}-\tr A_{j,\bn}}{\sqrt{N_{\bn}}}
 \Rightarrow \mathcal N(0,2t^T\Gamma t).
\]
The Cram\'er--Wold device
\cite[page~16]{VanDerVaart1998} now yields the joint
\(\mathcal N_q(0,2\Gamma)\) limit.
\end{proof}

Write \(m_1:=\int_Q r_{\alpha_0}\,d\nu_Q\) and \(m_2:=\int_Q r_{\alpha_0}^2\,d\nu_Q\), so \(J=\frac{1}{2}\begin{pmatrix}1&m_1\\m_1&m_2\end{pmatrix}\).

\begin{proof}[Proof of Theorem~\ref{thm:tail-exponent-gradient-information}]
Under \(P_{\varphi_0,\bn}\), apply the case
\(C_{\bn}=I_{N_{\bn}}\) of
Lemma~\ref{lem:centered-quadratic-form-transfer} to
the two matrix sequences \(A_{1,\bn}=I_{N_{\bn}}\) and
\(A_{2,\bn}=R_{\alpha,\bn}\).
Equation \eqref{eq:Ralpha-trace-limits} gives
\[
 \frac{\|R_{\alpha,\bn}\|_2}{\sqrt{N_{\bn}}}\longrightarrow0,
 \qquad
 \frac{\tr R_{\alpha,\bn}}{N_{\bn}}\longrightarrow m_1,
 \qquad
 \frac{\tr(R_{\alpha,\bn}^2)}{N_{\bn}}\longrightarrow m_2.
\]
Together with \(\|I_{N_{\bn}}\|_2/\sqrt{N_{\bn}}\to0\), these are its
hypotheses with
\[
 \Gamma=
 \begin{pmatrix}1&m_1\\m_1&m_2\end{pmatrix}=2J.
\]
The vector CLT in that lemma shows that the quadratic-form vector displayed in
Theorem~\ref{thm:tail-exponent-gradient-information} converges, before
division by two, to \(\mathcal N(0,2\Gamma)=\mathcal N(0,4J)\).  The
displayed definition of \(\mathcal S_{\bn}\) in that theorem divides this
vector by two, and hence \(\mathcal S_{\bn}\Rightarrow\mathcal N(0,J)\).
\end{proof}
\endgroup

\begingroup
\section{Hellinger separation, consistency, and local likelihood expansion}
\label{app:likelihood-separation-and-lan}
Recall from Equation~\eqref{eq:tail-rate-matrix} the sample-size-dependent
local parameter chart: for \(u=(u_\tau,u_\alpha)^T\),
\[
        \varphi_{\bn}(u):=\varphi_0+\Delta_{\bn}u,
        \qquad
        \Delta_{\bn}=N_{\bn}^{-1/2}
        \begin{pmatrix}1&a_{\bn}\\0&1\end{pmatrix}.
\]
Here \(u_\tau\) is the pure scale coordinate, whereas \(u_\alpha\) is the
compensated exponent coordinate.  We retain the moments \(m_1,m_2\)
and information matrix \(J\) from the preceding proof.
{
For \(A\succ0\), define the Cayley transform
\(\mathfrak C(A):=(A-I)(A+I)^{-1}\).
To obtain uniform affinity separation, relative covariance
departure must be detected uniformly.  The Cayley transform provides a
bounded spectral certificate, while \(q_{\bn}\) records the compensated
scale--shape distance.
For \(\varphi=(\tau,\alpha)\), recall that we use
\[
        P_{\varphi,\bn}:=\mathcal N(0,\Sigma_{\varphi,\bn}),
        \qquad
        V_{\varphi,\bn}:=\Sigma_{\varphi_0,\bn}^{-1/2}
        \Sigma_{\varphi,\bn}\Sigma_{\varphi_0,\bn}^{-1/2},
        \qquad
        g_{\bn}(\varphi):=(\tau-\tau_0)-a_{\bn}(\alpha-\alpha_0).
\]
Equation~\eqref{eq:supp-normalized-tail-secant} gives
\begin{equation}
        V_{\varphi,\bn}
        =e^{g_{\bn}(\varphi)}
        \{I_{N_{\bn}}+(\alpha-\alpha_0)
        \mathsf K_{\alpha-\alpha_0,\bn}\}.
        \label{eq:supp-full-relative-covariance-secant}
\end{equation}
We write
$
        q_{\bn}(\varphi)
        :=\sqrt{g_{\bn}(\varphi)^2+(\alpha-\alpha_0)^2}.
$
If \(x_1,\ldots,x_{N_{\bn}}\) are the generalized covariance
eigenvalues, their Gaussian Hellinger affinity satisfies
\begin{equation}
        -\log\mathcal A(P_{\varphi_0,\bn},P_{\varphi,\bn})
        =\frac{1}{2}\sum_{j=1}^{N_{\bn}}
        \log\cosh\left(\frac{\log x_j}{2}\right).
        \label{eq:supp-gaussian-affinity}
\end{equation}

The Cayley transform turns eigenvalue distance from one into a
bounded squared signal accessible to GLT limits.
\begin{lemma}
\label{lem:working-cayley-hellinger}
Let \(P_0=\mathcal N(0,\Sigma_0)\) and \(P_1=\mathcal N(0,\Sigma_1)\) be Gaussian laws
on \(\mathbb R^N\), where \(\Sigma_0,\Sigma_1\succ0\). Set
\(
A:=\Sigma_0^{-1/2}\Sigma_1\Sigma_0^{-1/2}.
\)
Then
\[
-\log\mathcal A(P_0,P_1)
\ge \frac{1}{4}\tr\{\mathfrak C(A)^2\}.
\]
\end{lemma}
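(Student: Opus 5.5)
The plan is to reduce everything to a scalar inequality on the eigenvalues of \(A\). Since \(A\) is symmetric positive definite, it is orthogonally diagonalizable with eigenvalues \(x_1,\ldots,x_N>0\). These are exactly the generalized covariance eigenvalues of the pair \((\Sigma_0,\Sigma_1)\). Equation~\eqref{eq:supp-gaussian-affinity}, which follows from the determinant formula~\eqref{eq:gaussian-affinity-relative-covariance}, then gives
\[
-\log\mathcal A(P_0,P_1)=\frac12\sum_{j=1}^N\log\cosh\Bigl(\frac{\log x_j}{2}\Bigr).
\]
The identity itself can be checked factor by factor: \(x^{1/4}/\{(1+x)/2\}^{1/2}=1/\cosh^{1/2}(\tfrac12\log x)\), because \((1+x)/(2\sqrt x)=\cosh(\tfrac12\log x)\).

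On the other side, \(\mathfrak C(A)=(A-I)(A+I)^{-1}\) is a rational function of \(A\), so it is diagonal in the same basis. Its eigenvalues are \((x_j-1)/(x_j+1)=\tanh(\tfrac12\log x_j)\). Hence
\[
\tr\{\mathfrak C(A)^2\}=\sum_j\tanh^2\bigl(\tfrac12\log x_j\bigr).
\]
With \(s_j:=\tfrac12\log x_j\), the claim reduces to the pointwise inequality \(\log\cosh s\ge\tfrac12\tanh^2 s\) for all real \(s\). Summing over \(j\) then gives the factor \(\tfrac12\cdot\tfrac12=\tfrac14\).

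The only real step is this scalar inequality, which I would prove by calculus. Set \(F(s):=\log\cosh s-\tfrac12\tanh^2 s\), which is even with \(F(0)=0\), so it suffices to treat \(s\ge0\). Differentiating gives
\[
F'(s)=\tanh s-\tanh s\,\operatorname{sech}^2 s=\tanh s\,(1-\operatorname{sech}^2 s)=\tanh^3 s\ge0 .
\]
So \(F\) is nondecreasing on \([0,\infty)\) and hence \(F\ge0\) there. Evenness extends this to all of \(\R\).

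I see no genuine obstacle. The step needing care is confirming that the affinity formula is the standard Bhattacharyya/Hellinger affinity \(\int\sqrt{p_0p_1}\), so that \eqref{eq:supp-gaussian-affinity} applies verbatim with generalized eigenvalues independent of the choice of square root. This holds because \(A\) is similar to \(\Sigma_0^{-1}\Sigma_1\).
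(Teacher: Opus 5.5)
Your proof is correct and takes the paper's route: diagonalize \(A\), express both sides as sums over the generalized eigenvalues via Equation~\eqref{eq:supp-gaussian-affinity}, and compare term by term. The only difference is cosmetic and lies in the scalar step. The paper writes \(\tfrac12\log\cosh(\tfrac12\log x)=-\tfrac14\log(1-z^2)\) with \(z=(x-1)/(x+1)\) and applies \(-\log(1-y)\ge y\), whereas you obtain the same inequality from \(F'(s)=\tanh^3 s\ge0\).
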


\begin{proof}[Proof of Lemma~\ref{lem:working-cayley-hellinger}]
If \(x>0\) is a generalized covariance eigenvalue and
\(z=(x-1)/(x+1)\), then
\[
        \frac{1}{2}\log\cosh\left(\frac{\log x}{2}\right)
        =-\frac{1}{4}\log(1-z^2)
        \ge\frac{1}{4}z^2.
\]
Sum this inequality over the eigenvalues and use
Equation~\eqref{eq:supp-gaussian-affinity}.
\end{proof}

The local quadratic affinity loss in \(q_{\bn}\) supplies
the local half of global separation.

\begin{lemma}
\label{lem:working-local-cayley-coercivity}
There are \(c_0,{\eta_{\mathrm{coer}}}>0\) such that, for all sufficiently large sample
sizes and every \(\varphi\in\Theta_{\mathrm{tail}}\) with
\(0<q_{\bn}(\varphi)\le{\eta_{\mathrm{coer}}}\),
\[
        -\log\mathcal A(P_{\varphi_0,\bn},P_{\varphi,\bn})
        \ge c_0N_{\bn}q_{\bn}(\varphi)^2.
\]
\end{lemma}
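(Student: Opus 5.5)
Since \(q_{\bn}(\varphi)\le\eta_{\mathrm{coer}}\) forces \(|\alpha-\alpha_0|\le\eta_{\mathrm{coer}}\) and \(|g_{\bn}(\varphi)|\le\eta_{\mathrm{coer}}\), we work near \(\varphi_0\), but only in \(g_{\bn}\)-coordinates, since \(\tau-\tau_0\) itself may be of order \(a_{\bn}\eta_{\mathrm{coer}}\). The plan is to start from Lemma~\ref{lem:working-cayley-hellinger} and bound \(\tr\{\mathfrak C(V_{\varphi,\bn})^2\}\) from below using the factorization \eqref{eq:supp-full-relative-covariance-secant}. Write \(\delta=\alpha-\alpha_0\), \(g=g_{\bn}(\varphi)\) and \(K=\mathsf K_{\delta,\bn}\), so that \(V=e^{g}(I+\delta K)\). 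Since \(\mathfrak C\) acts spectrally through \(z(x)=\tanh(\tfrac12\log x)\), we have \(z(x)^2\ge c\min\{1,(\log x)^2\}\). The eigenvalues of \(V\) are \(e^{g}(1+\delta\lambda_j(K))\), so \(\log x_j=g+\log(1+\delta\lambda_j(K))\).

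The obstacle is that \(\|K\|_2\) can be as large as \(Ca_{\bn}e^{C|\delta|a_{\bn}}\) (Lemma~\ref{lem:local-fixed-whitening-path-control}), so \(\delta\lambda_j(K)\) is not small for every \(j\). We will therefore restrict to the bulk eigenvalues. By the polynomial tail bound proved inside Lemma~\ref{lem:local-fixed-whitening-secant-tails}, or by its analogue obtained from Lemma~\ref{lem:local-fixed-whitening-secant-spectral-control} with \(r\) fixed (uniformly over \(|\delta|\le\eta_{\mathrm{coer}}\), where \(r^{-2|\delta|}\) stays bounded once \(\eta_{\mathrm{coer}}\) is small), there is a fixed \(T\) such that, for all fine grids and all admissible \(\delta\), at least \((1-\epsilon)N_{\bn}\) eigenvalues satisfy \(|\lambda_j(K)|\le T\). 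Choose \(\eta_{\mathrm{coer}}\) with \(\eta_{\mathrm{coer}}T\le1/2\). On these indices \(\log x_j=g+\delta\lambda_j+O(\delta^2\lambda_j^2)\) with \(|\log x_j|\le C\eta_{\mathrm{coer}}\), hence \(z_j^2\ge c(g+\delta\lambda_j)^2-C\delta^4\lambda_j^4\).

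It remains to show that \(N_{\bn}^{-1}\sum_{j\,\mathrm{bulk}}(g+\delta\lambda_j(K))^2\ge c(g^2+\delta^2)\) uniformly. The quadratic form \((g,\delta)\mapsto N_{\bn}^{-1}\sum_{\mathrm{bulk}}(g+\delta\lambda_j)^2\) has coefficients given by the truncated moments of the spectral distribution of \(K\). Lemma~\ref{lem:local-fixed-whitening-secant-spectral-control} gives \(\{K\}\sim_{\GLT} r_{\alpha_0}\) along any sequence \(\delta_{\bn}\to0\), and \(r_{\alpha_0}\) is nonconstant (strict Cauchy--Schwarz, \(v_{\alpha_0}>0\)). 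So for \(T\) large and \(\epsilon\) small the truncated Gram matrix converges to a matrix within \(o(1)\) of \(2J\succ0\). Uniformity over \(\delta\in[-\eta_{\mathrm{coer}},\eta_{\mathrm{coer}}]\) is not automatic, because the GLT limit is only along \(\delta_{\bn}\to0\). We will obtain it either by a subsequence and contradiction argument combined with continuity in \(\delta\) (shrinking \(\eta_{\mathrm{coer}}\)), or more directly via the Hilbert--Schmidt continuity of Lemma~\ref{lem:local-fixed-whitening-score-continuity} together with the path representation \eqref{eq:fixed-whitening-compensated-secant-path}. We expect this uniformity to be the main obstacle. The quartic error \(C\delta^4 N_{\bn}^{-1}\sum\lambda_j^4\le C\delta^4T^2\cdot O(1)\) is absorbed into \(c\delta^2\) once \(\eta_{\mathrm{coer}}\) is small. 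Dropping the non-bulk indices, which only decreases the sum, yields \(-\log\mathcal A\ge c_0N_{\bn}(g^2+\delta^2)=c_0N_{\bn}q_{\bn}(\varphi)^2\).
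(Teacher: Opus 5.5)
Your argument is correct in outline, but it takes a different route from the paper.

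\textbf{The paper's route.} The paper argues by contradiction from the outset. Along a bad subsequence with \(s_{\bn}=q_{\bn}(\varphi_{\bn})\to0\) and \((g_{\bn}/s_{\bn},\delta_{\bn}/s_{\bn})\to(u,v)\), it rescales the Cayley transform itself, setting \(H_{\bn}=s_{\bn}^{-1}\mathfrak C(V_{\varphi_{\bn},\bn})\). The secant factorization, together with the GLT inversion and product rules, gives \(\{H_{\bn}\}_{\bn}\sim_{\GLT}(u+vr_{\alpha_0})/2\), which is not a.e.\ zero. One test function \(\chi\in C_c(\R)\) with \(0\le\chi(x)\le x^2\) then yields \(\liminf N_{\bn}^{-1}\tr H_{\bn}^2>0\), contradicting Lemma~\ref{lem:working-cayley-hellinger}. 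The test function discards outlying eigenvalues automatically. So the paper needs no uniform bulk count, no expansion of \(\log(1+\delta\lambda_j)\), and no quartic error term.

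\textbf{Your route.} You work eigenvalue by eigenvalue. You take a uniform bulk bound from the compression decomposition in Lemma~\ref{lem:local-fixed-whitening-secant-spectral-control}, expand to second order on the bulk, and reduce to positivity of a truncated moment Gram matrix of the spectral distribution of \(\mathsf K_{\delta,\bn}\). This makes the curvature visible as the truncated analogue of \(2J\), the Gram matrix of \((1,r_{\alpha_0})\). It is not more constructive, though, because the final positivity still comes from a compactness/GLT argument. To pass the truncated moments to the limit, use a continuous cutoff \(\chi_T\le\mathbb I_{[-T,T]}\) rather than the sharp indicator.

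\textbf{The obstacle you single out is largely illusory.} On \(\Theta_{\mathrm{tail}}\) the coordinate \(\tau\) lies in \([\tau_-,\tau_+]\), so \(\tau-\tau_0\) is bounded, not of order \(a_{\bn}\eta_{\mathrm{coer}}\). Since \(a_{\bn}(\alpha-\alpha_0)=(\tau-\tau_0)-g_{\bn}(\varphi)\), every admissible \(\varphi\) satisfies \(|\alpha-\alpha_0|\le(\tau_+-\tau_-+\eta_{\mathrm{coer}})/a_{\bn}\to0\) uniformly. Hence every contradiction sequence automatically has \(\delta_{\bn}\to0\), the GLT limit \(r_{\alpha_0}\) from Lemma~\ref{lem:local-fixed-whitening-secant-spectral-control} applies directly, and you need no continuity in \(\delta\) over a fixed interval.

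\textbf{Your second suggested route would not work as stated.} Lemma~\ref{lem:local-fixed-whitening-score-continuity} gives Hilbert--Schmidt closeness only in the ordered limit \(a_{\bn}|\delta|\le\rho\) with \(\rho\downarrow0\). Here \(a_{\bn}|\delta|\) is merely bounded, so that lemma does not apply. For a fixed \(\delta\), the weight \(h^{-2t\delta}\) in the path representation is unbounded.
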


\begin{proof}
Suppose that the asserted uniform constants do not exist.  Then, after
passing to a subsequence, there is a sequence
\(\varphi_{\bn}=(\tau_{\bn},\alpha_{\bn})\) such that
\[
        s_{\bn}:=q_{\bn}(\varphi_{\bn})\longrightarrow0,
        \qquad s_{\bn}>0,
        \qquad
        \frac{-\log\mathcal A
        (P_{\varphi_0,\bn},P_{\varphi_{\bn},\bn})}{N_{\bn}s_{\bn}^2}\longrightarrow0.
\]
With \(\delta_{\alpha,\bn}:=\alpha_{\bn}-\alpha_0\), pass to a
further subsequence such that
\[
        \left(
        \frac{g_{\bn}(\varphi_{\bn})}{s_{\bn}},
        \frac{\delta_{\alpha,\bn}}{s_{\bn}}
        \right)
        \longrightarrow(u,v),
        \qquad u^2+v^2=1.
\]
Define
\[
        E_{\bn}:=\frac{V_{\varphi_{\bn},\bn}-I}{s_{\bn}},
        \qquad
        H_{\bn}:=E_{\bn}(V_{\varphi_{\bn},\bn}+I)^{-1}
        =s_{\bn}^{-1}\mathfrak C(V_{\varphi_{\bn},\bn}).
\]
By Equation~\eqref{eq:supp-full-relative-covariance-secant},
\[
        E_{\bn}
        =\frac{e^{g_{\bn}(\varphi_{\bn})}-1}{s_{\bn}}I
        +e^{g_{\bn}(\varphi_{\bn})}
        \frac{\delta_{\alpha,\bn}}{s_{\bn}}
        \mathsf K_{\delta_{\alpha,\bn},\bn}.
\]
Lemma~\ref{lem:local-fixed-whitening-secant-spectral-control} and
\(e^x-1=x+O(x^2)\) give
\[
        \{E_{\bn}\}_{\bn}\sim_{\GLT}u+vr_{\alpha_0}.
\]
Since \(V_{\varphi_{\bn},\bn}\sim_{\GLT}1\) and
\(V_{\varphi_{\bn},\bn}+I\succeq I\), inverse functional calculus gives
\[
        \{(V_{\varphi_{\bn},\bn}+I)^{-1}\}_{\bn}
        \sim_{\GLT}\frac{1}{2}.
\]
Thus the Hermitian GLT product rule yields
\[
        \{H_{\bn}\}_{\bn}\sim_{\GLT}g_{u,v},
        \qquad
        g_{u,v}:=\frac{u+vr_{\alpha_0}}{2}.
\]
The function \(g_{u,v}\) is not identically zero.  If \(v=0\), then
\(u=\pm1\); if \(v\ne0\), then
\(r_{\alpha_0}(\bth)=-2\log\|\bth\|+O(1)\) is not almost everywhere
constant.  Choose \(\chi\in C_c(\R)\) with
\(0\le \chi(x)\le x^2\) and
\(\int_Q\chi(g_{u,v})\,d\nu_Q>0\).  The Hermitian GLT distribution
formula then gives
\[
\begin{aligned}
        \liminf_{\bn}\frac{1}{N_{\bn}}\operatorname{tr}(H_{\bn}^2)
        &\ge \lim_{\bn}\frac{1}{N_{\bn}}\operatorname{tr}\chi(H_{\bn})\\
        &=\int_Q\chi(g_{u,v})\,d\nu_Q>0.
\end{aligned}
\]
Since \(\mathfrak C(V_{\varphi_{\bn},\bn})=s_{\bn}H_{\bn}\),
Lemma~\ref{lem:working-cayley-hellinger} gives
\[
        \liminf_{\bn}
        \frac{-\log\mathcal A
        (P_{\varphi_0,\bn},P_{\varphi_{\bn},\bn})}{N_{\bn}s_{\bn}^2}>0,
\]
contradicting the selected sequence.
\end{proof}
}

{Scalar rescaling can escape to zero or infinity; the Cayley
transform saturates at those endpoints and remains informative.}
{
\begin{lemma}
\label{lem:cayley-rescaling-glt}
Let \(M_{\bn}\succ0\) be Hermitian, with
\(\{M_{\bn}\}_{\bn}\sim_{\GLT}m\), where \(m\) is finite and positive
almost everywhere.  If
\(c_{\bn}>0\), then, whenever \(c_{\bn}\to c\in(0,\infty)\),
\begin{align}
        \label{eq:Caley_trace}
\frac{1}{N_{\bn}}\tr\{\mathfrak C(c_{\bn}M_{\bn})^2\}
 \longrightarrow\int_Q\left(\frac{cm-1}{cm+1}\right)^2\,d\nu_Q.
\end{align}
If \(c_{\bn}\to0\) or \(c_{\bn}\to\infty\), the same normalized trace
converges to \(1\).
\end{lemma}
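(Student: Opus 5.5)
The Cayley transform is a bounded continuous function of the eigenvalues, so the plan is to reduce the trace to a spectral average of a bounded continuous test function. Write
\[
 f(x):=\Bigl(\frac{x-1}{x+1}\Bigr)^2,\qquad x\ge0 .
\]
This function is continuous and bounded on \([0,\infty)\), takes values in \([0,1]\), satisfies \(f(0)=1\), and tends to \(1\) as \(x\to\infty\). Since \(c_{\bn}M_{\bn}\succ0\) is Hermitian, functional calculus gives
\[
 \frac{1}{N_{\bn}}\tr\{\mathfrak C(c_{\bn}M_{\bn})^2\}
 =\frac{1}{N_{\bn}}\sum_j f\bigl(c_{\bn}\lambda_j(M_{\bn})\bigr).
\]
It therefore suffices to understand the empirical eigenvalue distribution \(\mu_{\bn}\) of \(M_{\bn}\), evaluated against \(x\mapsto f(c_{\bn}x)\).

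\emph{Case \(c_{\bn}\to c\in(0,\infty)\).} First, the Hermitian GLT distribution formula gives \(\int F\,d\mu_{\bn}\to\int_Q F(m)\,d\nu_Q\) for all \(F\in C_c(\R)\). Because \(m\) is finite a.e., the limit is a probability measure, so this vague convergence upgrades to weak convergence, i.e.\ the test class extends to all bounded continuous \(F\). Equivalently, the family \(\{\mu_{\bn}\}\) is tight: cutoffs \(\chi_K\in C_c\) with \(\chi_K=1\) on \([-K,K]\) satisfy \(\int\chi_K(m)\,d\nu_Q\to1\) as \(K\to\infty\).

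Next, replace \(c_{\bn}\) by \(c\). The function \(f\) is uniformly continuous on \([0,\infty)\) because it converges at infinity. On \([0,K]\) this gives \(|f(c_{\bn}x)-f(cx)|\to0\) uniformly. Outside \([0,K]\), the total mass of \(\mu_{\bn}\) is small by tightness, and the integrand is bounded by one. Hence
\[
 \int f(c_{\bn}x)\,d\mu_{\bn}-\int f(cx)\,d\mu_{\bn}\to0 .
\]
Weak convergence then yields \(\int f(cx)\,d\mu_{\bn}\to\int_Q f(cm)\,d\nu_Q\), which is \eqref{eq:Caley_trace}.

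\emph{Degenerate cases.} Since \(1-f(x)=4x/(x+1)^2\), we have
\[
 1-\frac{1}{N_{\bn}}\tr\{\mathfrak C(c_{\bn}M_{\bn})^2\}
 =\int g(c_{\bn}x)\,d\mu_{\bn},\qquad g(x):=\frac{4x}{(1+x)^2}\in[0,1].
\]
The function \(g\) vanishes at \(0\) and at \(\infty\).

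If \(c_{\bn}\to0\), split the integral at \(x=K\). On \([0,K]\), \(g(c_{\bn}x)\le 4c_{\bn}K\to0\). On \((K,\infty)\), the mass of \(\mu_{\bn}\) is eventually at most \(\epsilon\) by tightness. Letting \(\bn\to\infty\) and then \(K\to\infty\) shows the integral tends to \(0\).

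If \(c_{\bn}\to\infty\), split at \(x=\delta\) instead. For \(x>\delta\), \(g(c_{\bn}x)\le 4/(c_{\bn}\delta)\to0\). For \(x\le\delta\), the mass is controlled by
\[
 \limsup_{\bn}\mu_{\bn}([0,\delta])\le\nu_Q(m\le 2\delta),
\]
obtained by testing against a \(C_c\) function equal to \(1\) on \([-\delta,\delta]\) and supported in \([-2\delta,2\delta]\). Because \(m>0\) a.e., \(\nu_Q(m\le2\delta)\to0\) as \(\delta\downarrow0\). Hence the normalized trace tends to \(1\) in both degenerate cases.

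The main obstacle is the justification that the eigenvalue distribution carries no mass escaping to infinity, together with the near-zero control in the case \(c_{\bn}\to\infty\). Both steps use that the GLT test class \(C_c\) captures full mass, which relies on \(m\) being finite and positive almost everywhere. The remaining steps are routine uniform continuity and splitting arguments.
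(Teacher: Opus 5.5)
Your proof is correct. The finite-$c$ and $c_{\bn}\to0$ cases agree in substance with the paper.

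\begin{itemize}
\item In the finite case, the paper also upgrades vague to weak convergence by tightness and then tests against the bounded continuous function $((x-1)/(x+1))^2$.
\item For $c_{\bn}\to0$, the paper writes $\mathfrak C(c_{\bn}M_{\bn})=-I+2Q_{\bn}$ with $Q_{\bn}=c_{\bn}M_{\bn}(c_{\bn}M_{\bn}+I)^{-1}$ and uses $\tr(Q_{\bn}^2)\le\tr Q_{\bn}$. This is your identity $1-f=g$ in disguise, since $g(x)=4q(1-q)$ with $q=x/(1+x)$. Its split at $K$, with a Portmanteau bound on the count of eigenvalues $\ge K$, is your tightness step.
\end{itemize}

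Two differences are worth noting.

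\begin{itemize}
\item In the finite case, the paper first invokes the GLT scalar rule to get $\{c_{\bn}M_{\bn}\}_{\bn}\sim_{\GLT}cm$. You instead keep the spectrum of $M_{\bn}$ fixed and absorb $c_{\bn}\to c$ by uniform continuity plus tightness.
\item More substantively, for $c_{\bn}\to\infty$ the paper uses GLT inversion, $\{M_{\bn}^{-1}\}_{\bn}\sim_{\GLT}m^{-1}$. Combined with the symmetry $\mathfrak C(A^{-1})=-\mathfrak C(A)$, this reduces the case to $c_{\bn}\to0$. You control the mass near zero directly via $\limsup_{\bn}\mu_{\bn}([0,\delta])\le\nu_Q\{m\le2\delta\}$.
\end{itemize}

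The paper's route is shorter given the GLT toolbox, and it makes the $0\leftrightarrow\infty$ symmetry of the Cayley transform explicit. Yours uses only the Hermitian eigenvalue-distribution formula for $\{M_{\bn}\}_{\bn}$ against $C_c$ test functions, with no GLT algebra. It therefore proves the lemma under the weaker hypothesis that $M_{\bn}$ merely has eigenvalue distribution $m$. It also shows transparently that positivity of $m$ enters only as the absence of spectral mass at the origin.
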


\begin{proof}
For \(c_{\bn}\to c\in(0,\infty)\),
\(\{c_{\bn}M_{\bn}\}_{\bn}\sim_{\GLT}cm\). 
  Define
\[
\mu_{\bn}
:=
\frac{1}{N_{\bn}}\sum_{j=1}^{N_{\bn}}
\delta_{\lambda_j(c_{\bn}M_{\bn})},
\qquad
\mu:=(cm)_{\#}\nu_Q,
\]
i.e. 
$
\mu(B)
=
\nu_Q\{\bth\in Q:cm(\bth)\in B\},$
$
B\in\mathcal B([0,\infty)).
$
These are probability measures on \([0,\infty)\). By the Hermitian GLT spectral-distribution theorem
\cite[Theorem~5.2, p.~94]{GaroniSerraCapizzanoII},
\(\mu_{\bn}\to\mu\) vaguely. We show that this
convergence is weak.  Given \(\varepsilon>0\), choose
\(\chi\in C_c([0,\infty))\) such that \(0\le\chi\le1\) and
\(\int\chi\,d\mu\ge1-\varepsilon/2\).  If
\(K=\operatorname{supp}\chi\), then vague convergence implies that, for all
sufficiently large \(\bn\),
\[
\mu_{\bn}(K)
\ge \int\chi\,d\mu_{\bn}
\ge 1-\varepsilon.
\]
Enlarging \(K\) to include the finite supports of the finitely many initial
empirical measures, we obtain
$
\mu_{\bn}(K)\ge1-\varepsilon
$
for every \(\bn\).  Hence \(\{\mu_{\bn}\}_{\bn}\) is tight and thus
\(\mu_{\bn}\to\mu\) weakly.

Now set
$
h(x):=\left(\frac{x-1}{x+1}\right)^2, x\ge0.
$
The function \(h\) is bounded and continuous on \([0,\infty)\).  By the
spectral theorem and the definition of \(\mathfrak C\),
{\small
\[
\frac{1}{N_{\bn}}\tr\{\mathfrak C(c_{\bn}M_{\bn})^2\}
=\int_{[0,\infty)}h(x)\,d\mu_{\bn}(x)
\longrightarrow\int_{[0,\infty)}h(x)\,d\mu(x)
=\int_Q\left(\frac{cm-1}{cm+1}\right)^2\,d\nu_Q.
\]
}

If \(c_{\bn}\to0\), set
\(Q_{\bn}:=c_{\bn}M_{\bn}(c_{\bn}M_{\bn}+I)^{-1}\).  Then
\(\mathfrak C(c_{\bn}M_{\bn})=-I+2Q_{\bn}\).  
If \(\lambda_{j,\bn}\), \(1\le j\le N_{\bn}\), are the eigenvalues of
\(M_{\bn}\), then, for every \(K>0\), the corresponding eigenvalues of
\(Q_{\bn}\) satisfy
\[
0\le q_{j,\bn}
=\frac{c_{\bn}\lambda_{j,\bn}}{1+c_{\bn}\lambda_{j,\bn}}
\le
c_{\bn}K\,\mathbb I_{\{\lambda_{j,\bn}\le K\}}
+\mathbb I_{\{\lambda_{j,\bn}>K\}}.
\]
Consequently,
\[
\frac{\tr Q_{\bn}}{N_{\bn}}
\le c_{\bn}K+
\frac{1}{N_{\bn}}
\#\{j:\lambda_{j,\bn}>K\}.
\]

The Portmanteau theorem {\cite[Theorem~29.1(ii)]{Billingsley1995}}, applied to the
closed set \([K,\infty)\), gives
\[
\limsup_{\bn}\frac{1}{N_{\bn}}
\#\{j:\lambda_j(M_{\bn})\ge K\}
\le \nu_Q\{m\ge K\}.
\]
Combining this with the preceding bound and \(c_{\bn}K\to0\) yields
\[
\limsup_{\bn}\frac{\tr Q_{\bn}}{N_{\bn}}
\le \nu_Q\{m\ge K\}.
\]
Letting \(K\to\infty\) yields \(N_{\bn}^{-1}\tr Q_{\bn}\to0\), and hence
\(0\preceq Q_{\bn}\preceq I\) gives
\(\tr(Q_{\bn}^2)\le\tr Q_{\bn}\), hence
\[
 \frac{1}{N_{\bn}}\tr\{\mathfrak C(c_{\bn}M_{\bn})^2\}
 =1-\frac{4\tr Q_{\bn}}{N_{\bn}}
 +\frac{4\tr(Q_{\bn}^2)}{N_{\bn}} 
\longrightarrow1.
\]

If \(c_{\bn}\to\infty\), inversion gives
$
\{M_{\bn}^{-1}\}_{\bn}\sim_{\GLT}m^{-1},
$ 
where \(m^{-1}\) is finite and positive almost everywhere.  Now use 
$
\mathfrak C\bigl((c_{\bn}M_{\bn})^{-1}\bigr)
=-\mathfrak C(c_{\bn}M_{\bn}).
$ Apply the preceding argument with \(c_{\bn}^{-1}\) and \(M_{\bn}^{-1}\).
\end{proof}
}

{Applying saturation to every sequence with \(q_{\bn}\) bounded
away from zero yields an order-\(N_{\bn}\) Cayley signal.}
{
\begin{lemma}
\label{lem:working-noncritical-cayley-separation}
For every sequence \(\varphi_{\bn}\in\Theta_{\mathrm{tail}}\) satisfying
\(\liminf_{\bn}q_{\bn}(\varphi_{\bn})>0\),
\begin{align}
        \label{eq:Caley2}
 \liminf_{\bn}\frac{1}{N_{\bn}}\tr\{\mathfrak C(V_{\varphi_{\bn},\bn})^2\}>0.
\end{align}
\end{lemma}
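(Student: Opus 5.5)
We argue by contradiction along subsequences. Suppose the $\liminf$ in \eqref{eq:Caley2} is zero. Then we can pass to a subsequence along which $N_{\bn}^{-1}\tr\{\mathfrak C(V_{\varphi_{\bn},\bn})^2\}\to0$. Along it we keep $q_{\bn}(\varphi_{\bn})\ge q_*>0$. By compactness of $\Theta_{\mathrm{tail}}$ we may also assume that $\alpha_{\bn}\to\alpha_*\in[\alpha_-,\alpha_+]$ and that $g_{\bn}:=g_{\bn}(\varphi_{\bn})\to g\in[-\infty,\infty]$. It then suffices to show that the normalized Cayley trace has a strictly positive limit along a further subsequence.

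The first step rewrites the relative covariance as a scalar times a GLT sequence, so that Lemma~\ref{lem:cayley-rescaling-glt} applies. The factorization \eqref{eq:supp-aliased-likelihood-factorization} gives $\Sigma_{\varphi,\bn}=e^\tau h^{2\alpha-d}T_{\bn}(b_{h,\alpha})$. With $W_{\bn}=T_{\bn}(b_{h,\alpha_0})^{-1/2}$ and $h^{2(\alpha-\alpha_0)}=e^{-a_{\bn}(\alpha-\alpha_0)}$, this yields exactly
\[
V_{\varphi_{\bn},\bn}=e^{g_{\bn}}M_{\bn},\qquad M_{\bn}:=W_{\bn}T_{\bn}(b_{h,\alpha_{\bn}})W_{\bn}\succ0 .
\]
Lemma~\ref{lem:working-moving-rescaled-glt} gives $\{T_{\bn}(b_{h,\alpha_{\bn}})\}_{\bn}\sim_{\GLT}\psi_{\alpha_*}$, and the same lemma at $\alpha_0$ gives $\{W_{\bn}\}_{\bn}\sim_{\GLT}\psi_{\alpha_0}^{-1/2}$. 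The factor $W_{\bn}$ is uniformly bounded because of the uniform lower bound $T_{\bn}(b_{h,\alpha_0})\succeq mI$. The GLT product rule then yields $\{M_{\bn}\}_{\bn}\sim_{\GLT}m_*:=\psi_{\alpha_*}/\psi_{\alpha_0}$, and $m_*$ is finite and positive almost everywhere on $Q$. The one technical point I expect to need care here is the GLT identification of $M_{\bn}$. This should follow from the algebra rules (GLT~3--4) once the square-root functional calculus for $W_{\bn}$ is justified, and that in turn rests on the uniform spectral bounds on $T_{\bn}(b_{h,\alpha_0})$.

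The second step is a case split on the limit $g$.

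If $g=\pm\infty$, then $c_{\bn}=e^{g_{\bn}}\to\infty$ or $0$. The saturation part of Lemma~\ref{lem:cayley-rescaling-glt} then gives a trace limit of $1$, contradicting the choice of subsequence.

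If $g$ is finite, \eqref{eq:Caley_trace} gives the limit $\int_Q\{(e^gm_*-1)/(e^gm_*+1)\}^2\,d\nu_Q$. This integral vanishes only if $e^gm_*=1$ almost everywhere, and we rule that out in two subcases.
\begin{itemize}
\item If $\alpha_*\neq\alpha_0$, the principal aliases dominate near the origin, so $m_*(\bth)\asymp\|\bth\|^{-2(\alpha_*-\alpha_0)}$ there. This is unbounded or tends to zero, so $m_*$ is not almost everywhere constant.
\item If $\alpha_*=\alpha_0$, then $m_*\equiv1$ and $\alpha_{\bn}-\alpha_0\to0$. The assumption $q_{\bn}\ge q_*$ then forces $|g|\ge q_*>0$, so $e^g\neq1$.
\end{itemize}
In both subcases the limit is strictly positive, which again contradicts the choice of subsequence.

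The main obstacle is therefore not the case analysis, which is elementary. It is making the GLT identification of $M_{\bn}$ rigorous along the moving exponent $\alpha_{\bn}\to\alpha_*$. This is where Lemma~\ref{lem:working-moving-rescaled-glt} is essential, since it already handles the moving exponent together with the a.c.s. control of the singular principal alias.
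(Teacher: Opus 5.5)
Your argument is correct and is essentially the paper's proof. Both use the same contradiction along a subsequence with $\alpha_{\bn}\to\alpha_*$ and $g_{\bn}(\varphi_{\bn})\to g_*\in[-\infty,\infty]$, the same factorization $V_{\varphi_{\bn},\bn}=e^{g_{\bn}(\varphi_{\bn})}W_{\bn}T_{\bn}(b_{h,\alpha_{\bn}})W_{\bn}$, and Lemma~\ref{lem:cayley-rescaling-glt} to conclude; for $\alpha_*=\alpha_0$ the paper writes this matrix as $I+\delta_{\alpha,\bn}\mathsf K_{\delta_{\alpha,\bn},\bn}$ and gets symbol $1$ from the secant lemma, whereas you get it from Lemma~\ref{lem:working-moving-rescaled-glt} and the product rule. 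Your subcase with $g_*$ finite and $\alpha_*\neq\alpha_0$ is harmless but cannot occur: since $\tau_{\bn}$ is bounded and $a_{\bn}\to\infty$, $g_{\bn}(\varphi_{\bn})\to\mp\infty$ whenever $\alpha_*\neq\alpha_0$, which is how the paper dispatches that case.
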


\begin{proof}
Suppose, to the contrary, that the liminf in
Equation~\eqref{eq:Caley2} is zero.  Along a subsequence it then converges
to zero.  By compactness and extended-real sequential compactness, pass to a
further subsequence such that
\[
        \delta_{\alpha,\bn}:=\alpha_{\bn}-\alpha_0\longrightarrow\delta_*,
        \qquad
        g_{\bn}(\varphi_{\bn})\longrightarrow g_*
        \quad\text{in }[-\infty,\infty].
\]
Write \(V_{\varphi_{\bn},\bn}=e^{g_{\bn}(\varphi_{\bn})}\mathsf M_{\bn}\).
If \(\delta_*=0\), take
\[
        \mathsf M_{\bn}:=
        I+\delta_{\alpha,\bn}\mathsf K_{\delta_{\alpha,\bn},\bn}\succ0.
\]
Lemma~\ref{lem:local-fixed-whitening-secant-spectral-control} gives
\(\{\mathsf M_{\bn}\}_{\bn}\sim_{\GLT}1\).  If \(g_*=\pm\infty\),
Lemma~\ref{lem:cayley-rescaling-glt} gives a trace limit of one.  If
\(g_*\in\mathbb R\), then \(\liminf_{\bn}q_{\bn}(\varphi_{\bn})>0\) and
\(\delta_{\alpha,\bn}\to0\) imply \(\lvert g_*\rvert>0\).  Thus
\(c=e^{g_*}\ne1\), and the finite-\(c\) limit in
Lemma~\ref{lem:cayley-rescaling-glt} is
\[
        \left(\frac{c-1}{c+1}\right)^2
        =\tanh^2(g_*/2)>0.
\]

If \(\delta_*\ne0\), take
\[
        \mathsf M_{\bn}:=
        h^{-2\delta_{\alpha,\bn}}
        \Sigma_{\varphi_0,\bn}^{-1/2}
        \Sigma_{(\tau_0,\alpha_{\bn}),\bn}
        \Sigma_{\varphi_0,\bn}^{-1/2}.
\]
Lemma~\ref{lem:working-moving-rescaled-glt} and Hermitian GLT functional
calculus give
\[
        \{\mathsf M_{\bn}\}_{\bn}\sim_{\GLT}
        \frac{\psi_{\alpha_0+\delta_*}}{\psi_{\alpha_0}}>0.
\]
Because \(\tau_{\bn}\) is bounded and \(a_{\bn}\to\infty\),
\[
        g_{\bn}(\varphi_{\bn})\longrightarrow
        \begin{cases}
        -\infty, & \delta_*>0,\\
        +\infty, & \delta_*<0.
        \end{cases}
\]
Lemma~\ref{lem:cayley-rescaling-glt} again gives a trace limit of one.  Each
case contradicts the selected zero-trace subsequence.
\end{proof}
}

{

The proof now stitches the local coercivity bound to noncritical separation.
\begin{proof}[Proof of Proposition~\ref{prop:working-glt-hellinger-separation}]
We first consider \(q_{\bn}(\varphi)=0\), for which the result is
immediate.  When \(0<q_{\bn}(\varphi)\le{\eta_{\mathrm{coer}}}\),
Lemma~\ref{lem:working-local-cayley-coercivity} gives
\[
        -\log\mathcal A(P_{\varphi_0,\bn},P_{\varphi,\bn})
        \ge c_0N_{\bn}q_{\bn}(\varphi)^2.
\]

There is \(c_1>0\) such that, for all sufficiently large sample sizes,
\[
        \inf_{\substack{\varphi\in\Theta_{\mathrm{tail}}\\
        q_{\bn}(\varphi)\ge{\eta_{\mathrm{coer}}}}}
        \frac{-\log\mathcal A(P_{\varphi_0,\bn},P_{\varphi,\bn})}{N_{\bn}}
        \ge c_1.
\]
Otherwise, there would be a sequence
\(\varphi_{\bn}\in\Theta_{\mathrm{tail}}\) satisfying
\(q_{\bn}(\varphi_{\bn})\ge{\eta_{\mathrm{coer}}}\) and
\[
\frac{-\log\mathcal A
(P_{\varphi_0,\bn},P_{\varphi_{\bn},\bn})}{N_{\bn}}
\longrightarrow0.
\]
However, Lemmas~\ref{lem:working-noncritical-cayley-separation}
and~\ref{lem:working-cayley-hellinger} give
\[
        0<\frac{1}{4}\liminf_{\bn}\frac{1}{N_{\bn}}
        \tr\{\mathfrak C(V_{\varphi_{\bn},\bn})^2\}
        \le\liminf_{\bn}\frac{1}{N_{\bn}}
        \{-\log\mathcal A(P_{\varphi_0,\bn},P_{\varphi_{\bn},\bn})\},
\]
a contradiction.

Taking \(c:=\min\{c_0,c_1\}\), we obtain
\[
        -\log\mathcal A(P_{\varphi_0,\bn},P_{\varphi,\bn})
        \ge cN_{\bn}\min\{q_{\bn}(\varphi)^2,1\}
        \ge cN_{\bn}
        \frac{g_{\bn}(\varphi)^2+(\alpha-\alpha_0)^2}{1+g_{\bn}(\varphi)^2+(\alpha-\alpha_0)^2}.
\]
\end{proof}
\begin{corollary}
\label{cor:working-glt-fixed-neighborhood-separation}
For every relative open neighborhood \(U\) of \(\varphi_0\) in
\(\Theta_{\mathrm{tail}}\), there is \(c_U>0\) such that
\[
 \inf_{\varphi\in\Theta_{\mathrm{tail}}\setminus U}
 \{-\log\mathcal A(P_{\varphi_0,\bn},P_{\varphi,\bn})\}
 \ge c_U\frac{N_{\bn}}{a_{\bn}^2}
\]
for all sufficiently large sample sizes.
\end{corollary}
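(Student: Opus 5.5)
The corollary follows directly from Proposition~\ref{prop:working-glt-hellinger-separation}, once we show that the quantity
\[
 \frac{q_{\bn}(\varphi)^2}{1+q_{\bn}(\varphi)^2},
 \qquad q_{\bn}(\varphi)^2=g_{\bn}(\varphi)^2+(\alpha-\alpha_0)^2,
\]
is bounded below by a constant multiple of \(a_{\bn}^{-2}\) uniformly on \(\Theta_{\mathrm{tail}}\setminus U\). The map \(x\mapsto x/(1+x)\) is increasing, so it suffices to bound \(q_{\bn}(\varphi)^2\) from below by \(c'/a_{\bn}^2\) on that set, with \(c'\) independent of \(\bn\). The proposition then gives
\[
 -\log\mathcal A\ge cN_{\bn}\frac{c'a_{\bn}^{-2}}{1+c'a_{\bn}^{-2}}\ge \frac{cc'}{2}\frac{N_{\bn}}{a_{\bn}^2}
\]
for large \(\bn\), because \(a_{\bn}\to\infty\).

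For the lower bound, fix \(\varepsilon>0\) so that the ball \(\{\|\varphi-\varphi_0\|<\varepsilon\}\cap\Theta_{\mathrm{tail}}\) lies in \(U\). Then every \(\varphi\notin U\) satisfies \(|\tau-\tau_0|^2+|\alpha-\alpha_0|^2\ge\varepsilon^2\). We split into two cases.

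\emph{Case 1: \(|\alpha-\alpha_0|\ge\varepsilon/(2a_{\bn})\).} Here \(q_{\bn}^2\ge(\alpha-\alpha_0)^2\ge\varepsilon^2/(4a_{\bn}^2)\).

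\emph{Case 2: \(|\alpha-\alpha_0|<\varepsilon/(2a_{\bn})\).} For large \(\bn\) this forces \(|\alpha-\alpha_0|\le\varepsilon/2\), and hence \(|\tau-\tau_0|\ge\sqrt3\,\varepsilon/2\ge\varepsilon/2\). Then
\[
 |g_{\bn}|\ge|\tau-\tau_0|-a_{\bn}|\alpha-\alpha_0|\ge\frac{\varepsilon}{2}-\frac{\varepsilon}{2}\cdot\frac{\varepsilon}{2}\cdot\frac{1}{\varepsilon}\cdots
\]
As written, that estimate loses too much, so choose the threshold more carefully: take the threshold to be \(\varepsilon/(4a_{\bn})\). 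Then \(a_{\bn}|\alpha-\alpha_0|<\varepsilon/4\), so \(|g_{\bn}|\ge\varepsilon/4\) and \(q_{\bn}^2\ge\varepsilon^2/16\ge\varepsilon^2/(16a_{\bn}^2)\) once \(a_{\bn}\ge1\). With the same threshold, Case 1 gives \(q_{\bn}^2\ge\varepsilon^2/(16a_{\bn}^2)\).

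Combining the two cases, \(c'=\varepsilon^2/16\) works.

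The only delicate point is that the compensated coordinate \(g_{\bn}\) can vanish far from \(\varphi_0\). This happens along the ridge \(\tau-\tau_0=a_{\bn}(\alpha-\alpha_0)\), where the only remaining signal is \(|\alpha-\alpha_0|\). On \(\Theta_{\mathrm{tail}}\setminus U\) that quantity is of order at least \(1/a_{\bn}\), which explains the loss of \(a_{\bn}^2\) in the rate. The rest of the argument is routine.
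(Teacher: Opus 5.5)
Your argument is correct and is essentially the paper's: both reduce the corollary to Proposition~\ref{prop:working-glt-hellinger-separation} by showing \(q_{\bn}(\varphi)\gtrsim a_{\bn}^{-1}\) on \(\Theta_{\mathrm{tail}}\setminus U\). The paper gets this without a case split, from the single inequality \(\sqrt{(\tau-\tau_0)^2+(\alpha-\alpha_0)^2}\le(a_{\bn}+2)q_{\bn}(\varphi)\), which follows from \(\tau-\tau_0=g_{\bn}(\varphi)+a_{\bn}(\alpha-\alpha_0)\); in a write-up you should also delete your abandoned first threshold in Case~2.
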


\begin{proof}
If \(\Theta_{\mathrm{tail}}\setminus U\) is empty, the claim is immediate.
Otherwise, with \(\delta_\tau:=\tau-\tau_0\) and
\(\delta_\alpha:=\alpha-\alpha_0\), compactness gives
\[
        d_U:=\inf_{\varphi\in\Theta_{\mathrm{tail}}\setminus U}
        \sqrt{\delta_\tau^2+\delta_\alpha^2}>0.
\]
The coordinate relation
\(\sqrt{\delta_\tau^2+\delta_\alpha^2}\le(a_{\bn}+2)q_{\bn}(\varphi)\)
gives \(q_{\bn}(\varphi)\ge d_U/(a_{\bn}+2)\) on the complement.
Proposition~\ref{prop:working-glt-hellinger-separation} and \(a_{\bn}\to\infty\)
now give the stated rate.
\end{proof}
The rate \(N_{\bn}/a_{\bn}^2\) is the fixed-complement input for global
consistency.
}
{
Deterministic affinity separation becomes a one-point likelihood probability
bound through the following inequality.
\begin{lemma}
\label{lem:likelihood-ratio-affinity-bound}
For every \(r\ge0\) and \(\varphi\in\Theta_{\mathrm{tail}}\),
\[
        \mathbb P_{\varphi_0}\{\ell_{\bn}(\varphi)
        -\ell_{\bn}(\varphi_0)\ge-r\}
        \le e^{r/2}\mathcal A(P_{\varphi_0,\bn},P_{\varphi,\bn}).
\]
\end{lemma}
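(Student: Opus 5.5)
The proof is a Chernoff (Markov) bound applied to the square root of the likelihood ratio. Write \(p_0\) and \(p_\varphi\) for the Gaussian densities of \(P_{\varphi_0,\bn}\) and \(P_{\varphi,\bn}\). Because the additive constant dropped in \eqref{eq:working-gaussian-loglik} does not depend on \(\varphi\), it cancels in the difference, so
\[
 \ell_{\bn}(\varphi)-\ell_{\bn}(\varphi_0)=\log\frac{p_\varphi(Y_{\bn})}{p_0(Y_{\bn})}
\]
is exactly the log-likelihood ratio. Both densities are strictly positive, since both covariance matrices are positive definite.

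The event \(\{\ell_{\bn}(\varphi)-\ell_{\bn}(\varphi_0)\ge-r\}\) is therefore the same as
\[
 \Bigl\{\bigl(p_\varphi/p_0\bigr)^{1/2}(Y_{\bn})\ge e^{-r/2}\Bigr\}.
\]
Markov's inequality applied to the nonnegative variable \((p_\varphi/p_0)^{1/2}(Y_{\bn})\) under \(P_{\varphi_0,\bn}\) gives
\[
 \mathbb P_{\varphi_0}\{\cdot\}\le e^{r/2}\,\mathbb E_{\varphi_0}\bigl[(p_\varphi/p_0)^{1/2}(Y_{\bn})\bigr]
 =e^{r/2}\int\sqrt{p_0\,p_\varphi}\,dy .
\]
The last integral is by definition the Hellinger affinity \(\mathcal A(P_{\varphi_0,\bn},P_{\varphi,\bn})\), the quantity given in closed form by \eqref{eq:gaussian-affinity-relative-covariance}. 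This is exactly the claimed bound.

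The only point needing care is to check that the affinity used in \eqref{eq:gaussian-affinity-relative-covariance} and \eqref{eq:supp-gaussian-affinity} is the Bhattacharyya coefficient \(\int\sqrt{p_0p_\varphi}\), and not some other normalization. This is confirmed by computing the Gaussian integral directly: with \(V=V_{\varphi,\bn}\), the integral \(\int\sqrt{p_0p_\varphi}\) equals \(\det(V)^{1/4}\det\{(I+V)/2\}^{-1/2}\). No other obstacle is expected, and no assumption on \(\varphi\) beyond positive definiteness of \(\Sigma_{\varphi,\bn}\) is used.
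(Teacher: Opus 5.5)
Your proposal is correct and follows the paper's proof: both rewrite the event as \(\{(dP_{\varphi,\bn}/dP_{\varphi_0,\bn})^{1/2}\ge e^{-r/2}\}\), apply Markov's inequality under \(P_{\varphi_0,\bn}\), and identify the resulting expectation with the Hellinger affinity. Your additional check that the determinant formula \(\det(V)^{1/4}\det\{(I+V)/2\}^{-1/2}\) equals \(\int\sqrt{p_0p_\varphi}\) is correct; the paper leaves this identification implicit.
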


\begin{proof}[Proof of Lemma~\ref{lem:likelihood-ratio-affinity-bound}]
Since
$
R_{\bn}(\varphi)
=\exp\{\ell_{\bn}(\varphi)-\ell_{\bn}(\varphi_0)\},
$
we have
\[
\{\ell_{\bn}(\varphi)-\ell_{\bn}(\varphi_0)\ge-r\}
=
\{R_{\bn}(\varphi)^{1/2}\ge e^{-r/2}\}.
\]
Markov's inequality therefore gives
\[
\begin{aligned}
e^{-r/2} \mathbb P_{\varphi_0}
\{\ell_{\bn}(\varphi)-\ell_{\bn}(\varphi_0)\ge-r\}
&\le
\mathbb E_{\varphi_0}R_{\bn}(\varphi)^{1/2}\\
&= \int
\left(\frac{dP_{\varphi,\bn}}{dP_{\varphi_0,\bn}}\right)^{1/2}
dP_{\varphi_0,\bn} \\
&=
\mathcal A
(P_{\varphi_0,\bn},P_{\varphi,\bn}) .
\end{aligned}
\]

\end{proof}

Define \(x_+:=\max(x,0)\).
Pointwise likelihood bounds require a modulus to transfer continuum
improvements to a finite net. We will use the following notation in the results below
\(
Z_{\bn}:=\Sigma_{\varphi_0,\bn}^{-1/2}Y_{\bn}
\)
and
\(
\mathcal A_{\bn}:=\{\|Z_{\bn}\|^2\le2N_{\bn}\}.
\)
\begin{lemma}
\label{lem:working-likelihood-modulus}
There is \(C>0\) such that, for all sufficiently large sample sizes, on
\(\mathcal A_{\bn}\),
\begin{equation}
\sup_{\varphi\in\Theta_{\mathrm{tail}}}
\|\nabla_\varphi\ell_{\bn}(\varphi)\|
\le
\mathfrak L_{\bn}
:=C N_{\bn}a_{\bn}
h^{-2(\alpha_+-\alpha_0)_+}.
\label{eq:working-likelihood-modulus}
\end{equation}
Moreover,
$
\mathbb P_{\varphi_0}(\mathcal A_{\bn})\longrightarrow1.
$
\end{lemma}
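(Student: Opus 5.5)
Write \(\Sigma_\varphi:=\Sigma_{\varphi,\bn}\) and \(W:=\Sigma_{\varphi_0,\bn}^{-1/2}\). I will bound each component of the gradient
\[
 \partial_j\ell_{\bn}(\varphi)=-\tfrac12\tr(\Sigma_\varphi^{-1}\partial_j\Sigma_\varphi)+\tfrac12 Y^T\Sigma_\varphi^{-1}(\partial_j\Sigma_\varphi)\Sigma_\varphi^{-1}Y .
\]
The plan is to reduce both terms to operator-norm bounds on the single whitened matrix \(G_j(\varphi):=\Sigma_\varphi^{-1/2}(\partial_j\Sigma_\varphi)\Sigma_\varphi^{-1/2}\), together with a bound on \(\|\Sigma_\varphi^{-1/2}\Sigma_{\varphi_0}^{1/2}\|_2^2=\|V_{\varphi,\bn}^{-1}\|_2\).

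The trace term satisfies \(|\tr G_j|\le N_{\bn}\|G_j\|_2\). For the quadratic term, write \(Y=W^{-1}Z\); then
\[
 |Y^T\Sigma_\varphi^{-1/2}G_j\Sigma_\varphi^{-1/2}Y|\le\|G_j\|_2\,\|V_{\varphi,\bn}^{-1}\|_2\,\|Z\|^2\le 2N_{\bn}\|G_j\|_2\|V_{\varphi,\bn}^{-1}\|_2
\]
on \(\mathcal A_{\bn}\).

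\textbf{Bounding \(G_j\).} For \(j=\tau\), the scale representation gives \(G_\tau=I\). For \(j=\alpha\), use the factorization \(\Sigma_\varphi=e^\tau h^{2\alpha-d}T_{\bn}(b_{h,\alpha})\). This yields \(\partial_\alpha\Sigma_\varphi=e^\tau h^{2\alpha-d}T_{\bn}(\partial_\alpha b_{h,\alpha}-a_{\bn}b_{h,\alpha})\). Lemma~\ref{lem:uniform-tail-family-comparison} gives the pointwise bound \(|\partial_\alpha b_{h,\alpha}|\le Ca_{\bn}b_{h,\alpha}\). The Toeplitz quadratic-form identity then gives the Loewner sandwich \(\pm Ca_{\bn}T_{\bn}(b_{h,\alpha})\), and congruence yields \(\|G_\alpha\|_2\le Ca_{\bn}\) uniformly on \(\Theta_{\mathrm{tail}}\).

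\textbf{Bounding \(V_{\varphi,\bn}^{-1}\).} I compare symbols. We have \(V_{\varphi,\bn}=e^{\tau-\tau_0}h^{2(\alpha-\alpha_0)}\,W T_{\bn}(b_{h,\alpha})W\) with \(WT_{\bn}(b_{h,\alpha_0})W=I\). So it suffices to bound \(b_{h,\alpha_0}/b_{h,\alpha}\) pointwise, and Lemma~\ref{lem:uniform-tail-family-comparison} reduces this to \(\psi_{h,\alpha_0}/\psi_{h,\alpha}\). Termwise, \(A^{-\alpha_0}/A^{-\alpha}=A^{\alpha-\alpha_0}\) with \(h^2\le A_{\bk,h}\). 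Both signs of \(\alpha-\alpha_0\) must be handled:
\begin{itemize}
\item if \(\alpha\ge\alpha_0\), the ratio is bounded on the principal alias (there \(A\le 1+d\pi^2\)); the shifted aliases are handled by summability, using \(\alpha\ge\alpha_0\);
\item if \(\alpha<\alpha_0\), the ratio is at most \(h^{-2(\alpha_0-\alpha)}\).
\end{itemize}
Combined with the prefactor \(h^{2(\alpha-\alpha_0)}\), the inverse then carries the factor \(h^{-2(\alpha-\alpha_0)}\) in the first case and a bounded factor in the second. In all cases \(\|V_{\varphi,\bn}^{-1}\|_2\le Ch^{-2(\alpha_+-\alpha_0)_+}\), with \(e^{\tau_0-\tau}\) absorbed since \(\tau\) lies in a compact set.

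This sign bookkeeping, checked against the exponent in \(\mathfrak L_{\bn}\), is the main obstacle. One worry is that the inverse direction produces the factor for \(\alpha>\alpha_0\) while the lower-alias ratio contributes only in the other direction. If the bookkeeping fails, I will fall back on the cruder bound \(h^{-2|\alpha-\alpha_0|}\). That fallback still gives a polynomially growing modulus, and the net argument tolerates any such modulus since \(\log|\mathcal T_{\bn}|=O(a_{\bn})\).

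\textbf{Assembling and the probability claim.} Multiplying the bounds gives \(\|\nabla\ell_{\bn}\|\le CN_{\bn}a_{\bn}h^{-2(\alpha_+-\alpha_0)_+}\). Finally, \(\|Z\|^2\sim\chi^2_{N_{\bn}}\) under \(P_{\varphi_0}\), so Chebyshev's inequality gives \(\mathbb P(\|Z\|^2>2N_{\bn})\le 2/N_{\bn}\to0\).
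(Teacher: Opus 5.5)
Your proposal is correct and follows the paper's proof essentially step for step. It uses the same Loewner sandwich $-Ca_{\bn}\Sigma_{\varphi,\bn}\preceq\partial_\alpha\Sigma_{\varphi,\bn}\preceq Ca_{\bn}\Sigma_{\varphi,\bn}$ from Lemma~\ref{lem:uniform-tail-family-comparison}, and the same comparison $\Sigma_{\varphi_0,\bn}\preceq Ch^{-2(\alpha-\alpha_0)_+}\Sigma_{\varphi,\bn}$, obtained from $\psi_{h,\alpha_0}/\psi_{h,\alpha}$ (bounded for $\alpha\ge\alpha_0$, at most $Ch^{-2(\alpha_0-\alpha)}$ otherwise) times the prefactor $h^{2(\alpha_0-\alpha)}$; it also uses the same chi-square concentration for $\mathcal A_{\bn}$.

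Your sign bookkeeping is right, so the fallback is unnecessary; it would in general give a larger exponent than the stated one. Three clarifications are worth making:
\begin{itemize}
\item For $\alpha>\alpha_0$, a termwise alias ratio does not control the shifted aliases. What does is that $\sum_{\bk\ne\boldsymbol0}A_{\bk,h}^{-\alpha_0}$ is bounded (since $\alpha_0>d/2$) while the principal term of $\psi_{h,\alpha}$ is bounded below.
\item The $W$ in your factorization of $V_{\varphi,\bn}$ must be $T_{\bn}(b_{h,\alpha_0})^{-1/2}$, not $\Sigma_{\varphi_0,\bn}^{-1/2}$.
\item Using $\|V_{\varphi,\bn}^{-1}\|_2\le Ch^{-2(\alpha_+-\alpha_0)_+}$ relies on $h<1$.
\end{itemize}
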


\begin{proof}
For \(i\in\{\tau,\alpha\}\), differentiation of the Gaussian likelihood
gives
\[
\partial_i\ell_{\bn}(\varphi)
=
-\frac{1}{2}\tr
(\Sigma_{\varphi,\bn}^{-1}\partial_i\Sigma_{\varphi,\bn})
+\frac{1}{2}Y_{\bn}^{T}\Sigma_{\varphi,\bn}^{-1}
(\partial_i\Sigma_{\varphi,\bn})
\Sigma_{\varphi,\bn}^{-1}Y_{\bn}.
\]
We control this expression uniformly over
\(\varphi\in\Theta_{\mathrm{tail}}\) by first bounding
\(\partial_i\Sigma_{\varphi,\bn}\) relative to
\(\Sigma_{\varphi,\bn}\), and then comparing
\(\Sigma_{\varphi,\bn}\) with \(\Sigma_{\varphi_0,\bn}\).

Equation~\eqref{eq:supp-aliased-likelihood-factorization} and
Lemma~\ref{lem:uniform-tail-family-comparison} give, uniformly over
\(\varphi\in\Theta_{\mathrm{tail}}\),
\[
|\partial_\alpha\phi_{h,\varphi}|
\le Ca_{\bn}\phi_{h,\varphi},
\qquad
\partial_\tau\phi_{h,\varphi}
=\phi_{h,\varphi}.
\]
The multilevel Toeplitz quadratic-form identity in
Equation~\eqref{eq:supp-toeplitz-quadratic-form} therefore gives
\begin{align*}
 -Ca_{\bn}\Sigma_{\varphi,\bn}
 &\preceq \partial_\alpha\Sigma_{\varphi,\bn}
 \preceq Ca_{\bn}\Sigma_{\varphi,\bn},\\
 \partial_\tau\Sigma_{\varphi,\bn}
 &=\Sigma_{\varphi,\bn}.
\end{align*}
Since \(a_{\bn}\to\infty\), these bounds and the likelihood derivative
formula imply, for \(i\in\{\tau,\alpha\}\),
\[
|\partial_i\ell_{\bn}(\varphi)|
\le
Ca_{\bn}
\left\{
N_{\bn}
+Y_{\bn}^{T}\Sigma_{\varphi,\bn}^{-1}Y_{\bn}
\right\}
\]
for all sufficiently large sample sizes.

It remains to control the quadratic term uniformly in \(\varphi\).
Equation~\eqref{eq:tail-alias-global-comparison} reduces the relevant
symbol ratio to the corresponding ratio of the alias series
\(\psi_{h,\alpha_0}\) and \(\psi_{h,\alpha}\).  
Let \(a_{\boldsymbol0}(\bth):=h^2+\|\bth\|^2\).  By definition,
\[
\psi_{h,u}(\bth)
=
a_{\boldsymbol0}(\bth)^{-u}
+
\sum_{\bk\ne\boldsymbol0}
\{h^2+\|\xi_{\bk}(\bth)\|^2\}^{-u}.
\]
The nonnegativity of the second term gives the lower bound.  Since
\(\alpha_->d/2\), the sum over \(\bk\ne\boldsymbol0\) is bounded uniformly
for \(\bth\in Q\), \(u\in[\alpha_-,\alpha_+]\), and all sufficiently
large sample sizes.  Therefore,
$
a_{\boldsymbol0}(\bth)^{-u}
\le
\psi_{h,u}(\bth)
\le
a_{\boldsymbol0}(\bth)^{-u}+C.
$ It follows that
\[
\frac{\psi_{h,\alpha_0}(\bth)}{\psi_{h,\alpha}(\bth)}
\le
\begin{cases}
C, & \alpha\ge\alpha_0,\\
C h^{-2(\alpha_0-\alpha)}, & \alpha<\alpha_0.
\end{cases}
\]
Combining these bounds with the factorization in
Equation~\eqref{eq:supp-aliased-likelihood-factorization} and the
comparison in Equation~\eqref{eq:tail-alias-global-comparison}, and using
\(\tau\in[\tau_-,\tau_+]\), gives
\[
\frac{\phi_{h,\varphi_0}(\bth)}{\phi_{h,\varphi}(\bth)}
\le
C e^{\tau_0-\tau}
h^{2(\alpha_0-\alpha)}
\frac{\psi_{h,\alpha_0}(\bth)}{\psi_{h,\alpha}(\bth)}
\le
C h^{-2(\alpha-\alpha_0)_+}
\le
C h^{-2(\alpha_+-\alpha_0)_+}.
\]

The Toeplitz quadratic-form identity now gives
$
\Sigma_{\varphi_0,\bn}
\preceq
C h^{-2(\alpha_+-\alpha_0)_+}
\Sigma_{\varphi,\bn}.
$
After inversion,
\[
Y_{\bn}^{T}\Sigma_{\varphi,\bn}^{-1}Y_{\bn}
\le
C h^{-2(\alpha_+-\alpha_0)_+}
Y_{\bn}^{T}\Sigma_{\varphi_0,\bn}^{-1}Y_{\bn} =
C h^{-2(\alpha_+-\alpha_0)_+}\|Z_{\bn}\|^2.
\]
Consequently, on \(\mathcal A_{\bn}\),
\[
|\partial_i\ell_{\bn}(\varphi)|
\le
C N_{\bn}a_{\bn}
h^{-2(\alpha_+-\alpha_0)_+},
\qquad i\in\{\tau,\alpha\},
\]
uniformly over \(\varphi\in\Theta_{\mathrm{tail}}\).  Absorbing the two
coordinate bounds into \(C\) proves
Equation~\eqref{eq:working-likelihood-modulus}.

Finally, under \(P_{\varphi_0,\bn}\),
\(Z_{\bn}\sim\mathcal N(0,I_{N_{\bn}})\).  Since
\(\|Z_{\bn}\|^2/N_{\bn}\to1\) in probability,
\(\mathbb P_{\varphi_0}(\mathcal A_{\bn})\to1\).
\end{proof}

The deterministic modulus also satisfies
\[
\log\mathfrak L_{\bn}
=
O(1)+\log N_{\bn}+\log a_{\bn}
+2(\alpha_+-\alpha_0)_+\log(1/h)
=
O(a_{\bn}).
\]
\begin{lemma}
\label{lem:working-likelihood-exclusion}
Let \(K_{\bn}\) be deterministic Borel subsets of
\(\Theta_{\mathrm{tail}}\).  With the conventions
\(\inf\varnothing=+\infty\) and \(\sup\varnothing=-\infty\), write
\[
        \beta_{\bn}
        :=\inf_{\varphi\in K_{\bn}}
        \{-\log\mathcal A(P_{\varphi_0,\bn},P_{\varphi,\bn})\}.
\]
There is \(C>0\) such that, for all sufficiently large sample sizes,
\[
\begin{aligned}
        P_{\varphi_0,\bn}
        \left\{
        \sup_{\varphi\in K_{\bn}}
        [\ell_{\bn}(\varphi)-\ell_{\bn}(\varphi_0)]
        \ge0
        \right\}
        &\le
        P_{\varphi_0,\bn}(\mathcal A_{\bn}^c)
        +C\mathfrak L_{\bn}^2e^{-\beta_{\bn}}.
\end{aligned}
\]
Consequently, this probability tends to zero whenever
$
        \beta_{\bn}-2\log\mathfrak L_{\bn}\longrightarrow\infty.
$
\end{lemma}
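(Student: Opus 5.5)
On the event \(\mathcal A_{\bn}\) we will replace the continuum \(K_{\bn}\) by a finite net and then apply the one-point bound of Lemma~\ref{lem:likelihood-ratio-affinity-bound} at each net point. Two inputs make this work. The gradient modulus \(\mathfrak L_{\bn}\) of Lemma~\ref{lem:working-likelihood-modulus} lets a likelihood improvement anywhere in \(K_{\bn}\) be reproduced, up to a bounded loss, at a nearby net point. The affinity infimum \(\beta_{\bn}\) controls each net point. If \(K_{\bn}=\varnothing\), the event is empty and \(e^{-\beta_{\bn}}=0\), so we assume \(K_{\bn}\neq\varnothing\).

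\textbf{Net construction.} Set the mesh \(\delta_{\bn}:=1/\mathfrak L_{\bn}\). Because \(\Theta_{\mathrm{tail}}\) is a fixed rectangle, \(K_{\bn}\) can be covered by at most \(C\mathfrak L_{\bn}^2\) closed squares of side \(\delta_{\bn}\); the constant depends only on the side lengths, and \(\mathfrak L_{\bn}\ge1\) for large samples. In every square that meets \(K_{\bn}\) we pick one point of \(K_{\bn}\), and call the resulting set \(\mathcal T_{\bn}\subseteq K_{\bn}\). Every \(\varphi\in K_{\bn}\) then lies within distance \(\sqrt2\,\delta_{\bn}\) of some \(\varphi'\in\mathcal T_{\bn}\). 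These two points share a square, and the segment joining them stays in the convex set \(\Theta_{\mathrm{tail}}\).

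\textbf{Transfer and union bound.} On \(\mathcal A_{\bn}\), the mean value theorem along that segment together with Lemma~\ref{lem:working-likelihood-modulus} gives \(|\ell_{\bn}(\varphi)-\ell_{\bn}(\varphi')|\le\mathfrak L_{\bn}\sqrt2\,\delta_{\bn}=\sqrt2\). Hence
\[
\Bigl\{\sup_{K_{\bn}}[\ell_{\bn}-\ell_{\bn}(\varphi_0)]\ge0\Bigr\}\cap\mathcal A_{\bn}
\subseteq\bigcup_{\varphi'\in\mathcal T_{\bn}}\{\ell_{\bn}(\varphi')-\ell_{\bn}(\varphi_0)\ge-\sqrt2\}.
\]
For each \(\varphi'\), Lemma~\ref{lem:likelihood-ratio-affinity-bound} with \(r=\sqrt2\) bounds the probability of the corresponding event by \(e^{\sqrt2/2}\mathcal A(P_{\varphi_0,\bn},P_{\varphi',\bn})\le e^{\sqrt2/2}e^{-\beta_{\bn}}\), using \(\varphi'\in K_{\bn}\). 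A union bound over \(|\mathcal T_{\bn}|\le C\mathfrak L_{\bn}^2\) points, plus the probability of \(\mathcal A_{\bn}^c\), gives the stated inequality.

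\textbf{Consequence and main obstacle.} The bound equals \(P(\mathcal A_{\bn}^c)+C\exp\{-(\beta_{\bn}-2\log\mathfrak L_{\bn})\}\). Lemma~\ref{lem:working-likelihood-modulus} gives \(P(\mathcal A_{\bn}^c)\to0\), so the probability tends to zero under the stated condition. The main care needed is measurability of the supremum event. Continuity of \(\ell_{\bn}\) in \(\varphi\) lets us replace \(K_{\bn}\) by a countable dense subset, or alternatively we read the probability as an outer probability; the inclusion above holds pointwise either way. A second point to check is that the net points must lie in \(K_{\bn}\) itself, so that \(\beta_{\bn}\) applies to them; this is why we choose them from \(K_{\bn}\) and do not take lattice points.
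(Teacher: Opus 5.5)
Your proposal is correct and follows essentially the paper's argument. You build an internal net \(\mathcal T_{\bn}\subseteq K_{\bn}\) of cardinality \(O(\mathfrak L_{\bn}^2)\), transfer likelihood improvements to net points through the gradient modulus on \(\mathcal A_{\bn}\), apply the one-point affinity bound at each net point, and finish with a union bound. The only differences are cosmetic: the paper uses mesh \((16\mathfrak L_{\bn})^{-1}\) and a near-maximizer with slack \(1/16\), which gives \(r=1/8\), whereas you use mesh \(\mathfrak L_{\bn}^{-1}\) and pass directly from the supremum to \(r=\sqrt2\); both choices are valid.
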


\begin{proof}
The assertion is immediate when \(K_{\bn}=\varnothing\).  Otherwise, the
likelihood increment is continuous, so its supremum over \(K_{\bn}\) equals
its supremum over a countable dense subset of \(K_{\bn}\); in particular,
the event in the statement is measurable.  Let
$
        \epsilon_{\bn}:=(16\mathfrak L_{\bn})^{-1}.
$
Because \(\Theta_{\mathrm{tail}}\) is a fixed compact rectangle in
\(\R^2\) and \(\mathfrak L_{\bn}\ge1\) eventually, \(K_{\bn}\) has an
internal \(\epsilon_{\bn}\)-net \(\mathcal T_{\bn}\subset K_{\bn}\)
satisfying
$
        |\mathcal T_{\bn}|\le C\mathfrak L_{\bn}^2.
$
If the event in the statement occurs, then the definition of the supremum
gives \(\varphi\in K_{\bn}\) such that
$
        \ell_{\bn}(\varphi)-\ell_{\bn}(\varphi_0)>-\frac{1}{16}.
$
Choose \(\widetilde\varphi\in\mathcal T_{\bn}\) within
\(\epsilon_{\bn}\) of \(\varphi\).  The segment joining these points lies
in the convex set \(\Theta_{\mathrm{tail}}\).  Hence, on
\(\mathcal A_{\bn}\), Equation~\eqref{eq:working-likelihood-modulus}
gives
\[
\begin{aligned}
        \ell_{\bn}(\widetilde\varphi)-\ell_{\bn}(\varphi_0)
        \ge
        \ell_{\bn}(\varphi)-\ell_{\bn}(\varphi_0)
        -\mathfrak L_{\bn}
        \|\widetilde\varphi-\varphi\|>-\frac{1}{8}.
\end{aligned}
\]
Thus Lemma~\ref{lem:likelihood-ratio-affinity-bound}, applied at every
center of \(\mathcal T_{\bn}\), gives
\[
\begin{aligned}
        P_{\varphi_0,\bn}
        \left\{
        \sup_{\varphi\in K_{\bn}}
        [\ell_{\bn}(\varphi)-\ell_{\bn}(\varphi_0)]
        \ge0
        \right\}\le
        P_{\varphi_0,\bn}(\mathcal A_{\bn}^c)
        +e^{1/16}|\mathcal T_{\bn}|e^{-\beta_{\bn}},
\end{aligned}
\]
which proves the bound.  The final assertion follows from
\(P_{\varphi_0,\bn}(\mathcal A_{\bn}^c)\to0\).
\end{proof}
}

{For consistency, fixed-neighborhood separation controls the grid
points, the likelihood modulus transfers continuum improvements to the grid,
and the net entropy is dominated by the affinity exponent.
\begin{proof}[Proof of Theorem~\ref{thm:tail-chart-consistency}]

{We use the neighborhood characterization of convergence in probability.
Fix a relative open neighborhood \(U\) of \(\varphi_0\) in
\(\Theta_{\mathrm{tail}}\), and set \(K:=\Theta_{\mathrm{tail}}\setminus U\).
If \(K=\varnothing\), the claim is immediate.  Suppose henceforth that
\(K\ne\varnothing\).  Then \(K\) is compact.  Since
\(\widehat\varphi_{\bn}\in\Theta_{\mathrm{tail}}\),
\[
 \{\widehat\varphi_{\bn}\in K\}
 \subseteq
 E_{\bn}:=\left\{
 \sup_{\varphi\in K}
 [\ell_{\bn}(\varphi)-\ell_{\bn}(\varphi_0)]\ge0
 \right\}.
\]
{
The conclusion of Theorem~\ref{thm:tail-chart-consistency} is under the
fitted law \(P_{\varphi_0,\bn}\), so it is enough to prove
\(P_{\varphi_0,\bn}(E_{\bn})\to0\).  For the rest of the proof, write
\(\mathbb P_{\varphi_0}:=P_{\varphi_0,\bn}\).}

We apply Lemma~\ref{lem:working-likelihood-exclusion} with
\(K_{\bn}=K\).  By the definition of \(\beta_{\bn}\) in that lemma,
Corollary~\ref{cor:working-glt-fixed-neighborhood-separation} gives
$
        \beta_{\bn}
        \ge c_U\frac{N_{\bn}}{a_{\bn}^2}.
$
Since \(\log\mathfrak L_{\bn}=O(a_{\bn})\) and
\(N_{\bn}/a_{\bn}^3\to\infty\),
\[
        \beta_{\bn}-2\log\mathfrak L_{\bn}
        \ge c_U\frac{N_{\bn}}{a_{\bn}^2}-O(a_{\bn})
        \longrightarrow\infty.
\]
Lemma~\ref{lem:working-likelihood-exclusion} therefore gives
$
        \mathbb P_{\varphi_0}(E_{\bn})\longrightarrow0.
$ Since \(U\) was
arbitrary, \((\widehat\tau_{\bn},\widehat\alpha_{\bn})\to
(\tau_0,\alpha_0)\) in probability.}
\end{proof}}

{Global localization is now complete.  The uniform LAN
remainder bounds use the following Gaussian quadratic-process tools.}

\begingroup
\begin{lemma}
\label{lem:gaussian-quadratic-process-chaining}
Fix \(p\in\mathbb N\).  There is a constant \(C_p<\infty\), depending
only on \(p\), such that the following holds:  Let \(T\subset\R^p\) be
nonempty and finite, let \(u_\star\in\R^p\), and suppose that
\(T\subset B(u_\star,R)\).  Let \(W(u)\), \(u\in T\cup\{u_\star\}\), be
deterministic symmetric \(N\times N\) matrices.  If
\(Z\sim\mathcal N(0,I_N)\), set
\(X(u):=Z^TW(u)Z-\operatorname{tr}W(u)\).  If, for constants \(L,b>0\),
\[
        \begin{aligned}
        \|W(u)-W(v)\|_{\mathrm{HS}}\le L\|u-v\|,
        \qquad\\
        \|W(u)-W(v)\|_2
        \le\min\{L\|u-v\|,b\},
        \qquad u,v\in T\cup\{u_\star\},
        \end{aligned}
\]
then, for every \(x\ge1\),
\begin{align*}
        \mathbb P\biggl\{
        \sup_{u\in T}|X(u)-X(u_\star)|
        &>C_p\biggl[
        LR+b\left\{1+\log\left(1+\frac{LR}{b}\right)\right\}\\
        &\quad+\sqrt{x}LR+xb\biggr]\biggr\}
        \le C_pe^{-x}.
\end{align*}
\end{lemma}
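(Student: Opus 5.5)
The plan is a generic chaining argument for a Gaussian chaos of order two, driven by the Hanson--Wright inequality. The first step is the increment tail bound. For fixed \(u,v\in T\cup\{u_\star\}\), put \(D:=W(u)-W(v)\). Then \(X(u)-X(v)=Z^TDZ-\tr D\), and Hanson--Wright in its Gaussian form (equivalently, Laurent--Massart after diagonalising \(D\)) gives, for all \(y\ge0\),
\[
 \mathbb P\{|X(u)-X(v)|>C(\sqrt y\,\|D\|_{\mathrm{HS}}+y\|D\|_2)\}\le2e^{-y}.
\]
The hypotheses then yield \(|X(u)-X(v)|\le C(\sqrt y\,L d+y\min\{Ld,b\})\) with \(d=\|u-v\|\). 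This is a mixed-tail condition with respect to the two metrics \(d_2(u,v)=L\|u-v\|\) and \(d_1(u,v)=\min\{L\|u-v\|,b\}\).

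The second step is the chaining construction. Since \(T\subset B(u_\star,R)\subset\R^p\), there are dyadic nets \(T_k\subset T\) at Euclidean scale \(R2^{-k}\) with \(|T_k|\le(C2^k)^p\). I would chain from \(u_\star\) through \(\pi_k(u)\in T_k\). The chaining stops at the level \(k_0\) where \(LR2^{-k_0}\approx b\), and because \(T\) is finite it can be continued until each point is reached exactly. At level \(k\) I would apply the increment bound with \(y_k=x+c(k+1)p\) and take a union bound over the at most \(|T_k||T_{k-1}|\) pairs. The failure probability then sums to at most \(C_pe^{-x}\).

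The third step is to sum the increments. The \(\sqrt{y_k}\) terms contribute \(\sum_k\sqrt{x+kp}\,LR2^{-k}\le C_p(\sqrt x\,LR+LR)\). The \(y_k\) terms contribute
\[
 \sum_k(x+kp)\min\{LR2^{-k},b\}.
\]
For \(k\ge k_0\) the geometric decay gives \(C_p(x+k_0)b\), since \(LR2^{-k}\le b\) there. For \(k<k_0\) the summand is capped at \(b\), which appears to produce a \(k_0^2b\) term, so I would instead use the Euclidean bound \((x+kp)LR2^{-k}\) on those levels. That bound in turn grows like \(LR\), which is not acceptable when \(LR\gg b\).

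This is the main obstacle: reaching exactly \(LR+b\{1+\log(1+LR/b)\}\) requires balancing the two regimes. My plan is to treat scales coarser than \(b/L\) separately. On those levels the operator-norm increment is capped at \(b\), while the Hilbert--Schmidt part still decays geometrically. I would therefore choose \(y_k\) at level \(k<k_0\) so that \(y_kb\) is only of size \(b(x+\log(1+LR/b))\), by splitting the union bound with level-dependent weights, namely \(y_k=x+p\log(C2^k)\) with \(k\le k_0\approx\log_2(1+LR/b)\). If summing the \(y_kb\) terms over \(k<k_0\) still leaves a \(k_0^2\) factor, I would fall back on a single-step union bound from \(u_\star\) to the coarse net \(T_{k_0}\). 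That net has \(\log|T_{k_0}|\lesssim p\,k_0\), and the step gives increments at most \(\sqrt{x+pk_0}\,LR+(x+pk_0)b\). Because \(\sqrt{k_0}\,LR\) is not allowed either, the estimate has to be refined with Talagrand's generic chaining bound \(\gamma_2(T,d_2)+\gamma_1(T,d_1)\) for mixed tails (Dirksen's version with the deviation term \(\sqrt x\,\mathrm{diam}_{d_2}+x\,\mathrm{diam}_{d_1}\)). The last step is to estimate the functionals directly: \(\gamma_2(T,d_2)\lesssim_p LR\) by Dudley's entropy integral, and \(\gamma_1(T,d_1)\lesssim_p\int_0^b\log N(T,d_1,\epsilon)\,d\epsilon\lesssim_p b\{1+\log(1+LR/b)\}\). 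The latter holds because \(d_1\)-balls of radius \(\epsilon<b\) are Euclidean balls of radius \(\epsilon/L\), and integrating \(p\log(LR/\epsilon)\) over \((0,b)\) gives exactly this term.
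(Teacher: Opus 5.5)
Your final argument is correct and is essentially the paper's proof (Dirksen's Theorems~6.1 and~3.5 with his Equation~(2.3)): Hanson--Wright increments give a mixed tail in \(d_2=L\|u-v\|\) and \(d_1=\min\{L\|u-v\|,b\}\), Dirksen's generic-chaining bound supplies \(\gamma_2+\gamma_1\) plus the deviation term \(\sqrt{x}\,\mathrm{diam}_{d_2}+x\,\mathrm{diam}_{d_1}\), and the two functionals are estimated by entropy integrals over Euclidean covering numbers, provided you run the bound on \(T\cup\{u_\star\}\) so that the anchor lies in the index set (the diameters are then at most \(2LR\) and \(b\)). Your preliminary dyadic chaining is superseded anyway, but the \(k_0^2b\) term you worried about is harmless, since \(\log^2(1+t)\lesssim 1+t\) lets \(LR+b\) absorb it; the genuine loss there is the \(k_0xb\) term from putting \(x\) into every coarse level, which the generic-chaining theorem avoids by paying for \(x\) only once, through the diameters.
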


\begin{proof}
For \(u,v\in T\cup\{u_\star\}\),
\[
        X(u)-X(v)
        =Z^T\{W(u)-W(v)\}Z-\operatorname{tr}\{W(u)-W(v)\}.
\]
Because \(Z\) has standard Gaussian coordinates, Theorem~6.1 of
\citep{Dirksen2015}, with its universal rescaling, gives mixed-tail
increments for this difference with
\[
        d_2(u,v):=C\|W(u)-W(v)\|_{\mathrm{HS}},
        \qquad
        d_1(u,v):=C\|W(u)-W(v)\|_2.
\]
Set \(\widetilde T:=T\cup\{u_\star\}\).  Since
\(\operatorname{diam}(\widetilde T)\le2R\), the displayed hypotheses give
\(\operatorname{diam}(\widetilde T,d_2)\le CLR\) and
\(\operatorname{diam}(\widetilde T,d_1)\le Cb\), after absorbing fixed
factors into \(C\).  The Euclidean covering bound
{\cite[Corollary~4.2.13]{Vershynin2018}}, followed by Equation~(2.3) of
\citep{Dirksen2015}, gives
\[
        \gamma_2(\widetilde T,d_2)\le C_pLR,
        \qquad
        \gamma_1(\widetilde T,d_1)
        \le C_pb\left\{1+\log\left(1+\frac{LR}{b}\right)\right\}.
\]
Theorem~3.5 of \citep{Dirksen2015}, anchored at \(X(u_\star)\), now gives
the stated tail inequality, with the fixed factors absorbed into \(C_p\).
\end{proof}

\begin{corollary}
\label{cor:vanishing-gaussian-quadratic-process}
Let \(K\subset\R^p\) be fixed, compact, convex, and contain the origin.
Suppose that \(A_{\bn}(u)\) is a deterministic, continuously
differentiable field of symmetric \(N_{\bn}\times N_{\bn}\) matrices on a
neighborhood of \(K\), with \(A_{\bn}(0)=0\), and that
\[
        \gamma_{\bn}
        :=\max_{1\le j\le p}\sup_{u\in K}
        \|\partial_{u_j}A_{\bn}(u)\|_{\mathrm{HS}}
        \longrightarrow0.
\]
If \(Z_{\bn}\sim\mathcal N(0,I_{N_{\bn}})\), then
\[
        \sup_{u\in K}
        |Z_{\bn}^TA_{\bn}(u)Z_{\bn}
        -\operatorname{tr}A_{\bn}(u)|
        \xrightarrow{\mathbb P}0.
\]
\end{corollary}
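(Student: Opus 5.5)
We derive the corollary from Lemma~\ref{lem:gaussian-quadratic-process-chaining}, applied with \(u_\star=0\). Since \(A_{\bn}(0)=0\), we have \(X(0)=0\), so the supremum of interest is \(\sup_{u\in K}|X_{\bn}(u)-X_{\bn}(0)|\) with \(X_{\bn}(u):=Z_{\bn}^TA_{\bn}(u)Z_{\bn}-\operatorname{tr}A_{\bn}(u)\).

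\emph{Lipschitz constants.} By convexity of \(K\), for \(u,v\in K\) the segment between them lies in \(K\). The fundamental theorem of calculus gives
\[
A_{\bn}(u)-A_{\bn}(v)=\int_0^1\sum_j(u_j-v_j)\,\partial_{u_j}A_{\bn}(v+s(u-v))\,ds .
\]
Applying the triangle inequality to this integral yields
\[
\|A_{\bn}(u)-A_{\bn}(v)\|_{\mathrm{HS}}\le\sqrt p\,\gamma_{\bn}\|u-v\|.
\]
Because \(\|\cdot\|_2\le\|\cdot\|_{\mathrm{HS}}\), the same bound holds in operator norm. We may therefore take \(L=L_{\bn}:=\sqrt p\,\gamma_{\bn}\) and \(b=b_{\bn}:=L_{\bn}R\), where \(R\) is the radius of a fixed ball about \(0\) containing \(K\). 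Then \(\min\{L\|u-v\|,b\}\) dominates the operator-norm increments on \(K\), since \(\|u-v\|\le 2R\); replacing \(R\) by \(2R\) absorbs the factor \(2\).

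\emph{Reduction to finite sets.} The lemma is stated for finite \(T\). Continuity of \(u\mapsto A_{\bn}(u)\) makes \(X_{\bn}\) almost surely continuous on \(K\). Hence the supremum over \(K\) equals the supremum over a countable dense subset \(D\subset K\), and this equals the increasing limit of suprema over finite sets \(T_k\uparrow D\). The tail bound of the lemma is uniform in \(T\), so monotone convergence carries it over to \(\sup_{u\in K}\).

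\emph{Tail bound.} For fixed \(x\ge1\), the lemma bounds the supremum by
\[
C_p\bigl[L_{\bn}R+L_{\bn}R(1+\log 2)+\sqrt x\,L_{\bn}R+x L_{\bn}R\bigr]\le C_p' x L_{\bn}R,
\]
off an event of probability at most \(C_pe^{-x}\). Given \(\epsilon,\delta>0\), first choose \(x\) with \(C_pe^{-x}<\delta\). Since \(\gamma_{\bn}\to0\), we then have \(C_p'xL_{\bn}R<\epsilon\) for all large \(\bn\), so the probability that the supremum exceeds \(\epsilon\) is eventually below \(\delta\).

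\emph{Main obstacle.} The only delicate points are the degenerate case \(L_{\bn}=0\), where \(A_{\bn}\equiv0\) and the statement is trivial, and the passage from finite \(T\) to all of \(K\) handled above. The remaining steps are routine.
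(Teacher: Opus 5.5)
Your proposal is correct and follows essentially the same route as the paper. Both arguments derive HS and operator-norm Lipschitz bounds of order \(\gamma_{\bn}\) from the derivative bound and convexity, and apply Lemma~\ref{lem:gaussian-quadratic-process-chaining} with external anchor \(u_\star=0\) (where \(X_{\bn}(0)=0\)) and \(L,b\asymp\gamma_{\bn}\). Both then pass from finite sets to \(K\) through increasing finite sets with dense union and continuity from below, choosing \(x\) first and then \(\bn\) large.
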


\begin{proof}[Proof of Corollary~\ref{cor:vanishing-gaussian-quadratic-process}]
Choose \(R_K<\infty\) with \(K\subset B(0,R_K)\).  The derivative bound and
convexity give, for \(u,v\in K\),
\begin{align*}
        \|A_{\bn}(u)-A_{\bn}(v)\|_{\mathrm{HS}}
        &\le C_p\gamma_{\bn}\|u-v\|,\\
        \|A_{\bn}(u)-A_{\bn}(v)\|_2
        &\le\min\{C_p\gamma_{\bn}\|u-v\|,
        C_p\gamma_{\bn}R_K\}.
\end{align*}
If \(\gamma_{\bn}=0\), the conclusion is immediate.  Otherwise apply
Lemma~\ref{lem:gaussian-quadratic-process-chaining} to each finite subset
of \(K\), with external reference point \(u_{\star}=0\), \(L=C_p\gamma_{\bn}\), and
\(b=C_p\gamma_{\bn}R_K\).  Its centered scalar process is
\[
        X_{\bn}(u):=Z_{\bn}^TA_{\bn}(u)Z_{\bn}-\operatorname{tr}A_{\bn}(u),
\]
and \(A_{\bn}(0)=0\) gives \(X_{\bn}(0)=0\), so its anchored supremum is
the absolute supremum in the conclusion.  For fixed \(\bn\), let \(S_m\) be
a finite \(2^{-m}\)-net of \(K\) and set
\[
        T_m:=S_1\cup\cdots\cup S_m.
\]
Then \(T_m\subset T_{m+1}\), and \(\bigcup_{m\ge1}T_m\) is dense in
\(K\).  Since
\(u\mapsto Z_{\bn}^TA_{\bn}(u)Z_{\bn}-\operatorname{tr}A_{\bn}(u)\)
is continuous,
\[
        \max_{u\in T_m}
        |Z_{\bn}^TA_{\bn}(u)Z_{\bn}-\operatorname{tr}A_{\bn}(u)|
        \uparrow
        \sup_{u\in K}
        |Z_{\bn}^TA_{\bn}(u)Z_{\bn}-\operatorname{tr}A_{\bn}(u)|.
\]
The chaining bound is uniform in \(m\), so continuity from below of
probability extends it to \(K\).  Choose \(x\) first so that its tail
probability is arbitrarily small, and then take \(\bn\) large enough that
the displayed threshold is arbitrarily small.
\end{proof}

\begin{corollary}
\label{cor:smooth-transform-vanishing-gaussian-quadratic-process}
Let \(K\subset\R^p\) be fixed, compact, convex, and contain the origin.
Suppose that \(A_{\bn}(u)\) is a deterministic, continuously
differentiable field of symmetric \(N_{\bn}\times N_{\bn}\) matrices on a
neighborhood of \(K\), with \(A_{\bn}(0)=0\), and that
\[
        \sup_{u\in K}\|A_{\bn}(u)\|_2
        \longrightarrow0,
        \qquad
        \max_{1\le j\le p}\sup_{u\in K}
        \|\partial_{u_j}A_{\bn}(u)\|_{\mathrm{HS}}
        =O(1).
\]
Let \(I\subset\R\) be an open interval containing zero, and let
\(f\in C^1(I)\) satisfy
\[
        f(0)=f'(0)=0.
\]
For a symmetric matrix
\(A=U\operatorname{diag}(\lambda_1,\ldots,\lambda_N)U^T\)
whose spectrum is contained in \(I\), define
\[
        f(A)
        :=
        U\operatorname{diag}
        \{f(\lambda_1),\ldots,f(\lambda_N)\}U^T.
\]
If \(Z_{\bn}\sim\mathcal N(0,I_{N_{\bn}})\), then
\[
        \sup_{u\in K}
        \left|
        Z_{\bn}^Tf(A_{\bn}(u))Z_{\bn}
        -\operatorname{tr}f(A_{\bn}(u))
        \right|
        \xrightarrow{\mathbb P}0.
\]
\end{corollary}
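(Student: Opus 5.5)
The plan is to reduce Corollary~\ref{cor:smooth-transform-vanishing-gaussian-quadratic-process} to Corollary~\ref{cor:vanishing-gaussian-quadratic-process}. We apply the latter to the transformed field \(B_{\bn}(u):=f(A_{\bn}(u))\) and verify its hypotheses: \(B_{\bn}(0)=f(0)I=0\), \(B_{\bn}\) is continuously differentiable in \(u\), and \(\sup_{u\in K}\|\partial_{u_j}B_{\bn}(u)\|_{\mathrm{HS}}\to0\).

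First, since \(\sup_{u\in K}\|A_{\bn}(u)\|_2\to0\), for all large \(\bn\) the spectra of \(A_{\bn}(u)\) lie in a fixed compact interval \([-\epsilon_{\bn},\epsilon_{\bn}]\subset I\) with \(\epsilon_{\bn}\to0\). On a neighborhood of \(K\), the spectrum therefore stays in \(I\), so \(f(A_{\bn}(u))\) is well defined. Differentiability of matrix functions for \(f\in C^1\) follows from the Daleckii--Krein formula. In the eigenbasis \(U\) of \(A=A_{\bn}(u)\),
\[
 \partial_{u_j}f(A)=U\bigl(f^{[1]}(\lambda_k,\lambda_l)\,[U^T(\partial_{u_j}A)U]_{kl}\bigr)_{k,l}U^T,
\]
where \(f^{[1]}\) is the first divided difference, with \(f^{[1]}(\lambda,\lambda)=f'(\lambda)\). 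Continuity of this derivative in \(u\) holds for \(C^1\) functions of finite symmetric matrices; this is classical, for example Bhatia's \emph{Matrix Analysis}, Theorem~V.3.3.

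The key estimate is then a Schur-multiplier bound in Hilbert--Schmidt norm. Since the Hadamard product acts entrywise in the orthonormal basis,
\[
 \|\partial_{u_j}f(A)\|_{\mathrm{HS}}\le \sup_{|s|,|t|\le\epsilon_{\bn}}|f^{[1]}(s,t)|\;\|\partial_{u_j}A\|_{\mathrm{HS}}.
\]
By the mean value theorem, \(|f^{[1]}(s,t)|\le\sup_{|x|\le\epsilon_{\bn}}|f'(x)|=:\omega_{\bn}\). Because \(f'(0)=0\) and \(f'\) is continuous, \(\omega_{\bn}\to0\). Combined with the assumed bound \(\max_j\sup_{u\in K}\|\partial_{u_j}A_{\bn}(u)\|_{\mathrm{HS}}=O(1)\), this gives \(\gamma_{\bn}\le\omega_{\bn}\,O(1)\to0\) for the field \(B_{\bn}\). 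Corollary~\ref{cor:vanishing-gaussian-quadratic-process} then yields the claim. The hypothesis on a neighborhood of \(K\) only needs to hold for all large \(\bn\), and finitely many initial indices do not affect convergence in probability.

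The only delicate point, and the step I expect to be the main obstacle, is justifying the Daleckii--Krein derivative when \(f\) is merely \(C^1\) and eigenvalues may coalesce. To avoid this, I would first try a perturbation route that bypasses differentiability. For \(u,v\in K\), the Schur-multiplier bound still gives
\[
 \|f(A(u))-f(A(v))\|_{\mathrm{HS}}\le\omega_{\bn}\|A(u)-A(v)\|_{\mathrm{HS}},
\]
using a divided-difference Lipschitz estimate for Hilbert--Schmidt perturbations. For operator norm we have \(\|f(A(u))-f(A(v))\|_2\le 2\sup_{|x|\le\epsilon_{\bn}}|f(x)|=:b_{\bn}\to0\). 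Feeding these Lipschitz and increment bounds directly into Lemma~\ref{lem:gaussian-quadratic-process-chaining}, with \(L=C\omega_{\bn}\), \(b=b_{\bn}\), and anchor \(u_\star=0\), followed by the same dense-net monotone limiting argument used in the proof of Corollary~\ref{cor:vanishing-gaussian-quadratic-process}, gives the conclusion without needing \(C^1\) regularity of \(u\mapsto f(A_{\bn}(u))\). Continuity of the process in \(u\), needed for the net argument, follows from continuity of \(f\) and of the eigenvalue map.
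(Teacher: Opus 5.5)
Your primary route is the paper's proof: put the spectra in a small interval around zero, use the Daleckii--Krein divided-difference formula with the entrywise Hilbert--Schmidt bound and factor $\sup_{|x|\le\sup_{u\in K}\|A_{\bn}(u)\|_2}|f'(x)|\to0$, and apply Corollary~\ref{cor:vanishing-gaussian-quadratic-process} to $f(A_{\bn}(u))$. The worry about coalescing eigenvalues is unnecessary, because the divided-difference derivative formula already holds for $C^1$ functions; your fallback, which feeds the Hilbert--Schmidt Lipschitz bound for $f$ directly into Lemma~\ref{lem:gaussian-quadratic-process-chaining}, is also valid.
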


\begin{proof}[
Proof of Corollary~\ref{cor:smooth-transform-vanishing-gaussian-quadratic-process}]
Choose \(\rho>0\) such that \([-\rho,\rho]\subset I\).  Symmetry implies
that every eigenvalue has absolute value at most the operator norm.  Hence,
the uniform operator-norm convergence eventually places every spectrum in
\([-\rho,\rho]\), uniformly over \(K\).  Set
\(B_{\bn}(u):=f(A_{\bn}(u))\), which is symmetric, with \(B_{\bn}(0)=0\).

For a symmetric matrix
\(A=U\operatorname{diag}(\lambda_1,\ldots,\lambda_N)U^T\)
and a symmetric direction \(H\), the Daleckii--Krein divided-difference formula
\cite[Theorem~5.3.1]{Bhatia2007PositiveDefinite}
gives
\[
        \bigl\{U^TDf(A)[H]U\bigr\}_{rs}
        =
        f^{[1]}(\lambda_r,\lambda_s)
        \bigl\{U^THU\bigr\}_{rs},
\]
where
\[
        f^{[1]}(x,y)
        :=
        \begin{cases}
        \dfrac{f(x)-f(y)}{x-y}, & x\ne y,\\[6pt]
        f'(x), & x=y.
        \end{cases}
\]
Consequently,
\[
        \|Df(A)[H]\|_{\mathrm{HS}}
        \le
        \sup_{|x|\le\|A\|_2}|f'(x)|
        \|H\|_{\mathrm{HS}}.
\]
The chain rule therefore yields
\begin{align*}
        \max_{1\le j\le p}\sup_{u\in K}
        \|\partial_{u_j}B_{\bn}(u)\|_{\mathrm{HS}}\le
        \sup_{\displaystyle |x|\le
        \sup_{u\in K}\|A_{\bn}(u)\|_2}
        |f'(x)|
        \max_{1\le j\le p}\sup_{u\in K}
        \|\partial_{u_j}A_{\bn}(u)\|_{\mathrm{HS}}
        \longrightarrow0.
\end{align*}
Indeed, the first factor tends to zero because \(f'\) is continuous and
\(f'(0)=0\), whereas the second factor is \(O(1)\).  Thus \(B_{\bn}\)
satisfies the hypotheses of
Corollary~\ref{cor:vanishing-gaussian-quadratic-process}, which gives the
claim.
\end{proof}

We now introduce the relative covariance increment and its first-order approximation.
Along the local chart, define its increment by
\begin{equation}
        D_{\bn}(u)
        :=V_{\varphi_{\bn}(u),\bn}-I_{N_{\bn}}.
        \label{eq:relative-covariance-increment}
\end{equation}
Thus \(D_{\bn}(0)=0\).  Its first-order term at the origin is
\begin{equation}
        L_{\bn}(u)
        :=\left.\frac{d}{dt}
        D_{\bn}(tu)\right|_{t=0}
        =
        N_{\bn}^{-1/2}
        \{u_\tau I_{N_{\bn}}+u_\alpha R_{\alpha,\bn}\},
        \label{eq:relative-covariance-linear-part}
\end{equation}
where $R_{\alpha,\bn}$ is defined in \eqref{eq:Ralpha}.

Recall from the relative covariance representation above that
\(g_{\bn}(\varphi)=(\tau-\tau_0)-a_{\bn}(\alpha-\alpha_0)\).  Write
\(\varphi_{\bn}(u)=(\tau_{\bn}(u),\alpha_{\bn}(u))\), and fix
\(\rho_{\mathrm{band}}>0\) so that the closed square
\[
 [\tau_0-\rho_{\mathrm{band}},\tau_0+\rho_{\mathrm{band}}]
 \times[\alpha_0-\rho_{\mathrm{band}},\alpha_0+\rho_{\mathrm{band}}]
 \subset\mathcal U
\]
has its \(\alpha\) interval contained in \(\mathcal I_\alpha\).  Define the
set
\[
 \mathcal B_{\bn}(\rho):=
 \left\{u:
 |g_{\bn}\{\varphi_{\bn}(u)\}|
 +a_{\bn}|\alpha_{\bn}(u)-\alpha_0|
 \le\rho\right\},
\]
for which we have
\[
 u\in\mathcal B_{\bn}(\rho)
 \quad\Longleftrightarrow\quad
 \frac{|u_\tau|+a_{\bn}|u_\alpha|}{\sqrt{N_{\bn}}}\le\rho.
\]
\begin{lemma}
\label{lem:first-order-relative-covariance-expansion}
There is a constant \(C>0\) such that, for all sufficiently large sample
sizes and every \(0<\rho\le\rho_{\mathrm{band}}\),
\begin{equation}
        \sup_{u\in\mathcal B_{\bn}(\rho)}\|D_{\bn}(u)\|_2\le C\rho.
        \label{eq:relative-increment-band-operator}
\end{equation}
Moreover,
\begin{equation}
 \lim_{\rho\downarrow0}\limsup_{\bn}
 \sup_{\substack{u\in\mathcal B_{\bn}(\rho)\\u\ne0}}
 \frac{\|D_{\bn}(u)-L_{\bn}(u)\|_{\mathrm{HS}}}{\|u\|}=0,
        \label{eq:relative-increment-shell-linearization}
\end{equation}
and
\[
 \lim_{\rho\downarrow0}\limsup_{\bn}
 \max_{j\in\{\tau,\alpha\}}
 \sup_{u\in\mathcal B_{\bn}(\rho)}
 \|\partial_{u_j}(D_{\bn}-L_{\bn})(u)\|_{\mathrm{HS}}=0.
\]
For \(u,v\in\mathcal B_{\bn}(\rho_{\mathrm{band}})\),
\begin{equation}
 \max_{j\in\{\tau,\alpha\}}
 \|\partial_{u_j}D_{\bn}(u)\|_{\mathrm{HS}}\le C,
 \qquad
 \|D_{\bn}(u)-D_{\bn}(v)\|_{\mathrm{HS}}
 \le C\|u-v\|.
        \label{eq:relative-increment-band-lipschitz}
\end{equation}
\end{lemma}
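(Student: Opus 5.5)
We will work entirely through the exact factorization \eqref{eq:supp-full-relative-covariance-secant}. Along the chart, $g:=g_{\bn}\{\varphi_{\bn}(u)\}=u_\tau/\sqrt{N_{\bn}}$ and $\delta:=\alpha_{\bn}(u)-\alpha_0=u_\alpha/\sqrt{N_{\bn}}$. This gives
\[
 D_{\bn}(u)=(e^{g}-1)I+e^{g}\,\delta\,\mathsf K_{\delta,\bn},
 \qquad
 \mathsf K_{\delta,\bn}=\int_0^1h^{-2t\delta}\mathcal R_{\bn}(\alpha_0+t\delta)\,dt,
\]
using the path identity \eqref{eq:fixed-whitening-compensated-secant-path}. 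On $\mathcal B_{\bn}(\rho)$ we have $|g|\le\rho$ and $a_{\bn}|\delta|\le\rho$. The band is chosen so that $\alpha_0+t\delta\in\mathcal I_\alpha$, and the estimates of Appendix~\ref{app:moving-tail-spectral-proofs} apply with $\rho_0=\rho_{\mathrm{band}}$.

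For the operator bound \eqref{eq:relative-increment-band-operator}, note that $|e^g-1|\le C\rho$. Lemma~\ref{lem:local-fixed-whitening-path-control} gives $\|\mathsf K_{\delta,\bn}\|_2\le Ca_{\bn}e^{C\rho}$, so $|\delta|\,\|\mathsf K_{\delta,\bn}\|_2\le C\rho$, which proves \eqref{eq:relative-increment-band-operator}.

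For the derivatives, differentiate the exact form directly:
\[
 \partial_{u_\tau}D_{\bn}=N_{\bn}^{-1/2}(I+D_{\bn}),
 \qquad
 \partial_{u_\alpha}D_{\bn}=N_{\bn}^{-1/2}e^{g}h^{-2\delta}\mathcal R_{\bn}(\alpha_0+\delta).
\]
The second identity holds because $\partial_\delta\{\delta\mathsf K_{\delta,\bn}\}=h^{-2\delta}\mathcal R_{\bn}(\alpha_0+\delta)$, which is the derivative of $h^{-2\delta}V_{(\tau_0,\alpha_0+\delta),\bn}$, and $\partial_{u_\alpha}\delta=N_{\bn}^{-1/2}$. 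The Hilbert--Schmidt bounds then follow:
\begin{itemize}
 \item The first derivative has $\|\cdot\|_{\mathrm{HS}}\le\|I+D_{\bn}\|_2\le C$.
 \item For the second, $\|\mathcal R_{\bn}(\alpha)\|_{\mathrm{HS}}^2/N_{\bn}\le(\tr|\mathcal R_{\bn}(\alpha)|^4/N_{\bn})^{1/2}=O(1)$ uniformly on $|\alpha-\alpha_0|a_{\bn}\le\rho_{\mathrm{band}}$, by Lemma~\ref{lem:local-fixed-whitening-score-spectral-tails}, while $e^g h^{-2\delta}\le e^{2\rho}$.
\end{itemize}
This proves the first half of \eqref{eq:relative-increment-band-lipschitz}. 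The Lipschitz half follows by integrating along segments, provided $\mathcal B_{\bn}(\rho_{\mathrm{band}})$ is convex; it is, because in $u$-coordinates it is a rhombus.

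For the derivative linearization, compare with $\partial_{u_\tau}L_{\bn}=N_{\bn}^{-1/2}I$ and $\partial_{u_\alpha}L_{\bn}=N_{\bn}^{-1/2}R_{\alpha,\bn}$. The $\tau$-difference is $N_{\bn}^{-1/2}D_{\bn}$, with HS norm at most $\|D_{\bn}\|_2\le C\rho$. The $\alpha$-difference splits as
\[
 N_{\bn}^{-1/2}(e^gh^{-2\delta}-1)\mathcal R_{\bn}(\alpha_0+\delta)
 +N_{\bn}^{-1/2}\{\mathcal R_{\bn}(\alpha_0+\delta)-R_{\alpha,\bn}\}.
\]
The first term has HS norm at most $(e^{2\rho}-1)\,O(1)$. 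The second term vanishes under $\lim_{\rho\downarrow0}\limsup_{\bn}$ by Lemma~\ref{lem:local-fixed-whitening-score-continuity}, which is exactly the needed uniform-in-band statement. This is the main obstacle, and it is already supplied by the cited lemma.

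Finally, \eqref{eq:relative-increment-shell-linearization} follows from the mean value inequality in HS norm along the segment from $0$ to $u$, which stays in the convex band. Since $(D_{\bn}-L_{\bn})(0)=0$,
\[
 \|(D_{\bn}-L_{\bn})(u)\|_{\mathrm{HS}}\le\sqrt2\,\|u\|\sup_{v\in\mathcal B_{\bn}(\rho)}\max_j\|\partial_{u_j}(D_{\bn}-L_{\bn})(v)\|_{\mathrm{HS}},
\]
and the previous step shows the supremum vanishes in the ordered limit.
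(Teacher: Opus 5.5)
Your argument is correct and is essentially the paper's proof. It uses the same derivative identities $\partial_{u_\tau}D_{\bn}=N_{\bn}^{-1/2}(I+D_{\bn})$ and $\partial_{u_\alpha}D_{\bn}=N_{\bn}^{-1/2}e^{\tau-\tau_0}\mathcal R_{\bn}(\alpha)$ (your $e^{g}h^{-2\delta}$ equals $e^{\tau-\tau_0}$), path control for the operator bound, Lemma~\ref{lem:local-fixed-whitening-score-continuity} for the linearization, and a radial fundamental-theorem-of-calculus step that is exactly the paper's representation \eqref{eq:Dpath}. The only real variation is how you bound $\|\mathcal R_{\bn}(\alpha)\|_{\mathrm{HS}}/\sqrt{N_{\bn}}$ on the band: you use the fourth-moment estimate of Lemma~\ref{lem:local-fixed-whitening-score-spectral-tails}, whereas the paper combines score continuity with $\|R_{\alpha,\bn}\|_{\mathrm{HS}}/\sqrt{N_{\bn}}\le C_R$ and shrinks $\rho_{\mathrm{band}}$, so your version works for any fixed $\rho_{\mathrm{band}}$ and is slightly cleaner.
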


\begin{proof}[Proof of Lemma~\ref{lem:first-order-relative-covariance-expansion}]
For \(u\in\mathcal B_{\bn}(\rho_{\mathrm{band}})\), we now define a radial
path between \(0\) and \(u\).  We show that, along this path,
\(\frac{d}{dt}D_{\bn}(tu)\) is close to \(L_{\bn}(u)\); integration then
shows that \(D_{\bn}(u)\) is close to its linear approximation
\(L_{\bn}(u)\).
Write
\[
 \varphi_{\bn}(tu)
 =\bigl(\tau_{\bn}(t,u),\alpha_{\bn}(t,u)\bigr),
 \qquad 0\le t\le1.
\]
 Along the path,
\[
 |g_{\bn}\{\varphi_{\bn}(tu)\}|
 +a_{\bn}|\alpha_{\bn}(tu)-\alpha_0|
 =t\left[
 |g_{\bn}\{\varphi_{\bn}(u)\}|
 +a_{\bn}|\alpha_{\bn}(u)-\alpha_0|\right]
 \le\rho_{\mathrm{band}}.
\]
For all sufficiently large sample sizes, \(a_{\bn}\ge1\).  The displayed
identity then gives
\[
 |\tau_{\bn}(t,u)-\tau_0|\le\rho_{\mathrm{band}},
 \qquad
 |\alpha_{\bn}(t,u)-\alpha_0|\le\rho_{\mathrm{band}}.
\]
Thus the path lies in the selected closed square, and its exponent coordinate
lies in \(\mathcal I_\alpha\).  Put
\begin{align*}
 A_{\bn}(t,u)&:=V_{\varphi_{\bn}(tu),\bn}
 =e^{\tau_{\bn}(t,u)-\tau_0}
 V_{(\tau_0,\alpha_{\bn}(t,u)),\bn},\\
 K_{\bn}(t,u)&:=e^{\tau_{\bn}(t,u)-\tau_0}
 \mathcal R_{\bn}\{\alpha_{\bn}(t,u)\}.
\end{align*}
The fundamental theorem of calculus gives the decisive relation
\begin{equation}
 D_{\bn}(u)=\frac{1}{\sqrt{N_{\bn}}}\int_0^1
 \{u_\tau A_{\bn}(t,u)+u_\alpha K_{\bn}(t,u)\}\,dt.
        \label{eq:Dpath}
\end{equation}
Path control gives \(\sup\|A_{\bn}(t,u)\|_2\le C\).  By
Equation~\eqref{eq:tail-alias-global-comparison} and the Toeplitz
quadratic-form bound,
\[
 -Ca_{\bn}V_{(\tau_0,\alpha),\bn}
 \preceq\mathcal R_{\bn}(\alpha)
 \preceq Ca_{\bn}V_{(\tau_0,\alpha),\bn}.
\]
Path control bounds \(V_{(\tau_0,\alpha),\bn}\) on the selected rectangle,
and the exponential factor in \(K_{\bn}(t,u)\) is bounded there.  Thus
\(\sup\|K_{\bn}(t,u)\|_2\le Ca_{\bn}\), and \eqref{eq:Dpath} proves
Equation~\eqref{eq:relative-increment-band-operator}.

For fixed \(\rho\), set
\[
 \omega_{R,\bn}(\rho):=\sup_{\substack{\alpha\in\mathcal I_\alpha\\
 a_{\bn}|\alpha-\alpha_0|\le\rho}}
 \frac{\|\mathcal R_{\bn}(\alpha)-R_{\alpha,\bn}\|_{\mathrm{HS}}}{\sqrt{N_{\bn}}},
 \qquad
 \omega_R(\rho):=\limsup_{\bn}\omega_{R,\bn}(\rho),
\]
and \(\omega_A(\rho):=e^\rho(e^{C\rho}-1)+(e^\rho-1)\).
Lemma~\ref{lem:local-fixed-whitening-score-continuity} gives
\(\omega_R(\rho)\to0\) as \(\rho\downarrow0\), and, for fixed \(\rho\),
{after shrinking \(\rho_{\mathrm{band}}\), if necessary, while
preserving the selected rectangle, we may assume
\(\limsup_{\bn}\omega_{R,\bn}(\rho_{\mathrm{band}})\le1\).}
Moreover,
\(\omega_{R,\bn}(\rho)\le\omega_R(\rho)+{o_{\bn}(1)}\) for each
fixed \(\rho\).  Uniformly over the
band and the radial path,
\[
 \|A_{\bn}-I_{N_{\bn}}\|_2\le\omega_A(\rho)+{o_{\bn}(1)},
 \qquad
 \frac{\|\mathcal R_{\bn}(\alpha_{\bn})-R_{\alpha,\bn}\|_{\mathrm{HS}}}{\sqrt{N_{\bn}}}\le\omega_R(\rho)+{o_{\bn}(1)}.
\]
Since
\[
 K_{\bn}-R_{\alpha,\bn}
 =e^{\tau_{\bn}-\tau_0}
 \{\mathcal R_{\bn}(\alpha_{\bn})-R_{\alpha,\bn}\}
 +(e^{\tau_{\bn}-\tau_0}-1)R_{\alpha,\bn},
\]
and \(\|R_{\alpha,\bn}\|_{\mathrm{HS}}/\sqrt{N_{\bn}}\le C_R\), set
\(\omega_K(\rho):=e^\rho\omega_R(\rho)+(e^\rho-1)C_R\) and
\(\bar\omega(\rho):=\omega_A(\rho)+\omega_K(\rho)\).  These moduli vanish
as \(\rho\downarrow0\).  Subtracting
Equation~\eqref{eq:relative-covariance-linear-part} from \eqref{eq:Dpath}
therefore gives
\[
 \|D_{\bn}(u)-L_{\bn}(u)\|_{\mathrm{HS}}
 \le\{\bar\omega(\rho)+{o_{\bn}(1)}\}\|u\|,
 \qquad u\in\mathcal B_{\bn}(\rho),
\]
which proves Equation~\eqref{eq:relative-increment-shell-linearization}.
Using 
\[
\begin{aligned}
 \partial_{u_\tau}(D_{\bn}-L_{\bn})(u)
 &=
 N_{\bn}^{-1/2}\{A_{\bn}(1,u)-I_{N_{\bn}}\},\\
 \partial_{u_\alpha}(D_{\bn}-L_{\bn})(u)
 &=
 N_{\bn}^{-1/2}\{K_{\bn}(1,u)-R_{\alpha,\bn}\},
\end{aligned}
\]
the bounds established above for \(A_{\bn}-I_{N_{\bn}}\) and
\(K_{\bn}-R_{\alpha,\bn}\) therefore prove the derivative limit.  The
corresponding bounds for \(A_{\bn}\) and \(K_{\bn}\), together with
\(\|R_{\alpha,\bn}\|_{\mathrm{HS}}/\sqrt{N_{\bn}}\le C_R\), also give
\[
 \max_{j\in\{\tau,\alpha\}}
 \|\partial_{u_j}D_{\bn}(u)\|_{\mathrm{HS}}\le C,
\]
which is the first assertion of
Equation~\eqref{eq:relative-increment-band-lipschitz}.

The band
\(\mathcal B_{\bn}(\rho_{\mathrm{band}})\) is convex, so integration along
the segment from \(u\) to \(v\) gives its Lipschitz assertion.
\end{proof}
\begin{corollary}
\label{cor:relative-covariance-trace-comparison}
Uniformly for \(u,v\) in compact sets,
\begin{equation}
        \tr\{L_{\bn}(u)L_{\bn}(v)\}\longrightarrow2u^TJv.
        \label{eq:relative-linear-trace-limit}
\end{equation}

Moreover, since \(J\succ0\), there exist
\(0<\rho_{\mathrm{quad}}\le\rho_{\mathrm{band}}\) and constants
\(0<c<C<\infty\) such that, for all sufficiently large sample sizes,
\begin{equation}
 c\|u\|^2
 \le \operatorname{tr}\{D_{\bn}(u)^2\}
 \le C\|u\|^2,
 \qquad
 u\in\mathcal B_{\bn}(\rho),\quad
 0<\rho\le\rho_{\mathrm{quad}}.
 \label{eq:relative-increment-quadratic-bounds}
\end{equation}
\end{corollary}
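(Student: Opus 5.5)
The first claim is a direct trace computation. By \eqref{eq:relative-covariance-linear-part},
\[
 \tr\{L_{\bn}(u)L_{\bn}(v)\}
 =\frac{1}{N_{\bn}}\Bigl\{u_\tau v_\tau N_{\bn}
 +(u_\tau v_\alpha+u_\alpha v_\tau)\tr R_{\alpha,\bn}
 +u_\alpha v_\alpha\tr(R_{\alpha,\bn}^2)\Bigr\}.
\]
Equation~\eqref{eq:Ralpha-trace-limits} gives \(\tr R_{\alpha,\bn}/N_{\bn}\to m_1\) and \(\tr(R_{\alpha,\bn}^2)/N_{\bn}\to m_2\). The right-hand side therefore converges to
\[
 u_\tau v_\tau+m_1(u_\tau v_\alpha+u_\alpha v_\tau)+m_2u_\alpha v_\alpha=2u^TJv.
\]
The expression is bilinear in \((u,v)\) with coefficient sequences that converge, and these coefficients do not depend on \(u\) or \(v\). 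Hence the convergence is uniform on compact sets.

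For \eqref{eq:relative-increment-quadratic-bounds}, I compare \(D_{\bn}\) with \(L_{\bn}\) in Hilbert--Schmidt norm. By homogeneity, \(\tr\{L_{\bn}(u)^2\}=\|u\|^2\,\tr\{L_{\bn}(e)^2\}\) with \(e=u/\|u\|\) on the unit sphere. The first claim, applied uniformly on the sphere, gives \(\tr\{L_{\bn}(e)^2\}\to2e^TJe\in[2\lambda_{\min}(J),2\lambda_{\max}(J)]\). So for large \(\bn\) and all \(u\),
\[
 \lambda_{\min}(J)\|u\|^2\le\|L_{\bn}(u)\|_{\mathrm{HS}}^2\le3\lambda_{\max}(J)\|u\|^2 .
\]
Next I use \eqref{eq:relative-increment-shell-linearization}. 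It lets me choose \(\rho_{\mathrm{quad}}\le\rho_{\mathrm{band}}\) such that, for all large \(\bn\) and all \(0\ne u\in\mathcal B_{\bn}(\rho_{\mathrm{quad}})\),
\[
 \|D_{\bn}(u)-L_{\bn}(u)\|_{\mathrm{HS}}\le\epsilon\|u\|,
 \qquad \epsilon:=\tfrac12\sqrt{\lambda_{\min}(J)} .
\]
The bands are nested, \(\mathcal B_{\bn}(\rho)\subseteq\mathcal B_{\bn}(\rho_{\mathrm{quad}})\) for \(\rho\le\rho_{\mathrm{quad}}\), so this one choice covers every \(\rho\) in the range. Since \(D_{\bn}(u)\) is real symmetric, \(\tr\{D_{\bn}(u)^2\}=\|D_{\bn}(u)\|_{\mathrm{HS}}^2\). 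The triangle inequality then gives
\[
 \tfrac12\sqrt{\lambda_{\min}(J)}\,\|u\|
 \le\|D_{\bn}(u)\|_{\mathrm{HS}}
 \le\bigl(\sqrt{3\lambda_{\max}(J)}+\epsilon\bigr)\|u\| .
\]
Squaring yields the claimed constants \(c\) and \(C\). The case \(u=0\) is trivial because \(D_{\bn}(0)=0\).

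The only delicate point is the quantifier order in \eqref{eq:relative-increment-shell-linearization}, which is a \(\lim_{\rho\downarrow0}\limsup_{\bn}\) statement. I first fix \(\rho_{\mathrm{quad}}\) so that the \(\limsup\) is below \(\epsilon/2\). Then "sufficiently large sample size" makes the supremum at most \(\epsilon\). The \(\bn\)-threshold from the sphere-uniform trace limit is taken simultaneously. As an alternative upper bound that avoids the linearization, one can use \eqref{eq:relative-increment-band-lipschitz} with \(v=0\).
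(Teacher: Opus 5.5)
Your proof is correct and follows the paper's argument. You expand $\tr\{L_{\bn}(u)L_{\bn}(v)\}$ using \eqref{eq:Ralpha-trace-limits}, obtain two-sided Hilbert--Schmidt bounds for $L_{\bn}$ from uniformity on the unit sphere and $J\succ0$, and transfer them to $D_{\bn}$ via \eqref{eq:relative-increment-shell-linearization}, with $\rho_{\mathrm{quad}}$ fixed before the sample-size threshold. The paper differs only in comparing $\tr\{D_{\bn}(u)^2\}$ with $\tr\{L_{\bn}(u)^2\}$ through $\|D_{\bn}-L_{\bn}\|_{\mathrm{HS}}(\|D_{\bn}\|_{\mathrm{HS}}+\|L_{\bn}\|_{\mathrm{HS}})$, which needs the a priori bound from \eqref{eq:relative-increment-band-lipschitz}; your reverse triangle inequality on Hilbert--Schmidt norms avoids that input.
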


\begin{proof}[Proof of Corollary~\ref{cor:relative-covariance-trace-comparison}]
     Using
\[
 L_{\bn}(u)
 =
 N_{\bn}^{-1/2}
 \{u_\tau I_{N_{\bn}}+u_\alpha R_{\alpha,\bn}\},
\]
expand \(\operatorname{tr}\{L_{\bn}(u)L_{\bn}(v)\}\).
Equation~\eqref{eq:Ralpha-trace-limits} then gives
Equation~\eqref{eq:relative-linear-trace-limit}, uniformly for \(u,v\) in
compact sets. 

Since \(L_{\bn}(u)\) is symmetric,
$
 \|L_{\bn}(u)\|_{\mathrm{HS}}^2
 =\operatorname{tr}\{L_{\bn}(u)^2\}.
$
Taking \(v=u\) in
Equation~\eqref{eq:relative-linear-trace-limit}, the convergence is uniform
on the unit sphere.  Since \(J\succ0\), and since \(L_{\bn}\) is linear,
there exist constants \(0<c_L<C_L<\infty\) such that, for all sufficiently
large sample sizes,
\begin{equation}
 c_L\|u\|^2
 \le\|L_{\bn}(u)\|_{\mathrm{HS}}^2
 \le C_L\|u\|^2,
 \qquad u\in\mathbb R^2.
 \label{eq:relative-linear-hs-bounds}
\end{equation}

For \(0<\rho\le\rho_{\mathrm{band}}\), define
\[
 e_{\bn}(\rho):=
 \sup_{\substack{u\in\mathcal B_{\bn}(\rho)\\u\ne0}}
 \frac{\|D_{\bn}(u)-L_{\bn}(u)\|_{\mathrm{HS}}}{\|u\|}.
\]
Equation~\eqref{eq:relative-increment-shell-linearization} gives
$
 \lim_{\rho\downarrow0}\limsup_{\bn}e_{\bn}(\rho)=0.
$

Let \(C_D\) denote the Lipschitz constant in
Equation~\eqref{eq:relative-increment-band-lipschitz}.  Since
\(0\in\mathcal B_{\bn}(\rho_{\mathrm{band}})\) and \(D_{\bn}(0)=0\),
that equation gives
\[
 \|D_{\bn}(u)\|_{\mathrm{HS}}
 \le C_D\|u\|,
 \qquad
 u\in\mathcal B_{\bn}(\rho_{\mathrm{band}}).
\]
Together with
\(\|L_{\bn}(u)\|_{\mathrm{HS}}\le\sqrt{C_L}\|u\|\), the
trace-difference inequality gives
\[
\begin{aligned}
 &\left|
 \tr\{D_{\bn}(u)^2\}
 -\tr\{L_{\bn}(u)^2\}
 \right|\\
 &\qquad\le
 \|D_{\bn}(u)-L_{\bn}(u)\|_{\mathrm{HS}}
 \{\|D_{\bn}(u)\|_{\mathrm{HS}}
   +\|L_{\bn}(u)\|_{\mathrm{HS}}\}\\
 &\qquad\le
 e_{\bn}(\rho)(C_D+\sqrt{C_L})\|u\|^2,
 \qquad
 u\in\mathcal B_{\bn}(\rho)\setminus\{0\}.
\end{aligned}
\]

Since \(e_{\bn}(\rho)\) is nondecreasing in \(\rho\), its ordered limit
allows us to choose
\(0<\rho_{\mathrm{quad}}\le\rho_{\mathrm{band}}\) such that, for all
sufficiently large sample sizes,
\[
 e_{\bn}(\rho)(C_D+\sqrt{C_L})
 \le\frac{c_L}{2},
 \qquad
 0<\rho\le\rho_{\mathrm{quad}}.
\]
The preceding trace-difference bound and
Equation~\eqref{eq:relative-linear-hs-bounds} then give
Equation~\eqref{eq:relative-increment-quadratic-bounds}.
\end{proof}

\begin{corollary}
\label{cor:compact-relative-covariance-expansion}
For every fixed \(M<\infty\),
\begin{align}
        \sup_{\|u\|\le M}\|D_{\bn}(u)\|_2&\longrightarrow0,
        \label{eq:relative-increment-operator-small}\\
        \sup_{\|u\|\le M}
        \|D_{\bn}(u)-L_{\bn}(u)\|_{\mathrm{HS}}&\longrightarrow0,
        \label{eq:relative-increment-hs-linearization}
\end{align}
and
\begin{equation}
\begin{split}
        \max_{j\in\{\tau,\alpha\}}\sup_{\|u\|\le M}
        \|\partial_{u_j}(D_{\bn}-L_{\bn})(u)\|_{\mathrm{HS}}&\to0,\\
        \max_{j\in\{\tau,\alpha\}}\sup_{\|u\|\le M}
        \|\partial_{u_j}D_{\bn}(u)\|_{\mathrm{HS}}&=O(1).
\end{split}
        \label{eq:relative-increment-derivative-linearization}
\end{equation}
\end{corollary}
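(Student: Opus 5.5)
The plan is to deduce Corollary~\ref{cor:compact-relative-covariance-expansion} from Lemma~\ref{lem:first-order-relative-covariance-expansion}. The key observation is that a fixed ball \(\{\|u\|\le M\}\) sits inside shrinking bands \(\mathcal B_{\bn}(\rho_{\bn})\) with \(\rho_{\bn}\to0\). By the characterization of the band, \(u\in\mathcal B_{\bn}(\rho)\) if and only if \((|u_\tau|+a_{\bn}|u_\alpha|)/\sqrt{N_{\bn}}\le\rho\). For \(\|u\|\le M\) and \(a_{\bn}\ge1\), this quantity is at most
\[
\rho_{\bn}:=\frac{M(1+a_{\bn})}{\sqrt{N_{\bn}}}.
\]
Since \(a_{\bn}=2\log(1/h)\) and \(N_{\bn}\asymp h^{-d}\), we have \(\rho_{\bn}\to0\). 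Hence, for every fixed \(\rho\in(0,\rho_{\mathrm{band}}]\), the ball is eventually contained in \(\mathcal B_{\bn}(\rho)\).

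The operator-norm claim \eqref{eq:relative-increment-operator-small} follows directly from \eqref{eq:relative-increment-band-operator}. Applied with \(\rho=\rho_{\bn}\), which is admissible once \(\rho_{\bn}\le\rho_{\mathrm{band}}\), it gives
\[
\sup_{\|u\|\le M}\|D_{\bn}(u)\|_2\le C\rho_{\bn}\to0.
\]
I need to check that the constant \(C\) in that lemma is uniform in \(\rho\in(0,\rho_{\mathrm{band}}]\). As stated, it is.

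For the Hilbert--Schmidt linearization \eqref{eq:relative-increment-hs-linearization}, I would fix \(\rho>0\). For large \(\bn\) the ball lies in \(\mathcal B_{\bn}(\rho)\), and \(D_{\bn}(0)=L_{\bn}(0)=0\), so
\[
\sup_{\|u\|\le M}\|D_{\bn}(u)-L_{\bn}(u)\|_{\mathrm{HS}}\le M\sup_{\substack{u\in\mathcal B_{\bn}(\rho)\\u\ne0}}\frac{\|D_{\bn}(u)-L_{\bn}(u)\|_{\mathrm{HS}}}{\|u\|}.
\]
Taking \(\limsup_{\bn}\) and then \(\rho\downarrow0\), \eqref{eq:relative-increment-shell-linearization} makes the right-hand side vanish. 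The left-hand side does not depend on \(\rho\), so its \(\limsup\) is zero.

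The first derivative statement in \eqref{eq:relative-increment-derivative-linearization} follows the same way, using the derivative limit from Lemma~\ref{lem:first-order-relative-covariance-expansion} in place of the ratio bound. The \(O(1)\) bound comes from the first part of \eqref{eq:relative-increment-band-lipschitz}, because the ball lies in \(\mathcal B_{\bn}(\rho_{\mathrm{band}})\) for large \(\bn\).

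The only point needing care is the order of limits in the \(o(1)\) statements. The quantities on the left are independent of \(\rho\), so bounding their \(\limsup_{\bn}\) by a function of \(\rho\) that vanishes as \(\rho\downarrow0\) suffices. A secondary point is that \(\partial_{u_j}D_{\bn}\) should be taken on an open neighborhood of the ball. This is fine, because a slightly larger ball is also eventually inside the band. I expect no real obstacle beyond this bookkeeping.
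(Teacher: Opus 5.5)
Your proposal is correct and follows essentially the same route as the paper. The fixed ball \(\{\|u\|\le M\}\) is eventually contained in every band \(\mathcal B_{\bn}(\rho)\) because \(M(1+a_{\bn})/\sqrt{N_{\bn}}\to0\), and the three assertions are then read off from Lemma~\ref{lem:first-order-relative-covariance-expansion}; your extra bookkeeping (the \(\rho\)-uniform constant applied with \(\rho=\rho_{\bn}\), and the ordered limits \(\limsup_{\bn}\) then \(\rho\downarrow0\)) only makes explicit what the paper's one-line proof leaves implicit.
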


\begin{proof}[Proof of Corollary~\ref{cor:compact-relative-covariance-expansion}]
For \(\|u\|\le M\), the physical chart quantity defining
\(\mathcal B_{\bn}(\rho)\) equals
\[
 \frac{|u_\tau|+a_{\bn}|u_\alpha|}{\sqrt{N_{\bn}}}
 \le\frac{M(1+a_{\bn})}{\sqrt{N_{\bn}}}\longrightarrow0.
\]
The three assertions therefore follow from
Lemma~\ref{lem:first-order-relative-covariance-expansion}.
\end{proof}

\endgroup
{
Recall that the data whitened under the fitted model are
$
        Z_{\bn}
        :=
        (\Sigma_{\varphi_0,\bn})^{-1/2}Y_{\bn}.
$
Hence, under \(P_{\varphi_0,\bn}\),
$
        Z_{\bn}\sim\mathcal N(0,I_{N_{\bn}}).
$ Define the normalized score and likelihood process by
\begin{equation}
        \mathcal S_{\bn}:=
        \frac{1}{2\sqrt{N_{\bn}}}
        \begin{pmatrix}
        Z_{\bn}^TZ_{\bn}-N_{\bn}\\
        Z_{\bn}^TR_{\alpha,\bn}Z_{\bn}
        -\operatorname{tr}R_{\alpha,\bn}
        \end{pmatrix},
        \qquad
        \Lambda_{\bn}(u):=
        \ell_{\bn}(\varphi_{\bn}(u))
        -\ell_{\bn}(\varphi_0).
\label{eq:supp-compensated-score-and-likelihood}
\end{equation}

\begin{proof}[Proof of Theorem~\ref{thm:compact-lan}]
Fix \(M<\infty\).  We prove
\[
\sup_{\|u\|\le M}\left|
 \Lambda_{\bn}(u)-u^T\mathcal S_{\bn}+\frac{1}{2}u^TJu\right|
 \longrightarrow0
 \quad\text{in }P_{\varphi_0,\bn}\text{-probability}.
\]
Corollary~\ref{cor:compact-relative-covariance-expansion} supplies the
fixed-\(M\) operator, Hilbert--Schmidt, and derivative controls.  The trace
curvature used below is supplied by
Corollary~\ref{cor:relative-covariance-trace-comparison}.
Here \(Z_{\bn}\) is standard Gaussian. Under
{Assumption~\ref{ass:common-leading-tail-misspecification},}
Lemma~\ref{lem:common-leading-tail-mutual-contiguity} transfers this
convergence to \(P^\star_{\bn}\)-probability; that external-law transfer is
the one used by Theorem~\ref{thm:misspecified-tail-mle-transfer}.

The proof has three steps.  First, we express the likelihood increment in
terms of the relative covariance change \(D_{\bn}(u)\).  Second, we show
that all higher-order random terms are negligible uniformly on the ball,
while the deterministic quadratic term converges to
\(-\frac{1}{2}u^TJu\).  Finally, we replace \(D_{\bn}(u)\) by its first-order
approximation \(L_{\bn}(u)\), the quadratic form associated with
\(L_{\bn}(u)\) is exactly \(u^T\mathcal S_{\bn}\).

By the definition of \(D_{\bn}(u)\) in
Equation~\eqref{eq:relative-covariance-increment}, the likelihood
increment has the exact representation
\[
        \Lambda_{\bn}(u)
        =-\frac{1}{2}\log\det\{I+D_{\bn}(u)\}
        -\frac{1}{2}Z_{\bn}^T
        [\{I+D_{\bn}(u)\}^{-1}-I]Z_{\bn}.
\]
Equation~\eqref{eq:relative-increment-operator-small} makes \(I+D_{\bn}(u)\)
invertible eventually, uniformly on the ball.  The log-determinant and
resolvent expansions are therefore uniform there.  Using
\begin{align*}
        \log\det(I+D)
        &=\operatorname{tr}D-\frac{1}{2}\operatorname{tr}(D^2)
        +O\{\|D\|_2\operatorname{tr}(D^2)\},\\
        (I+D)^{-1}
        &=I-D+D^2+E(D),
        \qquad
        E(D)=-D^3(I+D)^{-1},
\end{align*}
we get
\begin{align*}
        \Lambda_{\bn}(u)
        &=\frac{1}{2}\{Z_{\bn}^TD_{\bn}(u)Z_{\bn}
        -\operatorname{tr}D_{\bn}(u)\}
        -\frac{1}{4}\operatorname{tr}\{D_{\bn}(u)^2\}
        \\
        &\quad-\frac{1}{2}\left[
        Z_{\bn}^TD_{\bn}(u)^2Z_{\bn}
        -\operatorname{tr}\{D_{\bn}(u)^2\}
        \right]
        \\
        &\quad-\frac{1}{2}\left[
        Z_{\bn}^TE\{D_{\bn}(u)\}Z_{\bn}
        -\operatorname{tr}E\{D_{\bn}(u)\}
        \right]
        \\
        &\quad
        +O\!\left(
        \|D_{\bn}(u)\|_2
        \operatorname{tr}\{D_{\bn}(u)^2\}
        \right).
\end{align*}
The first line contains the leading centered term and deterministic
curvature, the next two are transformed stochastic remainders, and the final
term is the deterministic Taylor remainder.  We now show that the last three
terms vanish in probability.

We start with the two stochastic remainders (the middle lines).
Apply Corollary~\ref{cor:smooth-transform-vanishing-gaussian-quadratic-process}
with \(A_{\bn}=D_{\bn}\) on the ball.  Its hypotheses are
Equation~\eqref{eq:relative-increment-operator-small} and the \(O(1)\)
derivative assertion in Equation~\eqref{eq:relative-increment-derivative-linearization}.
For the two remainders above, take
\(f_1(x)=x^2\) and \(f_2(x)=-x^3/(1+x)\).  The corollary then shows that
both are uniformly \(o_{P_{\varphi_0,\bn}}(1)\).

For the deterministic Taylor remainder, use that
\[
 |\operatorname{tr}\{D_{\bn}(u)^2\}-\operatorname{tr}\{L_{\bn}(u)^2\}|
 \le\|D_{\bn}(u)-L_{\bn}(u)\|_{\mathrm{HS}}
 \{\|D_{\bn}(u)\|_{\mathrm{HS}}+\|L_{\bn}(u)\|_{\mathrm{HS}}\}.
\]
The trace limit gives \(\sup_{\|u\|\le M}\|L_{\bn}(u)\|_{\mathrm{HS}}=O(1)\);
the Hilbert--Schmidt linearization then gives the same bound for \(D_{\bn}\).
Hence \(\sup_{\|u\|\le M}|\operatorname{tr}D_{\bn}(u)^2-2u^TJu|\to0\).
Together with operator smallness, this makes the
remainder uniform \(o(1)\).

We now replace \(D_{\bn}(u)\) by \(L_{\bn}(u)\) for the remaining term.
Apply Corollary~\ref{cor:vanishing-gaussian-quadratic-process} to
\(A_{\bn}=D_{\bn}-L_{\bn}\).  Here \(A_{\bn}(0)=0\), and
Equation~\eqref{eq:relative-increment-derivative-linearization} supplies
the derivative condition.  Thus its centered form is uniformly
\(o_{P_{\varphi_0,\bn}}(1)\), which replaces the leading centered \(D_{\bn}\) term
by \(L_{\bn}\).  Finally,
\[
 \frac{1}{2}\{Z_{\bn}^TL_{\bn}(u)Z_{\bn}-\operatorname{tr}L_{\bn}(u)\}
 =u^T\mathcal S_{\bn}.
\]
Together with the preceding limit for \(\operatorname{tr}\{D_{\bn}(u)^2\}\),
this proves the result.
\end{proof}
}

Recall from Assumption~\ref{ass:differentiated-spectral-tail-family} that
\(\mathcal U\) is the open parameter neighborhood on which the differentiated
{spectral family is defined. The tail chart in}
Section~\ref{sec:maximum-likelihood} gives
\[
 \Theta_{\mathrm{tail}}=[\tau_-,\tau_+]\times[\alpha_-,\alpha_+]
 \Subset\mathcal U,
 \qquad \varphi_0\in\operatorname{int}(\Theta_{\mathrm{tail}}).
\]
\begin{lemma}
\label{lem:spectral-comparison-localization}
Fix
\(\delta_U\in(0,\operatorname{dist}\{\varphi_0,\partial\Theta_{\mathrm{tail}}\})\)
and define \(U:=\{\varphi:\|\varphi-\varphi_0\|<\delta_U\}\).  Then \(U\)
is open and convex, with
\(\overline U\subset\operatorname{int}(\Theta_{\mathrm{tail}})\).  For
\begin{align}
 E_{\bn,M}(U)
 &:=
 \left\{
 \sup_{\substack{\varphi_{\bn}(u)\in U\\ \|u\|\ge M}}
 \Lambda_{\bn}(u)\ge0
 \right\},
 \label{eq:spectral-comparison-localization-event}
\end{align}
\begin{equation}
 \lim_{M\to\infty}\limsup_{\bn}
 P_{\varphi_0,\bn}\{E_{\bn,M}(U)\}
 =0.
 \label{eq:spectral-comparison-localization-fitted-limit}
\end{equation}
If {Assumption~\ref{ass:common-leading-tail-misspecification} holds,} then
\begin{equation}
 \lim_{M\to\infty}\limsup_{\bn}
 P^\star_{\bn}\{E_{\bn,M}(U)\}
 =0.
 \label{eq:spectral-comparison-localization-limit}
\end{equation}
\end{lemma}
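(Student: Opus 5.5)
Under \(P_{\varphi_0,\bn}\), I would exclude the event that the likelihood increases somewhere in the annulus \(\{u:\varphi_{\bn}(u)\in U,\ \|u\|\ge M\}\). The annulus splits into a near shell \(\mathcal B_{\bn}(\rho_{\mathrm{quad}})\cap\{\|u\|\ge M\}\) and a far part \(\{\varphi\in U: q_{\bn}(\varphi)\ge \rho_{\mathrm{quad}}/C\}\). On the band, \(q_{\bn}(\varphi_{\bn}(u))\asymp \|u\|/\sqrt{N_{\bn}}\) up to the \(a_{\bn}\) weighting, so every point outside the band has \(q_{\bn}\) bounded below. This bound is needed because the band is defined using \(|g_{\bn}|+a_{\bn}|\alpha-\alpha_0|\), so the far part is only guaranteed to have \(q_{\bn}\gtrsim 1/a_{\bn}\).

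For the far part, I would invoke Proposition~\ref{prop:working-glt-hellinger-separation}. It gives \(-\log\mathcal A\ge cN_{\bn}\min\{q_{\bn}^2,1\}\), which is at least \(cN_{\bn}/a_{\bn}^2\) there. Lemma~\ref{lem:working-likelihood-exclusion} applied with \(K_{\bn}\) equal to this deterministic set, together with \(\log\mathfrak L_{\bn}=O(a_{\bn})\), then shows that its probability tends to zero, as in the consistency proof.

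The shell is handled by a peeling argument on the expanding band. Consider dyadic shells \(S_k=\{u\in\mathcal B_{\bn}(\rho_{\mathrm{quad}}):2^k M\le\|u\|<2^{k+1}M\}\). Use the exact representation of \(\Lambda_{\bn}(u)\) from the proof of Theorem~\ref{thm:compact-lan}. The deterministic part satisfies \(-\tfrac12\log\det(I+D)+\tfrac12\tr D\le -c\,\tr(D^2)\) when \(\|D\|_2\le C\rho_{\mathrm{quad}}\) is small, by \eqref{eq:relative-increment-band-operator}. Corollary~\ref{cor:relative-covariance-trace-comparison} turns this into \(-c\|u\|^2\). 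The stochastic part is \(\tfrac12 Z^T[I-(I+D)^{-1}]Z-\tfrac12\tr[I-(I+D)^{-1}]\), a centered Gaussian quadratic process in \(W(u)=I-(I+D_{\bn}(u))^{-1}\). On \(S_k\), the Lipschitz bounds \eqref{eq:relative-increment-band-lipschitz}, transported through the resolvent (with \(\|(I+D)^{-1}\|_2\le2\)), give \(\|W(u)-W(v)\|_{\mathrm{HS}}\le C\|u-v\|\). The operator-norm increments are at most \(\min\{C\|u-v\|,C\rho\}\). Lemma~\ref{lem:gaussian-quadratic-process-chaining} with anchor \(u_\star=0\), \(L=C\), \(R=2^{k+1}M\), \(b=C\min\{1,2^{k}M\}\) and \(x=x_k\) then bounds the supremum on \(S_k\) by \(C(2^kM+\sqrt{x_k}\,2^kM+x_kb)\) with probability \(1-C e^{-x_k}\).

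The choice of \(x_k\) is the delicate point, and I expect it to be the main obstacle. Take \(x_k=c'4^kM^2/b_k\), where \(b_k\) is the true operator-norm diameter on \(S_k\). That diameter is \(O(\|u\|a_{\bn}/\sqrt{N_{\bn}})\) and hence small, so \(x_kb_k\) stays below \(\tfrac c4 4^kM^2\). The term \(\sqrt{x_k}\,2^kM\) is also dominated provided \(b_k\) is small, since then \(2^kM\sqrt{x_k}=\sqrt{c'}\,4^kM^2/\sqrt{b_k}\)\dots. This term is in fact too large, so the \(\sqrt{x}LR\) contribution forces \(x_k\lesssim 4^kM^2\). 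Therefore I would choose \(x_k=c''4^kM^2\). The conditions \(\sqrt{x_k}\,2^kM\le \tfrac c8 4^kM^2\) and \(x_kb_k\le\tfrac c8 4^kM^2\) then both hold once \(c''\) is small and \(b_k\to0\), which is where \(\|D\|_2\le C\rho\) with \(\rho\) small is used. On the complement of the bad events the increment is negative on every shell. Summing the failure probabilities gives \(\sum_k Ce^{-c''4^kM^2}\to0\) as \(M\to\infty\), which proves \eqref{eq:spectral-comparison-localization-fitted-limit}.

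Finally, \eqref{eq:spectral-comparison-localization-limit} follows from Lemma~\ref{lem:common-leading-tail-mutual-contiguity}, applied along subsequences realizing the \(\limsup\), for each fixed large \(M\) after a diagonal choice. The events are defined by the data alone and do not depend on which law generated them, so contiguity transfers the vanishing probabilities from \(P_{\varphi_0,\bn}\) to \(P^\star_{\bn}\).
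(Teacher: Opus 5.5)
Your route is the paper's: split \(U\) into the band \(\mathcal B_{\bn}(\rho)\) and its complement. You exclude the complement through Proposition~\ref{prop:working-glt-hellinger-separation} and Lemma~\ref{lem:working-likelihood-exclusion} at rate \(N_{\bn}/a_{\bn}^2\). You peel the band into dyadic shells on which a drift of order \(-4^kM^2\) dominates the chaining bound of Lemma~\ref{lem:gaussian-quadratic-process-chaining} with \(x_k=\epsilon 4^kM^2\). You then transfer to \(P^\star_{\bn}\) by contiguity.

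One displayed step is false as written. Since \(\log(1+x)\le x\), your drift \(-\tfrac12\log\det(I+D)+\tfrac12\tr D=-\tfrac12\sum_j\{\log(1+d_j)-d_j\}\) is nonnegative, so it cannot be \(\le -c\tr(D^2)\). It is also not the mean that pairs with your centered process: the two pieces you display add up to \(\Lambda_{\bn}(u)+\tfrac12\tr\{D^2(I+D)^{-1}\}\), not to \(\Lambda_{\bn}(u)\). Subtracting your (correct) centered term \(\tfrac12 Z_{\bn}^T\{I-(I+D)^{-1}\}Z_{\bn}-\tfrac12\tr\{I-(I+D)^{-1}\}\) from the exact representation gives the drift \(-\tfrac12\log\det(I+D)+\tfrac12\tr\{D(I+D)^{-1}\}=-\tfrac12\sum_j\{\log(1+d_j)-d_j/(1+d_j)\}\). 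It is this quantity that is \(\le -c\tr(D^2)\) when \(\|D\|_2\le 1/2\). With that correction the shell argument goes through.

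Smaller points follow.
\begin{itemize}
\item The far part must be all of \(U\) minus the band. There only \(q_{\bn}(\varphi_{\bn}(u))=\|u\|/\sqrt{N_{\bn}}>\rho/\sqrt{1+a_{\bn}^2}\) holds. Describing it as \(\{q_{\bn}\ge\rho_{\mathrm{quad}}/C\}\) leaves points uncovered, although the \(cN_{\bn}/a_{\bn}^2\) bound you then use is the right one.
\item In the chaining step you do not need \(b_k\to0\). That fails for shells reaching the edge of the band, where \(\|D_{\bn}\|_2\) is of order \(\rho\). It suffices to take \(b=C\rho\), from \(\|W\|_2\le2\|D\|_2\), and \(x_k=\epsilon 4^kM^2\) with \(\epsilon\) small, as in the paper.
\item Because the chaining lemma is stated for finite index sets, you should pass to the full shell by monotone limits over increasing finite sets with dense union.
\item For the transfer, the H\"older bound \(P^\star_{\bn}(E)\le K\,P_{\varphi_0,\bn}(E)^{1-1/p}\) from the proof of Lemma~\ref{lem:common-leading-tail-mutual-contiguity} has \(K\) uniform in \(\bn\). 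It therefore yields \eqref{eq:spectral-comparison-localization-limit} directly from \eqref{eq:spectral-comparison-localization-fitted-limit}, with no diagonal extraction.
\end{itemize}
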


\begin{proof}[Proof of Lemma~\ref{lem:spectral-comparison-localization}]
We first prove the result under \(P_{\varphi_0,\bn}\),
then use Lemma~\ref{lem:common-leading-tail-mutual-contiguity}
to establish the result under \(P^\star_{\bn}\).

The exact likelihood identity is
\begin{align*}
 \Lambda_{\bn}(u)
 &=-\frac{1}{2}\log\det\{I_{N_{\bn}}+D_{\bn}(u)\}
 -\frac{1}{2}Z_{\bn}^T\mathsf W_{\bn}(u)Z_{\bn},\\
 \mathsf W_{\bn}(u)
 &:=\{I_{N_{\bn}}+D_{\bn}(u)\}^{-1}-I_{N_{\bn}},
\end{align*}
where \(Z_{\bn}:=(\Sigma_{\varphi_0,\bn})^{-1/2}Y_{\bn}\sim
\mathcal N(0,I_{N_{\bn}})\) under \(P_{\varphi_0,\bn}\).
By Lemma~\ref{lem:first-order-relative-covariance-expansion} and
Corollary~\ref{cor:relative-covariance-trace-comparison}, there is a fixed
\(\rho>0\) and constants \(0<c<C<\infty\) such that, for all sufficiently
large sample sizes,
\[
 \|D_{\bn}(u)\|_2\le\frac{1}{2},
 \qquad
 c\|u\|^2\le\tr\{D_{\bn}(u)^2\}\le C\|u\|^2,
 \qquad u\in\mathcal B_{\bn}(\rho).
\]
Here and below, \(c\) and \(C\) are positive, independent of \(\bn,u\),
and \(R\), and may change from line to line.  On the fixed interval
\([-1/2,1/2]\), for some \(c>0\),
\[
 \frac{1}{2}\left\{\log(1+x)-\frac{x}{1+x}\right\}\ge cx^2.
\]
Split \(U\) into
\[
 \mathcal L_{\bn}(\rho):=\{\varphi\in U:
 |g_{\bn}(\varphi)|+a_{\bn}|\alpha-\alpha_0|\le\rho\}
 \quad\text{and}\quad
 \mathcal L_{\bn}^{c}(\rho):=U\setminus\mathcal L_{\bn}(\rho).
\]

For \(\varphi\in\mathcal L_{\bn}^{c}(\rho)\), Cauchy--Schwarz gives
\(g_{\bn}(\varphi)^2+(\alpha-\alpha_0)^2>
\rho^2/(1+a_{\bn}^2)\).  Define directly
\[
 \beta_{\bn}:=\inf_{\varphi\in\mathcal L_{\bn}^{c}(\rho)}
 \{-\log\mathcal A(P_{\varphi_0,\bn},P_{\varphi,\bn})\}.
\]
Proposition~\ref{prop:working-glt-hellinger-separation} yields
\[
 \beta_{\bn}\ge cN_{\bn}\frac{\rho^2}{1+a_{\bn}^2+\rho^2}
 \ge c\rho^2N_{\bn}/a_{\bn}^2.
\]
Thus \(\beta_{\bn}-2\log\mathfrak L_{\bn}\to\infty\), since
\(\log\mathfrak L_{\bn}=O(a_{\bn})\) and \(N_{\bn}/a_{\bn}^3\to\infty\).
Because {\(\Delta_{\bn}\)} is nonsingular and
\(\Lambda_{\bn}(u)=\ell_{\bn}(\varphi_{\bn}(u))-\ell_{\bn}(\varphi_0)\),
\[
\begin{aligned}
 &P_{\varphi_0,\bn}\left\{
 \sup_{\varphi_{\bn}(u)\in\mathcal L_{\bn}^{c}(\rho)}
 \Lambda_{\bn}(u)\ge0\right\}\\
 &\qquad=
 P_{\varphi_0,\bn}\left\{
 \sup_{\varphi\in\mathcal L_{\bn}^{c}(\rho)}
 [\ell_{\bn}(\varphi)-\ell_{\bn}(\varphi_0)]\ge0\right\}
 \longrightarrow0
\end{aligned}
\]
by Lemma~\ref{lem:working-likelihood-exclusion}.  For \(R\ge M\), let
\[
 \mathcal S_{\bn}(R,\rho):=\{u:\varphi_{\bn}(u)\in
 \mathcal L_{\bn}(\rho),\ R\le\|u\|<2R\}.
\]
Set
\[
 \mathbb G_{\bn}(u):=-\{Z_{\bn}^T\mathsf W_{\bn}(u)Z_{\bn}
 -\tr\mathsf W_{\bn}(u)\}/2,
 \qquad
 \Lambda_{\bn}(u)=\mathbb E_{P_{\varphi_0,\bn}}\Lambda_{\bn}(u)
 +\mathbb G_{\bn}(u).
\]
The first term supplies the deterministic drift on
\(\mathcal S_{\bn}(R,\rho)\), and \(\mathbb G_{\bn}\) is the centered
fluctuation.

For \(\varphi_{\bn}(u)\in\mathcal L_{\bn}(\rho)\), the definitions of
\(\mathcal L_{\bn}(\rho)\) and \(\mathcal B_{\bn}(\rho)\) give
\(u\in\mathcal B_{\bn}(\rho)\).  The preceding bounds and the eigenvalue
identity give the drift
\[
\begin{aligned}
 \mathbb E_{P_{\varphi_0,\bn}}\Lambda_{\bn}(u)
 &=-\frac{1}{2}\sum_j\left\{\log(1+d_j)-\frac{d_j}{1+d_j}\right\}\\
 &\le-c\tr\{D_{\bn}(u)^2\}
 \le-cR^2,
 \qquad u\in\mathcal S_{\bn}(R,\rho).
\end{aligned}
\]
where \(d_j\) are the eigenvalues of \(D_{\bn}(u)\).
On \(\mathcal B_{\bn}(\rho)\),
\[
 \mathsf W_{\bn}(u)=-(I+D_{\bn}(u))^{-1}D_{\bn}(u),
 \qquad
 \|(I+D_{\bn}(u))^{-1}\|_2\le2.
\]
The trace upper bound supplies Hilbert--Schmidt control, and the operator
bound supplies the second estimate:
\[
 \|\mathsf W_{\bn}(u)\|_{\mathrm{HS}}
 \le2\|D_{\bn}(u)\|_{\mathrm{HS}}
 \le C\|u\|,
 \qquad
 \|\mathsf W_{\bn}(u)\|_2
 \le2\|D_{\bn}(u)\|_2
 \le C\rho.
\]
For \(u,v\in\mathcal S_{\bn}(R,\rho)\), use
\[
 \mathsf W_{\bn}(u)-\mathsf W_{\bn}(v)
 =-(I+D_{\bn}(u))^{-1}\{D_{\bn}(u)-D_{\bn}(v)\}
 (I+D_{\bn}(v))^{-1}.
\]

Since \(\|(I+D_{\bn}(u))^{-1}\|_2\le2\),
Equation~\eqref{eq:relative-increment-band-lipschitz} gives
\[
\begin{aligned}
 \|\mathsf W_{\bn}(u)-\mathsf W_{\bn}(v)\|_{\mathrm{HS}}
 &\le
 4\|D_{\bn}(u)-D_{\bn}(v)\|_{\mathrm{HS}}
 \le C\|u-v\|,\\
 \|\mathsf W_{\bn}(u)-\mathsf W_{\bn}(v)\|_2
 &\le
 4\|D_{\bn}(u)-D_{\bn}(v)\|_2\\
 &\le
 4\|D_{\bn}(u)-D_{\bn}(v)\|_{\mathrm{HS}}
 \le C\|u-v\|.
\end{aligned}
\]
Since
\(\|\mathsf W_{\bn}(u)-\mathsf W_{\bn}(v)\|_2
\le\|\mathsf W_{\bn}(u)\|_2+\|\mathsf W_{\bn}(v)\|_2\le C\rho\),
we have
\[
 \|\mathsf W_{\bn}(u)-\mathsf W_{\bn}(v)\|_{\mathrm{HS}}
 \le C\|u-v\|, \qquad
 \|\mathsf W_{\bn}(u)-\mathsf W_{\bn}(v)\|_2
 \le\min\{C\|u-v\|,C\rho\}.
\]
We now apply
Lemma~\ref{lem:gaussian-quadratic-process-chaining}.  The lemma is stated
for finite index sets, so fix an arbitrary finite set
\(T\subset\mathcal S_{\bn}(R,\rho)\).  We first derive a bound that is
uniform in \(T\) and then pass to the full shell by continuity.

The two bounds in the preceding display verify the required increment
conditions when \(u,v\in T\).  It remains to verify them when one point is
the external anchor \(0\).  Because \(D_{\bn}(0)=0\), we have
\(\mathsf W_{\bn}(0)=0\) and \(\mathbb G_{\bn}(0)=0\).  Hence, for
\(u\in T\),
\[
\begin{aligned}
 \|\mathsf W_{\bn}(u)-\mathsf W_{\bn}(0)\|_{\mathrm{HS}}
 &=\|\mathsf W_{\bn}(u)\|_{\mathrm{HS}}
 \le C\|u\|
 =C\|u-0\|,\\
 \|\mathsf W_{\bn}(u)-\mathsf W_{\bn}(0)\|_2
 &=\|\mathsf W_{\bn}(u)\|_2\\
 &\le
 \min\{\|\mathsf W_{\bn}(u)\|_{\mathrm{HS}},C\rho\}
 \le\min\{C\|u-0\|,C\rho\}.
\end{aligned}
\]
The same estimates hold with \(u\) and \(0\) interchanged, while the case
in which both points equal \(0\) is immediate.  Consequently, for all
\(u,v\in T\cup\{0\}\),
\[
 \|\mathsf W_{\bn}(u)-\mathsf W_{\bn}(v)\|_{\mathrm{HS}}
 \le C\|u-v\|,
 \qquad
 \|\mathsf W_{\bn}(u)-\mathsf W_{\bn}(v)\|_2
 \le\min\{C\|u-v\|,C\rho\}.
\]
Thus the two increment hypotheses of
Lemma~\ref{lem:gaussian-quadratic-process-chaining} hold on
\(T\cup\{0\}\).  Moreover,
$
 T\subset\mathcal S_{\bn}(R,\rho)\subset B(0,2R).
$
We may therefore apply the lemma with external anchor \(u_{\star}=0\), radius \(2R\),
\(L=C\), and \(b=C\rho\).  We obtain, uniformly over finite
\(T\subset\mathcal S_{\bn}(R,\rho)\), for every \(x\ge1\),
\[
 P_{\varphi_0,\bn}\left\{
 \sup_{u\in T}|\mathbb G_{\bn}(u)|
 >
 C\left[R+\rho\{1+\log(1+R/\rho)\}
 +\sqrt{x}R+x\rho\right]\right\}
 \le Ce^{-x}.
\]

If \(\mathcal S_{\bn}(R,\rho)\neq\varnothing\), choose increasing finite
sets \(T_m\subset\mathcal S_{\bn}(R,\rho)\) with dense union.  By
continuity,
\[
 \sup_{u\in T_m}|\mathbb G_{\bn}(u)|
 \uparrow
 \sup_{u\in\mathcal S_{\bn}(R,\rho)}
 |\mathbb G_{\bn}(u)|.
\]
Applying the preceding bound to \(T_m\) and letting \(m\to\infty\) gives
the same bound with \(T=\mathcal S_{\bn}(R,\rho)\).  For an empty shell,
the bound is trivial.
After fixing \(\rho\),
choose \(\epsilon>0\) so that
\(C(\sqrt{\epsilon}+\epsilon\rho)<c/2\).  Then choose \(M\) so that
\(\epsilon M^2\ge1\) and
\[
 C[R+\rho\{1+\log(1+R/\rho)\}]\le cR^2/2,
 \qquad R\ge M.
\]
Finally take \(\bn\) sufficiently large; the resulting threshold is
independent of \(R\).  With \(x=\epsilon R^2\), the stochastic bound is
below the drift for every \(R\ge M\), so
\begin{equation}
 P_{\varphi_0,\bn}\left\{
 \sup_{u\in\mathcal S_{\bn}(R,\rho)}\Lambda_{\bn}(u)\ge0\right\}
 \le Ce^{-\epsilon R^2}.
        \label{eq:local-likelihood-shell-bound}
\end{equation}
Every \(u\) in the local band with \(\|u\|\ge M\) belongs to one of the
half-open shells \(\mathcal S_{\bn}(2^kM,\rho)\), \(k\ge0\).  Thus the
exterior event and the dyadic shells satisfy
\[
 E_{\bn,M}(U)\subseteq
 \left\{\sup_{\varphi_{\bn}(u)\in\mathcal L_{\bn}^{c}(\rho)}
 \Lambda_{\bn}(u)\ge0\right\}
 \cup\bigcup_{k\ge0}
 \left\{\sup_{u\in\mathcal S_{\bn}(2^kM,\rho)}
 \Lambda_{\bn}(u)\ge0\right\}.
\]
Consequently,
\[
 P_{\varphi_0,\bn}\{E_{\bn,M}(U)\}
 \le o(1)+\sum_{k\ge0}Ce^{-\epsilon4^kM^2},
 \]
which tends to zero as \(M\to\infty\).  Hence
\(\lim_{M\to\infty}\limsup_{\bn}
P_{\varphi_0,\bn}\{E_{\bn,M}(U)\}=0\).

{
For the external law, define
\[
 q_0(M):=\limsup_{\bn}P_{\varphi_0,\bn}\{E_{\bn,M}(U)\},
 \qquad
 q_\star(M):=\limsup_{\bn}P^\star_{\bn}\{E_{\bn,M}(U)\}.
\]
Because \(E_{\bn,M}(U)\) decreases in \(M\), both functions are
nonincreasing.  We have already proved \(q_0(M)\downarrow0\).  Suppose,
for contradiction, that
\(q_\star(M)\downarrow q_\star^\infty>0\).  Put
\(\delta:=q_\star^\infty/2\).  Choose an increasing sequence
\(M_k\uparrow\infty\) such that \(q_0(M_k)\le k^{-1}\).  Monotonicity gives
\(q_\star(M_k)\ge q_\star^\infty=2\delta\) for every \(k\).

For each \(k\), the definition of limsup implies that there are arbitrarily
large sampling indices \(\bn\) with
\(P^\star_{\bn}\{E_{\bn,M_k}(U)\}\ge\delta\).  On the other hand,
\(q_0(M_k)\le k^{-1}\) implies that, for all sufficiently large \(\bn\),
\(P_{\varphi_0,\bn}\{E_{\bn,M_k}(U)\}\le2/k\).  We may therefore choose
an increasing sequence \(\bn_k\) such that simultaneously
\[
 P^\star_{\bn_k}\{E_{\bn_k,M_k}(U)\}\ge\delta,
 \qquad
 P_{\varphi_0,\bn_k}\{E_{\bn_k,M_k}(U)\}\le\frac{2}k.
\]
The second probability tends to zero.  Mutual contiguity from
Lemma~\ref{lem:common-leading-tail-mutual-contiguity}, applied to the event
sequence \(E_{\bn_k,M_k}(U)\), therefore forces the first probability to
tend to zero as well, contradicting its lower bound by \(\delta\).  Hence
\(q_\star(M)\downarrow0\), which proves
Equation~\eqref{eq:spectral-comparison-localization-limit}.}
\end{proof}

\begin{proof}[Proof of Theorem~\ref{thm:working-tail-mle-normality}]
\begingroup We first prove the working-model statement by
localizing the restricted maximizer and comparing it with the compact
quadratic expansion. The external-law score and argmax transfer is proved
separately in the proof of
Theorem~\ref{thm:misspecified-tail-mle-transfer} below.\endgroup

First, we show that $
        \widehat u_{\bn}
        :=\Delta_{\bn}^{-1}
        (\widehat\varphi_{\bn}-\varphi_0),
$ is bounded in probability. We then show that \(\widehat u_{\bn}\) converges to the solution of the quadratic
approximation in Corollary~\ref{cor:local-likelihood-geometry}.

Theorem~\ref{thm:tail-chart-consistency} gives
\(\widehat\varphi_{\bn}\to\varphi_0\) in probability. {Choose the
neighborhood \(U\) in Lemma~\ref{lem:spectral-comparison-localization}.
Because \(\widehat\varphi_{\bn}\) maximizes the likelihood,
\[
        \ell_{\bn}(\widehat\varphi_{\bn})
        -\ell_{\bn}(\varphi_0)\ge0
\]
Equation~\eqref{eq:spectral-comparison-localization-event} then gives
$
 \{\|\widehat u_{\bn}\|\ge M\}
 \cap
 \{\widehat\varphi_{\bn}\in U\}
 \subseteq E_{\bn,M}(U).
$}
Consequently, for every fixed \(M>0\), since \(\widehat\varphi_{\bn}\to\varphi_0\) in probability
\begin{align*}
 \limsup_{\bn}
 P_{\varphi_0,\bn}\{\|\widehat u_{\bn}\|\ge M\}
 &\le
 \limsup_{\bn}
 P_{\varphi_0,\bn}\{\widehat\varphi_{\bn}\notin U\}
 +
 \limsup_{\bn}
 P_{\varphi_0,\bn}\{E_{\bn,M}(U)\}\\
 &=
 \limsup_{\bn}
 P_{\varphi_0,\bn}\{E_{\bn,M}(U)\}.
\end{align*}
  Letting \(M\to\infty\)
and applying
Equation~\eqref{eq:spectral-comparison-localization-fitted-limit} now gives
\[
 \lim_{M\to\infty}\limsup_{\bn}
 P_{\varphi_0,\bn}\{\|\widehat u_{\bn}\|\ge M\}=0.
\]
Thus
$
        \widehat u_{\bn}=O_{\mathbb P}(1).
$

Write
$
        U_{\bn}
        :=\{u:\varphi_{\bn}(u)\in\Theta_{\mathrm{tail}}\}.
$
Since {\(\Delta_{\bn}\)} is nonsingular, the map
\(u\mapsto\varphi_{\bn}(u)=\varphi_0+\Delta_{\bn}u\)
is a bijection from \(U_{\bn}\) onto
\(\Theta_{\mathrm{tail}}\). 
The definition of \(\widehat u_{\bn}\) therefore gives
\[
        \widehat u_{\bn}
        \in\operatorname*{arg\,max}_{u\in U_{\bn}}
        \Lambda_{\bn}(u)
        =
        \operatorname*{arg\,max}_{u\in U_{\bn}}
        \left\{
        \ell_{\bn}(\varphi_{\bn}(u))
        -\ell_{\bn}(\varphi_0)
        \right\}.
\]
By Theorem~\ref{thm:compact-lan}, the quadratic approximation to
\(\Lambda_{\bn}\) is
\[
        {\mathcal Q_{\bn}^{\mathrm{LAN}}(u)}
        :=u^T\mathcal S_{\bn}-\frac{1}{2}u^TJu.
\]
Its unconstrained maximizer is
\(u_{\bn}^{\mathrm q}:=J^{-1}\mathcal S_{\bn}\), as in
Corollary~\ref{cor:local-likelihood-geometry}.
Theorem~\ref{thm:tail-exponent-gradient-information} gives
\(\mathcal S_{\bn}\Rightarrow\mathcal N(0,J)\), and hence
\(u_{\bn}^{\mathrm q}=O_{\mathbb P}(1)\).

Because
$
        d_0
        :=\operatorname{dist}
          \{\varphi_0,\partial\Theta_{\mathrm{tail}}\}>0,
$
the event \(u_{\bn}^{\mathrm q}\notin U_{\bn}\) implies
\(\|\Delta_{\bn}u_{\bn}^{\mathrm q}\|\ge d_0\).  Hence
\[
 P_{\varphi_0,\bn}\{u_{\bn}^{\mathrm q}\notin U_{\bn}\}
 \le
 P_{\varphi_0,\bn}\{
   \|\Delta_{\bn}u_{\bn}^{\mathrm q}\|\ge d_0
 \}
 \longrightarrow0.
\]
Thus the unconstrained maximizer of the quadratic approximation is feasible
with probability tending to one.  Since
\(\widehat u_{\bn}=O_{\mathbb P}(1)\) and
$
        \widehat u_{\bn}
        \in\operatorname*{arg\,max}_{u\in U_{\bn}}\Lambda_{\bn}(u),
$
Corollary~\ref{cor:local-likelihood-geometry} gives
\[
        \widehat u_{\bn}
        =J^{-1}\mathcal S_{\bn}+o_{\mathbb P}(1).
\]
Finally, Theorem~\ref{thm:tail-exponent-gradient-information} and
Slutsky's theorem yield
\[
        \widehat u_{\bn}
        \Rightarrow\mathcal N(0,J^{-1}).
\]
\begingroup The corresponding external-law conclusion does not
follow from contiguity alone; it is established by the centered-score and
argmax bridge in the proof of
Theorem~\ref{thm:misspecified-tail-mle-transfer} below.\endgroup

\end{proof}
We next justify the replacement of the logarithmic tail coordinate by the raw
tail constant in
Equation~\eqref{eq:working-tail-mle-readable-limit}.  Recall that
$
        \widehat c_{\infty,\bn}:=e^{\widehat\tau_{\bn}},
        c_{\infty,0}:=e^{\tau_0}.
$
The representation established above gives
\[
        \widehat\tau_{\bn}-\tau_0
        =O_{\mathbb P}
          \left(\frac{a_{\bn}}{\sqrt{N_{\bn}}}\right).
\]
{
More generally, on the isotropic grid in any fixed dimension \(d\ge1\),
\(N_{\bn}=(L/h+1)^d\asymp h^{-d}\) and
\(a_{\bn}=2\log(1/h)\).  Hence
\[
        \frac{a_{\bn}^2}{\sqrt{N_{\bn}}}
        =O\!\left(h^{d/2}\log^2(1/h)\right)
        \longrightarrow0.
\]
Taylor expansion of the exponential map therefore gives}
\[
        \sqrt{N_{\bn}}
        \left\{
        \frac{\widehat c_{\infty,\bn}-c_{\infty,0}}{c_{\infty,0}}
        -(\widehat\tau_{\bn}-\tau_0)
        \right\}
        =o_{\mathbb P}(1).
\]
Thus replacing \(\widehat\tau_{\bn}-\tau_0\) by the relative error in
\(\widehat c_{\infty,\bn}\) does not affect the limit in
Equation~\eqref{eq:working-tail-mle-readable-limit}.

\begin{proof}[Proof of Corollary~\ref{cor:cubic-grid-componentwise-rates}]
Set
\(U_{\bn}:=\Delta_{\bn}^{-1}(\widehat\varphi_{\bn}-\varphi_0)
=(U_{\tau,\bn},U_{\alpha,\bn})^T\).  By
Theorem~\ref{thm:working-tail-mle-normality},
\(U_{\bn}\Rightarrow\mathcal N(0,J^{-1})\).  The definition of
\(\Delta_{\bn}\) gives the two coordinate identities
\[
 \sqrt{N_{\bn}}(\widehat\alpha_{\bn}-\alpha_0)=U_{\alpha,\bn},
 \qquad
 \sqrt{N_{\bn}}(\widehat\tau_{\bn}-\tau_0)
 =U_{\tau,\bn}+a_{\bn}U_{\alpha,\bn}.
\]
On an isotropically refined cubic grid, \(N_{\bn}=n^d\) and
\(h=L/(n-1)\), hence
\(a_{\bn}=2\log((n-1)/L)=(2/d)\log N_{\bn}-2\log L+o(1)\).  In particular,
\(a_{\bn}\to\infty\) and \(d a_{\bn}/(2\log N_{\bn})\to1\).  Since
\(U_{\bn}=O_{\mathbb P}(1)\), the preceding coordinate identity therefore
implies
\[
 \frac{d\sqrt{N_{\bn}}}{2\log N_{\bn}}
 (\widehat\tau_{\bn}-\tau_0)
 =U_{\alpha,\bn}+o_{\mathbb P}(1).
\]
It also gives
\(\widehat\tau_{\bn}-\tau_0=O_{\mathbb P}(a_{\bn}/\sqrt{N_{\bn}})\).
Taylor's theorem for the exponential map then yields
\((\widehat c_{\infty,\bn}-c_{\infty,0})/c_{\infty,0}
=(\widehat\tau_{\bn}-\tau_0)
+O_{\mathbb P}(a_{\bn}^2/N_{\bn})\).  Because
\(a_{\bn}/\sqrt{N_{\bn}}\to0\), this remainder becomes
\(o_{\mathbb P}(1)\) after multiplication by
\(d\sqrt{N_{\bn}}/(2\log N_{\bn})\asymp\sqrt{N_{\bn}}/a_{\bn}\).
Thus the normalized relative error in \(c_\infty\) is also
\(U_{\alpha,\bn}+o_{\mathbb P}(1)\).  Combining this with
\(\sqrt{N_{\bn}}(\widehat\alpha_{\bn}-\alpha_0)=U_{\alpha,\bn}\) and
Slutsky's theorem gives the asserted rank-one limit.  The common limiting
variance is
\((J^{-1})_{22}=2/(m_2-m_1^2)=2/v_{\alpha_0}=\sigma_{\alpha_0}^2\).
\end{proof}

\begin{proof}[Proof of Corollary~\ref{cor:known-tail-parameter-mles}]
\noindent\emph{(1) Known exponent \(\alpha_0\) (scale assertion).}\par
At \(\alpha=\alpha_0\), the exact fitted scale identity is
\[
 \Sigma_{(\tau,\alpha_0),\bn}
 =e^{\tau-\tau_0}\Sigma_{\varphi_0,\bn}.
\]
Thus, with \(\mathsf q_{\bn}^{(\alpha_0)}:=Y_{\bn}^T\Sigma_{\varphi_0,\bn}^{-1}Y_{\bn}\),
\[
 \ell_{\bn}(\tau,\alpha_0)-\ell_{\bn}(\tau_0,\alpha_0)
 =-\frac{N_{\bn}}{2}(\tau-\tau_0)
 -\frac{\mathsf q_{\bn}^{(\alpha_0)}}{2}\{e^{-(\tau-\tau_0)}-1\}.
\]
The unrestricted maximizer is \(\tau_0+\log(\mathsf q_{\bn}^{(\alpha_0)}/N_{\bn})\). Under
\(P_{\varphi_0,\bn}\), \(\mathsf q_{\bn}^{(\alpha_0)}\sim\chi^2_{N_{\bn}}\), and the
interiority of \(\tau_0\) makes the constrained and unrestricted maximizers
agree with probability tending to one. The chi-square central limit theorem
and the logarithmic delta method therefore give
\(\sqrt{N_{\bn}}(\widehat\tau_{\bn}^{(\alpha_0)}-\tau_0)
\Rightarrow\mathcal N(0,2)\).

\par\medskip\noindent\emph{(2) Known coefficient \(\tau_0\) (exponent assertion).}\par
Write \(\mathcal S_{\bn}=(\mathcal S_{\tau,\bn},\mathcal S_{\alpha,\bn})^T\)
for the score from Theorem~\ref{thm:tail-exponent-gradient-information}.
For every \(\varepsilon>0\), the separation established in the proof of
Theorem~\ref{thm:tail-chart-consistency} rules out, with probability tending
to one, a maximizer of \(\ell_{\bn}(\tau_0,\cdot)\) satisfying
\(\lvert\alpha-\alpha_0\rvert\ge\varepsilon\).
Thus \(\widehat\alpha_{\bn}^{(\tau_0)}\to\alpha_0\) in probability.  

For \(t\in\mathbb R\), write
\[
        \alpha_{\bn}(t)
        :=\alpha_0+\frac{t}{\sqrt{N_{\bn}}a_{\bn}},
        \qquad
        u_{\bn}(t):=
        \begin{pmatrix}
        -t\\ t/a_{\bn}
        \end{pmatrix}.
\]
{Equation~\eqref{eq:tail-rate-matrix}} gives the exact map
\[
        \varphi_0+\Delta_{\bn}u_{\bn}(t)
        =(\tau_0,\alpha_{\bn}(t)).
\]
Consequently,
\(\widehat t_{\bn}:=\sqrt{N_{\bn}}a_{\bn}
(\widehat\alpha_{\bn}^{(\tau_0)}-\alpha_0)\) maximizes
\(\Lambda_{\bn}\{u_{\bn}(t)\}\) over its feasible values.

Fix the neighborhood \(U\) from
Lemma~\ref{lem:spectral-comparison-localization}.  Since
\(\|u_{\bn}(t)\|=\lvert t\rvert\sqrt{1+a_{\bn}^{-2}}\), the likelihood
increment at the maximizer is nonnegative, and
\[
 \left\{
 \lvert\widehat t_{\bn}\rvert\sqrt{1+a_{\bn}^{-2}}\ge M
 \right\}
 \cap
 \left\{
 (\tau_0,\widehat\alpha_{\bn}^{(\tau_0)})\in U
 \right\}
 \subseteq E_{\bn,M}(U).
\]
The consistency above and
Equation~\eqref{eq:spectral-comparison-localization-fitted-limit} show that
\(\widehat t_{\bn}\) is bounded in probability.

For every \(T<\infty\), Theorem~\ref{thm:compact-lan} and the displayed map
give, uniformly for \(\lvert t\rvert\le T\),
\[
 \Lambda_{\bn}\{u_{\bn}(t)\}
 =
 -t\mathcal S_{\tau,\bn}
 -\frac{t^2}{4}
 +o_{\mathbb P}(1).
\]
Indeed,
\[
 u_{\bn}(t)^T\mathcal S_{\bn}
 =-t\mathcal S_{\tau,\bn}
   +\frac{t}{a_{\bn}}\mathcal S_{\alpha,\bn},
 \qquad
 \frac{1}{2}u_{\bn}(t)^TJu_{\bn}(t)
 =\frac{t^2}{4}
 \left(1-\frac{2m_1}{a_{\bn}}+\frac{m_2}{a_{\bn}^2}\right).
\]
Here \(\mathcal S_{\bn}=O_{\mathbb P}(1)\), and the quadratic on the
right-hand side is uniquely maximized at \(-2\mathcal S_{\tau,\bn}\).

Let \(d_\alpha:=\operatorname{dist}\{\alpha_0,\{\alpha_-,\alpha_+\}\}>0\).
Because \(\sqrt{N_{\bn}}a_{\bn}\to\infty\) and
\(\mathcal S_{\tau,\bn}=O_{\mathbb P}(1)\),
\[
 P_{\varphi_0,\bn}\!\left\{
 \alpha_{\bn}(-2\mathcal S_{\tau,\bn})
 \notin[\alpha_-,\alpha_+]
 \right\}
 \le
 P_{\varphi_0,\bn}\!\left\{
 2\lvert\mathcal S_{\tau,\bn}\rvert
 \ge d_\alpha\sqrt{N_{\bn}}a_{\bn}
 \right\}
 \longrightarrow0.
\]
{Thus the quadratic maximizer is feasible with probability tending
to one.  Choose \(T<\infty\) so that, with probability arbitrarily close to
one, both \(\widehat t_{\bn}\) and \(-2\mathcal S_{\tau,\bn}\) belong to
\([-T,T]\) and the latter is feasible.  On this event, set
\[
 r_{\bn,T}(t)
 :=\Lambda_{\bn}\{u_{\bn}(t)\}
 +t\mathcal S_{\tau,\bn}+\frac{t^2}{4},
 \qquad |t|\le T.
\]
The uniform approximation gives \(\sup_{|t|\le T}|r_{\bn,T}(t)|
=o_{\mathbb P}(1)\).  Comparing the actual and quadratic maximizers gives
\[
 \frac{1}{4}\{\widehat t_{\bn}+2\mathcal S_{\tau,\bn}\}^2
 \le2\sup_{|t|\le T}|r_{\bn,T}(t)|
 =o_{\mathbb P}(1).
\]
Hence \(\widehat t_{\bn}=-2\mathcal S_{\tau,\bn}+o_{\mathbb P}(1).\)}
Theorem~\ref{thm:tail-exponent-gradient-information} gives
\(\mathcal S_{\tau,\bn}\Rightarrow\mathcal N(0,J_{11})
=\mathcal N(0,1/2)\).  Slutsky's theorem therefore yields
\(\widehat t_{\bn}\Rightarrow\mathcal N(0,2)\), which is
Equation~\eqref{eq:known-tail-coefficient-alpha-mle}.
\end{proof}
\begingroup
\begin{proof}[proof of Theorem~\ref{thm:misspecified-tail-mle-transfer}]
Write \(\widehat u_{\bn}:=\Delta_{\bn}^{-1}
(\widehat\varphi_{\bn}-\varphi_0)\).
The score transfer is not a consequence of contiguity alone. Apply
Lemma~\ref{lem:centered-quadratic-form-transfer} with
\(A_{1,\bn}=I_{N_{\bn}}\) and \(A_{2,\bn}=R_{\alpha,\bn}\). The trace
and operator limits in Equation~\eqref{eq:Ralpha-trace-limits} give
\(\Gamma=2J\), and therefore
\[\mathcal S_{\bn}\Rightarrow\mathcal N(0,J)
\quad\text{under \(P^\star_{\bn}\).}\]

Mutual contiguity is used only for vanishing-probability statements: tail-chart
consistency, the compact working-likelihood quasi-LAN remainder in
Theorem~\ref{thm:compact-lan}, and localization. On those events the
working-model quadratic argmax argument yields
\[\widehat u_{\bn}=J^{-1}\mathcal S_{\bn}+o_{\mathbb P}(1)
\quad\text{under \(P^\star_{\bn}\)},\]
which, with the score limit, proves the joint conclusion.

For known \(\alpha_0\), put \(\mathsf q_{\bn}^{(\alpha_0)}:=Z_{\bn}^TZ_{\bn}\).
The \(A_{1,\bn}=I\) transfer gives
\[
 \frac{\mathsf q_{\bn}^{(\alpha_0)}-N_{\bn}}{\sqrt{N_{\bn}}}
 \Rightarrow\mathcal N(0,2)
 \quad\text{under \(P^\star_{\bn}\)}.
\]
The exact unrestricted maximizer is
\(\tau_0+\log\{\mathsf q_{\bn}^{(\alpha_0)}/N_{\bn}\}\). Interiority
makes it feasible with probability tending to one, and the logarithmic delta
method gives the stated scale limit. For known \(\tau_0\), transferred restricted
consistency and localization permit the compact quasi-LAN expansion along
\(u_{\bn}(t)=(-t,t/a_{\bn})\), uniformly for \(t\) in every compact set:
\[
 \Lambda_{\bn}\{u_{\bn}(t)\}=-t\mathcal S_{\tau,\bn}-\frac{t^2}{4}
 +o_{\mathbb P}(1)\quad\text{under \(P^\star_{\bn}\)}.
\]
Its one-dimensional argmax gives
\[
 \sqrt{N_{\bn}}a_{\bn}
 \bigl(\widehat\alpha_{\bn}^{(\tau_0)}-\alpha_0\bigr)
 =-2\mathcal S_{\tau,\bn}+o_{\mathbb P}(1)
 \quad\text{under \(P^\star_{\bn}\)}.
\]
It therefore converges in distribution to \(\mathcal N(0,2)\).
Thus both restricted conclusions hold under \(P^\star_{\bn}\).
\end{proof}
\endgroup
\endgroup

\section{Covariance-family verification}
\label{app:wendland-differentiated-tail}

{This appendix verifies the named covariance families against
Assumption~\ref{ass:block-tail-detailed} and
Assumption~\ref{ass:differentiated-spectral-tail-family}: the former
supplies the scalar envelope and unit-tail limit, and the latter the
scale, remainder, and derivative interface.}

\subsection*{{Mat\'ern verification}}

{We first verify the fixed-range Mat\'ern fitted chart. The proof identifies the normalized spectrum, compares it with \((1+\|\bomega\|^2)^{-\alpha}\), and bounds the resulting remainder and its \(\alpha\)-derivative.}

\begin{proof}[Proof of Proposition~\ref{prop:matern-tail-family-verification}]
{We first establish the uniform scalar envelope and unit-tail limit, then identify the reference remainder and its \(\alpha\)-derivative.}
Fix \(\lambda=\log\kappa\).  Write the Mat\'ern family in the tail
coordinates \(\alpha=\nu+d/2\) and \(\tau=\log c_{\infty,\mathrm M}\), and
let \(\mathcal K_\alpha\Subset(d/2,\infty)\). The fitted normalized
spectrum is
$
        \overline S_{\alpha}(\bomega)
        =\bigl(e^{2\lambda}+\|\bomega\|^2\bigr)^{-\alpha}.
$

We first verify the scalar envelope and unit-tail limit in
Equations~\eqref{eq:block-envelope-detailed}
and~\eqref{eq:block-tail-limit-detailed}.  {Because \(\lambda\) is
fixed, there are constants \(0<c<C<\infty\) such that
\[
        c(1+\|\bomega\|^2)
        \le e^{2\lambda}+\|\bomega\|^2
        \le C(1+\|\bomega\|^2)
\]
for every \(\bomega\in\R^d\).  Raising this fixed-\(\lambda\) comparison
to the variable exponent \(-\alpha\) uses compactness of
\(\mathcal K_\alpha\) and gives, uniformly in
\(\alpha\in\mathcal K_\alpha\),
\[
        0<\overline S_{\alpha}(\bomega)
        \le C(1+\|\bomega\|^2)^{-\alpha}.
\]}
The spectrum is also continuous and even.  Moreover, for
\(\|\bomega\|\ge1\), {compactness of \(\mathcal K_\alpha\)
again makes the following tail estimate uniform:}
\[
        \left|
        \|\bomega\|^{2\alpha}
        \overline S_{\alpha}(\bomega)-1
        \right|
        =
        \left|
        \left(1+\frac{e^{2\lambda}}{\|\bomega\|^2}\right)^{-\alpha}-1
        \right|
        \le C\|\bomega\|^{-2}.
\]
Thus Equations~\eqref{eq:block-envelope-detailed}
and~\eqref{eq:block-tail-limit-detailed} hold uniformly on \(\mathcal K_\alpha\), with
leading constant one.

{For the differentiated-tail clauses, we now verify the reference decomposition and both remainder bounds.}
We now verify the reference decomposition in
Assumption~\ref{ass:differentiated-spectral-tail-family} by defining
\[
        S^{\mathrm{rem}}_{\alpha}(\bomega)
        :=\bigl(e^{2\lambda}+\|\bomega\|^2\bigr)^{-\alpha}
        -(1+\|\bomega\|^2)^{-\alpha}.
\]
To bound this remainder without an implicit intermediate point, let
\(q_t(\lambda):=1+t(e^{2\lambda}-1)\), \(0\le t\le1\).  The fundamental
theorem of calculus gives
\[
        S^{\mathrm{rem}}_{\alpha}(\bomega)
        =-\alpha(e^{2\lambda}-1)
        \int_0^1
        \{q_t(\lambda)+\|\bomega\|^2\}^{-\alpha-1}\,dt.
\]
The quantity \(q_t(\lambda)\) lies on the line segment joining \(1\) and
\(e^{2\lambda}\).  Hence, uniformly on \(\mathcal K_\alpha\times[0,1]\),
\[
        c(1+\|\bomega\|^2)
        \le q_t(\lambda)+\|\bomega\|^2
        \le C(1+\|\bomega\|^2).
\]
Differentiating the integral identity in \(\alpha\) gives
\begin{align*}
        \partial_\alpha
        S^{\mathrm{rem}}_{\alpha}(\bomega)
        &=(e^{2\lambda}-1)
        \int_0^1
        \bigl[\alpha\log\{q_t(\lambda)+\|\bomega\|^2\}-1\bigr]
        \{q_t(\lambda)+\|\bomega\|^2\}^{-\alpha-1}\,dt.
\end{align*}
The preceding comparison now yields
\begin{align*}
        \left|S^{\mathrm{rem}}_{\alpha}(\bomega)\right|
        &\le C(1+\|\bomega\|^2)^{-\alpha-1},\\
        \left|\partial_\alpha
        S^{\mathrm{rem}}_{\alpha}(\bomega)\right|
        &\le C(1+\|\bomega\|^2)^{-\alpha-1}
        \{1+\log(2+\|\bomega\|)\}.
\end{align*}
These are Equations~\eqref{eq:spectral-tail-remainder-bound} and
\eqref{eq:spectral-tail-alpha-remainder-bound} with \(\varepsilon=1\), which
completes the verification.
\end{proof}

\subsection*{{Generalized-Wendland verification}}

{We next verify the fixed-shape generalized-Wendland spectral family. Its boundary and support parameters \((\mu_0,\ell_0)\) remain fixed while \((\tau,\alpha)\) vary.}

Fix the boundary parameter \(\mu_0>(d+1)/2\) and a logarithmic support radius
\(\ell_0\in\R\).  The tail
coefficient is given by
\[
        c_{\infty,\mathrm W}(\sigma^2,\alpha;\mu_0,\ell_0)
        =2^d\pi^{d/2}\sigma^2
        \frac{\Gamma(\alpha)\Gamma(\mu_0+2\alpha-d)}{\Gamma(\alpha-d/2)\Gamma(\mu_0)}
        e^{-(2\alpha-d)\ell_0}.
\]
We therefore use the tail coordinates
\[
        \left( \tau=\log c_{\infty,\mathrm W},\alpha \right)
        \qquad
        {\Theta_{\mathrm W}^{\mathrm{fix}}}
        =\R\times\left(\frac{d+1}{2},\mu_0\right).
\]
All derivatives below are taken at fixed
\((\tau,\mu_0,\ell_0)\).

In \cite{bevilacqua2019gw}, their \(\kappa\) is
\(\alpha-(d+1)/2\), their \(\lambda(d,\kappa)\) equals our tail exponent
\(\alpha\), and their \(\beta\) is \(e^{\ell_0}\).  Their
Equation~(6) and Theorem~1 use the convention
\(\rho(\bx)=\int_{\R^d}e^{\mathrm i\bomega\cdot\bx}
\widehat\rho(\bomega)\,d\bomega\), whereas the present paper uses
\(S=(2\pi)^d\widehat\rho\).  Their printed radial-transform
Equation~(5), including in the published version, appears to have a
typographical normalization error: it displays \((2\pi)^{-d}\), where this
radial transform under their convention requires \((2\pi)^{-d/2}\).
After the Fourier-convention conversion and division by
\(c_{\infty,\mathrm W}\),
\cite[Point~1 of Theorem~1]{bevilacqua2019gw} gives
\begin{equation}
\begin{split}
        \overline S_{\alpha}(\bomega)
        &=\frac{\Gamma(\mu_0)}{\Gamma(\mu_0+2\alpha)}
        e^{2\alpha\ell_0} 
        {}_1F_2\left(
        \alpha;\alpha+\frac{\mu_0}{2},
        \alpha+\frac{\mu_0+1}{2};
        -\frac{e^{2\ell_0}\|\bomega\|^2}{4}
        \right).
        \label{eq:wendland-normalized-spectrum}
\end{split}
\end{equation}
Here \({}_1F_2\) is the generalized hypergeometric function. 
The normalized spectrum in
Equation~\eqref{eq:wendland-normalized-spectrum} is continuous and even; its
smooth dependence on \(\alpha\) will be established below from the
compactly supported covariance representation.  Moreover,
\cite[Theorem~2.2]{ChernihHubbert2014} gives strict positivity when
\(\mu_0>\alpha\).

For each fixed
\(\alpha\in((d+1)/2,\mu_0)\),
\cite[Point~2 of Theorem~1]{bevilacqua2019gw} gives
\begin{align}
        \overline S_{\alpha}(\bomega)
        &=\|\bomega\|^{-2\alpha}
        \{1+O(\|\bomega\|^{-2})\}\notag\\
        &\quad+D_{\alpha}
        \|\bomega\|^{-(\mu_0+\alpha)}
        \{\cos(\Phi_{\alpha}(\bomega))
        +O(\|\bomega\|^{-1})\},
        \label{eq:wendland-fixed-parameter-tail-expansion}
\end{align}
as \(\|\bomega\|\to\infty\), where
$
        D_{\alpha}
        =2^{1-\alpha}\frac{\Gamma(\mu_0)}{\Gamma(\alpha)}
        e^{(\alpha-\mu_0)\ell_0},
  $ and $
        \Phi_{\alpha}(\bomega)
        =e^{\ell_0}\|\bomega\|
        -\frac{\pi}{2}(\mu_0+\alpha)$.
This expansion identifies the algebraic and boundary contributions for each
fixed \(\alpha\). This is not sufficient for the uniform bounds in
Equations~\eqref{eq:spectral-tail-remainder-bound}, but we will now establish that it holds uniformly in \(\alpha\) on compact subsets of \(((d+1)/2,\mu_0)\).

For \(\|\bomega\|>0\), set
$
        q=e^{\ell_0}\|\bomega\|,
        $
        $ \nu:=\alpha-\frac{3}{2}.
$
 Define the finite Hankel integral
\begin{align}
        \mathcal H_\alpha(q)
        :=\int_0^1t^{\nu+1}(1-t)^{\mu_0}
        J_{\nu}(qt)\,dt,
        \label{eq:H}
\end{align}
so that their formula gives the normalization bridge
\[ {}_1F_2\left(
        \alpha;\alpha+\frac{\mu_0}{2},
        \alpha+\frac{\mu_0+1}{2};-\frac{q^2}{4}
        \right)=\frac{\Gamma(\alpha-1/2)}{B(2\alpha-1,\mu_0+1)}
        \left(\frac{2}q\right)^{\nu}
        \mathcal H_\alpha(q).
\]
Substitution into Equation~\eqref{eq:wendland-normalized-spectrum}, followed
by the gamma duplication formula, therefore gives the exact identity
\[
        \overline S_\alpha(\bomega)
        =A_\alpha q^{-\nu}\mathcal H_\alpha(q),
        \qquad
        A_\alpha
        =\frac{\sqrt\pi\,2^{1/2-\alpha}}{\mu_0\Gamma(\alpha)}e^{2\alpha\ell_0}.
\]
The two endpoints in the integral representation \eqref{eq:H} have distinct
roles: \(t=0\) produces the algebraic tail, whereas \(t=1\) produces the
oscillatory support-boundary term.

\begin{lemma}
\label{lem:wendland-fixed-real-endpoint-remainder}
Let
\(\mathcal J_\alpha\Subset((d+1)/2,\mu_0)\).
There are constants \(C>0\) and \(M\ge1\) such that, uniformly for
\(\alpha\in\mathcal J_\alpha\) and \(\|\bomega\|\ge M\),
\[
        \left|\overline S_\alpha(\bomega)
        -\|\bomega\|^{-2\alpha}\right|
        \le C\left\{
        \|\bomega\|^{-2\alpha-2}
        +\|\bomega\|^{-(\mu_0+\alpha)}\right\}.
\]
\end{lemma}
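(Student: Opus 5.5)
The plan is to work entirely with the exact representation \(\overline S_\alpha(\bomega)=A_\alpha q^{-\nu}\mathcal H_\alpha(q)\), with \(q=e^{\ell_0}\|\bomega\|\) and \(\nu=\alpha-3/2\), and to split the Hankel integral \eqref{eq:H} according to its two endpoints. Since \(q\asymp\|\bomega\|\), it suffices to show, uniformly for \(\alpha\in\mathcal J_\alpha\) and \(q\ge q_0\),
\[
 \Bigl|A_\alpha q^{-\nu}\mathcal H_\alpha(q)-e^{-2\alpha\ell_0}q^{-2\alpha}\Bigr|
 \le C\{q^{-2\alpha-2}+q^{-(\mu_0+\alpha)}\}.
\]
Fix a smooth partition of unity \(1=\chi_0(t)+\chi_1(t)\) on \([0,1]\), with \(\chi_0=1\) on \([0,1/3]\) and \(\chi_1=1\) on \([2/3,1]\). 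This gives the decomposition \(\mathcal H_\alpha=\mathcal H_\alpha^{(0)}+\mathcal H_\alpha^{(1)}\). All constants will be tracked through quantities such as \(\Gamma(\alpha)\), \(\Gamma(\nu+k)\) and Bessel asymptotic constants, which are continuous in \(\alpha\) on the compact set \(\overline{\mathcal J_\alpha}\).

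\emph{Boundary endpoint \(t=1\).} On \(\operatorname{supp}\chi_1\) we have \(qt\ge q/3\), so the uniform large-argument expansion
\[
 J_\nu(x)=\sqrt{2/(\pi x)}\{\cos(x-\nu\pi/2-\pi/4)+O(x^{-1})\},
\]
with the \(O\)-constant uniform for \(\nu\) in a compact set, applies. The main term gives oscillatory integrals \(\int t^{\nu+1/2}\chi_1(t)(1-t)^{\mu_0}e^{\pm\mathrm iqt}\,dt\). The amplitude vanishes to all orders at \(t=2/3\) and has a \((1-t)^{\mu_0}\) singularity at \(t=1\). The standard endpoint (Erdélyi) lemma, or repeated integration by parts after the substitution \(s=1-t\), bounds these by \(Cq^{-\mu_0-1}\), with constants controlled by finitely many \(t\)-derivatives of the smooth factor; these are uniform in \(\alpha\). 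Including the \(q^{-1/2}\) prefactor gives \(\mathcal H_\alpha^{(1)}=O(q^{-\mu_0-3/2})\). The \(O(x^{-1})\) Bessel remainder is handled the same way: write it through the next term of the Hankel expansion plus an \(O(x^{-2})\) remainder, so it is no worse. Multiplying by \(q^{-\nu}\) gives \(O(q^{-(\mu_0+\alpha)})\).

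\emph{Algebraic endpoint \(t=0\).} Write \(f(t)=(1-t)^{\mu_0}\chi_0(t)\), which is smooth with compact support and satisfies \(f(0)=1\). Extend the integral to \([0,\infty)\); it is then the Hankel transform of \(t^{\nu}f(t)\). I would evaluate it using the Mellin–Barnes representation, or equivalently Taylor-expand \(f\) at \(0\) to order \(K\) and use the regularized identities
\[
 \int_0^\infty t^{\nu+1+k}J_\nu(qt)\,dt=2^{\nu+1+k}\frac{\Gamma(\nu+1+k/2)}{\Gamma(-k/2)}q^{-\nu-2-k}
\]
in the sense of Abel summation. The even-\(k\) terms vanish because \(1/\Gamma(-k/2)=0\). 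The leading term is therefore \(k=1\), namely \(-\mu_0\,2^{\nu+2}\Gamma(\nu+3/2)/\Gamma(-1/2)\,q^{-\nu-3}\), and the next nonzero term is \(k=3\), which is \(O(q^{-\nu-5})\). The Taylor remainder is \(C^\infty\) and vanishes to high order at \(0\), so repeated integration by parts via \((x^{\nu+1}J_{\nu+1})'=x^{\nu+1}J_\nu\) gives \(O(q^{-\nu-5})\) uniformly. After multiplication by \(A_\alpha q^{-\nu}\), the leading term is \(c_\alpha q^{-2\alpha}\) and the error is \(O(q^{-2\alpha-2})\).

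To identify \(c_\alpha=e^{-2\alpha\ell_0}\) without recomputing gamma factors, note that \(c_\alpha\) is continuous in \(\alpha\) and, for each fixed \(\alpha\), must coincide with the leading coefficient in the fixed-parameter expansion \eqref{eq:wendland-fixed-parameter-tail-expansion}. Consistency of the two expansions then pins it down; a direct check via the duplication formula serves as a cross-check. The step I expect to be the main obstacle is making the \(t=0\) analysis rigorous and uniform. The extension to \(\infty\) and the regularized moments must be justified, since \(\int t^{\nu+1+k}J_\nu\) diverges classically. I would handle this with the Abel factor \(e^{-\epsilon t}\), or by a Mellin–Barnes contour shift past the poles at \(s=\nu+2+k\), bounding the shifted contour integral by \(Cq^{-\nu-5}\) with \(\alpha\)-uniform Stirling bounds on the gamma ratio.
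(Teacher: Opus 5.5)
The overall architecture is the same as the paper's: the exact representation $\overline S_\alpha=A_\alpha q^{-\nu}\mathcal H_\alpha(q)$, a smooth split at the two endpoints, and regularized Hankel moments at $t=0$. However, the boundary-endpoint step fails as written. Once a Bessel remainder is bounded in absolute value, its oscillation is lost. An $O(x^{-K})$ remainder inside the braces of the Hankel expansion therefore contributes $O(q^{-K-1/2})$ to $\mathcal H^{(1)}_\alpha$, hence $O(q^{-\alpha+1-K})$ to $\overline S_\alpha$. Your choice, one further term plus an $O(x^{-2})$ remainder ($K=2$), leaves an error of order $\|\bomega\|^{-\alpha-1}$. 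That error is dominated by neither $\|\bomega\|^{-2\alpha-2}$ nor $\|\bomega\|^{-(\mu_0+\alpha)}$, because $\mu_0>(d+1)/2\ge1$. So the remainder is not ``no worse''; it destroys the bound.

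The number of retained terms must grow with $\mu_0$. Take $K\ge\mu_0+1$ terms, with a remainder bound uniform for $\nu$ in compacts (e.g.\ Nemes' bounds, which the paper also cites). Then apply Erd\'elyi's lemma to each retained oscillatory integral, with amplitude $t^{\nu+1/2-j}\chi_1(t)(1-t)^{\mu_0}$; each is $O(q^{-1/2-j}q^{-\mu_0-1})$ uniformly. It is Erd\'elyi's lemma, not plain integration by parts in $s=1-t$, that gives the sharp exponent. For nonintegral $\mu_0$, integration by parts must stop after $\lceil\mu_0\rceil$ steps and only yields $O(q^{-\lceil\mu_0\rceil})$. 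With these repairs your $t=1$ argument is a valid alternative.

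The paper handles $t=1$ differently. It expands $g_{\mathrm{end}}$ in powers of $1-t^2$; being functions of $t^2$, the pieces generate no algebraic terms from the origin. Sonine's integral evaluates the two leading pieces exactly as multiples of $q^{-\mu_0-j-1}J_{\nu+\mu_0+j+1}(q)$. The $C^N$ remainder $R_2(t^2)$, whose derivatives vanish at $t=1$, is then handled by $N=\lceil\mu_0+1\rceil$ integrations by parts via the Bessel derivative identity. This needs only the crude uniform bound $\sup q^{1/2}|J_\kappa(q)|<\infty$ and keeps every integral on $[0,1]$. Note that the number of steps again scales with $\mu_0$, which is exactly the dependence your remainder step missed.

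At $t=0$ your plan coincides with the paper's use of Wong's Gauss-summable expansion, and the obstacle you flag is real. Your Taylor remainder is a polynomial at infinity, not compactly supported, so the repeated integration by parts must be done in the regularized sense. The paper does this through Wong's exact remainder
\[
\delta_4(q)=(-q^{-1})^m\int_0^\infty J_{\nu+m}(qt)\,t^{\nu+1-m}\Pi_m(t\,d/dt)R_4(t)\,dt .
\]
Choosing $5\le m-\nu<6$ makes the majorant integrable at both $0$ and $\infty$, uniformly on the compact $\nu$-range.

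Finally, no consistency argument is needed for the constant. Directly, $A_\alpha\mu_0 2^{\nu+1}\Gamma(\nu+3/2)/\sqrt\pi=e^{2\alpha\ell_0}$, and $\|\bomega\|^{-2\alpha}=e^{2\alpha\ell_0}q^{-2\alpha}$. Your stated target $e^{-2\alpha\ell_0}q^{-2\alpha}$ therefore has the wrong sign in the exponent. Also, $\chi_1$ vanishes to infinite order at $t=1/3$, not at $t=2/3$, where it equals one.
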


\begin{proof}
Set
$
        V:=\left\{\alpha-\frac{3}{2}:\alpha\in\mathcal J_\alpha\right\}.
$
Then \(\nu=\alpha-3/2\) ranges over
\(V\Subset((d-2)/2,\mu_0-3/2)\subset(-1/2,\infty)\).  Recall that
\(q=e^{\ell_0}\|\bomega\|\), so \(q\to\infty\) exactly when
\(\|\bomega\|\to\infty\).  We work below with \(q\ge1\), and all constants
and \(O(\cdot)\) terms are uniform for \(\nu\in V\).  Define
\[
        \mathcal T_\nu f(q)
        :=\int_0^1t^{\nu+1}f(t)J_\nu(qt)\,dt.
\]

Choose a fixed smooth cutoff \(\chi_0=1\) near zero whose support is separated
from one, and put
\[
\begin{gathered}
        g_{\mathrm{org}}(t)=\chi_0(t)(1-t)^{\mu_0},\qquad
        g_{\mathrm{end}}(t)=\{1-\chi_0(t)\}(1-t)^{\mu_0},
\end{gathered}
\]
thus
$       \mathcal H_\alpha(q)
        =\mathcal T_\nu g_{\mathrm{org}}(q)
        +\mathcal T_\nu g_{\mathrm{end}}(q).
$

The first term is localized near \(t=0\).  The second vanishes near \(t=0\)
and contains the contribution from the support endpoint \(t=1\).  We estimate
these two terms separately.

Since \(\operatorname{supp}\chi_0\subset[0,b]\) for some \(b<1\), extension
by zero gives \(g_{\mathrm{org}}\in C_c^\infty([0,\infty))\).  For the first
term, Wong's construction
\cite[Equations~(4.5)--(4.9)]{Wong1980IntegralError} gives a
four-term expansion of \(\mathcal T_\nu g_{\mathrm{org}}(q)\) as
\(q\to\infty\), together with an exact integral formula for its remainder. Set \(f_\nu(t):=t^{\nu+1}g_{\mathrm{org}}(t)\).  Since
Wong's statement is for fixed \(\nu\), we then bound the exact remainder
directly to obtain an \(O(q^{-\nu-5})\) bound uniform for \(\nu\in V\).
In Wong's notation, the required expansion at \(t=0\) is
\[
        f_\nu(t)\sim\sum_{s\ge0}c_st^{s+\nu+1},
        \qquad t\downarrow0,
\]
so the source exponent in his notation is \(\nu+2\).
Take \(n=4\) in that construction and put
\[
        c_s=\frac{g_{\mathrm{org}}^{(s)}(0)}{s!},\qquad
        R_4(t)=g_{\mathrm{org}}(t)-\sum_{s=0}^3c_st^s,\qquad
        m=\lceil\nu+5\rceil.
\]
{For Wong's conditions~(i)--(iii) in
\cite[Equation~(4.10)]{Wong1980IntegralError}, the source exponent is
\(\nu+2\), and we take \(n=4\).  The smooth compact support gives the
required \(C^m\) regularity and differentiable expansion, while
\(5\le m-\nu<6\) gives Wong's integer window
\(\nu+9/2<m<\nu+6\) uniformly on \(V\), with only finitely many possible
values of \(m\).  Also \(2\nu+2>0\), and the condition~(iii) integrals
vanish because \(f_\nu\) and its derivatives vanish on \([1,\infty)\).
Thus Wong's finite-sum formula in
\cite[Equation~(4.5)]{Wong1980IntegralError}, with the exact remainder
interface in Equations~(4.7)--(4.9), gives}
\[
        \mathcal T_\nu g_{\mathrm{org}}(q)
        =\sum_{s=0}^3c_s
        \frac{2^{s+\nu+1}\Gamma(\nu+s/2+1)}{\Gamma(-s/2)}
        q^{-s-\nu-2}+\delta_4(q).
\]
Here \(\delta_4(q)\) is Wong's remainder. For \(s=0,2\), the corresponding terms in the finite sum are zero because
the reciprocal gamma function vanishes at the nonpositive integers.  For
\(s=1\), using \(c_1=g_{\mathrm{org}}'(0)\) and
\(\Gamma(-1/2)=-2\sqrt\pi\) gives the term displayed below.  The \(s=3\)
term is \(O(q^{-\nu-5})\), uniformly for \(\nu\in V\).  Hence
\[
        \mathcal T_\nu g_{\mathrm{org}}(q)
        =-\frac{2^{\nu+1}\Gamma(\nu+3/2)}{\sqrt\pi}
        g_{\mathrm{org}}'(0)q^{-\nu-3}
        +O(q^{-\nu-5})+\delta_4(q).
\]

Let \(\Pi_m(z)=\prod_{j=0}^{m-1}(z-2j)\).  Iterating the remainder recurrence
in \cite[Equations~(4.7)--(4.9)]{Wong1980IntegralError} gives, initially in
the same Gauss-summability sense,
\[
\begin{aligned}
        \delta_4(q)
        &=(-q^{-1})^m\int_0^\infty
          J_{\nu+m}(qt)\varphi_{m,4}^{(\nu)}(t)\,dt,\\
        \varphi_{m,4}^{(\nu)}(t)
        &=t^{\nu+1-m}\Pi_m(t\,d/dt)R_4(t).
\end{aligned}
\]
{Write \(E=t\,d/dt\).  Taylor's formula at zero,
\(\Pi_m(0)=\Pi_m(2)=\Pi_m(4)=0\), and the polynomial form of \(R_4\)
off the support of \(g_{\mathrm{org}}\) give, using
\(5\le m-\nu<6\), the following bound.  Its constant is uniform in
\(\nu\) because \(V\) is compact and only finitely many values of \(m\)
occur:}
\[
        \left|t^{-1/2}\varphi_{m,4}^{(\nu)}(t)\right|
        \le C
        \begin{cases}
        t^{-1/2},&0<t\le1,\\
        t^{-3/2},&t\ge1.
        \end{cases}
\]
{The orders \(\nu+m\) lie in a compact interval
\(V_*\Subset(-1/2,\infty)\).  The Bessel power series in
\cite[Equation~(10.2.2)]{NISTDLMF} for \(0<z\le1\) and the large-argument
bound in \cite[Equation~(1.5), Theorem~1.5, and Proposition~B.1]{Nemes2017}
for \(z\ge1\) give
\(\sup_{\kappa\in V_*,z>0}z^{1/2}|J_\kappa(z)|<\infty\).  The displayed
majorant is therefore uniform and integrable, so Wong's remainder integral
is absolutely convergent and}
\[
        |\delta_4(q)|
        \le Cq^{-m-1/2}\le Cq^{-\nu-11/2},\qquad q\ge1.
\]
Since \(g_{\mathrm{org}}'(0)=-\mu_0\), this gives uniformly
\[
        \mathcal T_\nu g_{\mathrm{org}}(q)
        =\frac{\mu_0 2^{\nu+1}\Gamma(\nu+3/2)}{\sqrt\pi}
        q^{-\nu-3}+O(q^{-\nu-5}).
\]
This proves the result for the first term.

We now focus on the second term $\mathcal T_\nu g_{\mathrm{end}}(q)$.
We use a first-order Taylor expansion at $t=1$ to separate two terms that
can be evaluated explicitly.  We then show by repeated integration by parts
that the remainder decays at the required rate.
Set
\[
\begin{aligned}
        a_0&=2^{-\mu_0},\qquad
        a_1=\mu_0 2^{-\mu_0-2},\qquad
        N=\lceil\mu_0+1\rceil,\\
        R_2(t)&=\{1-\chi_0(\sqrt t)\}(1-\sqrt t)^{\mu_0}
        -a_0(1-t)^{\mu_0}-a_1(1-t)^{\mu_0+1}.
\end{aligned}
\]
{The first term in \(R_2\) vanishes near zero, and Taylor's
formula for \((1+\sqrt t)^{-\mu_0}\) shows that near one \(R_2(t)\) is
\((1-t)^{\mu_0+2}\) times a smooth function.  Hence
\(R_2\in C^N[0,1]\) and \(R_2^{(k)}(1)=0\) for \(k<N\), because
\(\mu_0+2-N\in(0,1]\), and}
\[
        g_{\mathrm{end}}(t)
        =\sum_{j=0}^1a_j(1-t^2)^{\mu_0+j}+R_2(t^2).
\]
{Sonine's finite integral
\cite[\href{https://dlmf.nist.gov/10.22.E19}{Equation~(10.22.19)}]{NISTDLMF},
after \(t=\sin\theta\), gives}
\[
\begin{aligned}
        \mathcal T_\nu g_{\mathrm{end}}(q)
        ={}&\sum_{j=0}^1a_j2^{\mu_0+j}\Gamma(\mu_0+j+1)
        q^{-\mu_0-j-1}J_{\nu+\mu_0+j+1}(q)+I_{R_2}(q),\\
        I_{R_2}(q)&:=\int_0^1t^{\nu+1}R_2(t^2)J_\nu(qt)\,dt.
\end{aligned}
\]
{Here \(\nu>-1/2\) and \(\mu_0+j>-1\), so Sonine's hypotheses
hold.  The orders \(\nu+\mu_0+j+1\) lie in a compact set, and Nemes'
large-argument expansion and remainder bound
\cite[Equation~(1.5), Theorem~1.5, and Proposition~B.1]{Nemes2017} give}
\[
        \sup_{\nu\in V,\,q\ge1}
        q^{1/2}|J_{\nu+\mu_0+j+1}(q)|<\infty,
        \qquad j=0,1.
\]
{Thus the two explicit terms together are
\(O(q^{-\mu_0-3/2})\).}

{The derivative identity
\cite[\href{https://dlmf.nist.gov/10.6.E6}{Equation~(10.6.6)}]{NISTDLMF}
and \(N\) integrations by parts give}
\[
        I_{R_2}(q)
        =\left(-\frac{2}q\right)^N
        \int_0^1t^{\nu+N+1}R_2^{(N)}(t^2)J_{\nu+N}(qt)\,dt.
\]
{The boundary terms vanish at one by the endpoint conditions and
at zero because \(R_2^{(k)}\) is bounded and the power series for
\(J_{\nu+k+1}\) supplies \(t^{2\nu+2k+2}=o(1)\).  The final orders lie in
the compact interval \(V+N\), so the same Nemes bound gives, for \(q\ge1\),}
\[
        |I_{R_2}(q)|\le Cq^{-N-1/2}
        \le Cq^{-\mu_0-3/2}.
\]
{Together with the explicit terms, this proves uniformly that
\(\mathcal T_\nu g_{\mathrm{end}}(q)=O(q^{-\mu_0-3/2})\).}

Combining the two terms gives
\[
        \mathcal H_\alpha(q)
        =\frac{\mu_0 2^{\nu+1}\Gamma(\nu+3/2)}{\sqrt\pi}
        q^{-\nu-3}+O(q^{-\nu-5})+O(q^{-\mu_0-3/2}).
\]
Since \(\nu+3/2=\alpha\) and \(q=e^{\ell_0}\|\bomega\|\), the leading term
has the exact normalization
\[
        A_\alpha q^{-\nu}
        \frac{\mu_0 2^{\nu+1}\Gamma(\nu+3/2)}{\sqrt\pi}
        q^{-\nu-3}
        =\|\bomega\|^{-2\alpha}.
\]
The two remainders become, uniformly on \(\mathcal J_\alpha\),
\(O(\|\bomega\|^{-2\alpha-2})\) and
\(O(\|\bomega\|^{-(\mu_0+\alpha)})\), respectively.  All estimates above
hold for \(q\ge1\), so the choice
\(M=\max\{1,e^{-\ell_0}\}\) proves the claim.
\end{proof}

\subsubsection*{Parameter regularity and real interpolation}

The interpolation argument requires crude high-order parameter derivatives,
not differentiated endpoint asymptotics.  We obtain them from the covariance
in physical space.

\begin{lemma}
\label{lem:wendland-fixed-real-parameter-regularity}
For every compact interval
\(\mathcal J_\alpha\Subset((d+1)/2,\mu_0)\), the tail-normalized covariance
is a \(C^m\) map from \(\mathcal J_\alpha\) into \(L^1(\R^d)\) for every
integer \(m\ge1\).  Consequently, for each \(j\le m\),
\[
        \sup_{\alpha\in\mathcal J_\alpha}
        \sup_{\bomega\in\R^d}
        |\partial_\alpha^j\overline S_\alpha(\bomega)|<\infty.
\]
The derivatives are jointly continuous in \((\alpha,\bomega)\).
\end{lemma}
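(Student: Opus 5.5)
The plan is to work entirely in physical space. There the tail-normalized covariance is an explicit compactly supported function whose dependence on \(\alpha\) enters only through the Wendland kernel exponent \(k=\alpha-(d+1)/2\) and elementary normalizing constants. Write \(\bar\rho_\alpha:=\rho_{\mathrm W}/c_{\infty,\mathrm W}\), so that \(\overline S_\alpha=\widehat{\bar\rho_\alpha}\) in the paper's convention. Substituting \(u=s\) and writing \(x=\|\bx\|/\beta_0\), we have
\[
\bar\rho_\alpha(\bx)=\kappa(\alpha)\,\mathbb I_{\{x<1\}}\int_x^1 s\,(s^2-x^2)^{k-1}(1-s)^{\mu_0}\,ds ,
\]
where \(\kappa(\alpha)\) collects \(\sigma^2/B(2k,\mu_0+1)\) and \(1/c_{\infty,\mathrm W}\). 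By the displayed formula for \(c_{\infty,\mathrm W}\), \(\kappa\) is a ratio of Gamma functions times exponentials, with arguments bounded away from the poles on \(\mathcal J_\alpha\). Hence \(\kappa\in C^\infty(\mathcal J_\alpha)\), and all its derivatives are bounded there.

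The main step is to differentiate under the integral. We have \(\partial_\alpha^j(s^2-x^2)^{k-1}=\log^j(s^2-x^2)\,(s^2-x^2)^{k-1}\). On \(\mathcal J_\alpha\Subset((d+1)/2,\mu_0)\), \(k\) ranges over a compact subset of \((0,\infty)\), so \(k-1\ge k_--1>-1\). For \(s\in(x,1]\) the majorant \(|\log(s^2-x^2)|^j(s^2-x^2)^{k_--1}\) is integrable, since \(s^2-x^2=(s-x)(s+x)\) and logarithms do not spoil integrability of \((s-x)^{k_--1}\). I expect the main obstacle to be uniformity near \(x=0\), where \(s+x\) is small only when \(s\) is, and near the diagonal. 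I would split at \(s=2x\). On \(s\ge 2x\) use \(s^2-x^2\asymp s^2\), and \(\int_0^1 s^{2k_--1}|\log s|^j\,ds<\infty\). On \(x<s<2x\) use \(s+x\asymp x\) together with the substitution \(s=x+(s-x)\). Together these give a bound \(\sup_{\alpha\in\mathcal J_\alpha}\sup_x|\partial_\alpha^j(\cdots)|\le C_j\) uniformly in \(x\in[0,1)\), up to an at most logarithmic factor in \(x\). That factor is still in \(L^1(B_{\beta_0})\).

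Dominated convergence, combined with the same majorant evaluated at neighbouring \(\alpha\) and the mean value theorem, then shows that \(\alpha\mapsto\bar\rho_\alpha\) is \(C^m\) into \(L^1(\R^d)\). Its derivatives are supported in the fixed ball \(\{\|\bx\|\le\beta_0\}\), and each derivative is the \(L^1\)-limit of difference quotients. Continuity of \(\partial^j_\alpha\bar\rho_\alpha\) in \(L^1\) follows from pointwise continuity plus the uniform integrable majorant. The Leibniz rule with \(\kappa\) handles the constant.

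Finally, the Fourier transform is bounded from \(L^1(\R^d)\) to \(C_b(\R^d)\) with norm at most one in this convention. Hence \(\partial_\alpha^j\overline S_\alpha=\widehat{\partial_\alpha^j\bar\rho_\alpha}\), the required uniform \(\sup_{\alpha,\bomega}\) bound holds, and each derivative is continuous in \(\bomega\). Joint continuity in \((\alpha,\bomega)\) then follows from
\[
|\widehat f_{\alpha}(\bomega)-\widehat f_{\alpha'}(\bomega')|\le\|f_\alpha-f_{\alpha'}\|_{L^1}+|\widehat f_{\alpha'}(\bomega)-\widehat f_{\alpha'}(\bomega')|,
\]
with \(f_\alpha=\partial_\alpha^j\bar\rho_\alpha\). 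The first term is small by \(L^1\)-continuity in \(\alpha\). The second is small by dominated convergence, using that \(f_{\alpha'}\) has compact support and the fixed integrable majorant bounds \(f_{\alpha'}\) uniformly in \(\alpha'\).
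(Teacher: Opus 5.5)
Your proposal is correct and follows essentially the same route as the paper: write the tail-normalized covariance in physical space, differentiate under the integral (the $\alpha$-derivatives only produce powers of $\log(s^2-x^2)$), dominate them uniformly in $\alpha$ by the majorant with exponent $k_- - 1$, use dominated convergence to get $C^m$ into $L^1(\mathbb R^d)$, and then pass to $\overline S_\alpha$ via the bound $\|\widehat f\|_\infty\le\|f\|_{L^1}$ together with the same splitting argument for joint continuity. The only difference is how integrability of the majorant is checked: the paper applies Fubini in polar coordinates with the substitution $s=ut$, which factors the double integral, whereas you bound the inner integral uniformly in $x$ by splitting at $s=2x$; that split in fact gives a bound uniform in $x\in[0,1)$, so the hedged logarithmic factor is unnecessary.
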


\begin{proof}
Let \(v_\alpha\) be the raw variance for which the Wendland tail coefficient
equals one.  The displayed tail coefficient above gives
\[
        v_\alpha
        =\frac{\Gamma(\alpha-d/2)\Gamma(\mu_0)}{2^d\pi^{d/2}\Gamma(\alpha)
        \Gamma(\mu_0+2\alpha-d)}
        e^{(2\alpha-d)\ell_0}.
\]
This is a smooth positive function of \(\alpha\) on
\(\mathcal J_\alpha\).  With
\(s(\bx)=e^{-\ell_0}\|\bx\|\), the corresponding tail-normalized
covariance is
\[
        \overline\rho_\alpha(\bx)
        =\frac{v_\alpha\mathbb I_{\{s(\bx)<1\}}}{B(2\alpha-d-1,\mu_0+1)}
        \times\int_{s(\bx)}^1
        u\{u^2-s(\bx)^2\}^{\alpha-(d+3)/2}
        (1-u)^{\mu_0}\,du.
\]

Set
\[
        \kappa_-:=\inf_{\alpha\in\mathcal J_\alpha}
        \left\{\alpha-\frac{d+1}{2}\right\}>0.
\]
Differentiating the inner kernel \(j\) times in \(\alpha\) produces powers
of \(\log(u^2-s^2)\) of degree at most \(j\).  Up to a constant, all such
derivatives for \(j\le m\) are bounded by
\[
        u(u^2-s^2)^{\kappa_--1}(1-u)^{\mu_0}
        \{1+|\log(u^2-s^2)|^m\}.
\]
The resulting bound is integrable.  Indeed, using polar coordinates and
setting \(s=ut\), its integral is bounded by a constant times
\[
\begin{split}
        &\int_0^1u^{2\kappa_-+d-1}(1-u)^{\mu_0}
        \{1+|\log u|^m\}\,du\\
        &\quad\times\int_0^1t^{d-1}(1-t^2)^{\kappa_--1}
        \{1+|\log(1-t^2)|^m\}\,dt,
\end{split}
\]
and both factors are finite.  The derivatives of \(v_\alpha\) and of the
beta-function factor are bounded on \(\mathcal J_\alpha\).

This common majorant justifies differentiation under both integrals and
continuity in \(L^1(\R^d)\) by the dominated convergence theorem. Under the paper's Fourier convention,
\[
        \partial_\alpha^j\overline S_\alpha(\bomega)
        =\int_{\R^d}e^{-\mathrm i\bomega\cdot\bx}
        \partial_\alpha^j\overline\rho_\alpha(\bx)\,d\bx.
\]
The asserted uniform bound and joint continuity now follow from the
\(L^1\) bound and dominated convergence.
\end{proof}

We also use the following one-dimensional interpolation bound.

\begin{lemma}
\label{lem:wendland-fixed-real-interpolation}
Let \(\mathcal K\) and \(\mathcal J\) be compact intervals with
\(\mathcal K\subset\operatorname{int}(\mathcal J)\), and let \(m\ge2\).
There is a constant \(C\), depending only on the two intervals and \(m\),
such that every \(f\in C^m(\mathcal J)\) satisfies
\[
        \|f'\|_{L^\infty(\mathcal K)}
        \le C\left[
        \|f\|_{L^\infty(\mathcal J)}
        +\|f\|_{L^\infty(\mathcal J)}^{1-1/m}
        \|f^{(m)}\|_{L^\infty(\mathcal J)}^{1/m}
        \right].
\]
\end{lemma}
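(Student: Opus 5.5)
The plan is to reduce to a fixed reference interval and derive the bound from the classical Landau--Kolmogorov (Gagliardo--Nirenberg) interpolation inequality on a bounded interval. Set $\delta:=\operatorname{dist}(\mathcal K,\partial\mathcal J)>0$, and write $A:=\|f\|_{L^\infty(\mathcal J)}$ and $B:=\|f^{(m)}\|_{L^\infty(\mathcal J)}$. The core step is a local estimate: for every $x\in\mathcal K$ and every $0<\ell\le\delta$, the interval $I_\ell=[x,x+\ell]$ or $[x-\ell,x]$, whichever points into $\mathcal J$, is contained in $\mathcal J$. On it we show
\[
|f'(x)|\le C_m\bigl(\ell^{-1}A+\ell^{m-1}B\bigr).
\]

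For the local estimate, take $p$ to be the Lagrange interpolating polynomial of degree $m-1$ for $f$ at $m$ equally spaced nodes in $I_\ell$, including $x$. The remainder formula gives $|f(y)-p(y)|\le C_m\ell^mB$ on $I_\ell$. For the derivative, Rolle-type error bounds for the derivative of the interpolant give $|f'(x)-p'(x)|\le C_m\ell^{m-1}B$. Alternatively, one can apply the remainder formula to the divided-difference representation of $f'$. Markov's inequality for polynomials of degree $m-1$ on an interval of length $\ell$ gives
\[
|p'(x)|\le C_m\ell^{-1}\|p\|_{L^\infty(I_\ell)}\le C_m\ell^{-1}\bigl(A+C_m\ell^mB\bigr).
\]
Combining the two bounds proves the local estimate. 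The constants depend only on $m$ through the fixed node configuration scaled to length $\ell$.

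Next, optimize the local estimate over $\ell$. If $B=0$ or $(A/B)^{1/m}\ge\delta$, take $\ell=\delta$. In that case $\ell^{m-1}B\le\delta^{-1}A$, so $|f'(x)|\le C\delta^{-1}A$. Otherwise, take $\ell=(A/B)^{1/m}<\delta$. Then both terms of the local estimate equal $A^{1-1/m}B^{1/m}$. In either case
\[
|f'(x)|\le C\bigl[A+A^{1-1/m}B^{1/m}\bigr],
\]
with $C=C(m,\delta)$, which depends only on the two intervals and $m$. Taking the supremum over $x\in\mathcal K$ gives the claim.

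The only step needing care is the derivative error bound $|f'(x)-p'(x)|\lesssim\ell^{m-1}B$, where the scaling in $\ell$ must come out cleanly. The most direct route is to write $f'(x)-p'(x)$ via the derivative of the nodal remainder $f[x_1,\dots,x_m,y]\prod_j(y-x_j)$ and bound the divided differences by $B/m!$ and $B/(m+1)!$ through the Hermite--Genocchi formula. If this proves awkward, the fallback is to rescale $I_\ell$ to $[0,1]$ and prove the inequality there with a compactness argument, then undo the scaling.
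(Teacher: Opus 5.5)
Your proof is correct, but it follows a different route from the paper's. The paper proves the lemma in one line. It applies a bounded-domain Gagliardo--Nirenberg inequality (Li--Zhang, Theorem~1.3) on $\operatorname{int}(\mathcal J)$ with $n=1$, $k=m$, $j=1$, $p=q=r=\infty$ and $\theta=1/m$. The additive lower-order term of that inequality supplies $\|f\|_{L^\infty(\mathcal J)}$, and the bound is then restricted from $\mathcal J$ to $\mathcal K$.

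You instead prove the needed special case directly, as in the classical Landau--Kolmogorov argument. First you get the local estimate $|f'(x)|\le C_m(\ell^{-1}A+\ell^{m-1}B)$ from Lagrange interpolation at $m$ nodes and Markov's inequality for degree $m-1$. Then you choose $\ell=\min\{\delta,(A/B)^{1/m}\}$.

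What each buys:
\begin{itemize}
\item The paper's proof is shorter. It relies on the cited theorem, including its $L^\infty$ endpoint case.
\item Your proof is elementary and self-contained, and it makes $C=C(m,\delta)$ explicit.
\item Because you use one-sided intervals, your argument also works up to $\partial\mathcal J$, with $\delta$ replaced by half the length of $\mathcal J$. So neither argument really needs $\mathcal K\subset\operatorname{int}(\mathcal J)$.
\end{itemize}

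One correction to your fallback. The $(m+1)$-st order divided difference $f[x_1,\dots,x_m,x,x]$ is bounded by $\|f^{(m+1)}\|_\infty/(m+1)!$, not by $B/(m+1)!$, and that norm is not available for $f\in C^m$. You do not need that term. Since $x$ is a node and $e:=f-p$ vanishes there,
$e'(x)=\lim_{y\to x}e(y)/(y-x)=f[x_1,\dots,x_m,x]\prod_{x_j\ne x}(x-x_j)$.
This is at most $B\ell^{m-1}/m!$ by Hermite--Genocchi.

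Your Rolle route also works as stated for $f\in C^m$. The function $e'$ has $m-1$ distinct zeros in $I_\ell$, so $p'$ interpolates $f'$ there. The remainder formula then gives $|e'|\le B\ell^{m-1}/(m-1)!$ on $I_\ell$.

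Finally, the branch $\ell=(A/B)^{1/m}$ needs $A>0$. This holds, because $A=0$ forces $f\equiv0$ on $\mathcal J$ and hence $B=0$.
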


\begin{proof}
{Apply the bounded-domain Gagliardo--Nirenberg inequality \cite[Theorem~1.3]{LiZhang2022} on \(\operatorname{int}(\mathcal J)\) with \(n=1\), \(k=m\), \(j=1\), \(p=q=r=\infty\), and \(\theta=1/m\); its additive bounded-domain term is the first term on the right-hand side, and restriction from \(\mathcal J\) to \(\mathcal K\) proves the stated bound.}
\end{proof}

For the remaining bounds, write
$
        L(\bomega):=1+\log(2+\|\bomega\|).
$

\begin{lemma}
\label{lem:wendland-fixed-differentiated-tail-bounds}
Let
\(\mathcal K_\alpha\Subset((d+1)/2,\mu_0)\).  Suppose that
\[
        0<\varepsilon<
        \min\left\{1,\frac{1}{2}
        \inf_{\alpha\in\mathcal K_\alpha}(\mu_0-\alpha)\right\}.
\]
There are constants \(C>0\) and \(M\ge1\) such that, uniformly for
\(\alpha\in\mathcal K_\alpha\) and \(\|\bomega\|\ge M\),
\begin{subequations}
\label{eq:wendland-uniform-parameter-estimates}
\begin{align}
        \left|\overline S_{\alpha}(\bomega)
        -\|\bomega\|^{-2\alpha}\right|
        &\le C\left\{
        \|\bomega\|^{-2\alpha-2}
        +\|\bomega\|^{-(\mu_0+\alpha)}\right\},
        \label{eq:wendland-uniform-tail-remainder}\\
        \left|\partial_\alpha
        \left\{\overline S_{\alpha}(\bomega)
        -\|\bomega\|^{-2\alpha}\right\}\right|
        &\le CL(\bomega)
        \|\bomega\|^{-2\alpha-2\varepsilon}.
        \label{eq:wendland-uniform-alpha-derivative}
\end{align}
\end{subequations}
\end{lemma}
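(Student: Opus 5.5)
The first bound \eqref{eq:wendland-uniform-tail-remainder} is Lemma~\ref{lem:wendland-fixed-real-endpoint-remainder}, applied with \(\mathcal J_\alpha=\mathcal K_\alpha\). The work is the derivative bound \eqref{eq:wendland-uniform-alpha-derivative}. The plan is to avoid differentiating the endpoint asymptotics. Instead, I will interpolate between the zeroth-order remainder bound and the crude high-order bounds of Lemma~\ref{lem:wendland-fixed-real-parameter-regularity}, using Lemma~\ref{lem:wendland-fixed-real-interpolation}.

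Fix a compact interval \(\mathcal J_\alpha\Subset((d+1)/2,\mu_0)\) whose interior contains \(\mathcal K_\alpha\). Choose it close enough to \(\mathcal K_\alpha\) that \(\inf_{\mathcal J_\alpha}(\mu_0-\alpha)>2\varepsilon'\) for some \(\varepsilon'\in(\varepsilon,1)\); this is possible because the hypothesis on \(\varepsilon\) is a strict inequality. For fixed \(\bomega\) with \(\|\bomega\|\ge M\), set
\[
f_{\bomega}(\alpha):=\overline S_\alpha(\bomega)-\|\bomega\|^{-2\alpha},\qquad \alpha\in\mathcal J_\alpha .
\]
Lemma~\ref{lem:wendland-fixed-real-endpoint-remainder} on \(\mathcal J_\alpha\) gives
\[
\|f_{\bomega}\|_{L^\infty(\mathcal J_\alpha)}\le C\|\bomega\|^{-2\alpha_{\min}-2\varepsilon'},
\]
where \(\alpha_{\min}:=\min\mathcal J_\alpha\). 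Here I use that \(\mu_0+\alpha\ge2\alpha+2\varepsilon'\) and \(2\alpha+2\ge2\alpha+2\varepsilon'\) for every \(\alpha\in\mathcal J_\alpha\).

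For the \(m\)-th derivative, Lemma~\ref{lem:wendland-fixed-real-parameter-regularity} gives \(|\partial_\alpha^m\overline S_\alpha(\bomega)|\le C_m\). The explicit term contributes \(|\partial_\alpha^m\|\bomega\|^{-2\alpha}|=(2\log\|\bomega\|)^m\|\bomega\|^{-2\alpha}\le CL(\bomega)^m\). Hence \(\|f^{(m)}_{\bomega}\|_{L^\infty(\mathcal J_\alpha)}\le C_mL(\bomega)^m\). Lemma~\ref{lem:wendland-fixed-real-interpolation} then yields
\[
\sup_{\alpha\in\mathcal K_\alpha}|f'_{\bomega}(\alpha)|\le C\Bigl[\|\bomega\|^{-2\alpha_{\min}-2\varepsilon'}+L(\bomega)\|\bomega\|^{-(2\alpha_{\min}+2\varepsilon')(1-1/m)}\Bigr].
\]

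This bound has the wrong exponent: it uses \(\alpha_{\min}\) uniformly over \(\mathcal J_\alpha\), while the target uses the pointwise \(\alpha\). The fix is to localize. Cover \(\mathcal K_\alpha\) by finitely many short intervals \(\mathcal K^{(i)}\subset\operatorname{int}\mathcal J^{(i)}\), each \(\mathcal J^{(i)}\) of length \(\le\gamma\). On \(\mathcal J^{(i)}\) the sup bound on \(f\) carries the exponent \(2\min\mathcal J^{(i)}+2\varepsilon'\), which is at least \(2\alpha-2\gamma+2\varepsilon'\) for every \(\alpha\in\mathcal K^{(i)}\). The interpolation constant depends on the pair of intervals, but there are only finitely many pairs.

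With this localization the derivative bound on \(\mathcal K^{(i)}\) becomes \(CL(\bomega)\|\bomega\|^{-(2\alpha-2\gamma+2\varepsilon')(1-1/m)}\). It remains to choose parameters so that this exponent is at least \(2\alpha+2\varepsilon\) uniformly over \(\alpha\in\mathcal K_\alpha\). First pick \(\gamma<(\varepsilon'-\varepsilon)/2\). Then pick \(m\) large enough that \((2\alpha_++2)/m<\varepsilon'-\varepsilon-2\gamma\), where \(\alpha_+:=\max\mathcal J_\alpha\). With these choices
\[
(2\alpha-2\gamma+2\varepsilon')(1-1/m)\ge2\alpha+2\varepsilon,
\]
and the first interpolation term is handled the same way. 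This proves \eqref{eq:wendland-uniform-alpha-derivative}.

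I expect the main obstacle to be this bookkeeping: the sup over \(\mathcal J_\alpha\) in Lemma~\ref{lem:wendland-fixed-real-interpolation} destroys \(\alpha\)-locality, and the exponent losses \(\gamma\) and \(1/m\) must both be absorbed into the margin \(\varepsilon'-\varepsilon\). The key point is that this margin exists precisely because \(\varepsilon\) is strictly below \(\tfrac12\inf_{\mathcal K_\alpha}(\mu_0-\alpha)\) and strictly below \(1\).
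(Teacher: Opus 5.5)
Your argument is correct. It uses the same three ingredients as the paper: the endpoint remainder lemma on a slightly enlarged interval, the crude parameter-regularity bounds, and the one-dimensional Gagliardo--Nirenberg interpolation. The difference is in how you handle the $\alpha$-dependence of the decay exponent.

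The paper does not localize. It interpolates the weighted remainder $G_\alpha(\bomega)=\|\bomega\|^{2\alpha}\{\overline S_\alpha(\bomega)-\|\bomega\|^{-2\alpha}\}$. Its supremum over the single enlarged interval $\mathcal J_\alpha$ is $O(\|\bomega\|^{-\delta})$ uniformly in $\alpha$, where $\delta=\min\{2,\inf_{\mathcal J_\alpha}(\mu_0-\alpha)\}>2\varepsilon$. The price is that Leibniz's rule gives $\|\partial_\alpha^m G_\bullet(\bomega)\|_{L^\infty(\mathcal J_\alpha)}\le C\|\bomega\|^{2\alpha_+}L(\bomega)^m$. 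One interpolation then yields $\sup_{\mathcal K_\alpha}|\partial_\alpha G_\alpha(\bomega)|\le CL(\bomega)\|\bomega\|^{-\delta+(\delta+2\alpha_+)/m}$. The identity $\partial_\alpha E_\alpha=\|\bomega\|^{-2\alpha}\{\partial_\alpha G_\alpha-2\log\|\bomega\|\,G_\alpha\}$ then restores the pointwise factor $\|\bomega\|^{-2\alpha}$.

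You instead interpolate the unweighted remainder. Its $m$-th derivative is only $O(L(\bomega)^m)$, but its supremum over an interval is governed by the left endpoint. That is what forces your finite covering by intervals of length at most $\gamma$ and the two-parameter bookkeeping in $\gamma$ and $m$; that bookkeeping checks out.

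What each route buys: the weighting trick gives a single interpolation on one pair of intervals, with no covering. Your route gets a sharper high-derivative bound and avoids the unweighting step, at the cost of the covering argument. In that covering you should state explicitly that each $\mathcal J^{(i)}$ lies inside $\mathcal J_\alpha$. This holds once $\gamma$ is below the distance from $\mathcal K_\alpha$ to the boundary of $\mathcal J_\alpha$, and it is what lets the endpoint and regularity bounds apply on each $\mathcal J^{(i)}$.
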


\begin{proof}
{Since the upper bound on \(\varepsilon\) is strict, compactness of
\(\mathcal K_\alpha\) allows us to choose a compact interval
\(\mathcal J_\alpha\) such that
\begin{align*}
 \mathcal K_\alpha&\subset\operatorname{int}(\mathcal J_\alpha),
 \quad \mathcal J_\alpha\Subset\left(\frac{d+1}{2},\mu_0\right),
\end{align*}
and $2\varepsilon<
 \delta:=\min\left\{2,
 \inf_{\alpha\in\mathcal J_\alpha}(\mu_0-\alpha)\right\}.$}
Lemma~\ref{lem:wendland-fixed-real-endpoint-remainder}, applied on
\(\mathcal J_\alpha\), proves
Equation~\eqref{eq:wendland-uniform-tail-remainder} after restriction to
\(\mathcal K_\alpha\).

{For \(\|\bomega\|\ge M\),} weight the same remainder by setting
\[
        E_\alpha(\bomega)
        :=\overline S_\alpha(\bomega)-\|\bomega\|^{-2\alpha},
        \qquad
        G_\alpha(\bomega)
        :=\|\bomega\|^{2\alpha}E_\alpha(\bomega).
\]
The endpoint lemma and, for every \(m\ge2\), parameter regularity plus
Leibniz's rule give, with \(\alpha_+=\sup\mathcal J_\alpha\),
\[
        \|G_\bullet(\bomega)\|_{L^\infty(\mathcal J_\alpha)}
        \le C\|\bomega\|^{-\delta},
        \qquad
        \|\partial_\alpha^mG_\bullet(\bomega)\|_
        {L^\infty(\mathcal J_\alpha)}
        \le C\|\bomega\|^{2\alpha_+}L(\bomega)^m.
\]
Lemma~\ref{lem:wendland-fixed-real-interpolation} now yields
\[
        \sup_{\alpha\in\mathcal K_\alpha}
        |\partial_\alpha G_\alpha(\bomega)|
        \le CL(\bomega)\|\bomega\|^{-\gamma_m},
        \qquad
        \gamma_m
        :=\delta-\frac{\delta+2\alpha_+}{m}.
\]
Fix \(m\) so large that \(\gamma_m>2\varepsilon\).  Returning to the
unweighted remainder \(E_\alpha\), we have
\[
        \partial_\alpha E_\alpha(\bomega)
        =\|\bomega\|^{-2\alpha}
        \{\partial_\alpha G_\alpha(\bomega)
        -2\log\|\bomega\|G_\alpha(\bomega)\}.
\]
Since \(\gamma_m,\delta>2\varepsilon\), this proves
Equation~\eqref{eq:wendland-uniform-alpha-derivative}.
\end{proof}

\begin{proof}[Proof of Proposition~\ref{prop:wendland-tail-family-verification}]
Fix
\(\mathcal K_\alpha\Subset((d+1)/2,\mu_0)\), set
$
        \Delta_\alpha
        :=\inf_{\alpha\in\mathcal K_\alpha}(\mu_0-\alpha)>0,
$
and fix
$
        0<\varepsilon<
        \min\left\{1,\frac{\Delta_\alpha}{2}\right\}.
$
Let \(C,M\) be the constants supplied by
Lemma~\ref{lem:wendland-fixed-differentiated-tail-bounds} for this
\(\varepsilon\).  We verify every clause of
Assumptions~\ref{ass:block-tail-detailed}
and~\ref{ass:differentiated-spectral-tail-family}, beginning with the scale
representation and basic properties.

\emph{Scale and basic properties.}
The tail-coordinate construction gives
\[
        S_{\tau,\alpha}(\bomega)
        =e^\tau\overline S_{\alpha}(\bomega),
\]
which is the scale-family representation in
Assumption~\ref{ass:differentiated-spectral-tail-family} for the
two-coordinate fitted chart \(\varphi=(\tau,\alpha)\).
It also shows that differentiation in \(\alpha\) holds
\((\tau,\mu_0,\ell_0)\) fixed.  Continuity and evenness follow from
Equation~\eqref{eq:wendland-normalized-spectrum}, while
Lemma~\ref{lem:wendland-fixed-real-parameter-regularity} gives \(C^1\)
regularity in \(\alpha\). {Strict positivity follows from \cite[Theorem~2.2]{ChernihHubbert2014}, as used at the normalized-spectrum interface.}

Since \(\mu_0-\alpha\ge\Delta_\alpha>0\), both error terms in
Equation~\eqref{eq:wendland-uniform-tail-remainder} are bounded by a constant
multiple of \(\|\bomega\|^{-2\alpha}\) for \(\|\bomega\|\ge M\).
Continuity and compactness extend the resulting upper envelope to the bounded
frequency region.  Hence
\[
        0<\overline S_{\alpha}(\bomega)
        \le C(1+\|\bomega\|^2)^{-\alpha}
\]
uniformly on \(\mathcal K_\alpha\times\R^d\).  The same estimate gives
\[
        \sup_{\alpha\in\mathcal K_\alpha}
        \left|
        \|\bomega\|^{2\alpha}
        \overline S_{\alpha}(\bomega)-1
        \right|
        \le C\left\{
        \|\bomega\|^{-2}
        +\|\bomega\|^{-\Delta_\alpha}\right\}
        \longrightarrow0,
        \qquad \|\bomega\|\to\infty.
\]
These are Equations~\eqref{eq:block-envelope-detailed}
and~\eqref{eq:block-tail-limit-detailed}, uniformly with leading constant one.

We have 
$
        S^{\mathrm{rem}}_{\alpha}(\bomega)
        :=\overline S_{\alpha}(\bomega)
        -(1+\|\bomega\|^2)^{-\alpha}.
$ For
\(j=0,1\) and \(\|\bomega\|\ge M\), direct differentiation of the
reference-spectrum difference gives
\[
        \left|\partial_\alpha^j
        \left\{\|\bomega\|^{-2\alpha}
        -(1+\|\bomega\|^2)^{-\alpha}\right\}\right|
        \le CL(\bomega)^j\|\bomega\|^{-2\alpha-2}.
\]
Uniformly for \(\alpha\in\mathcal K_\alpha\) and
\(\|\bomega\|\ge M\),
\begin{align*}
        \|\bomega\|^{-2\alpha-2}
        +\|\bomega\|^{-(\mu_0+\alpha)}
        &\le C\|\bomega\|^{-2\alpha-2\varepsilon},\\
        L(\bomega)\|\bomega\|^{-2\alpha-2}
        &\le CL(\bomega)
        \|\bomega\|^{-2\alpha-2\varepsilon}.
\end{align*}
Combining these comparisons with
Equations~\eqref{eq:wendland-uniform-tail-remainder}
and~\eqref{eq:wendland-uniform-alpha-derivative} proves
Equations~\eqref{eq:spectral-tail-remainder-bound}
and~\eqref{eq:spectral-tail-alpha-remainder-bound} for
\(\|\bomega\|\ge M\).

The normalized spectrum in
Equation~\eqref{eq:wendland-normalized-spectrum} is jointly continuous in
\((\alpha,\bomega)\), and its \(\alpha\)-derivative is continuous on
$
        \mathcal K_\alpha
        \times\{\bomega:\|\bomega\|\le M\}.
$
Compactness extends the two bounds to this remaining frequency region.
\end{proof}

\begingroup
\begin{proof}[Proof of Proposition~\ref{prop:common-leading-tail-covariance-pairs}]
We verify the two displayed clauses of
Assumption~\ref{ass:common-leading-tail-misspecification}, for all the four combinations of Mat\'ern and generalized Wendland spectra. 

The required \(\eta>d/4\) can be chosen as follows:
\begin{enumerate}
\item if both spectra are Mat\'ern, take \(\eta=1\);
\item if \(S^\star\) is generalized Wendland and \(S_{\varphi_0}\) is
Mat\'ern, take
\(d/4<\eta<\min\{1,(\mu^\star-\alpha_0)/2\}\);
\item if \(S^\star\) is Mat\'ern and \(S_{\varphi_0}\) is generalized
Wendland, take
\(d/4<\eta<\min\{1,(\mu_0-\alpha_0)/2\}\);
\item if both spectra are generalized Wendland, take
\[
 d/4<\eta<\min\left\{1,\frac{\mu^\star-\alpha_0}{2},
 \frac{\mu_0-\alpha_0}{2}\right\}.
\]
\end{enumerate}
These intervals are nonempty: the gaps in the proposition exceed \(d/2\), while
\(d/4<1\) because \(d\le3\).

{In case 2, the generalized Wendland spectrum \(S^\star\) and
its remainder are fixed external objects; only the fitted Mat\'ern family
varies over the neighborhood.  Since \(2\eta<\mu^\star-\alpha_0\), a
sufficiently small compact fitted-\(\alpha\) interval about \(\alpha_0\)
retains this strict gap.}

{We will now build compact intervals for $\alpha$ and \(\tau\) that contain the fitted parameters in their interiors
and satisfy Assumption~\ref{ass:differentiated-spectral-tail-family}.}
{This is done by selecting intervals so that Proposition~\ref{prop:matern-tail-family-verification}
and Proposition~\ref{prop:wendland-tail-family-verification} are satisfied.}
{For $\tau$, choose any compact interval around $\tau_0$.}
{The same holds for $\alpha$ for the Mat\'ern family.}
{For the generalized Wendland family, the selected \(\eta\) satisfies
\(2\eta<\mu_0-\alpha_0\).}
{Thus we choose a small interval around $\alpha_0$ where the inequality is satisfied.}
{As the propositions are satisfied, we may therefore choose an open neighborhood \(\mathcal U\)
of \(\varphi_0\) whose closure lies in that product.}
{Applying the relevant proposition at the fixed external parameters gives continuity,
evenness, and strict positivity of \(S^\star\).}

We will now prove  the difference-bound clause of
Assumption~\ref{ass:common-leading-tail-misspecification}.
First we assume that at least one spectrum is generalized Wendland.  For the
chosen \(\eta\), the family verifications give the two separate bounds for
\(\|\bomega\|\) sufficiently large,
\begin{align*}
 \left|S^\star(\bomega)-e^{\tau_0}
 (1+\|\bomega\|^2)^{-\alpha_0}\right|
 &\le C(1+\|\bomega\|^2)^{-\alpha_0-\eta},\\
 \left|S_{\varphi_0}(\bomega)-e^{\tau_0}
 (1+\|\bomega\|^2)^{-\alpha_0}\right|
 &\le C(1+\|\bomega\|^2)^{-\alpha_0-\eta}.
\end{align*}
The common term in the two bounds cancels, so the
triangle inequality proves the difference-bound clause of
Assumption~\ref{ass:common-leading-tail-misspecification} at
high frequency.  Continuity extends that bound to all \(\bomega\in\R^d\)
after increasing its constant.

Second, if both spectra are Mat\'ern, let \(\kappa_0\) and \(\kappa^\star\) be
the fixed ranges of \(S_{\varphi_0}\) and \(S^\star\), respectively.  The
mean-value theorem gives
\[
 \lvert S^\star(\bomega)-S_{\varphi_0}(\bomega)\rvert
 \le e^{\tau_0}\alpha_0\lvert\kappa^{\star2}-\kappa_0^2\rvert
 \{\|\bomega\|^2+\min(\kappa^{\star2},\kappa_0^2)\}^{-\alpha_0-1}
 \lesssim(1+\|\bomega\|^2)^{-\alpha_0-1},
\]
which proves the difference bound with \(\eta=1\).

We now show that the two-sided spectral-comparison clause of
Assumption~\ref{ass:common-leading-tail-misspecification} holds.
 {For all spectra, the difference bound and the common leading tail give
\[
 \frac{S^\star(\bomega)}{S_{\varphi_0}(\bomega)}\longrightarrow1,
 \qquad \|\bomega\|\longrightarrow\infty.
\]}
Hence this ratio is bounded above and below by positive constants outside a
sufficiently large ball.  On the closed ball,
continuity and strict positivity give positive minimum and finite maximum.
Combining the two regions proves
the two-sided spectral-comparison clause of
Assumption~\ref{ass:common-leading-tail-misspecification}.  Thus all clauses of
Assumption~\ref{ass:common-leading-tail-misspecification} hold. 
\end{proof}
\endgroup

\bibliographystyle{plainnat}
\bibliography{likelihood_asymptotics}

@article{CandesDemanetYing2007,
  author  = {Cand{\`e}s, Emmanuel and Demanet, Laurent and Ying, Lexing},
  title   = {Fast Computation of {Fourier} Integral Operators},
  journal = {SIAM Journal on Scientific Computing},
  volume  = {29},
  number  = {6},
  pages   = {2464--2493},
  year    = {2007},
}

@book{Bhatia2007PositiveDefinite,
  author    = {Bhatia, Rajendra},
  title     = {Positive Definite Matrices},
  series    = {Princeton Series in Applied Mathematics},
  publisher = {Princeton University Press},
  address   = {Princeton, NJ},
  year      = {2007},
  isbn      = {978-0-691-12918-1}
}

@article{Hajek1958,
  author  = {H{\'a}jek, Jaroslav},
  title   = {On a Property of Normal Distributions of Any Stochastic Process},
  journal = {Czechoslovak Mathematical Journal},
  volume  = {8},
  number  = {4},
  pages   = {610--618},
  year    = {1958},
  note    = {In Russian}
}

@book{Vershynin2018,
  author    = {Vershynin, Roman},
  title     = {High-Dimensional Probability:
               An Introduction with Applications in Data Science},
  publisher = {Cambridge University Press},
  year      = {2018},
}

@book{HornJohnson1991,
  author    = {Horn, Roger A. and Johnson, Charles R.},
  title     = {Topics in Matrix Analysis},
  publisher = {Cambridge University Press},
  address   = {Cambridge},
  year      = {1991},
}

@article{bevilacqua2019gw,
  author  = {Moreno Bevilacqua and Tarik Faouzi and Reinhard Furrer and Emilio Porcu},
  title   = {Estimation and Prediction Using Generalized {W}endland Covariance Functions under Fixed Domain Asymptotics},
  journal = {The Annals of Statistics},
  year    = {2019},
  volume  = {47},
  number  = {2},
  pages   = {828--856},
}

@book{Matern1986,
  author    = {Mat{\'e}rn, Bertil},
  title     = {Spatial Variation},
  edition   = {Second},
  series    = {Lecture Notes in Statistics},
  volume    = {36},
  publisher = {Springer},
  address   = {New York},
  year      = {1986},
}

@article{Wendland1995,
  author  = {Wendland, Holger},
  title   = {Piecewise Polynomial, Positive Definite and Compactly Supported
             Radial Functions of Minimal Degree},
  journal = {Advances in Computational Mathematics},
  volume  = {4},
  pages   = {389--396},
  year    = {1995},
}

@book{Stein1999,
  author    = {Stein, Michael L.},
  title     = {Interpolation of Spatial Data: Some Theory for Kriging},
  series    = {Springer Series in Statistics},
  publisher = {Springer},
  address   = {New York},
  year      = {1999},
  isbn      = {978-0-387-98629-6},
}

@book{GrenanderSzego1958,
  author    = {Grenander, Ulf and Szeg{\H{o}}, G{\'a}bor},
  title     = {{Toeplitz} Forms and Their Applications},
  publisher = {University of California Press},
  address   = {Berkeley},
  year      = {1958},
}

@article{TyrtyshnikovZamarashkin1998,
  author  = {Tyrtyshnikov, E. E. and Zamarashkin, N. L.},
  title   = {Spectra of Multilevel {Toeplitz} Matrices: Advanced Theory via Simple Matrix Relationships},
  journal = {Linear Algebra and its Applications},
  volume  = {270},
  number  = {1--3},
  pages   = {15--27},
  year    = {1998},
}

@book{GaroniSerraCapizzanoI,
  author    = {Carlo Garoni and Stefano Serra{-}Capizzano},
  title     = {Generalized Locally Toeplitz Sequences: Theory and Applications. Volume I},
  series    = {Springer Monographs in Mathematics},
  publisher = {Springer},
  address   = {Cham},
  year      = {2017},
  isbn      = {978-3-319-53679-8},
}

@book{GaroniSerraCapizzanoII,
  author    = {Carlo Garoni and Stefano Serra{-}Capizzano},
  title     = {Generalized Locally Toeplitz Sequences: Theory and Applications. Volume II},
  series    = {Springer Monographs in Mathematics},
  publisher = {Springer},
  address   = {Cham},
  year      = {2018},
  isbn      = {978-3-030-02232-7},
}

@article {Zhang2004,
	AUTHOR = {Zhang, Hao},
	TITLE = {Inconsistent estimation and asymptotically equal
	interpolations in model-based geostatistics},
	JOURNAL = {J.\ Amer.\ Statist.\ Assoc.},
	FJOURNAL = {Journal of the American Statistical Association},
	VOLUME = {99},
	YEAR = {2004},
	NUMBER = {465},
	PAGES = {250--261},
}

@misc{ChongMies2025,
  author       = {Chong, Carsten H. and Mies, Fabian},
  title        = {Likelihood Asymptotics of Stationary {G}aussian Arrays},
  year         = {2025},
  eprint       = {2502.09229},
  archiveprefix = {arXiv},
  primaryclass = {math.ST},
  note         = {Preprint, arXiv v2, November 2025},
}

@article{Dirksen2015,
  author  = {Dirksen, Sjoerd},
  title   = {Tail Bounds via Generic Chaining},
  journal = {Electronic Journal of Probability},
  volume  = {20},
  number  = {53},
  pages   = {1--29},
  year    = {2015},
}

@article{Kailath1967,
  author    = {Kailath, T.},
  title     = {The Divergence and {Bhattacharyya} Distance Measures in Signal Selection},
  journal   = {{IEEE} Transactions on Communication Technology},
  volume    = {15},
  number    = {1},
  pages     = {52--60},
  year      = {1967},
}

@article{BrownPurves1973,
  author    = {Brown, L. D. and Purves, R.},
  title     = {Measurable Selections of Extrema},
  journal   = {The Annals of Statistics},
  volume    = {1},
  number    = {5},
  pages     = {902--912},
  year      = {1973},
}

@book{LeCamYang2000,
  author    = {Le Cam, Lucien and Yang, Grace Lo},
  title     = {Asymptotics in Statistics: Some Basic Concepts},
  edition   = {Second},
  series    = {Springer Series in Statistics},
  publisher = {Springer},
  address   = {New York},
  year      = {2000},
}

@book{Billingsley1995,
  author    = {Billingsley, Patrick},
  title     = {Probability and Measure},
  edition   = {Third},
  publisher = {Wiley},
  address   = {New York},
  year      = {1995},
  isbn      = {978-0-471-00710-4}
}

@book{VanDerVaart1998,
  author    = {van der Vaart, A. W.},
  title     = {Asymptotic Statistics},
  series    = {Cambridge Series in Statistical and Probabilistic Mathematics},
  volume    = {3},
  publisher = {Cambridge University Press},
  address   = {Cambridge},
  year      = {1998},
}

@article{SerraCapizzano2003,
  author  = {Serra-Capizzano, Stefano},
  title   = {Generalized Locally {Toeplitz} Sequences: Spectral Analysis and Applications to Discretized Partial Differential Equations},
  journal = {Linear Algebra and its Applications},
  volume  = {366},
  pages   = {371--402},
  year    = {2003},
}

@article{Ying1991,
  author  = {Ying, Zhiliang},
  title   = {Asymptotic Properties of a Maximum Likelihood Estimator with
             Data from a {G}aussian Process},
  journal = {Journal of Multivariate Analysis},
  volume  = {36},
  number  = {2},
  pages   = {280--296},
  year    = {1991},
}

@article{Ying1993,
  author  = {Ying, Zhiliang},
  title   = {Maximum Likelihood Estimation of Parameters under a Spatial
             Sampling Scheme},
  journal = {The Annals of Statistics},
  volume  = {21},
  number  = {3},
  pages   = {1567--1590},
  year    = {1993},
}

@article{DuZhangMandrekar2009,
  author  = {Du, Juan and Zhang, Hao and Mandrekar, V. S.},
  title   = {Fixed-Domain Asymptotic Properties of Tapered Maximum
             Likelihood Estimators},
  journal = {The Annals of Statistics},
  volume  = {37},
  number  = {6A},
  pages   = {3330--3361},
  year    = {2009},
}

@article{KaufmanShaby2013,
  author  = {Kaufman, Cari G. and Shaby, Benjamin A.},
  title   = {The Role of the Range Parameter for Estimation and Prediction
             in Geostatistics},
  journal = {Biometrika},
  volume  = {100},
  number  = {2},
  pages   = {473--484},
  year    = {2013},
}

@article{WangLoh2011,
  author  = {Wang, Daqing and Loh, Wei-Liem},
  title   = {On fixed-domain asymptotics and covariance tapering in {G}aussian random field models},
  journal = {Electronic Journal of Statistics},
  volume  = {5},
  pages   = {238--269},
  year    = {2011},
}

@article{LiuFengXiao2026,
  author  = {Liu, Nian and Feng, Haoxiang and Xiao, Yimin},
  title   = {Infill Asymptotics for Closed-Form Alternatives to Maximum
             Likelihood Estimators of Covariance Parameters in the
             {Ornstein--Uhlenbeck} Fields},
  journal = {Annals of the Institute of Statistical Mathematics},
  year    = {2026},
}

@article{WuLim2016,
  author  = {Wu, Wei-Ying and Lim, Chae Young},
  title   = {Estimation of Smoothness of a Stationary {G}aussian Random Field},
  journal = {Statistica Sinica},
  volume  = {26},
  number  = {4},
  pages   = {1729--1745},
  year    = {2016},
}

@article{LohSunWen2021,
  author  = {Loh, Wei-Liem and Sun, Saifei and Wen, Jun},
  title   = {On Fixed-Domain Asymptotics, Parameter Estimation and Isotropic
             {G}aussian Random Fields with {M}at\'ern Covariance Functions},
  journal = {The Annals of Statistics},
  volume  = {49},
  number  = {6},
  pages   = {3127--3152},
  year    = {2021},
}

@article{Loh2015,
  author  = {Loh, Wei-Liem},
  title   = {Estimating the Smoothness of a {G}aussian Random Field from
             Irregularly Spaced Data via Higher-Order Quadratic Variations},
  journal = {The Annals of Statistics},
  volume  = {43},
  number  = {6},
  pages   = {2766--2794},
  year    = {2015},
}

@article{ChenOwhadiStuart2021,
  author  = {Chen, Yifan and Owhadi, Houman and Stuart, Andrew M.},
  title   = {Consistency of Empirical {B}ayes and Kernel Flow for Hierarchical
             Parameter Estimation},
  journal = {Mathematics of Computation},
  volume  = {90},
  number  = {332},
  pages   = {2527--2578},
  year    = {2021},
}

@article{Petit2025,
  author  = {Petit, S{\'e}bastien J.},
  title   = {An Asymptotic Study of the Joint Maximum Likelihood Estimation
             of the Regularity and the Amplitude Parameters of a Periodized
             {Mat\'ern} Model},
  journal = {Electronic Journal of Statistics},
  volume  = {19},
  number  = {1},
  pages   = {2052--2094},
  year    = {2025},
}

@article{Karvonen2023,
  author  = {Karvonen, Toni},
  title   = {Asymptotic Bounds for Smoothness Parameter Estimates in
             {G}aussian Process Interpolation},
  journal = {SIAM/ASA Journal on Uncertainty Quantification},
  volume  = {11},
  number  = {4},
  pages   = {1225--1257},
  year    = {2023},
}

@article{KorteStapffKarvonenMoulines2025,
  author  = {Korte-Stapff, Moritz and Karvonen, Toni and Moulines, {\'E}ric},
  title   = {Smoothness Estimation for {Whittle--Mat\'ern} Processes on
             Closed {R}iemannian Manifolds},
  journal = {Stochastic Processes and their Applications},
  volume  = {189},
  pages   = {104685},
  year    = {2025},
}

@misc{KotharkarStein2025,
  author       = {Kotharkar, Varun and Stein, Michael L.},
  title        = {Fixed and Increasing Domain Asymptotics for the Roughness
                  and Scale of Isotropic {G}aussian Random Fields},
  year         = {2025},
  eprint       = {2510.26045},
  archiveprefix = {arXiv},
  primaryclass = {math.ST},
  note         = {Preprint},
}

@article{BrousteFukasawa2018,
  author  = {Brouste, Alexandre and Fukasawa, Masaaki},
  title   = {Local Asymptotic Normality Property for Fractional {G}aussian
             Noise under High-Frequency Observations},
  journal = {The Annals of Statistics},
  volume  = {46},
  number  = {5},
  pages   = {2045--2061},
  year    = {2018},
}

@article{ChernihHubbert2014,
  author  = {Chernih, Andrew and Hubbert, Simon},
  title   = {Closed Form Representations and Properties of the Generalised
             {W}endland Functions},
  journal = {Journal of Approximation Theory},
  volume  = {177},
  pages   = {17--33},
  year    = {2014},
}

@article{Wong1980IntegralError,
  author  = {Wong, R.},
  title   = {Error Bounds for Asymptotic Expansions of Integrals},
  journal = {SIAM Review},
  volume  = {22},
  number  = {4},
  pages   = {401--435},
  year    = {1980},
}

@article{Nemes2017,
  author  = {Nemes, Gerg{\H{o}}},
  title   = {Error Bounds for the Large-Argument Asymptotic Expansions of
             the {H}ankel and {B}essel Functions},
  journal = {Acta Applicandae Mathematicae},
  volume  = {150},
  number  = {1},
  pages   = {141--177},
  year    = {2017},
}

@article{LiZhang2022,
  author  = {Li, Congming and Zhang, Kai},
  title   = {A Note on the {G}agliardo--{N}irenberg Inequality in a
             Bounded Domain},
  journal = {Communications on Pure and Applied Analysis},
  volume  = {21},
  number  = {12},
  pages   = {4013--4017},
  year    = {2022},
}

@book{wendland2005scattered,
  author    = {Wendland, Holger},
  title     = {Scattered Data Approximation},
  series    = {Cambridge Monographs on Applied and Computational Mathematics},
  number    = {17},
  publisher = {Cambridge University Press},
  address   = {Cambridge},
  year      = {2005},
  isbn      = {978-0-521-84335-5},
  note      = {Copyright page states ``First published in print format 2004''}
}

@misc{NISTDLMF,
  author       = {{NIST Digital Library of Mathematical Functions}},
  title        = {{NIST Digital Library of Mathematical Functions}},
  note         = {Release 1.2.7 of 15 June 2026, accessed 1 August 2026},
  year         = {2026}
}

@article{LimStein2008,
  author  = {Lim, Chae Young and Stein, Michael L.},
  title   = {Properties of Spatial Cross-Periodograms Using Fixed-Domain
             Asymptotics},
  journal = {Journal of Multivariate Analysis},
  volume  = {99},
  number  = {9},
  pages   = {1962--1984},
  year    = {2008},
}

@article{BevilacquaCaamanoPorcu2022,
  author  = {Bevilacqua, Moreno and Caama\~no-Carrillo, Christian and Porcu,
             Emilio},
  title   = {Unifying Compactly Supported and {Mat\'ern} Covariance Functions
             in Spatial Statistics},
  journal = {Journal of Multivariate Analysis},
  volume  = {189},
  pages   = {104949},
  year    = {2022},
}

@misc{KarvonenBachoc2025,
  author       = {Karvonen, Toni and Bachoc, Fran\c{c}ois},
  title        = {Scale Estimation and Rate-Unbiasedness for {Gaussian}
                  Processes under Smoothness Misspecification},
  year         = {2025},
  eprint       = {2110.02810},
  archiveprefix = {arXiv},
  primaryclass = {math.ST},
  note         = {Preprint, arXiv v3, October 2025},
}

@misc{Pillai2026,
  author       = {Pillai, Natesh S.},
  title        = {Microergodicity Implies Orthogonality of {Mat\'ern} Fields
                  on Bounded Domains in {$\mathbb{R}^4$}},
  year         = {2026},
  eprint       = {2603.23959},
  archiveprefix = {arXiv},
  primaryclass = {math.ST},
  note         = {Preprint, arXiv v1, March 2026},
}

@article{PorcuEtAl2024,
  author  = {Porcu, Emilio and Bevilacqua, Moreno and Schaback, Robert and Oates, Chris J.},
  title   = {The {Mat\'ern} Model: A Journey Through Statistics, Numerical Analysis and Machine Learning},
  journal = {Statistical Science},
  year    = {2024},
  volume  = {39},
  number  = {3},
  pages   = {469--492},
}
\end{document}